\documentclass[10pt,leqno,a4paper]{article}
\usepackage{amsmath}
\usepackage{amsthm}
\usepackage{amssymb}



\usepackage[hypertex]{hyperref}
\usepackage[english]{babel}
\usepackage[totalheight=22 true cm, totalwidth=14 true cm]{geometry}
\usepackage{setspace}
\usepackage{mathrsfs}
\usepackage[all]{xy}


\newtheorem{thm}{Theorem}[subsection]
\newtheorem{cor}[thm]{Corollary}
\newtheorem{lemma}[thm]{Lemma}

\newtheorem{prop}[thm]{Proposition}

\newtheorem{defn}[thm]{Definition}
\newtheorem{conj}[thm]{Conjecture}

\theoremstyle{remark}

\theoremstyle{definition}

\newtheorem{rmk}[thm]{Remark}
\newtheorem{exa}[thm]{Example}
\newtheorem{notation}[thm]{Notation}
\numberwithin{equation}{thm}

\def\beq{\begin{equation}}
\def\eeq{\end{equation}}

\def\crash#1{}
\def\N{{\mathbb N}}
\def\Z{{\mathbb Z}}
\def\P{{\mathbb P}}
\def\Q{{\mathbb Q}}
\def\R{{\mathbb R}}
\def\C{{\mathbb C}}
\def\A{{\mathbb A}}

\def\H{{\mathbb H}}
\def\G{{\mathbb G}}

\def\l{\left}
\def\r{\right}
\def\[[{\l[\l[}
\def\]]{\r]\r]}
\def\p{\prime}

\def\an{{\rm an}}
\def\rk{{\rm rk}}
\def\Aut{{\rm Aut}}
\def\cf{\emph{cf.}~}
\def\ie{\emph{i.e.}~}
\def\lc{\emph{loc.cit.}~}

\def\ds{\displaystyle}

\def\cA{{\mathcal A}}
\def\cB{{\mathcal B}}
\def\cC{{\mathcal C}}
\def\cD{{\mathcal D}}
\def\cE{{\mathcal E}}
\def\cF{{\mathcal F}}
\def\cG{{\mathcal G}}

\def\cM{{\mathcal M}}

\def\cO{{\mathcal O}}
\def\cH{{\mathcal H}}

\def\cK{{\mathcal K}}

\def\cP{{\mathcal P}}
\def\cR{{\mathcal R}}
\def\cS{{\mathcal S}}

\def\cV{{\mathcal V}}
\def\cU{{\mathcal U}}
\def\cX{{\mathcal X}}
\def\cY{{\mathcal Y}}
\def\cZ{{\mathcal Z}}

\def\sA{{\mathscr A}}

\def\sF{{\mathscr F}}
\def\sH{{\mathscr H}}

\def\sM{{\mathscr M}}
\def\sP{{\mathscr P}}
\def\sT{{\mathscr T}}
\def\sY{{\mathscr Y}}

\def\sV{{\mathscr V}}

\def\sX{{\mathscr X}}

\def\sZ{{\mathscr Z}}

\def\fA{{\mathfrak A}}
\def\fB{{\mathfrak B}}
\def\fC{{\mathfrak C}}
\def\fD{{\mathfrak D}}
\def\fE{{\mathfrak E}}
\def\fF{{\mathfrak F}}

\def\fX{{\mathfrak X}}
\def\fY{{\mathfrak Y}}
\def\fZ{{\mathfrak Z}}
\def\fT{{\mathfrak T}}

\def\fV{{\mathfrak V}}

\def\fc{{\mathfrak c}}

\def\wtilde{\widetilde}
\def\what{\widehat}

\def\veps{\varepsilon}
\def\a{\alpha}

\def\na{\nabla}

\def\sm{{\rm sm}}
\def\st{{\rm st}}
\def\an{{\rm an}}
\def\alg{{\rm alg}}
\def\sp{{\rm sp}}

\def\Spf{{\rm Spf\,}}
\def\Spec{{\rm Spec\,}}

\def\id{{\rm id\,}}
\def\ker{{\rm Ker\,}}
\def\dis{{\rm dis\,}}

\def\st{{\rm st\,}}
\def\ul{\underline}
\def\ol{\overline}

\def\limpr{\mathop{\underleftarrow{\rm lim}}}

\def\limind{\mathop{\lim\limits_{\displaystyle\rightarrow}}}

\def\kc{{k^\circ}}
\def\kcc{{k^{\circ\circ}}}
\def\kt{\widetilde{k}}
\def\wt{\what{\otimes}}

\author{Francesco Baldassarri\thanks{Universit\`{a} di Padova,
Dipartimento di matematica pura e applicata, Via Trieste, 63, 35121 Padova, Italy.}}

\title{Continuity of the radius of convergence of  differential equations on $p$-adic analytic curves}

\bibliographystyle{plain}

\begin{document}

\date{March 28, 2010}

\maketitle


\tableofcontents

\setcounter{section}{-1}
\section{Introduction}
\subsection{Radius of convergence of $p$-adic differential equations}
\par
It is well-known that any system of linear differential equations with complex analytic coefficients on a complex open disk admits a full set of solutions convergent on the whole disk.
It is also well known that the same fact does not hold over a
non-archimedean field $k$ that contains the field of $p$-adic numbers. For example, solutions of the equation $\ds \frac{d \, f}{dT} = f$ are convergent only on an open disks of radius $|p|^{\frac{1}{p-1}}$. The question is actually of interest over any 
non-archimedean field $k$ \emph{of characteristic zero} (we do not  exclude the case of a trivially valued $k$). What is the behaviour of the radius of convergence of  a system of differential equations with $k$-analytic coefficients as a function on points of the $k$-analytic affine line~? Of course, one has to give a precise meaning to this question since points of $k$-analytic spaces are not necessarily $k$-rational. We recall in fact  \cite[1.2.2]{Berkovich} that to any point
$x$ of a $k$-analytic space $X$, one associates a completely valued extension
field ${\sH}(x)$ of $k$, called the {\it residue field at $x$}; the point $x$ is {\it $k$-rational} (resp. \emph{rigid}) iff $\sH(x) = k$ (resp. $[\sH(x) : k]$ is finite)\,; $X(k)$  (resp. $X_0$) denotes the subset of $k$-rational (resp. rigid) points of $X$. On a smooth $X$, $k$-rational points admit a fundamental system of open neighborhoods 
which are open polydisks,
but this is not the case for general points, not even  using the \'etale topology \cite[Corollary 2.3.3]{BerkInt}. 
\smallskip
\par
Let $X$ be a relatively compact analytic domain in the $k$-analytic affine line $\A^1=\A^1_k$, and suppose we are given a system 
 \beq \label{diffCHDWintro} 
\Sigma : \frac {d\, \vec y}{dT}=G\,\vec y \; ,
\eeq
of linear differential equations, with $G$ a $\mu \times \mu$ matrix of $k$-analytic functions on $X$. If $x\in X$ is a $k$-rational point, let $R(x) = R(x,\Sigma)$ denote the radius of the maximal open disk in $X$ with center at $x$ on which all solutions of $\Sigma$ converge. If $x$ is not necessarily $k$-rational, the 
$\sH(x)$-analytic space $X_{\sH(x)} := X\what{\otimes} \sH(x) \subset \A^1_{\sH(x)}$ contains a canonical $\sH(x)$-rational point $x^\p$ over $x$ which corresponds to the induced character $\sH(x)[T] \to \sH(x)$, and we define $R(x)$ as the number $R(x^\p)$ for the system of equations on 
$X_{\sH(x)}$ induced by $\Sigma$. A more precise formulation of the above question is as follows. What is the behavior of the function $X \to \R_{>0}$~, $x \mapsto R(x)$~? For example, is it continuous~? The latter question is not trivial since a precise formula for the radius of convergence involves the infimum limit of an infinite number of continuous real valued functions on $X$ (\cf \S  \ref{raddef}). 
\par
A problem of the above type was considered for the first time in the $p$-adic case in a paper by Christol and Dwork (in a slightly different setting). Namely, let $X$ be an affinoid domain in $\A^1$ that contains the open annulus
\beq \label{openann}
B(r_1,r_2) = \{ x \in \A^1 \, | \, r_1 < |T(x)| < r_2 \,\} \; .
\eeq
Then there is a continuous embedding $[r_1,r_2] \hookrightarrow X$, $r \mapsto t_{0,r}$, where $t_{0,r}$ is the maximal point of the closed disk of radius $r$ with center at zero. Christol and Dwork proved that, for any system of differential equations $\Sigma$ on $X$, the function $[r_1,r_2] \to \R_{>0}$~, $r \mapsto \inf (r, R(t_{0,r}))$ is continuous \cite[Th\'eor\`eme 2.5]{ChristolDwork}.
\par
The following example, which we learned from Christol,   should convince the reader that the variation of the radius of convergence presents, in general, some non obvious  features.
\begin{exa}\label{Dworkfct} (\cite{Ch2},\cite[III.10.6]{RoCh}) 
Let $k$ be a non-archimedean extension of $\Q_{p}$, $p>2$,  containing an element $\pi$ such that $\pi^{p-1} =-p$. It is well-known that  the \emph{Dwork exponential} 
$$F(T) = \exp \pi (T - T^{p}) =  \exp \l( \pi T + \frac{(\pi T)^{p}}{p}\r)
$$  converges precisely for 
$|T| <  p^{\beta}$, where $\beta = { \frac{p-1}{p^{2}}}$. 
We can check this by comparison with the Artin-Hasse exponential series 
\beq
 \exp (T + \frac{T^{p}}{p} +  \frac{T^{p^{2}}}{p^{2}} + \dots ) \in \Z_{p}[[T]] \; ,
\eeq
 evaluated at $\pi T$, which converges for $|T(x)| < p^{\frac{1}{p-1}}$ ($ > p^{\beta}$).
 A crucial remark here, and all over the present example, is that if we have a relation
 $$f(T) = g(T)h(T)
 $$
 among invertible power series $f,g,h \in k[[T]]$, and if the radii of convergence of $g$ and $g^{-1}$  (resp. $h$ and $h^{-1}$) coincide, while the radii of convergence of $g$ and $h$ are distinct, then the radius of convergence of $f(T)$ is the minimum of the radii of convergence of  $g(T)$ and $h(T)$.
 \par
Now, $F(T)$ is a local power series solution at 0  of the differential equation
 \beq \label{Dworkexp} 
\Sigma ~: \frac {d\, y}{dT}= \, \pi (1 - pT^{p-1})\, y \; ,
\eeq
so that $R(0,\Sigma) = p^{\beta}$.
%
A more interesting result  is that if we set $\delta := \frac{1}{p-1} - \frac{1}{p^{2}} $, so that
 \beq \beta =  \frac{p-1}{p^{2}} < \frac{1}{p} <  \delta = \frac{1}{p-1} - \frac{1}{p^{2}} <  \frac{1}{p-1}\;  , 
 \eeq
then, for $R(t) = R(t,\Sigma)$,
 \beq \label{radformula}
   R(t) = \left\{\begin{array}{ccc} p^{\beta} &\mbox{if} & |t| < p^{\delta}\;\; , \\ \\
        p|t|^{1-p} &\mbox{if} &  |t| > p^{\delta} \;\; .
        \end{array} \right .
 \eeq
 So, at least if $|t| \not= p^{\delta}$, the following picture illustrates the behavior of $R(t)$.
 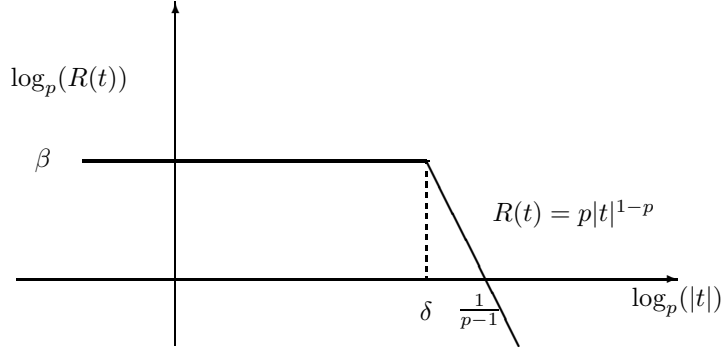
\begin{figure}[ht]
\begin{picture}(220,130)(-50,0)     
\put(0,25){\vector(1,0){250}}       
\put(60,0){\vector(0,1){130}}         
\put(155,25){\dashbox{2}(0,45){}} 
\thicklines              
\put(25,70){\line(1,0){130}}
\put(155,70){\line(1,-2){35}}
\put(10,70){\makebox(0,0){${\beta}$}}
\put(250,11){\makebox(0,0)[b]{$\log_p(|t|)$}}
\put(155,13){\makebox(0,0){$\delta$}}
\put(175,13){\makebox(0,0){$\frac1{p-1}$}}
\put(20,100){\makebox(0,0){$\log_p(R(t))$}}
\put(180,50){\makebox(0,0)[l]{$R(t)=p|t|^{1-p}$}}
\end{picture}
\caption{\label{fig 1} Radius of convergence of  $\Sigma$ at $t$, for $|t| \not= p^{\delta}$.}
\end{figure}
\begin{proof} 
It follows from \cite{Pu}  that if 
 $\pi_{1}$ is such that $\pi_{1}^{p} + p\pi_{1} = \pi$
  the series 
 \beq
 \exp ( \pi_{1} x + \pi x^{p}/p) 
 \eeq
converges precisely for  $|x| <1$. Therefore, if  $\zeta$ satisfies $\zeta^{p} = p$ and $b:= \zeta \frac{\pi_{1}}{\pi}$, the radius of convergence of 
  \beq
 \exp \l( \pi (x^{p} +  b x) \r) = {\rm E}(x^{p} + b \; x) \; ,
 \eeq
where  ${\rm E}(x) :=  \exp \pi x$, is exactly  $|\zeta|^{-1} = p^{1/p}$. 
 The calculation of $R(t)$ now follows from the product decomposition of the 
 solution $f(x,t) \in \sH(t)[[x-x(t)]]$ of $\Sigma$ at $t$ with $f(t,t) =1$, 
 \begin{multline} \label{proddec} 
f(x,t) = {\rm E}(x^{p} -x - t^{p} +t) = \\
{\rm E}\l((x-t)^{p} + b(x-t)\r)\;  {\rm E}\l((-1-b+pt^{p-1})(x-t)\r) \;
\prod_{k=2}^{p-1}{\rm E}\l({{p}\choose{k}} t^{p-k} (x-t)^{k}\r)
\; .
 \end{multline}
 In fact, the previous estimate shows that the dominant term in (\ref{proddec} ) is 
 ${\rm E}\l((-1-b+pt^{p-1})(x-t)\r)$. 
 
 \begin{figure}[ht]
\begin{picture}(300,170)(-50,0)     
\put(0,25){\vector(1,0){280}}       
\put(60,0){\vector(0,1){165}}         
\put(125,25){\dashbox{2}(0,50){}}  
\put(155,25){\dashbox{2}(0,20){}}
\put(25,75){\line(1,0){140}}
\put(75,125){\line(1,-1){80}}      
\put(25,125){\line(2,-1){160}}    
\thicklines              
\put(25,45){\line(1,0){130}}
\put(155,45){\line(1,-1){45}}
\put(15,75){\makebox(0,0){${\frac1p}$}}           
\put(10,45){\makebox(0,0){${\beta}$}}
\put(250,11){\makebox(0,0)[b]{$\log_p(|t|)$}}
\put(122,13){\makebox(0,0){$\frac1p$}}
\put(155,13){\makebox(0,0){$\delta$}}
\put(175,13){\makebox(0,0){$\frac1{p-1}$}}
\put(-3,140){\makebox(0,0){$\log_p(|x-t|)$}}
\put(100,110){\makebox(0,0)[l]{$|x-t|=p|t|^{1-p}$}}
\put(175,55){\makebox(0,0)[l]{$|x-t|=p^{1/k}|t|^{1-p/k}$}}
\end{picture}
\caption{\label{fig 2} Radii of convergence of factors in  (\ref{proddec})}
\end{figure}
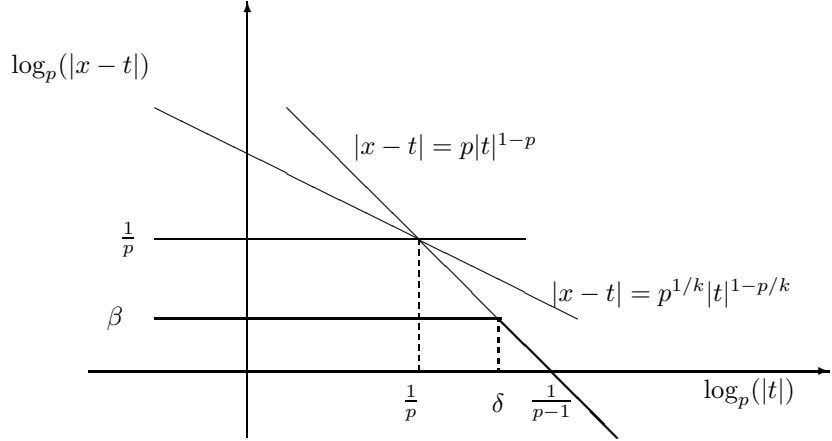

Therefore 
 \beq
 R(t) = |-1-b+pt^{p-1}|^{-1} \;.
 \eeq 
 Now, $b^{p} = -1 + \frac{p\pi_{1}}{\pi}$, so that $|1+b^{p}| = p^{-1 + 1/p} > |p|$. Consideration of the Newton polygon of the polynomial 
 $$(x-1)^{p} - b^{p} = x^{p}- p x^{p-1} + \dots -1 - b^{p}
 $$
 which kills $1+b$, shows finally  that 
 $$
 |1+b| = p^{-\beta} \;.
 $$
  This concludes the proof since 
  $$ |-1-b+pt^{p-1}| = \max(p^{-\beta}, |p||t|^{p-1}) \; ,
  $$
  unless the equality $p^{-\beta} = |p||t|^{p-1}$ takes place, in which case $|t| = p^{\delta}$. 
 \end{proof}

 In the region $|t| = p^{\delta}$ we do not get from (\ref{radformula}) a complete description of $R(t)$. The transfer theorem (\cite[IV.5]{DGS} or (\ref{contiguity}) below)  implies that $R(t) \geq p^{\beta}$, and  the main result of this paper,  \ie continuity of $t \mapsto R(t)$, together with  proposition \ref{advances2}, imply $R(t) = p^{\beta}$  in almost all residue classes 
 $D = D(c, (p^{\delta})^{-})$ with $|c| =p^{\delta}$. This is because,  by formula \ref{pentes2}, if $R(x) > p^{\beta}$ in  $D$, then in fact $R(x) \geq  p^{\beta + \delta}/|x-c|$ all over $D$. Then, if $x \in D$, $R(x) < p^{\beta + \veps}$, for $\veps >0$, can only take place in 
 $ A_{D,\veps} := \{ x \in D \,| \, |x-c| \geq p^{\delta - \veps} \,\}$. But a union $\bigcup_{D} A_{D,\veps}$ contains a neighborhood of $t_{0,\delta}$  in $D(c, (p^{\delta})^{+})$ only if $D$ runs over a finite set of classes.

  \par
 By a  closer examination of (\ref{proddec}) Christol shows that there are  $p-1$ exceptional open disks of radius  $p^{\delta}$, centered in the zeros  $c$ of the polynomial  $-1 - b + pc^{p-1} = 0$, where $R(t)$ is strictly bigger than $p^{\beta}$.  Notice that $|c| = p^{\delta}$ for such zeros.  In such an exceptional class, one obtains \emph{a priori}
 \beq \label{excclass}
   R(t) = \left\{\begin{array}{ccc} 
  p^{\frac{1}{2}\l( 1- \delta (p-2)\r) } &\mbox{if} & |t-c| < p^{\frac{1}{2}\l( 1- \delta (p-2)\r) } \; \;, \\  \\
|t-c|^{-1} p^{ 1- \delta (p-2)} &\mbox{if} & p^{\frac{1}{2}\l( 1- \delta (p-2)\r) }  <  |t-c| < p^{\delta} 
        \;\; .
        \end{array} \right .
 \eeq
 We point out that 
\beq
p^{\beta} < p^{\frac{1}{2}\l( 1- \delta (p-2)\r) }  < p^{\frac{1}{p}} \; ,
\eeq
so that the value of $R(t)$ in the exceptional classes is bigger than the value at their boundary, as  expected.
\par
Now, for 
$$|t-c| = p^{\frac{1}{2}\l( 1- \delta (p-2)\r) } \; ,$$
we have  
$R(t) \geq p^{\frac{1}{2}\l( 1- \delta (p-2)\r) }$ by the transfer principle. But  $R(t) > p^{\frac{1}{2}\l( 1- \delta (p-2)\r) }$ is impossible, since the disk $D(t, R(t)^{-})$ would properly intersect the region $p^{\frac{1}{2}\l( 1- \delta (p-2)\r) }  <  |t-c| < p^{\delta}$, where the second line in (\ref{excclass}) gives a different exact value.
\par 
The conclusion is that if  $-1 - b + pc^{p-1} = 0$ one has 
\beq \label{excclass2}
   R(t) = \left\{\begin{array}{ccc} 
  p^{\frac{1}{2}\l( 1- \delta (p-2)\r) } &\mbox{if} & |t-c| \leq p^{\frac{1}{2}\l( 1- \delta (p-2)\r) } \;\; , \\  \\
|t-c|^{-1} p^{ 1- \delta (p-2)} &\mbox{if} & p^{\frac{1}{2}\l( 1- \delta (p-2)\r) }  \leq  |t-c| < p^{\delta} 
         \;\; .
        \end{array} \right .
 \eeq
 The behavior of $R(t)$ in any of the $p-1$ exceptional classes is summarized by the following picture.
 \begin{figure}[ht]
\begin{picture}(220,140)(-50,0)     
\put(0,25){\vector(1,0){250}}       
\put(60,0){\vector(0,1){140}}         
\put(135,25){\dashbox{2}(0,75){}}  
\put(165,25){\dashbox{2}(0,45){}} 
\put(60,70){\dashbox{2}(100,0){}} 
\thicklines              
\put(135,100){\line(1,-1){30}}
\put(25,100){\line(1,0){110}}
\put(165,70){\line(1,-2){35}}
\put(0,90){\makebox(0,0){$\frac{1}{2}\l( 1- \delta (p-2)\r)$}}
\put(250,11){\makebox(0,0)[b]{$\log_p(|t-c|)$}}
\put(125,15){\makebox(0,0){$\frac{1}{2}\l( 1- \delta (p-2)\r)$}} 
\put(165,15){\makebox(0,0){$\delta$}}
\put(35,70){\makebox(0,0){$\beta$}}
\put(185,15){\makebox(0,0){$\frac1{p-1}$}}
\put(20,130){\makebox(0,0){$\log_p(R(t))$}}
\put(190,50){\makebox(0,0)[l]{$R(t)=p|t-c|^{1-p}$}}
\put(170,90){\makebox(0,0)[l]{$R(t)=p^{ 1- \delta (p-2)} |t-c|^{-1}$}}
\end{picture}
\caption{\label{fig 3} Radius of convergence of $\Sigma$  in an exceptional class}
\end{figure}
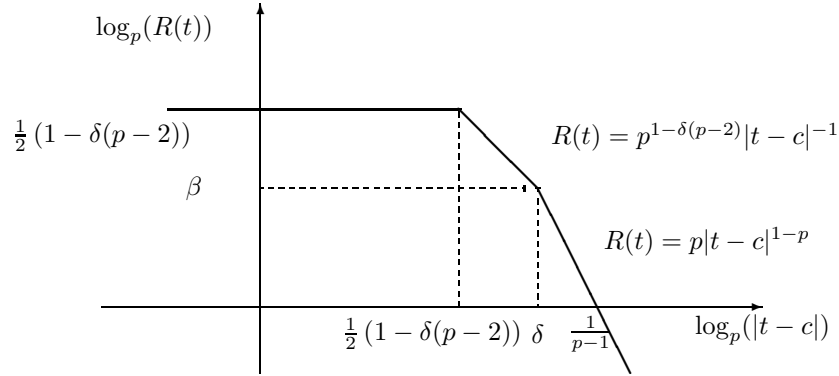

The previous calculations give a complete description of the function $R(t, \Sigma)$ on  $\A^1$. The conclusion is summarized in Fig. 4, where the graph $\Gamma \subset \A^{1}$ has the property that, for any $t \in \A^{1}(k)$, $R(t, \Sigma)$ is constant in any open disk  in $\A^{} \setminus \Gamma$ containing $x$.  Such a $\Gamma$ is said to \emph{control} the system $\Sigma$. In a subsequent paper \cite{polygon} we will show that every connection on a curve is controlled by a finite graph in a much stronger sense. 
\begin{figure}[ht]
\begin{picture}(180,210)(-100,-20)
\def\NL#1#2{\qbezier(#1)(#1)(#2)}
\linethickness{0.7pt}
\put(80,85){$t_{0,p^{\delta}}$}
\put(200,85){$\infty$}
\NL{80,80}{200,80}
\put(10,135){$t_{c_{1},p^{\delta_{0}}}$}
\NL{30,130}{80,80}
\NL{30,30}{80,80}
\NL{20,60}{80,80}
\put(53, 30){\circle{30}}
 \put(105, 45){\vector( - 3, - 1){55}}
 \put(103,47){constant radius}
  \put(113, 53){\vector( 3, 4){35}}
  \put(140, 165){\vector( - 1, - 1){60}}
  \put(142,165){radius affine of slope $-1$}
    \put(180,135){\vector( 0, - 1){50}}
  \put(180,135){radius affine of slope $p-1$}
\put(53, 130){\circle{30}}
\put(140, 100){\circle{80}}
\NL{20,110}{80,80}
\put(5,25){$t_{c_{p-1},p^{\delta_{0}}}$}
\end{picture}
\caption{\label{fig4} The controlling graph $\Gamma$ of $\Sigma$}
\end{figure}

\end{exa}

\par
A particular case of our main result states that  for any relatively compact analytic domain $X$ in $\A^1$ the function $x \mapsto R(x)$ is continuous on the whole space $X$. Our result in fact establishes the property of continuity for more general analytic curves. (Recall that in comparison with the complex analytic case a smooth $k$-analytic space is not necessarily locally isomorphic to an affine space.)
To formulate the result, we first define a so-called  ``normalized''  radius of convergence $\cR(x)$, which is related to the function $R(x)$ via a simple formula but does not depend on the embedding of the analytic domain $X$ into $\A^1$. After that we explain how the function $\cR(x)$ can be defined in a more general setting on one-dimensional smooth\footnote{in the sense of rigid geometry, \ie \emph{rig-smooth} \cf definition \ref{rigsmooth} below} affinoids, which are not necessarily analytic subdomains of $\A^1$, and even in arbitrary dimensions. 
\smallskip

\par
Let $X$ be an affinoid domain in $\A^1_k$, and assume at first that $X$ is strict and that $k$ is non-trivially valued and algebraically closed. Recall that the set of all points of $X$ which have no open neighborhood isomorphic to an open disk, is a closed subset of $X$, called the \emph{analytic skeleton} of $X$ and denoted by $S(X)$ \cite[\S 4]{Berkovich}. The complement $X \setminus S(X)$ is a disjoint union of open sets $\cD_y(X)$, where $y \in \cD_y(X)$ is a $k$-rational point, isomorphic to the standard open unit disk 
 $$D_k(0,1^-) = \{ x \in \A_k^1 \,|\,  |T(x)| < 1 \, \} \; ,$$
 via a \emph{normalized} coordinate $T_y: \cD_y(X) \xrightarrow{\ \sim\ } D_k(0,1^-)$ such that $T_y(y) = 0$.
 Furthermore, each of the above open disks has a unique boundary point at $S(X)$, and so there is a canonical continuous retraction map 
 $\tau_X: X \to S(X)$.
 The topological space  $S(X)$ carries a canonical structure of a finite polygon ${\bf S}(X) = (S(X),\cV(X),\cE(X))$ whose set of vertices $\cV(X)$  consists of the points  which have no neighborhood isomorphic to an open annulus: they are all of type (2) in the sense of  \cite[1.4.4]{Berkovich}. The set of open edges $\cE(X)$ of ${\bf S}(X)$ is formed by the connected components of $S(X) \setminus \cV(X)$; an  open edge $E = E(u,v)$ connects precisely two vertices $u,v$ and $E \cup \{u,v\}$ is a closed subset of $X$, 
 canonically homeomorphic to the closed interval $[r,1]$ for a well defined $r \in |k| \cap (0,1)$, 
 with $u \mapsto 1$, $v \mapsto r$. 
  All points of the Shilov boundary $\Gamma(X)$ of $X$ are among the vertices of ${\bf S}(X)$.   For $x \in S(X)$, the set 
 $\tau_X^{-1}(x) \setminus \{ x \}$ is a disjoint union of maximal open disks $\cD_y(X)$ with boundary point $x$ in $X$, all of the same radius $r(x)$ with respect to the standard coordinate in $\A^1$. Then, $r(x)$ is the  \emph{radius} of the point $x \in \A^1$ defined on p. 78 of \cite{Berkovich} and the function $r : S(X) \to \R_{>0}$ is continuous. Notice that if $r(x) \notin |k|$, $\tau_X^{-1}(x) \setminus \{ x \} = \emptyset$. If $k^\p$ is an algebraically closed non-archimedean field over $k$, then ${\bf S}(X  \what{\otimes} k^\p)  \xrightarrow{\ \sim\ }  {\bf S}(X)$ under the natural projection.  
 In the general case (with $k$ possibly trivially valued), 
  we define $S(X)$ 
  as the image in $X$ of $S(X  \what{\otimes} k^\p)$, for any non-trivially valued algebraically closed non-archimedean extension field $k^\p/k$ such that $X  \what{\otimes} k^\p$ is strict, and similarly for open edges and vertices.  We have a canonical continuous retraction $\tau_X: X \to S(X)$ induced by the similar retraction on $X  \what{\otimes} k^\p$. We call the triple ${\bf S}(X) = (S(X),\cV(X),\cE(X))$, a \emph{generalized subpolygon} of $X$. 
 \par
 Suppose we are given a system of differential equations on the affinoid domain $X \subset \A^1$ with analytic coefficients, \ie a free  $\cO_X$-Module of finite rank $\cF$ with a connection $\na$. If $k$ is algebraically closed, $X$ is strict, and $x \in X(k)$ is a $k$-rational point of $X$,  we define the \emph{normalized} radius of convergence $\cR(x) = \cR_X(x,(\cF,\na))$ as the radius of convergence of $\cF^{\na}$ around $x$ in the corresponding maximal open disk neighborhood $\cD_x(X) \cong D(0,1^-)$. If $k$, $X$ and $x$ are arbitrary, we consider an algebraically closed non-archimedean field $k^\p$ over $k$ such that $X \wt_k k^\p$ is strict and so large that $\sH(x)$ admits an isometric $k$-embedding $\varphi$ in $k^\p$. Then, there is a canonical point $x^\p \in X \wt_k k^\p$
 and we set 
 $\cR(x) = \cR(x^\p)$, a quantity independent of the choice of $k^\p$ and of $\varphi$. The function $\cR : X \to \R_{>0}$ does not depend on the embedding of $X$ in $\A^1$, either, and one has $R(x) = \cR(x) \cdot r(\tau_X(x))$ for all $x \in X$. The function $r(\tau_X(x)) = \delta_{\what{\P^1}}(x,X)$ was called the \emph{diameter of $X$ at $x$} (with respect to the embedding $X \hookrightarrow \A^1 \hookrightarrow \P^1 = (\what{\P^1})_{\eta}$) in \cite{Lucia}; its continuity (at least under the present assumptions) was proven in \lc [3.3]. 
 By definition, the function $\cR(x)$ is preserved by extensions of the ground field, and so it suffices to study its behavior in the case when $X$ is strictly $k$-affinoid.
 \smallskip
 \par
 We now notice that, independently of the characteristic, if $k$ is algebraically closed and non-trivially valued, 
 a strictly $k$-affinoid, as well as a projective, rig-smooth curve $X$ is the generic fiber $\fX_{\eta}$ of a strictly semistable\footnote{When $\kc$ is not noetherian, this definition is not completely standard \cf  (\ref{STRss}) below.} formal scheme 
 $\fX$ over $\kc$.
 We say that $\fX$ is a  \emph{strictly semistable model}  
 of $X$.  
 \par
 We will assume, for simplicity, that $k$ is  non-trivially  valued and algebraically closed. 
To any  semistable formal  scheme $\fX$ of relative dimension one over $\kc$, one  associates (\cf \cite[Chap. IV]{Berkovich}, modulo the equivalence between such formal schemes and formal coverings of smooth compact curves with semistable reduction),
 a triple of the previous type  ${\bf S}(\fX) =(S(\fX),\cV(\fX),\cE(\fX))$ supported on a closed subset $S(\fX)  \subset X := \fX_{\eta}$, called the \emph{skeleton} of $\fX$. Unless $\fX$ is \emph{strictly} semistable (\ref{STRss}), the subpolygon  ${\bf S}(\fX)$ may have loops, \ie open edges with a single boundary vertex in $X$. 
 
%
\par
For $\fX$ and $\fY$  semistable models of $X$, we set $\fX \leq \fY$ if there exists a morphism $\fY \to \fX$ inducing the identity on the generic fiber (identified with $X$ for both). 
  Then, if $\fX \leq \fY$,  ${\bf S}(\fX)$ is a ``subpolygon" of  ${\bf S}(\fY)$, in the sense that $S(\fX) \subset S(\fY)$, $\cV(\fX) \subset \cV(\fY)$, and an open edge of ${\bf S}(\fX)$ is a union of vertices and open edges of ${\bf S}(\fY)$. There is a natural continuous retraction  $\tau_{\fX,\fY}: S(\fY) \to S(\fX)$. Now, any strictly $k$-affinoid curve $X$, as much as any projective smooth curve which is neither rational nor a  Tate curve,  admits a \emph{minimum} semistable model $\fX_{0}$ and  the skeleton ${\bf S}(X)$ of $X$ coincides with the skeleton ${\bf S}(\fX_{0})$ of the formal scheme $\fX_{0}$\footnote{For a rational curve $X$, $S(X)$ is empty, while for any minimal semistable model $\fX$ of $X$, ${\bf S}(\fX) = (\eta, \{ \eta \},\emptyset)$, for a point $\eta \in X$ 
  with $\wtilde{\sH(\eta)}$ purely transcendental of degree one over $\kt$. For a Tate curve $X$, a choice of $\eta \in S(X)$ with $\wtilde{\sH(\eta)}$ of transcendence degree one over $\kt$, determines  a homeomorphism   
 $S(X) \xrightarrow{\ \sim\ } (\R_{>0}, \cdot)/r^{\Z} \approx S^{1}$, $\eta \mapsto 1$, up to orientation, 
 for $r  \in |k| \cap (0,1)$. In this case,  ${\bf S}(X) = (S(X),\emptyset,\{ S(X)\})$. The choice of a  minimal semistable model $\fX$ of $X$ corresponds to the choice of $\eta \in S(X)$~: ${\bf S}(\fX) = (S(X), \{ \eta \},\{S(X) \setminus \{\eta\}\})$.}. Moreover, as a topological space, $\ds X = \limpr_{\fY \geq \fZ}(S(\fY), \tau_{\fZ, \fY})$  where $\fY$ and $\fZ$ run over a cofinal system of  semistable models of $X$, is a quasi-polyhedron (\cf \cite[IV.1]{Berkovich}) and, if the minimum semistable model $\fX_{0}$ of $X$ exists, the retraction $\tau_X: X \to S(X)$ is the natural projection to $S(\fX_{0}) = S(X)$. 
 \par \noindent 
  A semistable formal scheme $\fX$ over $\kc$ is
  an example of a nondegenerate polystable  formal scheme over $\kc$ (see \cite{BerkContr}). In the case of dimension one over $\kc$, the class of such formal schemes coincides with the class of semistable formal schemes.  Recall  \cite[5.2]{BerkContr} that every nondegenerate polystable formal scheme $\fX$ has a skeleton $S(\fX) \subset \fX_{\eta}$ and a retraction map $\tau_{\fX}: \fX_{\eta} \to S(\fX)$ which are preserved under any ground field extension functor. Furthermore, if $x$ is a $k$-rational point of $\fX_{\eta}$, then there is a well-defined maximum open neighborhood $D_{\fX}(x,1^-)$ of $x$ in $\fX_{\eta} \setminus S(\fX)$, which is isomorphic to the standard open unit polydisk $D_{k}(0,1^-)$ (with $x \mapsto 0$), so that the open or closed polydisks $D_{\fX}(x,r^{\pm})$, $r < 1$ with center at any $k$-rational point of $\fX_{\eta}$ are well-defined (we are talking about polydisks with equal radii). 
   \smallskip
 \par
Let $k$ be any non-archimedean field.  \emph{All $k$-analytic spaces considered in this paper are supposed to be separated.} We generalize  a definition of  \cite[1.1]{BerkInt}, as follows.
\begin{defn} \label{rigsmooth} A $k$-analytic space $X$ is said to be \emph{rig-smooth} (resp. \emph{of pure dimension $n$}) if, for any non-archimedean field $k^\p$ over $k$, and any connected strictly affinoid domain $V \subset X\what{\otimes}k^\p$, $\Omega_V^1$ is a locally free $\cO_V$-Module (resp. of rank $n$).
\end{defn} 
 Assume now $k$ is of characteristic zero, let $X$ be a rig-smooth $k$-analytic space, and let $X_G$ be the associated $G$-analytic space.  We denote by ${\bf MIC}(X/k)$ the category of pairs consisting of  a locally free $\cO_{X_G}$-Module
 $\cF$ of finite type and  of an integrable $k$-linear connection $\na $ on $\cF$, with the usual (horizontal) morphisms. 
Notice that, unlike in the classical case, for an object $(\cF,\na)$ of 
 ${\bf MIC}(X/k)$, the abelian sheaf 
$$
\cE^{\na} = \ker (\na : \cE \to \cE \otimes \Omega^1_{X/k}) 
$$
 for the $G$-topology of $X$ is not  in general locally constant.
 \begin{defn} \label{Robbaconn} Let $k$ be a non-archimedean field  of characteristic zero,  and  $X$ be a rig-smooth $k$-analytic space. We say that an object $(\cF,\na)$ of 
 ${\bf MIC}(X/k)$ is a \emph{Robba connection on $X$} if for any  algebraically closed complete extension $k^\p$ of $k$, and any open polydisk $D^\p \subset X^\p = X \what{\otimes} k^\p$, if we denote by $(\cE^\p, \na^\p)$ the $k^\p$-linear extension of $(\cE, \na)$ on $X^\p$, 
the sheaf $(\cE^{\p \, \na^\p})_{|D^\p} = (\cE^\p_{\,|D^\p})^{\na^\p_{|D^\p}}$ is  constant.   We denote by ${\bf MIC}^{Robba}(X/k)$ the full abelian subcategory of ${\bf MIC}(X/k)$, consisting of Robba connections.
\end{defn}

\begin{exa} \label{Turrittin} Let $\C$ be the complex field, equipped with the trivial valuation. On the $\C$-analytic space $\A^1_{\C}$, we consider a system of linear differential equations $\Sigma$ of the form (\ref{diffCHDWintro}), with $G$ a matrix of rational functions in $\C(T)$. To analyze the singularity  of $\Sigma$ at $T=0$, we consider the open analytic domain 
$$X = \{x \in \A^1_{\C} \, |\,  0 < |T(x)| < 1 \, \} \subset \A^1_{\C} \; .$$ 
Notice that $X = S(X) \cong (0,1)$ via $\rho \mapsto t_{0,\rho}$. The field $\sH(t_{0,\rho})$ is the non-archimedean field over $\C$, $\cK_{\rho}=  \C((T))$, equipped with the absolute value $||~||_{\rho}$, such that $||0||_{\rho} =0$ and
$|| \sum_n a_n T^n||_{\rho}= \rho^m$, if $a_n =0$, for $n <m$ and  $a_m \not=0$.
Let $X^\p := X \what{\otimes} \cK_{\rho}$, and let $t^\p_{0,\rho}$ be the canonical $\cK_{\rho}$-rational point in $X^\p$ above $t_{0,\rho}$. Then, writing $t = T( t_{0,\rho})$,
 the maximal open disk containing $t^\p_{0,\rho}$ in $X^\p$ is $D_{\cK_{\rho}}(t^\p_{0,\rho},\rho^-)$, with normalized coordinate
$\ds \frac{T-t}{t}$,
and the fact that $\Sigma$ is  a Robba connection is equivalent to the property that the singularity at $T=0$ is regular. In fact, by the formal Turrittin theory \cite{Advances}, it is enough to check this statement for the rank one differential equations
$$ \frac {d\,  y}{dT}=\frac{\alpha}{T}\, y \; ,\; \alpha \in \C \; ,
$$
and
$$ \frac {d\,  y}{dT}=\frac{\alpha}{T^N}\, y \; ,\; \alpha \in \C^{\times} \; , \; N =2,3,\dots \; .
$$
In the former case  the solution at $t^\p_{0,\rho}$ is $\ds y = \sum_i {\alpha \choose i} (\frac{T-t}{t})^i$, converging for $\ds |\frac{T-t}{t}|~<~1$. In the latter case a simple calculation shows that the solution at $t^\p_{0,\rho}$ is a power series $\ds \sum a_i(t) (\frac{T-t}{t})^i \in \cK_{\rho}[[\frac{T-t}{t}]]$, having the same radius of convergence as the series $\ds \exp (t^{-N}\frac{T-t}{t})$, that is converging for 
$\ds |\frac{T-t}{t}|~<~|t|^N$. We recall from \lc that $N$ is the Poincar\'e-Katz rank of irregularity of the corresponding differential equation at $T=0$.
\end{exa}

Notice that if $(\cF,\na)$ is any connection, 
the natural map 
 \beq
\label{eq:RHsheafintro}
\cF^{\na}    \otimes_k \cO_{X} \longrightarrow  \cF \;  ,
\eeq
is  an isomorphism. So,  if  $D$ is a connected  analytic domain in  $X$ and $\cF^{\na}$ is constant over $D$, then 
$(\cF,\na)_{|D}$  is isomorphic to the trivial connection $(\cO_D,d_{D/k})^{\mu}$, for $\mu = \rk \, \cF_{|D}$. In particular, if $\cF^{\na}$ is locally constant, then it is locally isomorphic to the locally constant sheaf $k^{\rk \cF}$. 
\par

Assume now that the  $k$-analytic space $X$ is  isomorphic to the generic fiber $\fX_{\eta}$ of a   nondegenerate polystable formal scheme $\fX$ over $\kc$. 
For any object  $(\cF,\na)$ of ${\bf MIC}(X/k)$, and for 
a $k$-rational point $x \in \fX_{\eta}$, we may define the (\emph{$\fX$-normalized}) \emph{radius of convergence $\cR(x) = \cR_{\fX}(x,(\cF,\na))$} (\ref{limrad}). For $x \in X(k)$, it  is the supremum of $r \in (0,1]$ such that the restriction of $\cF^{\na}$ to $D_{\fX}(x,r^-)$ is constant. Notice that the restriction of $\cF^{\na}$ to $D_{\fX}(x,\cR(x)^-)$ is then locally constant \emph{for the usual topology}\footnote{This means for the topology of a Berkovich analytic space.}, and is therefore constant, since an open disk is simply connected for the usual topology. 
If $x$ is arbitrary,  we set $\cR(x) = \cR(x^\p)$, where $x^\p$ is the canonical $\sH(x)$-rational point in $
 \fX_{\eta} \what{\otimes} \sH(x) = (\fX \what{\otimes}_{\kc}  \sH(x)^{\circ} )_{\eta}$ over $x$. 
Let $\varphi: \fY \to \fX$ be an \'etale morphism of  nondegenerate  polystable formal schemes over $\kc$, and let $\varphi_{\eta}: Y \to X$ be the generic fiber of $\varphi$. For any object  $(\cF,\na)$ of ${\bf MIC}(X/k)$, $\varphi_{\eta}^{\ast}(\cF,\na)$ is an object of ${\bf MIC}(Y/k)$ and for any point $y \in Y$
we have
\beq \label{etaleinv}
 \cR_{\fY}(y,\varphi_{\eta}^{\ast}(\cF,\na)) =  \cR_{\fX}(\varphi_{\eta}(y),(\cF,\na)) \; .
\eeq
Under the same assumptions, if $k^{\p}/k$ is any extension of non-archimedean fields, the 
$k^{\p \, \circ}$-formal scheme $\fX^{\p} := \fX \what{\otimes} k^{\p \, \circ}$ is nondegenerate polystable, and, $\psi: \fX^{\p}  \to \fX$ is the natural projection
\beq \label{basechange}
 \cR_{\fX^{\p}}(x,\psi_{\eta}^{\ast}(\cF,\na)) =  \cR_{\fX}(\psi_{\eta}(x),(\cF,\na)) \; .
\eeq
  \begin{conj} \label{conjcont} Let  $X = \fX_{\eta}$ be the generic fiber $\fX_{\eta}$ of a  nondegenerate  polystable formal scheme $\fX$ over $\kc$ and $(\cF,\na)$ be an object of ${\bf MIC}(X/k)$.   The   function $X \to \R_{>0}$~, $x \mapsto \cR_{\fX}(x,(\cF,\na))$ is continuous. 
 \end{conj}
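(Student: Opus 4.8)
The statement is a conjecture, so strictly speaking the excerpt announces it rather than proves it; what I would do is indicate the strategy by which the paper establishes it in the cases it actually settles, and how one would hope to attack it in general. The first reduction is via the invariance properties already recorded in the excerpt. By formula (\ref{basechange}), $\cR_{\fX}$ is insensitive to extension of the ground field, so one may enlarge $k$ at will; in particular one may assume $k$ is algebraically closed and non-trivially valued, which is the standing hypothesis under which skeleta, retractions $\tau_{\fX}$, and the structure theory of ${\bf S}(\fX)$ are available. Next, continuity is a local statement on $X = \fX_{\eta}$, and by (\ref{etaleinv}) the function $\cR$ is an étale-local invariant on nondegenerate polystable formal schemes; since such an $\fX$ is étale-locally a product of standard ``semistable'' pieces $\Spf \kc\{t_0,\dots,t_n\}/(t_0\cdots t_m - a)$ (the polystable presentation of \cite{BerkContr}), one is reduced to understanding $\cR$ on these model pieces and, by a further dévissage along the product structure, ultimately to the one-dimensional factors — i.e. to semistable formal \emph{curves}. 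This is the step where the higher-dimensional case genuinely reduces to the curve case; making the dévissage along products rigorous (so that convergence in each variable separately controls convergence of the joint solution on a polydisk) is where I expect the main technical friction, and it is presumably why the paper isolates the curve case as its main theorem.

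For the curve case, which is the heart of the matter, the plan is to work on the quasi-polyhedron $X = \limpr_{\fY} S(\fY)$ and to separate the behaviour of $\cR$ in two directions: along the skeleton $S(X)$, and transversally to it, inside the open disks $\cD_y(X) = \tau_X^{-1}(x)\setminus\{x\}$. Inside a single such open disk $D \cong D(0,1^-)$, with normalized coordinate $T_y$, the connection $(\cF,\na)_{|D}$ is a system of the shape (\ref{diffCHDWintro}), and $\cR$ restricted to the ``radial'' family $\rho \mapsto t_{0,\rho}$ inside $D$ is exactly the function whose continuity is the Christol–Dwork theorem \cite[Th\'eor\`eme 2.5]{ChristolDwork} recalled in the introduction; combined with the well-known estimates on the Newton polygon of the radius function (the ``pentes'' computations alluded to around (\ref{pentes2})), one gets that $x \mapsto \cR(x)$ is continuous along each disk and that its restriction to each closed edge $E\cup\{u,v\} \cong [r,1]$ of ${\bf S}(X)$ is continuous. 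The transfer theorem (\cite[IV.5]{DGS}, quoted as (\ref{contiguity})) supplies the crucial lower semicontinuity in the transverse direction: it forces $\cR$ at a skeleton point to be at least the value propagated from nearby disks, preventing downward jumps when one moves off the skeleton.

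It then remains to glue. Continuity on $X$ follows once one knows: (i) continuity on each maximal open disk $\cD_y(X)$; (ii) continuity along $S(X)$ with its polygon structure; and (iii) compatibility at the boundary points where disks attach to the skeleton, i.e. that $\lim_{x\to u,\ x\in\cD_y(X)}\cR(x) = \cR(u)$ for the unique boundary point $u\in S(X)$ of $\cD_y(X)$. Point (iii) is exactly what the transfer theorem gives in one inequality and what a semicontinuity-of-the-infimum-of-continuous-functions argument (the radius being $\liminf$ of explicit continuous functions built from the matrix $G$, as flagged in \S\ref{raddef}) gives in the other. Because $X$ is, as a topological space, the inverse limit of the finite polygons $S(\fY)$, and $\cR$ is compatible with the retractions $\tau_{\fX,\fY}$ up to the corrections measured by the disk-radius function, a standard argument with inverse limits of compact spaces upgrades the piecewise continuity to global continuity. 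The genuinely hard point, beyond the curve case, is the product dévissage in higher dimension: one must show that the $\fX$-normalized radius on a polydisk factor is governed by the worst of the one-variable radii, which requires controlling how the connection mixes the variables and is not formal — this is the obstacle I would expect to absorb most of the work, and it is the reason the full conjecture is harder than the curve statement proved here.
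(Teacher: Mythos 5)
You are right that the statement is a conjecture and that the paper only establishes it for curves; your opening reductions (base change to an algebraically closed $k$ via (\ref{basechange}), \'etale-localization via (\ref{etaleinv}) to basic formal disks and annuli) are exactly the ones the paper uses. But your sketch of the curve case has a genuine gap at the decisive step, upper semicontinuity. You write that the upward inequality at a boundary point follows from ``a semicontinuity-of-the-infimum-of-continuous-functions argument'' because the radius is a $\liminf$ of explicit continuous functions of $x$. This does not work: $\liminf_n |G_{[n]}(x)|^{-1/n} = \sup_s \inf_{n\ge s}|G_{[n]}(x)|^{-1/n}$ is an \emph{increasing} limit of USC functions, and a supremum of USC functions is LSC, not USC. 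The whole difficulty — which the paper flags explicitly as ``precisely the non-trivial part'' of Christol--Dwork — is to show that this increasing limit converges \emph{uniformly} on $X$, and that is exactly what the Dwork--Robba effective bound (theorem \ref{cor:DworkRobba1}) provides in \S\ref{subsection:USC}. Your proposal never invokes Dwork--Robba, so as written the USC half of the argument is missing; the transfer theorem (\ref{contiguity}) you do cite is itself a corollary of Dwork--Robba and only gives the comparison between a disk and its boundary point, not semicontinuity at arbitrary points of type (2) or (3).

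Two smaller divergences from the paper's route are worth noting. First, the gluing is not done by a generic inverse-limit argument on $X=\limpr S(\fY)$: the paper runs everything through Berkovich's explicit continuity criterion (theorem \ref{contthm}), whose five hypotheses (continuity at rational points, a minimum principle on special affinoids from the maximum modulus principle for $|G_{[n]}|$, continuity on open segments after arbitrary base change, continuity at vertices, and USC) are verified one by one; the ``minimum on the Shilov boundary'' condition is what replaces your point (iii). Second, your proposed product d\'evissage reducing the higher-dimensional polystable case to curves is not carried out (or claimed) anywhere in the paper — the conjecture genuinely remains open beyond dimension one — so it should be presented as speculation rather than as a reduction.
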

 Notice that, by formula \ref{etaleinv}, the conjecture holds if it holds under the restrictive condition that $\fX$ is  strictly polystable over $\kc$ (base change to a finite extension field $k^\p/k$ is irrelevant). 
 Let ${\bf MIC}_{\fX}(X/k)$ be the full subcategory of ${\bf MIC}(X/k)$ consisting of the objects $(\cF,\na)$, such that  $\cF$ is the $\cO_X$-Module associated to some locally free $\cO_{\fX}$-Module of finite type $\fF$, called a \emph{formal model} of $\cF$ \emph{over $\fX$}.  
 In case $\cF$ admits a formal model $\fF$, the calculation of $\cR(x)$ may be performed as follows. We  pick  an open affine neighborhood $\fY$ of the specialization of $x$ in $\fX$  such that $\fF$ is free on $\fY$. Then $x \in \fY_{\eta}$, $D_{\fX}(x,1^-) = 
 D_{\fY}(x,1^-)$,  and $\cR_{\fX}(x,(\cF,\na)) = \cR_{\fY}(x,(\cF,\na)_{|\fY_{\eta}})$ is the maximum common radius of convergence of all sections of $\cF^{\na}$ in $D_{\fY}(x,1^-)$ around $x$, when expressed in terms of a basis of global sections of $\fF$ on $\fY$. In dimension one, this amounts to solving a system of differential equations as in (\ref{diffCHDWintro}), \emph{over an affinoid domain in a smooth projective curve}.  
 \par
  \begin{conj} \label{conjcont2} Let  $X = \fX_{\eta}$ be the generic fiber $\fX_{\eta}$ of a  nondegenerate  polystable formal scheme $\fX$ over $\kc$ and $(\cF,\na)$ be an object of 
  ${\bf MIC}_{\fX}(X/k)$.   Then  $(\cF,\na)$
  is a Robba connection on $X$ if and only if the function $x \mapsto \cR_{\fX}(x,(\cF,\na))$ is identically 1 on $X$. 
 \end{conj}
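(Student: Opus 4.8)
\textbf{Proof plan for Conjecture \ref{conjcont2}.}

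The plan is to prove both implications by reducing to the one-dimensional, strictly $k$-affinoid (or projective) case and analyzing the local behaviour on open polydisks. First assume $(\cF,\na)$ is a Robba connection. By Definition \ref{Robbaconn}, after any algebraically closed extension $k^\p/k$ and on any open polydisk $D^\p \subset X^\p = X\wt k^\p$, the sheaf $(\cE^{\p\,\na^\p})_{|D^\p}$ is constant; in particular, for a $k$-rational point $x \in \fX_\eta$ the maximal open polydisk $D_{\fX}(x,1^-)$ is itself such a $D^\p$ (over $k$ directly, or after base change), so $\cF^{\na}$ is constant on all of $D_{\fX}(x,1^-)$. By the definition of $\cR$ recalled just before Conjecture \ref{conjcont}, this forces $\cR_{\fX}(x,(\cF,\na)) = 1$. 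For an arbitrary point $x$, we pass to the canonical $\sH(x)$-rational point $x^\p$ in $\fX_\eta \wt \sH(x) = (\fX \wt_{\kc}\sH(x)^\circ)_\eta$; since the Robba condition is preserved by ground-field extension (it is phrased for \emph{all} algebraically closed $k^\p$, and one further extends $\sH(x)$ to such a field), $\cR(x) = \cR(x^\p) = 1$. Hence $x \mapsto \cR_{\fX}(x,(\cF,\na))$ is identically $1$ on $X$.

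For the converse, assume $\cR_{\fX}(x,(\cF,\na)) \equiv 1$ on $X$, and let $k^\p/k$ be algebraically closed and $D^\p \subset X^\p$ an open polydisk; we must show $(\cE^{\p\,\na^\p})_{|D^\p}$ is constant. By formula \ref{basechange} the hypothesis $\cR \equiv 1$ is inherited by $(\cE^\p,\na^\p)$ over $\fX^\p = \fX \wt k^{\p\circ}$, which is again nondegenerate polystable, so we may rename $k^\p$ as $k$ and assume $k$ algebraically closed and nontrivially valued. Now take any $k$-rational point $y \in D^\p$. An open polydisk inside $\fX_\eta \setminus S(\fX)$ containing $y$ is contained in the maximal one $D_{\fX}(y,1^-)$, and the hypothesis $\cR(y) = 1$ says exactly that $\cE^{\na}$ is constant on $D_{\fX}(y,1^-)$; by the remark following Definition \ref{Robbaconn}, constancy of $\cE^{\na}$ over a connected domain is equivalent to $(\cF,\na)$ being trivial there, and restriction of a trivial connection to the subpolydisk $D^\p \cap D_{\fX}(y,1^-)$ (when $D^\p$ meets this maximal disk) keeps $\cE^{\na}$ constant. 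To globalize over all of $D^\p$: since $D^\p$ is a connected open polydisk it is contained in a single connected component of $\fX_\eta \setminus S(\fX)$, i.e. in a single $D_{\fX}(y,1^-)$ for a $k$-rational $y$ (using that every component of the complement of the skeleton of a nondegenerate polystable formal scheme is a maximal open unit polydisk around its unique $k$-rational "center" — the structure recalled before Conjecture \ref{conjcont}); therefore $D^\p \subset D_{\fX}(y,1^-)$ and $\cE^{\na}$ is already constant on $D_{\fX}(y,1^-) \supset D^\p$, a fortiori on $D^\p$. Thus $(\cF,\na)$ is a Robba connection.

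The one subtlety to handle carefully — and the step I expect to be the main obstacle — is the globalization just invoked: that every connected open polydisk $D^\p$ in $X^\p$ lies inside one of the canonical maximal open polydisks $D_{\fX}(y,1^-)$, so that the pointwise hypothesis $\cR \equiv 1$ (which a priori only controls these maximal disks) actually controls $D^\p$. This uses the description of $\fX_\eta \setminus S(\fX)$ as a disjoint union of the $D_{\fX}(y,1^-)$, together with the fact that an open polydisk is connected and does not meet the skeleton $S(\fX)$ (a polydisk has no point without a polydisk neighborhood, whereas every point of $S(\fX)$ does), hence lies in exactly one component. One must also check that the normalized coordinate on $D_{\fX}(y,1^-)$ identifies $D^\p$ with a sub-polydisk and that constancy of the kernel sheaf is coordinate-independent, which is immediate from (\ref{eq:RHsheafintro}) and the discussion after Definition \ref{Robbaconn}. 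The remaining points — compatibility of $\cR$ with base change (formula \ref{basechange}), with passage to $\sH(x)$-rational points, and with the definition via constancy of $\cF^{\na}$ on $D_{\fX}(x,r^-)$ — are exactly the properties recorded in the excerpt, so no new input is needed beyond organizing these reductions.
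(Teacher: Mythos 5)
Your forward direction (Robba $\Rightarrow \cR \equiv 1$) is fine: the maximal disks $D_{\fX}(x,1^-)$ are themselves open polydisks in $X^\p$, so the Robba hypothesis applies directly to them and forces $\cR(x)=1$, and compatibility of $\cR$ with passage to $\sH(x)$ handles arbitrary points.

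The converse direction, however, has a genuine gap at the very step you flagged. You assert that an open polydisk $D^\p \subset X^\p$ cannot meet $S(\fX^\p)$, on the grounds that ``a polydisk has no point without a polydisk neighborhood, whereas every point of $S(\fX)$ does.'' The second half of that parenthetical is false. The set of points with no open polydisk neighborhood is the \emph{analytic} skeleton $S(X)$, not the model skeleton $S(\fX)$. For a general (non-minimal) nondegenerate polystable model $\fX$, $S(\fX)$ properly contains $S(X)$, and the points of $S(\fX)\setminus S(X)$ do have open disk neighborhoods. Concretely, take $X = D(0,1^+)$ and let $\fX$ be the formal model obtained by blowing up the closed subdisk $D(0,r^+)$ for some $r \in |k^\times|\cap(0,1)$: then $S(\fX)$ is the segment $\{t_{0,\rho}\,:\,r\le\rho\le 1\}$, and any open disk $D(0,\rho^-)$ with $r<\rho<1$ meets $S(\fX)$ in a positive-length segment. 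Here the hypothesis $\cR\equiv 1$ only tells you that $\cF^{\na}$ is constant on each of the disks $D_{\fX}(y,1^-)$, and $D(0,\rho^-)$ meets many such disks together with part of the skeleton; constancy on the pieces does not automatically patch across $S(\fX)$.

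The paper's proof (Corollary \ref{Robbacond4}, applied to $\cZ=\emptyset$) handles exactly this case and requires the transfer theorem (Corollary \ref{contiguity}). After shrinking $D^\p$ to a closed $k$-rational subdisk whose maximal point $v$ lies in an open edge of ${\bf S}(\fX)$, one equips $D^\p$ with the formal structure induced by the graph $D^\p\cap S(\fX)$, observes that the associated normalized radius is identically $1$ on $D^\p$, and then uses the transfer theorem to deduce that the fundamental solution matrix at $v$ converges on all of $D(0,1^-)$, i.e.\ that $\cF^{\na}$ is constant there. That Dwork--Robba input --- effective bounds giving transfer to an adjacent disk --- is precisely what propagates local constancy across $S(\fX)$, and it is missing from your argument. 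Without it your reduction proves the conjecture only for the \emph{minimal} semistable model, where $S(\fX)=S(X)$.
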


 \par \medskip
 And here is our main result.
 \begin{thm} \label{mainthm} Let $k$ be a non-archimedean field extension of $\Q_p$ and $X$ be a rig-smooth compact strictly $k$-analytic curve. Conjectures \ref{conjcont}  and \ref{conjcont2} hold for objects of ${\bf MIC}_{\fX}(X/k)$, where $\fX$ is any semistable model of  $X$.
 \end{thm}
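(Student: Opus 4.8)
The plan is to reduce the statement to a purely local, one-dimensional problem on annuli and disks, and to prove continuity edge by edge along the skeleton. First I would exploit the invariance formulas (\ref{etaleinv}) and (\ref{basechange}) to reduce to the case where $k$ is algebraically closed, non-trivially valued, $\fX$ is strictly semistable, and $\cF$ is free with a fixed formal model $\fF$ on each affine piece. The space $X$ then decomposes as the disjoint union of the skeleton $S(\fX)$ and the open residue disks retracting to it via $\tau_\fX$. On each such open disk $D_\fX(x,1^-) = D(0,1^-)$, the function $\cR$ restricted to the disk is, by the classical theory (\cf the Christol--Dwork result quoted in the introduction, \cite[Th\'eor\`eme 2.5]{ChristolDwork}, applied along the radii $\rho \mapsto t_{0,\rho}$), continuous in the radial direction, and since $\cR$ is constant on each ``spherical shell'' $\{|T|=\rho\}$ in an open disk (the point $t_{0,\rho}$ being the maximal point of that shell and dominating all $k$-rational points on it, using that the radius of convergence is an infimum of functions each of which attains its sup at the maximal point), continuity on the whole disk reduces to continuity of the single-variable function $\rho \mapsto \cR(t_{0,\rho})$, which is the Christol--Dwork statement, transported by the normalized coordinate. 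So the content is continuity at the skeleton and along its edges.

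Next I would treat an open edge $E = E(u,v)$ of ${\bf S}(\fX)$: after choosing a semistable model refining $\fX$ so that $E$ sits inside an open annulus, $E \cup \{u,v\}$ is canonically $[r,1]$, and a neighborhood of $E$ in $X$ is an open annulus $B(r_1,r_2)$ with its skeleton. Here again the Christol--Dwork theorem gives continuity of $\rho \mapsto \cR(t_{0,\rho})$ \emph{on the interior of the edge}, and the transversal directions (the open residue disks attached along $E$) are handled by the same ``sup at the maximal point'' principle together with the slope estimates of the radius alluded to in the introduction (formula \ref{pentes2} and the partial-height/Newton-polygon machinery), which force $\cR$ on a residue disk to be governed near its boundary point by its value at the boundary point. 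The delicate point is the behavior at the vertices $u,v$: one must show that $\cR$ is continuous at a vertex, \ie that the limits of $\cR$ along all edges and residue disks meeting at a vertex agree with $\cR$ at the vertex. This I would do by a local computation: a vertex of type (2) has a neighborhood isomorphic to a standard neighborhood in $\fX_\eta$ for which $\fX$ looks like $\Spf \kc[T_1,\dots]/(\dots)$, and the several branches are the several residue disks/annuli emanating from the corresponding point of the special fiber; one compares the solutions of $\Sigma$ in each branch using that they all restrict compatibly to (formal) solutions at the vertex point, and uses the maximality of the vertex point (its residue field being of transcendence degree one) to squeeze the radii.

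The final step is Conjecture \ref{conjcont2}: if $(\cF,\na)$ is a Robba connection then by definition, after any algebraically closed extension, $\cF^\na$ is constant on every open polydisk, hence $\cR \equiv 1$ by the very definition of $\cR$ as the sup of radii on which it is constant; conversely, if $\cR \equiv 1$, then $\cF^\na$ is constant on $D_\fX(x,1^-)$ for every $k$-rational $x$, and since such disks cover $X \setminus S(\fX)$ and the statement is stable under base change, $\cF^\na$ is constant on every open polydisk of every $X^\p$, which is exactly the Robba condition. So this equivalence is essentially formal once Conjecture \ref{conjcont} is available, modulo checking that ``constant on all \emph{maximal} residue disks $D_\fX(x,1^-)$'' implies ``constant on all open polydisks'' — true because any open polydisk is contained in some $D_\fX(x,1^-)$ (or in a residue disk of an appropriate refinement) and constancy passes to subdisks.

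I expect the main obstacle to be continuity at the vertices of the skeleton: away from the vertices everything is a one-variable affair covered by Christol--Dwork plus convexity/slope properties of the radius, but at a vertex of type (2) the several branches can a priori approach different limits, and controlling this requires a genuinely local structure theory of semistable curves near such a point together with fine estimates (partial heights, Newton polygons of the associated differential modules, the behavior quoted in \ref{excclass} of the introduction) showing that the radius is ``convex enough'' across the vertex. A secondary technical point, to be dispatched carefully but not deep, is the passage between the several possible semistable models (the $\fX \le \fY$ formalism) and the verification that $\cR_\fX$ does not see the difference once one is on a fixed residue disk — this is the identity $D_\fX(x,1^-) = D_\fY(x,1^-)$ used above — together with the reduction to $\cF$ admitting a \emph{global} free formal model on each affine chart, which is where the hypothesis $(\cF,\na) \in {\bf MIC}_\fX(X/k)$ enters.
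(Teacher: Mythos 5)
Your proposal correctly identifies the reduction to one-dimensional local problems and correctly sees Christol--Dwork as the engine along edges, but it has two genuine gaps that do not match what the paper actually does. First, your treatment of the open residue disks relies on the claim that $\cR$ ``is constant on each spherical shell $\{|T|=\rho\}$.'' This is false in general: the transfer theorem gives only $\cR(x)\ge\cR(t_{0,\rho})$ for $x$ in the closed disk of radius $\rho$, and equality fails as soon as $\cR(x)<\rho$ in some residue classes of the shell. The introductory Dwork-exponential example illustrates exactly this phenomenon: on the shell $|t|=p^{\delta}$ the radius equals $p^{\beta}$ except on $p-1$ exceptional residue classes where it is strictly larger. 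So ``continuity on a shell'' cannot be reduced to the single radial function $\rho\mapsto\cR(t_{0,\rho})$ in the way you propose. Second, you defer the vertex case to a ``local computation comparing branches'' without a concrete mechanism; this is precisely where your argument has no actual proof.

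The paper's proof closes both of these gaps by a different organizational principle. The key lemma is the abstract continuity criterion \ref{contthm}: a function is continuous as soon as it is (i) \emph{upper semicontinuous} everywhere, (ii) bounded below on any special affinoid by its infimum over the Shilov boundary, (iii) continuous along open segments (the Christol--Dwork input), (iv) continuous on $S(X)$ at vertices, and (v) continuous at rational points. Upper semicontinuity is supplied globally by the Dwork--Robba effective bounds (\S\ref{subsection:USC}), a crucial ingredient that you never invoke, and the lower bound near an arbitrary point---including a vertex---comes from (ii) combined with the radial continuity (iii). This removes any need for a shell-constancy statement or a branch-by-branch vertex analysis; the vertex case is handled abstractly by shrinking the affinoid domains $\tau^{-1}(L_i)$. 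The reduction itself is also done differently, not by the skeleton/residue-disk decomposition you describe, but by the \'etale-local system formalism (lemma \ref{etaleloc}) together with the fact that every semistable model has a covering by basic formal disks and annuli (lemma \ref{cover}); this makes the passage from the global semistable situation to the one-variable annulus both cleaner and logically tight. Finally, in your argument for conjecture \ref{conjcont2}, the claim that ``any open polydisk is contained in some $D_\fX(x,1^-)$'' is not correct when the polydisk meets the skeleton $S(\fX)$; the paper's proof of corollary \ref{Robbacond4} has to deal with that case separately (by an auxiliary formal model of the disk plus the transfer theorem on the closed disk).
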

 \par
Surprisingly enough,  before our joint paper with Di Vizio \cite{Lucia}, conjecture \ref{conjcont}  seemed to be open even in the case when  $\fX = \Spf \kc\{T\}$, hence $\fX_{\eta} = X$ is the closed unit disk $D_{k}(0,1^{+})$, a case  extensively discussed in the literature (\cf \cite{Ke}, \cite{DGS} and \cite{ChMeAST} for reference,  and \cite[10.4.3]{RoCh} for a partial result  in our direction). A direct proof, in the case of an affinoid domain $X$ of $\A^1$,  was given in \cite[5.3]{Lucia}. Notice that, always by (\ref{etaleinv}), it suffices to prove theorem 
\ref{mainthm} in the special case when $\fX$ is either a ``formal disk"  or a ``formal annulus", 
\ie is affine connected and admits a dominant  \'etale morphism $\varphi$ to  $\Spf \kc \{X\}$ or to $\Spf \kc \{X,Y\}/(XY-a)$, for $a \in \kcc \setminus \{0\}$, respectively. The previously mentioned result of \cite{Lucia}, implies the theorem when $\varphi$ is an open immersion. In the general case however, we must appeal to the more powerful statement in section \ref{ContLemma} below. 
\par
Our detailed description of the function $x \mapsto \cR_{\fX}(x,(\cF,\na))$ will in fact show the following
 \begin{thm} \label{conjRobba} Let  $k$, $X$ and $\fX$  be as in  theorem \ref{mainthm}, and let $(\cF,\na)$ be an object of ${\bf MIC}_{\fX}(X/k)$.  If $\cR_{\fX}(\xi,(\cF,\na)) =1$ at any vertex  $\xi$ of the skeleton ${\bf S}(\fX)$ of $\fX$, then $x \mapsto \cR_{\fX}(x,(\cF,\na))$ is identically 1 on  $X$. 
 Moreover, 
 $(\cF,\na)$ is a Robba connection on $X$. 
   \end{thm}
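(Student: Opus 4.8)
\textbf{Proof proposal for Theorem \ref{conjRobba}.}

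The plan is to reduce to the two local model cases --- a ``formal disk'' and a ``formal annulus'' --- via the \'etale invariance formula (\ref{etaleinv}) and the base-change formula (\ref{basechange}), exactly as indicated after Theorem \ref{mainthm}, so that it suffices to treat an affinoid $X$ admitting a dominant \'etale map to $\Spf\kc\{T\}$ or to $\Spf\kc\{X,Y\}/(XY-a)$. The hypothesis is that $\cR_\fX(\xi,(\cF,\na))=1$ at every vertex $\xi$ of $\mathbf{S}(\fX)$. The first step is to propagate this value from the vertices to the whole skeleton: using the detailed description of $x\mapsto\cR_\fX(x,(\cF,\na))$ on open edges (the piecewise log-affine behavior recorded in formulas like (\ref{pentes2}), together with continuity from Theorem \ref{mainthm}), one shows that a function which equals $1$ at the two endpoints of a closed edge and is log-concave --- more precisely, bounded above by $1$ by the general bound $\cR\le 1$ and log-affine with controlled slopes away from finitely many break points --- must be identically $1$ on that edge. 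Hence $\cR_\fX(x,(\cF,\na))=1$ for all $x\in S(\fX)$.

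The second step is to propagate from the skeleton into the open disks $\cD_y=\tau_\fX^{-1}(s)\setminus\{s\}$ hanging off each $s\in S(\fX)$. For a $k$-rational point $x$ in such a disk, one again invokes the description of $\cR_\fX$ restricted to the segment joining $x$ (or rather the maximal point $t_{x,\rho}$ of a sub-disk) to the boundary point $s\in S(\fX)$: by continuity $\cR_\fX$ equals $1$ at $s$, and along the segment toward $x$ it is governed by the same slope restrictions, with the crucial point being that the intrinsic (normalized) radius cannot exceed $1$. A function that is $\le 1$ everywhere, log-affine with slopes in the allowed range, and equal to $1$ at the outer endpoint, is forced to be $1$ on the entire segment; letting the sub-disk exhaust $\cD_y$ gives $\cR_\fX(x,(\cF,\na))=1$ at every $k$-rational $x$. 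For a general point $x$ one passes to the canonical $\sH(x)$-rational point $x^\p$ over $x$ after a ground field extension and applies (\ref{basechange}); since $\cR$ is preserved by such extensions, this yields $\cR_\fX(x,(\cF,\na))\equiv 1$ on all of $X$.

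The final step is to deduce that $(\cF,\na)$ is a Robba connection. Fix an algebraically closed complete $k^\p/k$ and an open polydisk (here, open disk) $D^\p\subset X^\p=X\wt k^\p$. By the previous steps, applied to $\fX^\p=\fX\wt k^{\p\circ}$ and $\psi_\eta^\ast(\cF,\na)$ using (\ref{basechange}), we have $\cR_{\fX^\p}(x,\psi_\eta^\ast(\cF,\na))=1$ for every $x\in X^\p$; in particular at the unique $k^\p$-rational point $y\in D^\p$ that is the center of $D^\p=D_{\fX^\p}(y,1^-)$ in the sense recalled before Definition \ref{rigsmooth}. Since $\cR_{\fX^\p}(y,\cdot)$ is the supremum of $r\in(0,1]$ with $(\cE^{\p\,\na^\p})_{|D_{\fX^\p}(y,r^-)}$ constant, the value $1$ means precisely that $(\cE^{\p\,\na^\p})_{|D^\p}$ is constant. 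As $D^\p$ was an arbitrary open disk in an arbitrary $k^\p$-base change, $(\cF,\na)$ satisfies Definition \ref{Robbaconn}, i.e. it is a Robba connection on $X$.

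\medskip
The main obstacle I expect is the first two steps: turning the qualitative ``piecewise log-affine with bounded slopes'' description of $\cR_\fX$ along edges and into the disks into the rigid statement that a value of $1$ at the vertices forces $1$ everywhere. This requires knowing both the precise admissible slopes (nonnegative going into a disk, so that $\cR$ cannot drop below its boundary value) and the upper bound $\cR\le 1$, and combining them with the convexity/concavity properties of the radius function established in the body of the paper (the analysis underlying Theorem \ref{mainthm} and proposition \ref{advances2}); the ground-field-extension bookkeeping in the passage to non-rational points, while routine given (\ref{basechange}), must also be handled with care so that the skeleton and the disks $D_{\fX}(x,r^-)$ are matched correctly under $\psi$.
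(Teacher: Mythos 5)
Your overall strategy lines up with the paper's: the first two steps (propagating $\cR\equiv 1$ from the vertices along the edges of $S(\fX)$ by log-concavity, and then into the hanging disks by a transfer/non-negative-slope argument) are precisely what the paper encapsulates in Corollary \ref{Robbacond} (log-concave polygon $\le 0$ equal to $0$ at the endpoints of an edge forces $\equiv 0$ on the edge, and the transfer theorem \ref{contiguity} pushes the value into $\sp_{\fY}^{-1}$ of that edge). So far so good, modulo the hand-waving about slopes that you flag yourself.

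The genuine gap is in the final step. You write that the open disk $D^\p\subset X^\p$ has a ``unique $k^\p$-rational point $y\in D^\p$ that is the center of $D^\p=D_{\fX^\p}(y,1^-)$,'' and then read off constancy of $\cE^{\p\,\na^\p}$ on $D^\p$ directly from $\cR_{\fX^\p}(y,\cdot)=1$. This implicitly assumes that every open disk $D^\p\subset X^\p$ is contained in (or equal to) one of the maximal open disks $D_{\fX^\p}(y,1^-)$, i.e. misses the skeleton $S(\fX^\p)$. That is false in general: an open disk in $X^\p$ can meet (even contain large pieces of) $S(\fX^\p)$ — for instance an open disk in $\P^{1}$ containing the Gauss point — and for such $D^\p$ the value $\cR_{\fX^\p}(y,\cdot)=1$ at some center $y$ only gives constancy on $D_{\fX^\p}(y,1^-)\subsetneq D^\p$, not on $D^\p$ itself. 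The paper handles this (in the proof of Corollary \ref{Robbacond4}) by shrinking $D^\p$ to a closed $k^\p$-rational sub-disk whose maximal point $v$ lies on an open edge of the skeleton, endowing that disk with the induced formal model coming from the graph $D^\p\cap S_{\fZ}(\fY)$, noting that the normalized radius is still $\equiv 1$ there, and then invoking the transfer theorem for the resulting system on $D(0,1^+)$ with $R(t_{0,1},\Sigma)=1$. You would need to supply some analogue of this ``disk-meeting-the-skeleton'' case; without it the Robba-connection claim is not established for general polydisks, only for those contained in a maximal disk.

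A minor additional slip: an open disk has many $k^\p$-rational points, so ``the unique $k^\p$-rational point'' is not well-posed; what is well-defined is the (possibly non-existent) maximal disk $D_{\fX^\p}(y,1^-)$ containing $D^\p$, and it exists precisely when $D^\p$ misses the skeleton — which is the dichotomy your argument needs to make explicit.
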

\par
\begin{defn} \label{crystal} Let  $k$, $X$ be as in  \ref{mainthm} and let $\fX$ be a semistable formal model of $X$.  An object $(\cF,\na)$  of ${\bf MIC}(X/k)$ is said to be \emph{$\fX$-convergent} if it is an object of 
${\bf MIC}_{\fX}(X/k)$ and $\cR_{\fX}(x,(\cF,\na)) =1$ identically on $X$.  We denote by 
${\bf MIC}^{{\fX}-conv}(X/k)$ the full subcategory of ${\bf MIC}(X/k)$ consisting of $\fX$-convergent objects. 
\end{defn}

 Let $\varphi: \fY \to \fX$ be an \'etale morphism of  semistable $\kc$-formal schemes and let $\varphi_{\eta}: Y \to X$ be its generic fiber.  
 It follows from formula \ref{etaleinv} that an object $(\cF,\na)$ of   ${\bf MIC}_{\fX}(X/k)$ is  $\fX$-convergent  if and only if  the object $\varphi_{\eta}^{\ast}(\cF,\na)$ of   ${\bf MIC}_{\fY}(Y/k)$ is $\fY$-convergent.
\begin{cor}  \label{colimit} The category 
${\bf MIC}^{Robba}(X/k)$ is the 2-colimit category of its full subcategories 
${\bf MIC}^{{\fX}-conv}(X/k)$, where $\fX$ runs over the semistable models of $X$.
\end{cor}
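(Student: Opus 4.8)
The plan is to reduce the statement, via soft $2$-categorical bookkeeping, to an equality of the corresponding classes of objects inside the fixed category ${\bf MIC}(X/k)$, and then to read off that equality from Theorem \ref{conjRobba} together with the description of residue disks. I would carry this out in three steps.

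First, the reduction of the $2$-colimit. Note that ${\bf MIC}_{\fX}(X/k)$, hence also ${\bf MIC}^{{\fX}-conv}(X/k)$, is by definition a \emph{full} subcategory of ${\bf MIC}(X/k)$, the formal model entering only as an existence condition. If $\fX \leq \fY$ are semistable models of $X$, then $S(\fX) \subset S(\fY)$, so for a $k$-rational point $x$ the residue disk $D_{\fY}(x,1^-)$ is contained in $D_{\fX}(x,1^-)$; since a locally free $\cO_{\fX}$-model of $\cF$ pulls back along $\fY \to \fX$ to a locally free $\cO_{\fY}$-model, and since constancy of $\cF^{\na}$ on the larger disk implies constancy on the smaller, it follows — and, for arbitrary points, follows after the harmless base change making the point rational — that $\cR_{\fX}(\,\cdot\,,(\cF,\na)) \equiv 1$ implies $\cR_{\fY}(\,\cdot\,,(\cF,\na)) \equiv 1$; thus ${\bf MIC}^{{\fX}-conv}(X/k) \subseteq {\bf MIC}^{{\fY}-conv}(X/k)$ as full subcategories of ${\bf MIC}(X/k)$. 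As the semistable models of $X$ form a filtered system (any two admit a common semistable refinement), the $2$-colimit of this filtered family of full subcategories, with inclusions as transition functors, is the full subcategory of ${\bf MIC}(X/k)$ whose objects are $\bigcup_{\fX}\mathrm{Ob}\,{\bf MIC}^{{\fX}-conv}(X/k)$. It then remains to identify this union with $\mathrm{Ob}\,{\bf MIC}^{Robba}(X/k)$.

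Second, for each semistable model $\fX$ I would identify ${\bf MIC}^{{\fX}-conv}(X/k)$ with ${\bf MIC}_{\fX}(X/k) \cap {\bf MIC}^{Robba}(X/k)$. The inclusion $\subseteq$ is precisely Theorem \ref{conjRobba}: an $\fX$-convergent object has $\cR_{\fX}(\xi,(\cF,\na)) = 1$ at every vertex $\xi$ of ${\bf S}(\fX)$, whence that theorem yields the Robba property. For $\supseteq$, let $(\cF,\na) \in {\bf MIC}_{\fX}(X/k)$ be Robba. For a $k$-rational point $x$ the disk $D_{\fX}(x,1^-)$ is isomorphic to the standard open unit polydisk, so, picking an algebraically closed complete $k^\p/k$ and writing $\fX^\p = \fX\what{\otimes}k^{\p\,\circ}$ and $x^\p$ for the $k^\p$-rational point over $x$, the disk $D_{\fX^\p}(x^\p,1^-)$ is an open polydisk in $X\what{\otimes}k^\p$, on which $\cF^{\na}$ is constant by the very definition of a Robba connection; by (\ref{basechange}) this forces $\cR_{\fX}(x,(\cF,\na)) = 1$. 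An arbitrary point $x$ is reduced to this case by the same base change, now taking $k^\p \supseteq \sH(x)$ algebraically closed and complete and $x^\p$ the induced $k^\p$-rational point over $x$. Hence $\cR_{\fX}(\,\cdot\,,(\cF,\na)) \equiv 1$, i.e. $(\cF,\na)$ is $\fX$-convergent.

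Third, one supplies enough formal models: by Raynaud's theory together with the existence of semistable models for $X$, the locally free $\cO_{X_G}$-Module underlying any object of ${\bf MIC}(X/k)$ extends to a locally free module of finite type over some formal model of $X$, and then, after refinement, over a semistable one; thus $\bigcup_{\fX}{\bf MIC}_{\fX}(X/k) = {\bf MIC}(X/k)$. Combining the three steps, $\bigcup_{\fX}{\bf MIC}^{{\fX}-conv}(X/k) = \bigcup_{\fX}\bigl({\bf MIC}_{\fX}(X/k)\cap{\bf MIC}^{Robba}(X/k)\bigr) = {\bf MIC}^{Robba}(X/k)$, the last equality because ${\bf MIC}^{Robba}(X/k)$ is a full subcategory of ${\bf MIC}(X/k)$ and the ${\bf MIC}_{\fX}(X/k)$ exhaust ${\bf MIC}(X/k)$; this is the assertion of the corollary. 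The genuine input is Theorem \ref{conjRobba}; among the remaining points, the one requiring care is the last, namely carrying a locally free formal model along a refinement to a \emph{semistable} model without losing local freeness — which is why pullback (not pushforward) of the formal model is used throughout — the rest being the soft manipulation of filtered $2$-colimits of full subcategories and the observation, built into the definitions, that a maximal residue disk $D_{\fX}(x,1^-)$ is itself an open polydisk.
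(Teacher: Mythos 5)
Your argument is correct and is the natural fleshing-out of what the paper leaves implicit: the object-level identity ${\bf MIC}^{{\fX}-conv}(X/k)={\bf MIC}_\fX(X/k)\cap{\bf MIC}^{Robba}(X/k)$ is exactly (both directions of) Conjecture~\ref{conjcont2}, which Theorem~\ref{mainthm} asserts for objects of ${\bf MIC}_\fX(X/k)$, and the rest is the filtered-colimit bookkeeping together with the exhaustion $\bigcup_\fX {\bf MIC}_\fX(X/k)={\bf MIC}(X/k)$. You correctly flag the exhaustion as the one genuine input beyond the theorems already stated (Raynaud flattening plus refinement to a semistable model, pulling back to preserve local freeness), and your direct verification in Step~2 of the ``only if'' direction of Conjecture~\ref{conjcont2} --- via (\ref{basechange}) and the fact that $D_\fX(x,1^-)$ is an open polydisk --- is a valid, self-contained alternative to simply citing Theorem~\ref{mainthm}.
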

Notice that, despite the previous result,  the introduction of formal models is necessary for several reasons:
 \begin{enumerate}
 \item The analytic definition  of $\cD_x(X)$ given above for $x$ a $k$-rational point of the smooth affinoid $X$, as the maximum open neighborhood of $x$ in $X$ isomorphic to the polydisk $D(0,1^-)$, does not work in higher dimensions,  where such a \emph{maximum} neighborhood need not exist. 
 \item
 Even in the case of an affinoid $X \subset \A^1$, and $x \in X(k)$, the above analytic definition  of  $\cD_x(X)$, does not globalize well. If $X = X_1 \cup X_2$ is a union of two affinoids, and $x$ is a $k$-rational point of $X_1$, one can only say that  
 $\cD_x(X_1)$ and  $\cD_x(X_2)$ are contained in $\cD_x(X)$. The assignment of a semistable model $\fX$ of $X$, remedies to this problem, \ie  $D_{\fX_1}(x,1^-) = D_{\fX}(x,1^-)$, provided the decomposition $X = X_1 \cup X_2$ originates from a Zariski open covering $\fX = \fX_1 \cup \fX_2$ of formal schemes. Notice that if $X$ is the analytic projective line, there is no maximum disk containing a given point. 
 \item According to conjecture \ref{conjcont2}, it seems likely that consideration of a formal model $\fX$ of $X$ should be of lesser  importance in the study of the directional logarithmic derivative of $\cR(x)$ along any tangential vector to $X$. In other words, a generalized  theory of \emph{slopes} in the style of \cite{ChMe3}
should only depend upon the connection on the analytic curve $X$. 
 \end{enumerate}

 We finally propose, for future use,  a more general definition, where we consider various possible topologies  on categories of analytic spaces over $X$. We have in mind, in particular, the natural (Berkovich) topology on $X$, the $G$-topology on $X$ described in  \cite[1.3]{BerkovichEtale}, the \'etale topology of  \cite[4.1]{BerkovichEtale}, the quasi-\'etale topology of  \cite[\S 3]{BerkovichCycles}.
 \begin{defn} \label{CONV} Let $k$  and $X$ be as in definition \ref{Robbaconn} and let $\tau$ be a topology on $X$. An object $(\cF,\na)$ of ${\bf MIC}(X/k)$ will be said to be \emph{$\tau$-convergent} if, $\tau$-locally, 
  $(\cF,\na)$ is a Robba connection. We will denote by ${\bf MIC}^{\tau-conv}(X/k)$
   the full subcategory of ${\bf MIC}(X/k)$ consisting of $\tau$-convergent objects. For $\tau$ the natural (resp. the $G$-, resp. the \'etale, resp. the quasi-\'etale) topology, ${\bf MIC}^{\tau-conv}(X/k)$ will be denoted by  ${\bf MIC}^{conv}(X/k)$ (resp.  ${\bf MIC}^{G-conv}(X/k)$,  resp.  ${\bf MIC}^{et-conv}(X/k)$, resp.  ${\bf MIC}^{qet-conv}(X/k)$).
 \end{defn}
\subsection{Basic definitions and description of contents}
\label{basicdefn}
\medskip
The organization of the paper is as follows. 
In section \ref{Graph} we review the 
structure of a rig-smooth compact connected strictly $k$-analytic curve $X$ over a non-archimedean field $k$ with non-trivial valuation and the parallel theory of semistable reduction of $k$-algebraic curves.  \emph{For simplicity, we assume that $k$  is algebraically closed.} 
Most, but not all, of these results may be extracted from \cite[Chap. IV]{Berkovich} or \cite{BL0}.  
We describe systematically  the equivalence between the notions of  semistable model  of  $X$, of  formal affinoid covering of $X$ with semistable reduction, of \emph{semistable partition} 
 of $X$ (\ref{formalpartitions}), and of  \emph{complete subpolygon}  of $X$  (\ref{subsect:graphs}). A well-known consequence of these equivalences is that all results on semistable reduction of curves admit in principle both an algebraic and an analytic proof.   
\par
The algebraic theory of semistable models  of families of algebraic curves \cite{CA} has  recently made important progress  thanks to Temkin \cite{Te}, who has eliminated in particular the properness assumption, and has clarified minimality of the stable model. It should be possible to apply Temkin's method to completely describe the reticular structure of the category $\cF\cS(X)$ of semistable models of $X$, and in particular to showing that, unless $X$ is either rational or a Tate curve, a minimum semistable model $\fX_{0}$ of $X$ exists, and its skeleton coincides with the analytically defined skeleton of $X$.  
We have not seriously attempted to follow this algebraic strategy. Instead, we approach the problem via  
an analytic discussion of intersections and unions of disks and annuli in the style of  \cite[Prop. 5.4]{BL0}: we content ourselves with a precise account of the proofs. Details, which would lead us astray of the main path, have been collected in \cite{FormalCurves}.

\par  
Over an algebraically closed field $k$, the notions of open or closed disk or annulus, present no difficulty\footnote{This is one of the main reasons why we make such assumption on $k$.}. Open or closed disks or annuli in $\A^{1}_{k}$ are called \emph{$k$-rational}, if their center in a $k$-rational point and their radii belong to $|k|$. Similarly, for open or closed disks or annuli in general.  
A \emph{punctured disk} is a disk punctured at a rigid point, and it is \emph{$k$-rational}, if the full disk is so. 

\par
Open annuli, viewed as simply connected quasi-polyhedra,  have two endpoints. An  \emph{open segment}  (resp.  \emph{open half-line}) in $X$ is the skeleton of an open annulus (resp. of an open punctured disk) in $X$:   the annulus (resp. the punctured disk) is then uniquely defined.  An open segment  may have one or two boundary points called \emph{ends} in the compact curve $X$; they are points of type (2) or (3). The segment is \emph{$k$-rational} iff the corresponding annulus is, \ie  if and only if its ends are points of type $(2)$. 
In general, an open segment $E$ is  a locally closed subset of $X$  and it admits a  continuous $(1:1)$ parametrization $\rho \mapsto \eta_{\rho}$ by an open interval $(r,1) \subset (0,1)$  (resp. $[r,1) \subset (0,1)$), with $r$ the \emph{height} of the annulus, which is  canonical up to the  inversion $\rho \mapsto r/\rho$. Similarly, an open half-line is canonically parametrized by $(0,1)$, where $\eta_{\rho}$ tends  to a rigid point as $\rho \to 0$. A half-line is \emph{$k$-rational} iff the corresponding punctured disk is. 
\par
We define the category $\cG\cP(X)$ (resp. $\cG\cP^c(X)$) of (resp. complete) subpolygons  $\Gamma = (|\Gamma|,\cV_{\Gamma},\cE_{\Gamma})$
of $X$. Here $|\Gamma|$ is a closed connected subset of $X$, $\cV_{\Gamma}$ is a finite subset of points of type $(2)$ of $X$ 
(``vertices" of $\Gamma$) and $\cE_{\Gamma}$ is a finite set of open $k$-rational segments of $X$ (``open edges" of $\Gamma$), such that $|\Gamma|$ is the disjoint union of all vertices and open edges of $\Gamma$.  

A subpolygon $\Gamma$ of $X$ is ``complete" if $X \setminus |\Gamma|$ is a union of open disks. The category $\cG\cP^c(X)$ is really a directed partially ordered set, where 
$\Gamma \leq \Gamma^\p$ if $|\Gamma| \subset |\Gamma^\p|$,  $\cV_{\Gamma}\subset \cV_{\Gamma^\p}$, so that an open edge of $\Gamma$ is a union of open edges and vertices of $\Gamma^\p$.  
We  prove (more details in  \cite{FormalCurves}) that $\cG\cP^c(X)$ is  isomorphic to the category $\cF\cS(X)$ of semistable formal models of $X$ and morphisms inducing the identity on the generic fiber (they turn out to be precisely  the ``admissible blow-ups" of  \cite{BL1},
\cite{bosch}). The isomorphism ${\bf S}: \cF\cS(X) \xrightarrow{\ \sim\ } \cG\cP^c(X)$ is the \emph{skeleton functor}. 
Unless $X$ is a rational projective curve or  a Tate curve, the category   $\cF\cS(X)$  admits a minimum $\fX_0$. The skeleton ${\bf S}(\fX_0) = (S(\fX_0),\cV(\fX_0), \cE(\fX_0))$ is the skeleton ${\bf S}(X) = (S(X),\cV(X),\cE(X))$ of the analytic curve $X$, in the sense of the previous subsection. 
 \par
 
We define below \cf (\ref{basicannuli}) a ``basic formal disk" (resp. a  ``basic formal annulus") $(\fB,T)$ (resp. or $(\fB,S = a/T)$, for $a \in \kcc \setminus \{0\}$). 
We write  $(\cB,T)$ for the corresponding affinoid disk (resp. annulus). The coordinate $T$ in $\fB$ defines, for any $z \in \cB(k)$, a normalized coordinate $T_z :  D_{\fB}(z,1^-) \xrightarrow{\ \sim\ } D_k(0,1^-)$, with $z \mapsto 0$,  
 by 
 \beq
  \label{coorcann}
T_z(y) =  \left\{ \begin{array}{lll} T(y) - T(z)\; , \;  & \mbox{if $(\cB,T)$ is a disk\, ,}
\\
&
\\
(T(y) - T(z))/T(z)\; , \;  & \mbox{if $(\cB,T)$ is an annulus \, ,}
 \end{array} \right .
\eeq
$\forall y \in D_{\fB}(z,1^-)$. 
Notice that the basic formal annulus $\fB$ is the minimum  semistable model of $\cB$, so that ${\bf S}(\fB) = {\bf S}(\cB)$, $D_{\fB}(z,1^-) = D_{\cB}(z,1^-)$, $\tau_{\cB} = \tau_{\fB}$ and $|T(x)| = |T(\tau_{\cB}(x))| = r(T(\tau_{\cB}(x)))$, for any $x \in \cB$. 
\par \noindent
The main point for us is the fact that  any semistable model $\fX$ of  the curve $X$  admits an \'etale covering by formal disks or annuli.  
This allows us to recover and globalize most  results of the classical  non-archimedean theory of linear differential systems \cite{DGS}.
\par

In section \ref{ContLemma} we prove a general criterion due to Berkovich (private communication)  to test whether a  function $X \to \R_{>0}$, on a  smooth $k$-affinoid curve $X$ is continuous:
it represents an abstract and more efficient version of the method of proof used in \cite[5.3]{Lucia}.
In section \ref{raddef}, we start assuming that $k$ has characteristic zero. We pick a semistable model $\fX$ of $X$ and introduce the function ``normalized radius of convergence at $x$" $\cR_{\fX}(x,(\cF,\na))$, for an object $(\cF,\na)$ of ${\bf MIC}(X/k)$. 
Actually, in view of the most common applications, we also consider the following more general situation: $X$ is the complement of a finite reduced divisor $\cZ = \{z_1,\dots,z_r \}$ in a compact rig-smooth connected $k$-analytic curve $\ol{X}$. So, if $\ol{X} = \cM(\sA)$, $z_1,\dots,z_r $ correspond to $r$ distinct maximal ideals of the $k$-affinoid algebra $\sA$. We assume  that  $\ol{X}$ is the generic fiber of a semistable $\kc$-formal scheme  $\ol{\fX}$. It is well known that  $\cZ$   determines a finite flat generically \'etale closed reduced  subscheme $\fZ$ of $\ol{\fX}$. 
We will assume that $\fZ$ is \'etale over $\kc$. 
For the limited purposes of this paper, however, consideration of $\fZ$ will not be necessary, and the previous assumption will be concealed under the requirement  that  the disks  $D_{\ol{\fX}}(z_1,1^-), \dots, D_{\ol{\fX}}(z_r,1^-)$ are distinct and that their boundary points $\tau_{{\ol{\fX}}}(z_1), \dots, \tau_{{\ol{\fX}}}(z_r)$ are vertices of ${\bf S}(\ol{\fX})$.  We keep the notational distinction mainly for future use. 
\par
For any object $(\cF,\na)$ of ${\bf MIC} (X/k)$,
we define  the function $x \mapsto  \cR_{\ol{\fX}, \fZ}(x, (\cF,\na))$,  $X \to (0,1]$. 
We restrict our attention to the abelian tannakian subcategory ${\bf MIC} (\ol{X}(\ast \cZ)/k)$ of ${\bf MIC}(X/k)$, consisting of the objects $(\cF,\na)$ \emph{having  meromorphic singularities at $\cZ$}. This means that 
$\cF$ is the restriction to $X$ of a coherent and locally free $\cO_{\ol{X}}$-Module  $\ol{\cF}$ 
and for any open $U \subset \ol{X}$ and any section $e \in \Gamma(U,  \ol{\cF})$, there is a non-zero $f \in \cO_{\ol{X}}(U)$, such that $ f \na(e) \in \Gamma(U, \ol{\cF} \otimes  \Omega^1_{\ol{X}})$. Morphisms are horizontal $\cO_{X}$-linear maps, with meromorphic poles at $\{z_1,\dots,z_r\}$. 
\par
We are mostly concerned with the full abelian tannakian subcategory ${\bf MIC}_{\ol{\fX}}(\ol{X}(\ast \cZ)/k)$ of 
$ {\bf MIC} (\ol{X}(\ast \cZ)/k)$.  It consists of objects 
$(\cF,\na)$ of $ {\bf MIC} (\ol{X}(\ast \cZ)/k)$ such that 
 the $\cO_X$-Module $\cF$ is the restriction to $X$ of   a coherent and locally free  
 $\cO_{\ol{\fX}}$-Module  $\ol{\fF}$.
The category 
${\bf MIC}_{\ol{\fX}}(\ol{X}(\ast \cZ)/k)$ admits  natural internal tensor product and internal ${\cH}om$ and ${\cE}nd$. 
\par
Let us assume, from now on in this section, that $k$ is an extension of $\Q_p$.  
We say that an object $(\cF,\na)$ of ${\bf MIC}_{\ol{\fX}}(\ol{X}(\ast \cZ)/k)$ \emph{satisfies condition $\bf NL$ at $z \in \cZ$}, if  the formal Fuchs exponents of  (the regular part of) ${\cE}nd ((\cF,\na))$ at  $z$ are $p$-adic non-Liouville numbers. An old result of the author \cite[Prop. 4]{Advances}, which applies under  condition $\bf NL$, relates the  asymptotic behavior of $x \mapsto \cR_{\ol{\fX}, \fZ}(x, (\cF,\na))$ for $x \to z_i$ to the algebraic irregularity 
$\rho_{z_i}(\cF,\na)$ of the connection at $z_i$, namely
\beq \label{advances}                                                                                                                                                                                                                                                                                                                                                                                                                                                                
\cR_{\ol{\fX}, \fZ}(x, (\cF,\na)) \sim |T_{z_i}(x)|^{\rho_{z_i}(\cF,\na)}  \;\;  \; \; , \; \; {\rm as} \; x \to z_i \; ,
\eeq 
where $T_{z_i}$ is  the normalized coordinate  on $D_{\ol{\fX}}(z_i,1^-)$ with $T_{z_i}(z_i)=0$.
Our main result is  that, for any object $ (\cF,\na)$ of ${\bf MIC}_{\ol{\fX}}(\ol{X}(\ast \cZ)/k)$, the function $x \mapsto  \cR(x) = \cR_{\ol{\fX}, \fZ}(x, (\cF,\na))$ is continuous on $X$. We first reduce the problem of continuity of this function to  the standard situation of a connection $(\cF,\na)$ with $\cF$ free of finite rank $\mu$ over a basic affinoid disk or annulus.
\par
 For the convenience of the reader, we recall in section \ref{review} the classical theory of differential systems on an annulus, largely due to Dwork, Robba, Christol and Mebkhout. One should also refer to the elegant account  \cite{Ke}, for more recent results.  These authors treat the case of a system of ordinary differential equations (\ref{diffCHDWintro}) defined on the open  annulus $B(r_1,r_2)$ (\ref{openann}), with coefficients in the Banach $k$-algebra $\sH(r_1,r_2)$ of {\it analytic elements} on 
 $B(r_1,r_2)$: $\sH(r_1,r_2)$ is the completion of the $k$-subalgebra of $k(T)$, consisting of rational functions with no pole in 
 $B(r_1,r_2)$, under the supnorm on $B(r_1,r_2)$. 
\par Our main technical tool is explained in section \ref{Dwork-Robba}: it is  a  generalization of the  Dwork-Robba theorem
\cite[IV.3.1]{DGS} on effective bounds  for the growth of local
solutions (theorem (\ref{cor:DworkRobba1}) and its corollaries).  Essentially, we must replace in the original formulation analytic elements in the previous sense on the  open annulus $B(r_1,r_2) \subset \A^1$, with functions which extend to analytic functions on a basic affinoid annulus containing $B(r_1,r_2)$ as an open analytic subspace. 
The Dwork-Robba theorem is the essential step in
our proof of the upper semicontinuity of the radius of convergence
(\cf \S\ref{subsection:USC}) and in generalizing the continuity
result of Christol-Dwork \cite[2.5]{ChristolDwork} to the present  situation \S \ref{CHDWHigherGenus}. Section 4 of the joint work with Di Vizio \cite{Lucia} contains a several variable generalization of this result. We point out that the upper semicontinuity is precisely the non-trivial part of  \cite[2.5]{ChristolDwork}, and that our proof (as given here and in \cite{Lucia}) differs from the one of  Christol-Dwork. 
We conclude the proof of  continuity of $x \mapsto \cR(x)$ in section  \ref{Conclusion}.  As in the classical case,  we obtain a more precise description of $x \mapsto \cR(x)$. 
 Namely, let $k_{0}$ be a closed subfield of $k$, and assume $\ol{\fX} = \ol{\fX}_{0} \wt \kc$, where  $\ol{\fX}_{0}$ is a $k_{0}^{\circ}$-semistable formal scheme.
  Assume the points $z_{1}, \dots,z_{r}$ come from $k_{0}$-rational points of  $(\ol{\fX}_{0})_{\eta}$ (identified with the $z_{i}$'s) and 
 the object $(\cF,\na)$ of ${\bf MIC}_{\ol{\fX}}(\ol{X}(\ast \cZ)/k)$ from an object $(\cF_{0},\na_{0})$ of ${\bf MIC}((\ol{\fX}_{0})_{\eta}(\ast \cZ)/k_{0})$ such that $\cF_{0}$ is the inverse image on $(\ol{\fX}_{0})_{\eta}$ of a coherent and locally free $\cO_{\ol{\fX}_{0}}$-module $\fF_{0}$.
Let $\fY_{0} \geq \ol{\fX}_{0}$ be any $k_{0}^{\circ}$-semistable
model of $(\ol{\fX}_{0})_{\eta}$, and let $E$  be 
any oriented open edge  of 
${\bf S}(\fY_{0})$, with $|k_{0}|$-rational ends  in $(\ol{\fX}_{0})_{\eta}$. 
Let 
$(r_E,1) \xrightarrow{\ \sim\ } E$, $\rho \mapsto \eta_{\rho}$, be the canonical parametrization, with $r_E \in |k_{0}| \cap (0,1)$. Then  the restriction to $(r_E,1)$ of $x \mapsto \cR(x)$, namely $\rho \mapsto \cR(\rho) = \cR(\eta_{\rho})$,   is the infimum of the constant 1 and a finite set of functions of the form
\beq
\label{rslinearity1}
 |p|^{1/(p-1)p^h}|b|^{1/jp^h}\rho^{s/j}\; ,
\eeq
 where $j \in \{ 1, 2, \dots,\mu - 1\}$, $s \in \Z$,  $h \in \Z \cup \{\infty\}$, $b \in k_{0}^{\times}$.  Arbitrarily high $h$ can appear even in the simplest rank 1 case of the equation killing $x^{\a}$ \cf \cite[IV.7.3 $(iv)$]{DGS} and our account below (\ref{xalpha}). 
 \par
 
 This statement in the classical situation appeared in \cite{Pons}: we provide here a hopefully more convincing proof\footnote{The treatment of  \cite{ChMe3} is unfortunately restricted to the solvable case, where negative slopes do not appear.}.   We point out the novelty of using Dwork's technique of descent by Frobenius on basic affinoid annuli which are not necessarily affinoid subdomains of $\A^{1}$, \cf \S \ref{Frobenius}.
 \par
 If $(\cF,\na)$  has meromorphic singularity at a $z_i \in \cZ$,  and the previous $p$-adic non-Liouvilleness assumption holds at $z_i$, then a similar result holds on the half-line  $E$ connecting $z_i$ with the  point $\tau_{\ol{\fX}}(z_i)$ at the boundary of $D_{\ol{\fX}}(z_i,1^-)$. In terms of the canonical parametrization  via the function ``radius of a point"  
$\rho: \ol{E} = E \cup \{ z_i, \tau_{\ol{\fX}}(z_i) \} \xrightarrow{\ \sim\ } [0,1]$, 
 the restriction of $x \mapsto \cR(x)$ to $\ol{E}$  is precisely of the form
(\ref{rslinearity1}), with moreover $\max \{s/j \} = \rho_{z_i}(\cF,\na)$. 
A consequence  is
\begin{thm} \label{Robbacond3} Let $(\cF,\na)$  be an object of ${\bf MIC}_{\ol{\fX}}(\ol{X}(\ast \cZ)/k)$ satisfying condition $\bf NL$ at each $z \in \cZ$. Then $\cR_{\ol{\fX}, \fZ}(x,(\cF,\na)) = 1$ identically for $x \in X$
if and only if both
\begin{enumerate}
\item
$\cR_{\ol{\fX}, \fZ}(\xi,(\cF,\na)) = 1$, for each vertex $\xi$ of ${\bf S}(\ol{\fX})$,
\item
$(\cF,\na)$  has regular singularities along $\cZ$. 
\end{enumerate}
Under the previous assumptions 
$(\cF,\na)$ is a Robba connection on $X$.
\end{thm}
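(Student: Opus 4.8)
The plan is to dispatch the easy implication directly and then spend the argument on the converse, using throughout the continuity of $x\mapsto\cR_{\ol{\fX},\fZ}(x,(\cF,\na))$ (theorem \ref{mainthm}) and the description obtained in section \ref{Conclusion}: along every open edge $E$ of the skeleton of an arbitrary semistable model $\fY\geq\ol{\fX}$, and along the half-line joining any $z_i$ to $\tau_{\ol{\fX}}(z_i)$, the function $\rho\mapsto\cR_{\ol{\fX},\fZ}(\eta_{\rho},(\cF,\na))$ is the infimum of the constant $1$ and finitely many functions $c_{\alpha}\rho^{s_{\alpha}/j_{\alpha}}$ of the shape (\ref{rslinearity1}) (so $c_{\alpha}>0$), with $\max_{\alpha}\{s_{\alpha}/j_{\alpha}\}=\rho_{z_i}(\cF,\na)$ in the half-line case. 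By (\ref{basechange}) we may assume $k$ algebraically closed and non-trivially valued, so that all vertices and ends of skeletons are $k$-rational of type $(2)$. Write $\cR$ for $\cR_{\ol{\fX},\fZ}(-,(\cF,\na))$. For ``$\cR\equiv1\Rightarrow(1)\wedge(2)$'', condition (1) is trivial; for (2), condition $\bf NL$ at $z_i$ makes (\ref{advances}) available, and evaluating it along $E_i$ as $\eta_{\rho}\to z_i$ gives $1\sim\rho^{\,\rho_{z_i}(\cF,\na)}$ as $\rho\to0^{+}$, forcing $\rho_{z_i}(\cF,\na)=0$, i.e.\ a regular singularity at $z_i$.

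For the converse, assume (1) and (2). First I would prove $\cR\equiv1$ on $\Gamma_{0}:=S(\ol{\fX})\cup\bigcup_{i}E_i$, the skeleton of $X$. For an open edge $E$ of ${\bf S}(\ol{\fX})$, parametrized by $(r_E,1)$ with both ends $\rho\to1$ and $\rho\to r_E$ mapping to vertices, $\cR|_{E}=\inf_{\alpha}(1,c_{\alpha}\rho^{s_{\alpha}/j_{\alpha}})$; taking $\rho$-limits, continuity and (1) give $c_{\alpha}\geq1$ and $c_{\alpha}r_E^{\,s_{\alpha}/j_{\alpha}}\geq1$, hence $c_{\alpha}\rho^{s_{\alpha}/j_{\alpha}}\geq1$ throughout $[r_E,1]$ by monotonicity, so $\cR|_{E}\equiv1$; together with the vertices, $\cR\equiv1$ on $S(\ol{\fX})$. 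For a half-line $E_i$, hypothesis (2) gives $\rho_{z_i}(\cF,\na)=0$, so $\cR|_{E_i}=\inf_{\alpha}(1,c_{\alpha}\rho^{s_{\alpha}/j_{\alpha}})$ with all $s_{\alpha}/j_{\alpha}\leq0$; since $\eta_{\rho}\to\tau_{\ol{\fX}}(z_i)$ (a vertex of ${\bf S}(\ol{\fX})$, where $\cR=1$ by (1)) as $\rho\to1^{-}$, continuity forces $c_{\alpha}\geq1$, whence $c_{\alpha}\rho^{s_{\alpha}/j_{\alpha}}\geq c_{\alpha}\geq1$ for $\rho\leq1$ and $\cR|_{E_i}\equiv1$. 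Thus $\cR\equiv1$ on $\Gamma_{0}$.

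It remains to propagate this to $X\setminus\Gamma_{0}$, a disjoint union of genuine open disks $D$, each with a single boundary point $\xi(D)\in\Gamma_{0}$, so that $\cR(\xi(D))=1$. This is the crux. Fix such a $D$ and any $c\in D(k)$; since $(\cF,\na)$ is holomorphic at $c$, enlarging $\cZ$ by $c$ — and, if necessary, refining $\ol{\fX}$ so that $\tau(c)$ becomes a vertex, which alters neither $D$ nor $\cR|_{D}$ — exhibits $c$ as an apparent regular singularity, so that $\bf NL$ holds at $c$ and $\rho_{c}(\cF,\na)=0$. The half-line $E_{c}=\{\xi_{c,\rho}\}_{0<\rho<1}$ joining $c$ to $\xi(D)$ inside $D$ then satisfies $\cR|_{E_{c}}=\inf_{\alpha}(1,c_{\alpha}\rho^{s_{\alpha}/j_{\alpha}})$ with all $s_{\alpha}/j_{\alpha}\leq0$, and $\xi_{c,\rho}\to\xi(D)$ with $\cR(\xi(D))=1$ as $\rho\to1^{-}$, so exactly as above $\cR|_{E_{c}}\equiv1$. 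Every point of $D$ of type $(2)$ or $(3)$ is the Gauss point of a sub-disk $D(c',\rho^{-})\subseteq D$ with $D(c',1^{-})=D$ (as $|c'|<1$), hence lies on some $E_{c'}$ along which $\cR\equiv1$; since such points are dense in $D$ and $\cR$ is continuous, $\cR\equiv1$ on $D$, and therefore $\cR\equiv1$ on $X$.

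Finally, $\cR\equiv1$ says that $\cF^{\na}$ is constant on $D_{\ol{\fX}}(x,1^{-})$ for every $x\in X(k)$, hence, by (\ref{basechange}), after any algebraically closed extension of $k$ as well; gluing these constant sheaves over the residue disks of a sufficiently fine semistable model across the edges of its skeleton — along which $\cR\equiv1$ too — shows that $\cE^{\p\,\na^{\p}}$ is constant on every open disk of every $X\wt k^{\p}$, i.e.\ that $(\cF,\na)$ is a Robba connection, by the argument already used for theorem \ref{conjRobba}. I expect the delicate point to be precisely this last propagation step — the device of promoting an arbitrary rigid point to an apparent regular singularity so that the half-line description (\ref{rslinearity1}) applies with vanishing irregularity, together with the verification that this change of $\cZ$ and of the semistable model leaves $\cR$ unchanged on the disk in question.
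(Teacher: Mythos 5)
Your overall plan — the easy direction from (\ref{advances}); then, for the converse, first prove $\cR_{\ol{\fX},\fZ}\equiv 1$ on the skeleton, then propagate to the open disks of $X\setminus S_{\fZ}(\ol{\fX})$; and finally the Robba property — matches the paper's, which packages the two steps of the converse into corollaries \ref{Robbacond} and \ref{Robbacond2}. Your easy direction and your treatment of the skeleton are fine. But the propagation to the disks, which you correctly single out as the crux, has a genuine gap.

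Enlarging $\cZ$ to $\cZ_1=\cZ\cup\{c\}$ really does change the normalized radius on $D$. By the change-of-model lemma, formula (\ref{eq:change}) with $\fY_1=\fY$, one has $\cR_{\fY,\fZ_1}(x)=\min\bigl(1,\,\cR_{\fY,\fZ}(x)/\rho_{(\fY,\fZ)/(\fY,\fZ_1)}(x)\bigr)$, and for $x=\xi_{c,\rho}$ on the half-line $E_c$ the $\fZ_1$-maximal disk around the canonical $\sH(x)$-point has radius $\rho$ while the $\fZ$-maximal disk is all of $D$, so the dilatation factor is exactly $\rho$. Proposition \ref{advances2} describes $\cR_{\fY,\fZ_1}$ along $E_c$ (it is a statement about the half-lines of $S_{\fZ_1}(\fY)$), and your argument correctly forces $\cR_{\fY,\fZ_1}|_{E_c}\equiv 1$; but feeding this back through (\ref{eq:change}) gives only $\cR_{\fY,\fZ}(\xi_{c,\rho})\geq\rho$, which degenerates as $\rho\to0$ and carries no information. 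You therefore have not shown $\cR_{\fY,\fZ}\equiv1$ on $E_c$, and the density-and-continuity step that follows has no foundation. The clause ``alters neither $D$ nor $\cR|_D$'' is correct only of the subdivision of $\ol{\fX}$; the enlargement of $\cZ$ certainly alters $\cR|_D$, and formula (\ref{eq:change}) quantifies by exactly how much.

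The tool that does the job — and is what drives corollaries \ref{Robbacond} and \ref{Robbacond2} — is the transfer theorem \ref{contiguity}. In a basic affinoid annulus $(\cB,T)$ containing $\xi(D)$, if $D'$ is the open disk of $\cB$ with boundary point $\xi(D)$, then $R(\xi(D),\Sigma)=\inf_{x\in D'}R(x,\Sigma)$, $|T|$ is constant on $D'$, and $\cR(\xi(D))=1$ means $R(\xi(D),\Sigma)=|T(\xi(D))|$; hence $R(x,\Sigma)=|T(x)|$ and $\cR_{\fY,\fZ}(x)=1$ for every $x\in D'$, with no enlargement of $\cZ$ and no refinement of the model. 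This replaces your propagation step in one stroke. After that, the remaining and in fact largest part of the paper's proof is devoted to the Robba property, where \ref{contiguity} is invoked again for an arbitrary open disk $D\subset X$ — a step your last paragraph only gestures at, by appealing to a proof of theorem \ref{conjRobba} that the paper does not give independently of this very corollary.
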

\par
It follows from \cite[4.5.3]{Berkovich} that
\begin{prop} If $f:X \to Y$ is a non-constant holomorphic map of rig-smooth $k$-analytic curves, and   $(\cF,\na)$ is a Robba connection on $Y$, then $f^{\ast}(\cF,\na)$ is a Robba connection on $X$. 
\end{prop}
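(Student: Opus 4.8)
The plan is to unwind the definition of a Robba connection and reduce the statement to the geometric fact, supplied by \cite[4.5.3]{Berkovich}, that a non-constant morphism of analytic curves carries an open disk into an open disk. First I would note that every hypothesis is stable under scalar extension. For an algebraically closed complete extension $k^\p/k$, put $X^\p = X\what{\otimes}k^\p$, $Y^\p = Y\what{\otimes}k^\p$, and let $f^\p : X^\p \to Y^\p$ be the base-changed map: it is again a non-constant holomorphic map of rig-smooth $k^\p$-analytic curves (rig-smoothness by definition \ref{rigsmooth}, non-constancy because $X^\p \to X$ is surjective). Moreover the $k^\p$-linear extension $(\cF^\p,\na^\p)$ of $(\cF,\na)$ on $Y^\p$ is again a Robba connection --- the condition of definition \ref{Robbaconn} for it, over an algebraically closed complete $k^{\p\p}/k^\p$, is the one for $(\cF,\na)$ over $k^{\p\p}/k$ --- and $(f^\p)^\ast(\cF^\p,\na^\p)$ is the $k^\p$-linear extension of $f^\ast(\cF,\na)$, since pullback of modules and connections commutes with base change. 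Hence, by definition \ref{Robbaconn}, it is enough to show: for every such $k^\p$ and every open disk $D \subset X^\p$, the sheaf of horizontal sections of $(f^\p)^\ast(\cF^\p,\na^\p)$ is constant on $D$.

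Fix $k^\p$ and an open disk $D \subset X^\p$. The key step, which I expect to be the main obstacle, is to produce an open subspace $\Delta \subset Y^\p$ that is isomorphic to an open disk and contains $f^\p(D)$; since in dimension one an open polydisk is an open disk, this is exactly the information recorded in \cite[4.5.3]{Berkovich}. The whole difficulty is in controlling the global shape of the image of a disk under a non-constant map --- excluding that $f^\p(D)$ meets a loop of the skeleton of $Y^\p$, or otherwise escapes every disk-like region --- and this is precisely what the cited structure theory for morphisms of curves settles; everything after it is formal.

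Indeed, granting such a $\Delta$: since $(\cF^\p,\na^\p)$ is a Robba connection and $\Delta$ is an open disk, $(\cF^\p_{|\Delta})^{\na^\p}$ is constant, so by the isomorphism $(\ref{eq:RHsheafintro})$ and the remark following it, $(\cF^\p,\na^\p)_{|\Delta} \cong (\cO_\Delta, d_{\Delta/k^\p})^{\mu}$ with $\mu = \rk\,\cF^\p_{|\Delta}$. Pulling back along $f^\p_{|D} : D \to \Delta$ gives $(f^\p)^\ast(\cF^\p,\na^\p)_{|D} \cong (\cO_D, d_{D/k^\p})^{\mu}$, whose sheaf of horizontal sections is the constant sheaf $(k^\p)^{\mu}$ on $D$. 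As this holds for every $k^\p$ and every open disk in $X^\p$, the $k^\p$-linear extension of $f^\ast(\cF,\na)$ has constant horizontal sheaf on every open polydisk, i.e. $f^\ast(\cF,\na)$ is a Robba connection on $X$. (It is an object of ${\bf MIC}(X/k)$ to begin with, since the pullback along any morphism of a locally free module with integrable $k$-linear connection is again such.) This would complete the proof.
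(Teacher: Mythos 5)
Your strategy matches the paper's: after the formal reduction (stability under scalar extension, compatibility of pullback with base change, then fixing an algebraically closed complete $k^\p$ and an open disk $D \subset X^\p$), everything hinges on placing $f^\p(D)$ inside an open disk $\Delta$ of $Y^\p$, and once $\Delta$ is in hand the pullback of the trivialized $(\cF^\p,\na^\p)_{|\Delta}$ gives the constancy you need. That last part is what the paper means by ``checked directly on the definition.'' The gap is in the geometric input, which you attribute entirely to \cite[4.5.3]{Berkovich}. The paper's own argument is more layered, and for two reasons. First, it requires $Y$ to be realized as an analytic domain in the analytification of a smooth projective curve $\ol{Y}$ (this is an explicit hypothesis in the version the paper proves), and \cite[4.5.3]{Berkovich} then says that $\ol{f^\p}(D)$ is contained in an open disk of $\ol{Y}^\p$ --- not of $Y^\p$ itself. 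An open disk of $\ol{Y}^\p$ containing $f^\p(D)$ has no reason to lie inside $Y^\p$, so your $\Delta$ is not yet produced. The paper's remedy is to invoke the arguments of \cite[4.5.2]{Berkovich} to upgrade this to: $\ol{f^\p}(D)$ \emph{is} an open disk of $\ol{Y}^\p$; since that disk equals $f^\p(D) \subset Y^\p$, it is an open disk of $Y^\p$. Your proposal skips this upgrade.

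Second, the case in which $Y$ (or its compactification) is $\P^1$ must be treated separately: the image of an open disk under a non-constant map to $\P^{1,\an}$ need not lie in any open disk, and the \cite[4.5.3]{Berkovich}/\cite[4.5.2]{Berkovich} step fails there. The paper's fix is not geometric at all --- it observes that on $\P^1$ every object of ${\bf MIC}(\P^1/k)$ is a direct sum of copies of the trivial connection $(\cO_{\P^1},d)$, so its pullback is trivial and hence trivially Robba. Your remark about ``excluding that $f^\p(D)$ meets a loop of the skeleton of $Y^\p$'' does not locate the obstruction correctly; the skeleton of $\P^1$ is a single point and has no loops, yet $\P^1$ is not exhausted by open disks. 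Without the $\P^1$ dichotomy and the promotion of ``contained in a disk of $\ol{Y}^\p$'' to ``is a disk of $Y^\p$,'' the step you yourself flag as ``the main obstacle'' remains unproven.
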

\medskip
{\bf Acknowledgements.} 
Continuity of the radius of convergence was the main target of a collaboration with Lucia Di Vizio on the long delayed, and still unpublished,  manuscript \cite{Lucia} dedicated to the higher dimensional generalization of the theory of Dwork-Robba and Christol-Dwork. 
While we plan to pursue that joint higher-dimensional program with Di Vizio in the next future, the present author 
had the good fortune of arousing the interest of Vladimir Berkovich in these problems. The influence of Berkovich and a series of letters he generously addressed to the author, modified the original plan. Namely, Berkovich insisted that the global 1-dimensional case should be treated first, using the theory of semistable reduction, and provided a detailed plan of the results to prove, and some of the proofs. To the author's regret, he did not accept to be a co-author. 
 \par
 The collaboration with Lucia over the past few years  has greatly contributed to shape the ideas appearing in this paper.
\par
We are
also grateful to Yves Andr\'e and Kiran Kedlaya for showing so much
interest in our results, to Pierre Berthelot for a question he asked on Robba connections. It is a pleasure to 
acknowledge the generous help we received from Michel Raynaud on the topic of formal geometry. 
We thank  Ahmed Abbes, Maurizio Cailotto, Gilles Christol, Antoine Ducros,  Adrian Iovita, Qing Liu,   Michael Temkin and especially Lorenzo Ramero for many useful discussions.

\section{The structure of rig-smooth compact strictly $k$-analytic curves}
\label{Graph}
\begin{notation}
All over this section,  the valuation of $k$ is assumed to be non-trivial. In fact, except in subsection \ref{semistability}, which might be of independent interest, $k$ will be assumed to be algebraically closed, in order to minimize technical difficulties. All $k$-analytic spaces are assumed to be separated. As in \cite[\S 3]{dejong}, a $k$-analytic curve is a $k$-analytic space pure of dimension 1. All over this and the next section,  $X$ denotes  a compact connected rig-smooth strictly $k$-analytic curve. For any field $L$  an \emph{$L$-algebraic curve}  is a  separated $L$-scheme of finite type, of pure dimension 1. 
\end{notation}   
Any $k$-analytic curve is a good analytic space, \ie every point of it has an affinoid neighborhood, and is paracompact  \cite[\S 3]{dejong}. 
An irreducible  compact $k$-analytic curve is either the analytification of a projective curve or it is  affinoid  \cite{FM}, \cite[Prop. 3.2]{dejong}. 
So, in our case, $X$ is either the analytification $\cX^{\an}$ of a smooth projective $k$-algebraic curve $\cX$ or it is strictly $k$-affinoid; its underlying topological space  is a quasipolyhedron \cite[4.1.1]{Berkovich}.
\subsection{Semistability}
\label{semistability}
Classically, one gives the following definition. 
\begin{notation} \label{forsch} Let $L$ be a non-archimedean field over $k$. An  $L^{\circ}$-scheme (resp. $L^{\circ}$-formal scheme) is \emph{admissible}  if it is reduced, quasi-compact, separated,  of  (resp. topologically)  finite presentation, and flat over $L^{\circ}$.  The algebra of an affine admissible $L^{\circ}$-scheme (resp. $L^{\circ}$-formal scheme) is also said to be \emph{admissible}.
\end{notation} 
In the present article, all $\kc$-schemes and $\kc$-formal schemes will be assumed to be  admissible. 
\par 
For an admissible $\kc$-scheme $\sZ$, we define the \emph{completion along the closed fiber} or simply \emph{completion} of $\sZ$ as the admissible $\kc$-formal scheme 
\beq \what{\sZ} = \what{\sZ}_{/\sZ_{s}} :=  \limind_{n} \sZ \otimes K^{ \circ}/(\pi^{n}) \; 
\eeq
where $\pi \in \kcc \setminus \{0\}$. The definition is clearly independent of the choice of $\pi$ and is functorial. 

\begin{defn} \label{defspec}   
  For an admissible $\kc$-formal scheme $\fY$, we denote by
 $\sp : \fY_{\eta} \to \fY_s$ the \emph{set-theoretic specialization map} and by 
  $\sp_{\fY}  : ( \fY_{\eta})_G \to \fY$ the \emph{specialization map} viewed as a morphism of ringed $G$-topological spaces. 
 \end{defn}
 \begin{defn} A morphism $\varphi : \fX \to \fY$ of $\kc$-formal schemes, is  \emph{proper} if for any $\pi \in \kcc \setminus \{0\}$ the morphism of $\kc/(\pi)$-schemes $\varphi_{0} : \fX \otimes  \kc/(\pi) \to \fY \otimes  \kc/(\pi)$ is proper.
\end{defn}
This definition agrees with \cite[III.1, 3.4.1]{EGA} when $\kc$ is noetherian and $\fX$, $\fY$ are admissible.
We recall \cite[\S 2]{BerkovichCycles} that a morphism $\varphi : \fX \to \fY$ of admissible $\kc$-formal schemes, is  said to be \emph{\'etale} if $\varphi_{0} $ is \'etale, for any choice of $\pi$.

\par
\smallskip
 We further abuse the classical terminology as follows. 
\begin{defn}  \label{STRss}
An  admissible formal scheme $\fX$  over $\kc$ 
is \emph{strictly semistable} if, locally for the Zariski topology, it is of the form 
\beq
\fX \xrightarrow{\ \phi\ } \Spf \kc  \{S,T\}/(ST-a) \to  \Spf \kc
\eeq
 with $a \in \kcc \setminus \{0\}$, where  $\phi$ is  \'etale.  Similarly, a reduced separated   $\kt$-scheme of finite type $\cY$ is \emph{strictly semistable} if, locally for the Zariski topology, it is of the form  
 \beq    \label{STRss1}
 \cY \xrightarrow{\ \phi\ } \Spec \kt  [x,y]/(xy) \to  \Spec \kt  \; ,
   \eeq
with  $\phi$  \'etale.
\hfill \break
The $\kc$-formal scheme $\fX$   (resp. the $\kt$-scheme $\cY$) is    \emph{semistable}  if there is a surjective \'etale morphism $\fX^\p \to \fX$ (resp. $\cY^\p \to \cY$)  with  $\fX^\p$ (resp. $\cY^\p$)   strictly semistable. 
\end{defn} The generic fiber of a semistable $\kc$-formal scheme  is rig-smooth. 
 It follows from \cite[Prop. 1.4 and step (1) in its proof]{BerkContr}  that a semistable $\kc$-formal scheme  is normal. 
 \par
The notion of  (resp. strict) semistability we use  is the
one-dimensional case of (resp. strict) nondegenerate polystability \cite{BerkContr}.  
The following result is proven in \cite{FormalCurves}. 
\begin{thm}\label{evviva}
A normal admissible formal scheme $\fX$ over $\kc$ is  (resp. strictly) semistable  if and only if  its  closed fiber $\fX_s$ is a (resp. strictly) semistable scheme over $\kt$. 
\end{thm}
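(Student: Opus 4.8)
The implication ``$\fX$ (strictly) semistable $\Rightarrow$ $\fX_s$ (strictly) semistable'' is essentially formal, so we only sketch it. Zariski-locally $\fX$ admits an étale morphism $\phi$ to a basic chart, \ie to $\Spf\kc\{T\}$ or to $\Spf\kc\{S,T\}/(ST-a)$ with $a\in\kcc\setminus\{0\}$; since $a$ reduces to $0$ in $\kt$, reduction modulo $\kcc$ turns such a chart into $\A^1_{\kt}$, respectively $\Spec\kt[x,y]/(xy)$, and the reduction $\phi_s$ of $\phi$ is étale by the very definition of étaleness for $\kc$-formal schemes. Hence $\fX_s$ is étale-locally of the shape \eqref{STRss1}, \ie strictly semistable; the non-strict variant is obtained by going through a surjective étale $\fX'\to\fX$ with $\fX'$ strictly semistable, whose closed fibre surjects étale onto $\fX_s$. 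The rest of the plan concerns the converse.

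First I would reduce to the strictly semistable case. If $\cY'\to\fX_s$ is a surjective étale morphism with $\cY'$ strictly semistable over $\kt$, then, since $\pi\cO_{\fX}$ is an ideal of definition of $\fX$ and $\kt$ is the reduction of $\cO_{\fX}/\pi\cO_{\fX}$, the étale sites of $\fX$, of $\fX\otimes\kc/(\pi)$ and of $\fX_s$ coincide; thus $\cY'\to\fX_s$ lifts uniquely to a surjective étale morphism of admissible $\kc$-formal schemes $\fX'\to\fX$ with $(\fX')_s\cong\cY'$. As an étale scheme over a normal one is normal, $\fX'$ is again normal, and strict semistability of $\fX'$ forces semistability of $\fX$; so we may assume $\fX_s$ strictly semistable and argue Zariski-locally near a closed point $\bar x\in\fX_s$. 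If $\bar x$ is a smooth point, an étale chart $U_s\to\A^1_{\kt}$ lifts with no obstruction through the thickenings $\fX\otimes\kc/(\pi^{n})$ --- the formal target $\Spf\kc\{T\}$ being $\kc$-smooth --- and the $\pi$-adic limit furnishes an étale morphism of a neighborhood of $\bar x$ to $\Spf\kc\{T\}$.

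The case of a node $\bar x$ carries the weight of the proof. From an étale chart $\psi\colon U_s\to\Spec\kt[x,y]/(xy)$, choose $s,t\in\cO_{\fX}$ near $\bar x$ lifting $x,y$; then $st\equiv xy=0$, so $st\in\kcc\,\cO_{\fX}$. One must now correct $s$ and $t$ by higher-order terms so that $st$ equals \emph{exactly} a fixed element $a\in\kcc\setminus\{0\}$, \ie solve $st=a$ modulo $\pi^{n+1}$ given a solution modulo $\pi^{n}$ and pass to the $\pi$-adic limit; the successive obstructions lie in groups which vanish because $\psi$ is étale and the target $\Spf\kc\{S,T\}/(ST-a)$ is $\kc$-flat and a complete intersection --- an Elkik/Hensel-type argument, to be run without noetherian hypotheses on $\kc$. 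Normality of $\fX$ then guarantees that $(s,t)$ defines, Zariski-locally, a morphism $\fX\to\Spf\kc\{S,T\}/(ST-a)$ with the expected local structure; this morphism is $\kc$-flat and its reduction modulo $\kcc$ is $\psi$, and since a flat morphism of admissible $\kc$-formal schemes whose reduction to $\kt$ is étale is automatically étale (the nilradical of $\cO_{\fX}\otimes\kc/(\pi)$ lies in its Jacobson radical, so Nakayama applies to the sheaf of differentials), $\fX$ is strictly semistable near $\bar x$. Gluing over the Zariski cover concludes, and the non-strict case follows as before.

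\textbf{Main obstacle.} Everything comes down to the node case, where the two delicate points are: pinning down the width parameter $a\in\kcc$ and adjusting the lifted coordinates $s,t$ so that $st=a$ holds on the nose --- a descent/approximation argument over a possibly non-noetherian valuation ring --- and then checking that the chart so produced is étale onto $\Spf\kc\{S,T\}/(ST-a)$ rather than merely flat with étale reduction. The forward implication, the reduction to the strict case, and the smooth-point case are comparatively routine.
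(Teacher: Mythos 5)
The paper does not contain a proof of Theorem \ref{evviva}: it is stated and referred to \cite{FormalCurves}, listed as in preparation, so there is no argument in the text to compare yours against. Judged on its own terms your outline gets most of the structure right --- the forward implication by reducing étale charts modulo $\kcc$, the passage to the strict case via invariance of the small étale site under nilpotent thickenings, the obstruction-free lifting of smooth charts against the $\kc$-smooth target $\Spf\kc\{T\}$, and the closing remark that a flat morphism of admissible $\kc$-formal schemes with étale special fibre is itself étale (Nakayama on $\Omega^1$). These steps are correct and would survive scrutiny.

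The gap lies exactly where you place it, in the node case, but your plan for closing it is circular. You propose to ``solve $st=a$ modulo $\pi^{n+1}$ given a solution modulo $\pi^n$'' for lifted coordinates $s,t$; but the iteration cannot begin until $a\in\kcc\setminus\{0\}$ has been determined, and the assertion that after adjusting $s$ and $t$ one arrives at $st$ equal to a \emph{unit times a fixed scalar} is precisely the content that has to be extracted from normality. A priori $st$ is merely some element of the ideal $\kcc\cO_{\fX}$; absent normality one can have $st=0$ (two crossing branches, admissible with nodal special fibre but not normal and not semistable), and even for normal $\fX$ the existence of the scalar ``width'' $a$ is a local structure theorem for normal $\kc$-flat formal germs with nodal reduction, nontrivially so over a non-noetherian valuation ring. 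This is exactly the kind of statement the Bosch--L\"utkebohmert analysis of formal fibres of ordinary double points (\cf \cite[\S\,2]{BL0}) is designed to supply; once it is available, your Elkik step gives the passage from the local model to an actual étale chart. As written, the proposal assumes this local structure result at the node rather than proving it, and that is a genuine lacuna, not a routine deferral.
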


\begin{defn} A normal admissible $\kc$-scheme $\sX$ is said to be (resp. \emph{strictly}) \emph{semistable} if  its special fiber is (resp. strictly) semistable.
\end{defn}
It follows from theorem \ref{evviva} that a normal admissible $\kc$-scheme $\sX$ is (resp. strictly) semistable if and only if its completion is a (resp. strictly) semistable $\kc$-formal scheme.
\par 

\subsection{Semistable models}
\emph{We assume from now on in this section that $k$ is algebraically closed.}
The following strong form of the semistable reduction theorem for  curves over $k$ (in fact over a general basis) has recently been made available  by Temkin \cite{Te}.

\begin{defn} \emph{\cite{Te}} Let $\sX$, $\sY$ (resp. $\fX$, $\fY$)  be  admissible   $\kc$-schemes (resp. $\kc$-formal schemes) of pure relative dimension 1, with smooth (resp. rig-smooth) generic fibers.
A  proper dominant morphism 
$\varphi~: ~\sX~\to~\sY $ (resp. $\varphi~: ~\fX~\to~\fY $) is called an \emph{$\eta$-modification of $\fY$} (resp. \emph{of $\fY$}) 
if the generic fiber  $\varphi_{\eta} : \sX_{\eta} \to   \sY_{\eta} $ 
 (resp. $\varphi_{\eta} : \fX_{\eta}  \to \fY_{\eta} $) is an isomorphism of $k$-algebraic curves (resp. of $k$-analytic curves).  
\end{defn}
\begin{thm} \label{strongsemistability} (Strong Semistable Reduction Theorem) Let $\sY$ be an admissible $\kc$-scheme of pure relative dimension one over $\kc$. Let us assume that the generic fiber $\cY := \sY_{\eta}$ of $\sY$ is a smooth $k$-algebraic curve. Then,  there is a 
  semistable (and even a \emph{strictly} semistable) $\kc$-scheme  $\sY^\p$ and an $\eta$-modification  
$\varphi^{\p}~: ~\sY^\p~\to~\sY$  of  $\sY$. Moreover,   the pair $(\sY^\p,\varphi^{\p} )$, with $\sY^\p$ semistable,  may be chosen to be \emph{minimal} in the following sense.  If   $\sY^{\p \p}$ is a semistable 
$\kc$-scheme and   $\varphi^{\p \p}: \sY^{\p \p} \to \sY$ is an $\eta$-modification of $\sY$,  then 
 there is a unique $\kc$-morphism $\chi: \sY^{\p\p} \to \sY^\p$, such that $\varphi^{\p \p}  = \varphi^{\p}   \circ \chi$. For any such minimal choice of $(\sY^\p,\varphi^{\p} )$, the morphism $\varphi^{\p}$ is projective and is an isomorphism over the maximal semistable (open) subscheme of $\sY$. 
\end{thm}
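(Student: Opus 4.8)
The statement is Temkin's strong semistable reduction theorem \cite{Te}; here is how I would reach it. \textbf{Existence of a semistable $\eta$-modification.} Since $k$ is algebraically closed, the smooth $k$-algebraic curve $\cY=\sY_{\eta}$ embeds as a dense open subscheme of a smooth projective $k$-curve $\ol{\cY}$, with complement a finite set of $k$-rational points $P_{1},\dots,P_{n}$. Choosing any admissible projective $\kc$-model $\ol{\sZ}$ of $\ol{\cY}$ and applying the semistable reduction theorem over the valuation ring $\kc$ --- in the non-noetherian generality made available by Temkin, who in turn reduces it either to the excellent, discretely valued case by noetherian approximation together with the classical results of Deligne--Mumford, Artin--Winters and de Jong, or to his altered local uniformization --- one obtains a semistable $\eta$-modification $\ol{\sZ}^{\p}\to\ol{\sZ}$. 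A further admissible blow-up at the finitely many nodes of $\ol{\sZ}^{\p}_{s}$ where a component meets itself makes $\ol{\sZ}^{\p}$ \emph{strictly} semistable, and one more admissible blow-up moves the $\kc$-sections prolonging $P_{1},\dots,P_{n}$ so that they are pairwise disjoint and avoid the singular locus of the special fibre; deleting their closures yields a strictly semistable $\kc$-model $\sY^{\p}_{0}$ of $\cY$. Now $\sY$ and $\sY^{\p}_{0}$ are two admissible $\kc$-models of $\cY$, hence (theory of admissible blow-ups \cite{BL1}, or Raynaud--Gruson flattening) they are dominated by a common $\eta$-modification $\sW$ which is an admissible blow-up of each; applying the semistable reduction theorem once more to the admissible $\kc$-scheme $\sW$ produces a strictly semistable $\eta$-modification $\sY^{\p}\to\sW$, and the composite $\varphi^{\p}:\sY^{\p}\to\sW\to\sY$ is a proper dominant $\kc$-morphism inducing an isomorphism on generic fibres, with $\sY^{\p}$ strictly semistable.

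\textbf{Minimality.} For the minimal pair I would run a contraction process inside the category of admissible semistable $\kc$-schemes dominating $\sY$. Call a reduced irreducible component $E$ of the special fibre \emph{superfluous over $\sY$} if $E\cong\P^{1}_{\kt}$, $E$ meets the rest of the special fibre in at most two points, and the image of $E$ in $\sY$ is a single point. A Castelnuovo--Lichtenbaum--Artin type contraction theorem, valid over $\kc$ by Temkin's methods (and over a noetherian base by the classical results), lets one contract such an $E$ and stay within admissible semistable $\kc$-schemes over $\sY$; iterating --- the process terminates because each step removes one component of the special fibre --- yields a semistable $\eta$-modification $\sY^{\p}\to\sY$ with no superfluous component over $\sY$. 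This $\sY^{\p}$ is independent of the order of contractions: two maximal contraction sequences are dominated by a common semistable $\eta$-modification, and a proper birational morphism between admissible semistable relative curves over $\kc$ with normal target is necessarily an iterated contraction of superfluous components, so comparing which components get contracted shows the two terminal objects coincide. Given any semistable $\eta$-modification $\psi:\cZ\to\sY$, choose a common semistable $\eta$-modification $\cW$ of $\sY^{\p}$ and $\cZ$ over $\sY$; the contraction process applied to $\cW$ over $\sY$ must terminate at $\sY^{\p}$, and since $\cW\to\cZ$ is also a composition of contractions, it factors through the terminal object, giving the required $\chi:\cZ\to\sY^{\p}$ over $\sY$; uniqueness of $\chi$ follows because a morphism between semistable $\eta$-modifications of $\sY$ is determined by its effect on special fibres together with normality of $\sY^{\p}$. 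Projectivity of $\varphi^{\p}$ follows since $\sY^{\p}$ is obtained from the projective $\kc$-scheme of the first step by admissible blow-ups and contractions, all projective over $\kc$; and over the maximal open $U\subseteq\sY$ where $\sY$ is already semistable, $\sY|_{U}$ has no superfluous component over itself, so the contraction process does nothing there and $\varphi^{\p}$ restricts to an isomorphism over $U$.

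\textbf{Main obstacle.} The genuinely hard input is the existence of semistable reduction, and the companion contraction theory, over an \emph{arbitrary} valuation ring $\kc$: this is precisely the content of Temkin's work, which I would invoke as a black box, whereas over a discrete valuation ring all of it is classical. Once that is granted, the remaining ingredients --- compactification, removal of the punctures, domination of the given $\sY$ by admissible blow-ups, and the bookkeeping of superfluous contractions that yields minimality, projectivity and triviality over the semistable locus --- are routine manipulations with modifications of relative curves.
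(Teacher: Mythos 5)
The paper does not prove Theorem \ref{strongsemistability}: it is stated as a citation to Temkin \cite{Te} (``has recently been made available by Temkin''), with no \verb|proof| environment attached. So there is no proof in the paper against which your sketch can be compared; the author's ``proof'' is the citation itself. Your proposal is therefore doing more than the paper does, by reconstructing a plausible route from Temkin's semistable reduction and contraction machinery over a valuation ring to the exact formulation used here, and you correctly flag that machinery as the irreducible black box. Given that, the overall shape of your argument (compactify, obtain a semistable $\eta$-modification of a projective model, blow up to make it strictly semistable and separate the sections, dominate the given $\sY$ by a common admissible blow-up, then run a Castelnuovo-type contraction process inside the fiber over $\sY$ to reach a minimal object) is consistent with what one expects to find inside the citation.

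Two places in your minimality argument are stated more loosely than they should be. First, the claim that ``a proper birational morphism between admissible semistable relative curves over $\kc$ with normal target is necessarily an iterated contraction of superfluous components'' is itself a factorization theorem that requires justification over a general valuation ring; you cannot simply assert it but must again appeal to Temkin (or to a normal-crossings MMP-type argument over $\kc$). Second, for uniqueness of $\chi$ it is cleaner to argue as the paper does in the formal setting, following \cite[\S 2.8]{bosch}: $\sY^{\prime\prime}$ is $\kc$-flat, hence its generic fiber is schematically dense, $\chi$ is prescribed there (it is the identity of $\cY$), and $\sY^{\prime}$ is separated, so $\chi$ is unique; ``determined by its effect on special fibres together with normality'' is not the right formulation. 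Finally, your criterion for a superfluous component should be stated in terms of nodes (a $\P^1$ in $\sY^{\prime}_s$ meeting the rest in at most two \emph{nodes}, mapping to a point of $\sY$), and one should note that contracting such a $\P^1$ meeting the rest at two nodes of thickness $a_1,a_2$ produces a node of thickness $a_1a_2$, which is why semistability is preserved. These are fixable presentation issues rather than conceptual gaps.
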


\begin{defn} The minimal pair $(\sY^\p,\varphi^{\p} )$ appearing in theorem \ref{strongsemistability} is unique up to unique isomorphism inducing the identity on $\sY$. 
 It will be called \emph{ the minimum semistable $\eta$-modification of $\sY$} and will be denoted by $(\sY_{\st},\varphi_{\st})$.
 \end{defn}

\par

The previous theorem has a formal counterpart as follows.

\begin{thm} \label{modelaffintro} Let $\fY$ be an admissible $\kc$-formal scheme of pure relative dimension 1. Assume the generic fiber $\fY_{\eta}$ is rig-smooth. Then,  there is  a  semistable (and even a \emph{strictly} semistable) $\kc$-formal scheme $\fY^\p$   and an $\eta$-modification 
$\phi^{\p}~: ~\fY^\p~\to~\fY $. 
Moreover,   the $\kc$-semistable formal scheme $\fY^\p$ may be chosen to be 
 \emph{minimal} in the following sense.  If  $\fY^{\p \p}$ is a  $\kc$-semistable formal scheme and a morphism 
 $\varphi^{\p \p}: \fY^{\p \p} \to \fY$ is an $\eta$-modification of $\fY$,  then
 there is a unique $\kc$-morphism $\chi: \fY^{\p\p} \to \fY^\p$, such that $\varphi^{\p \p} 
= \varphi^{\p}  \circ \chi$. For any such minimal choice of $(\fY^\p,\varphi^{\p} )$, the morphism $\varphi^{\p}$ is projective and is an isomorphism over the maximal semistable (open) formal subscheme of $\fY$. 
\end{thm}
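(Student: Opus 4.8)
The plan is to deduce the statement from its algebraic counterpart, the Strong Semistable Reduction Theorem \ref{strongsemistability} of Temkin, by transporting it along the completion functor between admissible $\kc$-schemes and admissible $\kc$-formal schemes. The genuinely delicate ingredients — algebraization of an admissible formal $\kc$-scheme and of proper morphisms between such, and the resulting equivalence between the category of semistable formal models of $\fY_\eta$ and the category of semistable $\kc$-schemes carrying an $\eta$-modification to a fixed algebraic model of $\fY$ — I would import from \cite{FormalCurves}; everything else is a routine translation.

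First I would set up the algebraic picture. Since $\fY$ is admissible (hence quasi-compact and separated) and $\fY_\eta$ is rig-smooth, I would, using \cite{FormalCurves}, realize $\fY$ as the completion $\what{\sY}$ of an admissible $\kc$-scheme $\sY$ of pure relative dimension one whose generic fiber $\sY_\eta := \sY \otimes_{\kc} k$ is a smooth $k$-algebraic curve, in such a way that the canonical identification $\fY_\eta = \what{\sY}_\eta = \sp_{\sY}^{-1}(\sY_s)$ exhibits $\fY_\eta$ as a relatively compact analytic domain in $\sY_\eta^{\an}$. Theorem \ref{strongsemistability} then applies to $\sY$ and yields the minimum semistable (indeed strictly semistable) $\eta$-modification $(\sY_{\st},\varphi_{\st})$ of $\sY$, with $\varphi_{\st}$ projective and an isomorphism over the maximal semistable open subscheme of $\sY$. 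I would then put $\fY^{\p} := \what{\sY_{\st}}$ and $\phi^{\p} := \what{\varphi_{\st}} : \fY^{\p} \to \fY$.

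Next I would check that $(\fY^{\p},\phi^{\p})$ has the required properties. It has pure relative dimension one, being an $\eta$-modification of $\fY$; it is (strictly) semistable by the remark following Theorem \ref{evviva}, since $\sY_{\st}$ is a normal admissible (strictly) semistable $\kc$-scheme and semistability of a normal admissible scheme is equivalent to that of its completion. Because $\varphi_{\st}$ is proper, set-theoretic specialization is compatible with it, so the analytification of $(\varphi_{\st})_\eta$ restricts to a bijection $\what{\sY_{\st}}_\eta = \sp^{-1}((\sY_{\st})_s) \xrightarrow{\ \sim\ } \sp^{-1}(\sY_s) = \fY_\eta$; since $(\varphi_{\st})_\eta$ is an isomorphism of $k$-algebraic curves, this makes $\phi^{\p}_\eta$ an isomorphism of $k$-analytic curves, i.e. $\phi^{\p}$ is an $\eta$-modification. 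Completing a projective morphism gives a projective morphism of formal schemes; and, by Theorem \ref{evviva}, an open formal subscheme of $\fY$ is semistable if and only if the corresponding open subscheme of $\sY$ is, so the maximal semistable open formal subscheme of $\fY$ is the completion of the maximal semistable open subscheme of $\sY$, over which $\phi^{\p}$, being the completion of $\varphi_{\st}$, is an isomorphism.

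Finally I would establish minimality, which I expect to be the main obstacle. Given a semistable $\kc$-formal scheme $\fY^{\p\p}$ and an $\eta$-modification $\varphi^{\p\p} : \fY^{\p\p} \to \fY$ (note $\varphi^{\p\p}$ is proper by definition), the equivalence quoted in the first paragraph identifies $\fY^{\p\p}$ with the completion of a semistable $\kc$-scheme $\sY^{\p\p}$ carrying an $\eta$-modification $\psi^{\p\p} : \sY^{\p\p} \to \sY$ whose completion is $\varphi^{\p\p}$. The universal property of $(\sY_{\st},\varphi_{\st})$ then yields a unique $\kc$-morphism $\sY^{\p\p} \to \sY_{\st}$ over $\sY$; its completion is a morphism $\chi : \fY^{\p\p} \to \fY^{\p}$ with $\varphi^{\p\p} = \phi^{\p} \circ \chi$. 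Uniqueness of $\chi$ follows formally: two such morphisms agree on generic fibers (because $\phi^{\p}_\eta$ and $\varphi^{\p\p}_\eta$ are isomorphisms), hence their equalizer is a closed formal subscheme of $\fY^{\p\p}$ with full generic fiber, which must be all of $\fY^{\p\p}$ since $\fY^{\p\p}$ is flat over $\kc$, hence $\pi$-torsion free. The hard part is thus not any of this formal bookkeeping but the precise form of the formal-to-algebraic dictionary — in particular that an abstractly given semistable formal model of $\fY_\eta$ is the completion of a semistable $\kc$-scheme over $\sY$, with the $\eta$-modification itself being algebraic — which is exactly where the non-noetherian algebraization results of \cite{FormalCurves} are needed.
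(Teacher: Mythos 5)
Your proposal is correct and follows essentially the same strategy as the paper: algebraize $\fY$, apply Temkin's algebraic Theorem~\ref{strongsemistability}, and complete. The difference is localized in the algebraization step. The paper first reduces to the case $\fY = \Spf A$ affine (``by uniqueness''), then invokes Elkik's theorem directly to produce a finitely generated $\kc$-algebra $B$ with $\what{B}\cong A$ and $B\otimes_{\kc}k$ smooth; and again for the minimality claim, Elkik is applied to the (affine) competitor $\fY^{\p\p}$, after which one shrinks to an open semistable neighbourhood of the special fibre. You instead invoke a global algebraization dictionary from \cite{FormalCurves} — an equivalence between semistable formal models of $\fY_\eta$ and semistable $\kc$-schemes over a fixed algebraic model $\sY$, with $\eta$-modifications also algebraized — and thereby avoid the reduction to affines entirely. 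This keeps the argument here cleaner and more modular, but at the cost of loading more onto the cited equivalence (whose content, in the end, is precisely what the paper assembles from Elkik plus the affine reduction). Both routes are sound; the paper's is marginally more self-contained, yours is more structural. Your uniqueness argument for $\chi$ (equalizer is closed, full on generic fibres, hence everything by flatness) is the standard one and is also what the paper relies on, via \cite[\S 2.8]{bosch}.
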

\begin{proof} 
By uniqueness, it is enough to prove the statement for  $\fY = \Spf A$  affine.  
Under this assumption, by  Elkik's theorem \cite[Thm. 7 and Rmk. 2 on p. 587]{elkik}, one can find a finitely generated $\kc$-algebra $B$ such that $\what{B} \xrightarrow{\ \sim\ } A$ and that $B \otimes_{\kc} k $ is smooth over $k$. 
We now apply theorem \ref{strongsemistability} to the admissible affine $\kc$-scheme $\sY = \Spec B$. We consider the minimal semistable $\eta$-modification $\varphi^{\p}: \sY^{\p} \to \sY$. Then $(a)$-adic  completion $\what{\varphi^{\p}}: \fY^{\p} \to \fY$ of $\varphi^{\p}$, for any $a \in \kcc \setminus \{0\}$, satisfies the  requirements of the theorem. It is in fact clearly semistable, projective and is an isomorphism over the maximal semistable (open) subscheme of $\fY$.   It is also minimal, because  if $\phi^{\p \p}: \fY^{\p \p} \to \fY$ has the properties of the statement, we may again assume that 
$\fY^{\p \p}$ is affine. We 
 apply again Elkik's result, to show that 
$\phi^{\p \p}$ is the formal $(a)$-adic completion of a morphism  $\varphi^{\p \p}: \sY^{\p \p} \to \sY$ of admissible $\kc$-schemes. On replacing  $\sY^{\p \p}$ by an open neighborhood of its special fiber, we may assume that $\sY^{\p \p}$ is semistable, as in   theorem 
\ref{strongsemistability}. So,   there is a unique $\kc$-morphism $\chi: \sY^{\p \p}  
\to \sY^\p$, such that $\varphi^{\p \p}  =  \varphi^{\p} \circ \chi$.
Then the $(a)$-adic  completion of $\chi$ satisfies the requirements in our statement.
\end{proof}
\par 
The following result is precisely what we need to start our arguments.
\begin{prop} (Berkovich) \label{projemb} Let  $V = \sM (\sA)$  be a  rig-smooth strictly $k$-affinoid curve. There exists  a  projective strictly semistable $\kc$-scheme $\sY$   and an embedding of $V$ in the generic fiber $\fY_{\eta}$ of the formal completion $\fY$ of $\sY$ along its closed fiber, which identifies $V$ to the affinoid  $\sp^{-1}(\cZ)$ for an affine open subset $\cZ$ of $\fY_{s}$. In particular, $V$ is the generic fiber of the strictly semistable  $\kc$-formal scheme $\what{\sY}_{/ \cZ}$,  formal completion of $\sY$ along $\cZ$. 
\end{prop}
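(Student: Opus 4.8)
The plan is to present $V$ as an affinoid subdomain of the analytification of a smooth projective curve, to choose a projective strictly semistable model of that curve, and then to recover $V$ as the tube of an affine open of its special fibre; we may assume $V$ connected, the general case following by disjoint union. First I would algebraise $V$. Since $V$ is rig-smooth, $\sA$ is reduced, and as $k$ is algebraically closed the ring $\sA^{\circ}$ of power-bounded elements is an admissible $\kc$-algebra; hence $\fV:=\Spf\sA^{\circ}$ is an admissible $\kc$-formal scheme of pure relative dimension $1$, with rig-smooth generic fibre $\fV_{\eta}=V$ and $V=\sp_{\fV}^{-1}(\fV_{s})$, the closed fibre $\fV_{s}$ being affine. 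By Elkik's theorem---the input already used in the proof of \ref{modelaffintro}---there is a finitely generated, flat, reduced $\kc$-algebra $B$ with $\what{B}\xrightarrow{\ \sim\ }\sA^{\circ}$ and with $B\otimes_{\kc}k$ smooth; put $\sV:=\Spec B$, an affine admissible $\kc$-scheme of pure relative dimension $1$ with $\what{\sV}=\fV$ and smooth affine generic fibre $\cV:=\sV_{\eta}$.

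Next I would compactify. A choice of generators of $B$ over $\kc$ gives a closed immersion $\sV\hookrightarrow\A^{N}_{\kc}$, and the scheme-theoretic closure of $\cV\subset\A^{N}_{k}\subset\P^{N}_{\kc}$ is a flat projective $\kc$-scheme containing $\sV$ as an open subscheme; after normalising it and, if needed, replacing it by an admissible (flat, finitely presented, reduced) model dominating it---invoking standard flattening over a valuation ring---one obtains an admissible projective $\kc$-scheme $\sX$ of pure relative dimension $1$ whose generic fibre $\cX$ is the smooth projective completion of $\cV$, with $V$ realised inside $(\what{\sX})_{\eta}=\cX^{\an}$ as $\sp_{\what{\sX}}^{-1}(\cZ_{1})$ for an open $\cZ_{1}\subset\sX_{s}$. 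Refining $\sX$ by admissible blow-ups, one can further arrange that $\cZ_{1}$ is affine: $\cX^{\an}\setminus V$ is a finite union of tubes of closed points of $\sX_{s}$, and these points can be chosen distinct and so as to meet every complete irreducible component of $\sX_{s}$.

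Finally I would strictly semistabilise the ambient curve. Applying the strong semistable reduction theorem \ref{strongsemistability} to $\sX$ yields the minimal strictly semistable $\eta$-modification $\sY:=\sX_{\st}\to\sX$, which is projective because $\sX$ is projective and $\varphi_{\st}$ is projective, and satisfies $\sY_{\eta}=\cX$; hence $\fY:=\what{\sY}$ has $\fY_{\eta}=\cX^{\an}$. Since $\varphi_{\st}$ is an isomorphism over the maximal semistable open of $\sX$, and $\cZ_{1}$ has been placed in that locus near the marked points, pulling $\cZ_{1}$ back along the reduction $\fY_{s}\to\sX_{s}$ produces an affine open $\cZ\subset\fY_{s}$ with $\sp_{\fY}^{-1}(\cZ)=V$. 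Then $\what{\sY}_{/\cZ}$ is an affine strictly semistable $\kc$-formal scheme with generic fibre $V$, which gives the embedding $V\hookrightarrow\fY_{\eta}$ identifying $V$ with $\sp^{-1}(\cZ)$, as required.

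The step I expect to be the main obstacle is keeping the two compatibility conditions of the second and third steps under control at once: the compactification $\sX$, and then the strictly semistable modification $\sY$, must be fine enough that the data of $\cX^{\an}\setminus V$ is visible on the special fibre and meets every complete component---so that the open subscheme whose tube is $V$ is genuinely affine---yet coarse enough that no superfluous complete exceptional components are introduced along the way. The algebraisation and compactification are routine given Elkik's theorem and standard results over a valuation ring; the real content lies in reconciling semistable reduction with the prescribed affine open.
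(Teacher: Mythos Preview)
Your three-step plan---algebraise via Elkik, compactify over $\kc$, then pass to a strictly semistable model---is exactly the paper's route. The only substantive difference lies in Step~2: the paper invokes \cite[Lemma~9.4]{BerkContr} to obtain, in one stroke, an integral proper flat finitely presented $\kc$-scheme $\sY$ with an open embedding of the algebraised curve into $\sY_\eta$ and an open $\cZ\subset\sY_s$ satisfying $V=\sp^{-1}(\cZ)$; you instead build the compactification by hand via scheme-theoretic closure in $\P^N_{\kc}$, normalisation, flattening, and further blow-ups.

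There is a genuine gap in your Step~3. You assert that ``$\cZ_1$ has been placed in'' the maximal semistable locus of $\sX$, but your Step~2 never arranges this---it only arranges that $\sX_s\setminus\cZ_1$ meets every complete component, so that $\cZ_1$ is affine in $\sX_s$. The strictly semistable modification $\varphi_{\st}\colon\sY\to\sX$ will in general blow up non-semistable points of $\sX_s$ lying inside $\cZ_1$, and any exceptional rational component introduced over such a point lies entirely in the preimage $\cZ\subset\sY_s$; once $\cZ$ contains a complete component it is no longer affine. You are right to flag this as the main obstacle in your closing paragraph, but you have not overcome it. The paper's own Step~3 is equally terse---it simply sets $\cZ'$ equal to the preimage of $\cZ$ and declares the conclusion---so the affineness of $\cZ'$ is not spelled out there either; the difference is that the paper leans on the controlled compactification supplied by Berkovich's lemma rather than on an ad hoc one. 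To close the gap in your version you would need either to arrange in Step~2 that $\sX$ is already semistable along $\cZ_1$ (for instance by applying Theorem~\ref{strongsemistability} to the affine $\kc$-scheme $\Spec B$ \emph{before} compactifying, so that the subsequent projective semistable modification is an isomorphism over the locus you care about), or to argue after the fact that any complete component of $\sY_s$ contained in $\cZ$ can be contracted without leaving the strictly semistable category.
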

\begin{proof} 
\par  (Steps 1 and 2 hold for a higher dimensional affinoid $V$).
\par
Step 1. \emph{$V$ is isomorphic to a strictly affinoid domain in the analytification $\sV^\an$ of an affine scheme $\sV$ over $k$}. 
By the theory of Raynaud \cite[\S 2.8]{bosch}, the closed $\kc$-subalgebra $\sA^{\circ}$ of $\sA$ consisting of elements of spectral norm $\leq 1$, contains an admissible $\kc$-subalgebra $A$,  such that $A \otimes_{\kc} k  \xrightarrow{\ \sim\ }  \sA$.  
Applying Elkik's theorem \cite[Thm. 7 and Rmk. 2 on p. 587]{elkik}, one can find a finitely generated $\kc$-algebra $B$ such that $\what{B} \xrightarrow{\ \sim\ } A$ and that $B \otimes_{\kc} k $ is smooth over $k$. Then the claim holds for $\sV = \Spec (B \otimes_{\kc} k )$. 
\par
Step 2. By \cite[Lemma 9.4]{BerkContr}, there is an open embedding of $\sV$ in $\sY_{\eta}$, where $\sY$ is an integral scheme proper finitely presented and flat over $\kc$, and an open subscheme $\cZ \subset \sY_s$ such that $V = \sp_{\sY}^{-1}(\cZ)$, where $\sp: (\sY_{\eta})^{\an} \to \sY_s$ underlies  the specialization map $\sp_{\what{\sY}}: (\what{\sY})_{\eta} \to \what{\sY}$ of the formal completion 
$\what{\sY}$ of $\sY$ along $\sY_s$, and $(\what{\sY})_{\eta}$ is identified with $(\sY_{\eta})^{\an}$, by properness.  It follows that $V$ coincides with the generic fiber of the admissible $\kc$-formal scheme $\what{\sY}_{/ \cZ}$,  formal completion of $\sY$ along $\cZ$.
\par
Step 3. 
 Assume now that $V$ is one-dimensional. Then  both $\sV$ and $\sY$ are one-dimensional. By the  semistable reduction theorem for curves \ref{strongsemistability}, there is  a projective strictly semistable $\eta$-modification $\sY^\p \to \sY$. In particular, $\sY^\p$ is proper, strictly semistable, and has projective smooth generic fiber $\cY$. The existence of a very ample line bundle on $\sY^\p$ relative to $\kc$ may be proven as in \cite[\S 7]{BL0}. 
 If $\cZ^\p$ is the preimage of $\cZ$ in $\sY_s^\p$, then the lemma holds for $\fV^{\p} = \what{\sY^\p}_{/ \cZ^\p}$,  formal completion of $\sY^\p$ along $\cZ^\p$.
\end{proof}
\par
We deduce as above from theorem \ref{modelaffintro} the notion of 
the \emph{minimum  semistable $\eta$-modification of $\fY$}, which is a pair $(\fY_{\st},\varphi_{\st})$, consisting of a semistable $\kc$-formal scheme $\fY_{\st}$,  and of a morphism $\varphi_{\st} : \fY_{\st} \to \fY$: it  is  again unique up to unique isomorphism inducing the identity on $\fY$.
\par
\smallskip

\begin{defn} \label{ssmodels} Let the  $k$-analytic curve $X$ and, in case, the projective curve $\cX$ such that $X = \cX^{\an}$, be as before. 
Let $\cF\cA(X)$ be the category whose objects are pairs $(\fY,\phi)$, where $\fY$ is an admissible normal formal scheme and $\phi$ is an isomorphism $\phi: X  \xrightarrow{\ \sim\ } \fY_{\eta}$ of $k$-analytic curves. We call the pair $(\fY,\phi)$, and abusively also the formal scheme $\fY$, an \emph{admissible formal model} of $X$. A \emph{morphism} $f: (\fY,\phi) \to (\fZ,\psi)$ is a morphism of $\kc$-formal schemes $f : \fY \to \fZ$ such that $f_{\eta} \circ \phi= \psi$. We usually identify $X$ with $\fY_{\eta}$ via $\phi$, and regard the specialisation map $\sp_{\fY} : (\fY_{\eta})_{G} \to \fY$, via the composition with the morphism locally ringed $G$-topological spaces $\phi_{G}$ induced by $\phi$, as a morphism $\sp_{\fY} : X_{G} \to \fY$. The category $\cF\cA(X)$ is \emph{the category of admissible formal models\footnote{We do not specify \emph{normal}, as we should,  in the rest of this paper.} of $X$}. \hfill\break
We similarly define the \emph{category $\cS\cA(\cX)$ of admissible models of $\cX$}, namely  pairs $(\sY,\phi)$, where $\sY$ is an admissible normal $\kc$-scheme and $\phi$ is an isomorphism $\phi: \cX  \xrightarrow{\ \sim\ } \sY_{\eta}$ of $k$-algebraic curves. A \emph{morphism} $f: (\sY,\phi) \to (\sZ,\psi)$ in 
$\cS\cA(X)$ is a morphism of $\kc$-schemes $f:\sY \to \sZ$ such that $f_{\eta} \circ \phi = \psi$.
\hfill\break
We define $\cF\cS(X)$ (resp. $\cS\cS(\cX)$) to be the full subcategory of 
$\cF\cA(X)$ (resp. of $\cS\cA(\cX)$) consisting of pairs $(\fY,\phi)$ (resp. $(\sY,\phi)$) in which $\fY$ (resp. $\sY$) is semistable. Similarly for the subcategory  $\cF\cS^{\st}(X)$ (resp. $\cS\cS^{\st}(\cX)$)  of \emph{strictly} semistable models. \hfill\break
We will denote by $\cP\cS\cS(\cX)$ (resp. $\cP\cS\cS^{\st}(\cX)$) the full subcategory of $\cS\cS(\cX)$ consisting of proper (resp. strictly) semistable $\kc$-models of $\cX$.
\end{defn}
\smallskip

\begin{defn} A \emph{Tate curve over $k$} is a smooth projective $k$-algebraic curve of genus 1, which does not admit a smooth projective model over the ring of integers $\kc$. By a \emph{rational}   curve we simply mean the $k$-projective line $\P^{1}$.  The analytification of a rational (resp. of a Tate) curve is also called \emph{rational} (resp. a \emph{Tate curve}).
\end{defn}
\begin{rmk} 
A Tate curve $X$ over $k$  is $k$-analytically isomorphic to $\G_{m,k}/q^{\Z}$, where $\G_{m,k}$ is the $k$-analytic multiplicative group, while $q \in k$, $0 < |q| <1$. 
\end{rmk}

The following result completely describes the category of formal semistable models of a given $k$-analytic curve. 
\begin{thm} \label{Q-analss} (Semistable reduction of compact $k$-analytic curves) Let   $Y$ be a compact rig-smooth strictly $k$-analytic curve.  The category $\cF\cS(Y)$ of semistable models of $Y$ and morphisms of $\kc$-formal schemes inducing the identity on the generic fiber, is a non-empty partially ordered set, where $x \to y \Leftrightarrow x \geq y$. For any objects $x,y$ of $\cF\cS(Y)$, $x \vee y = \sup \{x,y\}$ exists.   Every morphism $\fY^{\p\p} \to \fY^{\p}$ in $\cF\cS(Y)$ is an admissible blow-up of an open ideal $\fA$ of finite presentation  of $\cO_{\fY^{\p}}$, invertible outside of a finite set of points
 (in particular, it is proper).  Unless $Y$ is  a rational or a Tate  projective curve, for any object $x$ of $\cF\cS(Y)$, there is a unique minimal object $m_{x}$ of $\cF\cS(Y)$, such that $x \geq m_{x}$ and, 
 for any objects $x,y$ of $\cF\cS(Y)$, $x \wedge y = \inf \{x,y\}$ exists. In particular, $\cF\cS(Y)$ admits a minimum.
\end{thm}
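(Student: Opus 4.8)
The plan is to build the entire structure on the equivalence, established in \cite{FormalCurves} and sketched above, between $\cF\cS(Y)$ and the combinatorial category $\cG\cP^c(Y)$ of complete subpolygons of $Y$. Once one knows that the skeleton functor ${\bf S}: \cF\cS(Y) \xrightarrow{\ \sim\ } \cG\cP^c(Y)$ is an isomorphism of categories, every statement about $\cF\cS(Y)$ becomes a statement about complete subpolygons, where it can be checked by the elementary geometry of disks, annuli and segments in $Y$. Non-emptiness of $\cF\cS(Y)$ is Proposition \ref{projemb} in the affinoid case and the semistable reduction theorem for the projective curve $\cX$ (via Theorem \ref{strongsemistability}, applied after spreading out $\cX$ to an admissible $\kc$-scheme) in the projective case; translated through ${\bf S}$, this says merely that $Y$ contains at least one complete subpolygon, which one produces directly from any finite affinoid covering of $Y$ together with the decomposition $Y \setminus S(Y)$ into open disks.

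First I would record that $\cF\cS(Y)$ is partially ordered (not merely preordered): if $\fX \leq \fY$ and $\fY \leq \fX$ then the two canonical morphisms compose to the identity on both sides by the uniqueness clause in the definition of $\eta$-modification together with separatedness, so $\fX \cong \fY$ canonically, hence equal as objects of $\cF\cS(Y)$. Next, for the existence of $x \vee y$: given $\Gamma_x = {\bf S}(x)$ and $\Gamma_y = {\bf S}(y)$, the union $|\Gamma_x| \cup |\Gamma_y|$ is a closed connected subset of $Y$ which is a finite graph (each of $|\Gamma_x|, |\Gamma_y|$ is); it carries an obvious subpolygon structure $\Gamma_x \vee \Gamma_y$ with vertex set $\cV_{\Gamma_x} \cup \cV_{\Gamma_y}$ together with the finitely many points where an edge of one polygon meets the other, and it is complete because the complement $Y \setminus (|\Gamma_x| \cup |\Gamma_y|) \subset (Y\setminus|\Gamma_x|)$ is an open subset of a union of disks whose connected components, being simply connected subsets of disks bounded by a single point, are again disks. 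Then $\Gamma_x \vee \Gamma_y = {\bf S}(x\vee y)$ gives the supremum in $\cF\cS(Y)$, and the morphism $x \vee y \to x$ is, by the description of morphisms in $\cG\cP^c(Y)$ as refinements, the blow-up of the open ideal cutting out the finite set of new vertices — this simultaneously proves the "every morphism is an admissible blow-up, invertible outside a finite set of points" clause (the general case of an arbitrary morphism $\fY'' \to \fY'$ reduces to this by the refinement description, following \cite{BL1}, \cite{bosch}).

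For the minimum: here one must rule out the two exceptional cases. If $Y$ is neither rational nor a Tate curve, then $S(Y)$ (the image of the skeleton of $Y\wt k'$, equivalently the intersection $\bigcap_\Gamma |\Gamma|$ over complete subpolygons, which is non-empty and is a finite graph with no vertices of the "wrong" type precisely because the genus/analytic structure forbids it) is itself the underlying space of a canonical complete subpolygon ${\bf S}(Y) = (S(Y),\cV(Y),\cE(Y))$, whose vertices are the type-(2) points of $S(Y)$ with no annulus neighborhood; this subpolygon is $\leq$ every other, so ${\bf S}^{-1}({\bf S}(Y))$ is the minimum $\fX_0$, and more generally $m_x = \fX_0$ works — actually one gets $m_x$ as the model attached to the smallest complete subpolygon contained in $\Gamma_x$, obtained by deleting from $\Gamma_x$ all vertices and edges not needed to keep $Y\setminus|\Gamma|$ a union of disks. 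Existence of $x \wedge y$ then follows: $|\Gamma_x|\cap|\Gamma_y| \supseteq S(Y)$ is non-empty and connected, and carries an induced complete subpolygon structure which is the infimum. The main obstacle I expect is precisely the verification that $|\Gamma_x|\cap|\Gamma_y|$ is \emph{connected} and that its complement is a union of \emph{disks} (not annuli) — in the Tate case the intersection of two circles can be disconnected or empty, which is exactly why that case is excluded; the honest argument that connectedness holds in all other cases uses that any two points of $S(Y)$ are joined by a unique reduced path in $S(Y)$ which must lie in both $|\Gamma_x|$ and $|\Gamma_y|$, i.e. one leans on the quasi-polyhedron (tree-like away from $S(Y)$) structure of $Y$ from \cite[4.1]{Berkovich}, and the detailed bookkeeping of how edges of one polygon cut edges of the other is the part I would relegate to \cite{FormalCurves}.
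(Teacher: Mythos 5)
Your plan matches the paper's strategy: both translate $\cF\cS(Y)$ into an equivalent combinatorial category — the paper works primarily with semistable partitions $\cP\cS(Y)$, you with the equivalent $\cG\cP^c(Y)$ of complete subpolygons via the skeleton functor — then verify the order-theoretic claims by elementary geometry of disks, annuli and segments, delegating the remaining bookkeeping to \cite{FormalCurves}. That said, two points need repair. Your sup construction asserts that $|\Gamma_x| \cup |\Gamma_y|$ is \emph{connected}, and this fails exactly when $Y = (\P^1)^{\an}$, where $S(Y) = \emptyset$: two one-vertex complete subpolygons of $\P^1$ with distinct Gauss-point vertices have disjoint supports, yet the theorem asserts the existence of $x \vee y$ unconditionally (only $m_x$, $x\wedge y$, and the minimum carry the rational/Tate exclusion). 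The fix is to take not the bare union but the smallest connected subtree of $\H(Y)$ containing $|\Gamma_x| \cup |\Gamma_y|$ — adjoin the unique reduced path joining the two supports, using the quasi-polyhedron structure — and then mark as vertices the branch and terminal points; as written your construction has a hole for $\P^1$.

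Second, the diagnosis of the Tate exclusion ("the intersection of two circles can be disconnected or empty") is incorrect. For a Tate curve, every complete subpolygon has support containing the whole circle $S(Y)$, so $|\Gamma_x|\cap|\Gamma_y| \supseteq S(Y)$ is always connected and nonempty. The genuine obstruction is that the circle with empty vertex set is not a valid subpolygon; consequently two complete subpolygons whose unique vertices sit at distinct points of the circle admit \emph{no} common lower bound whatsoever, so $x\wedge y$ fails to exist. Your conclusion (exclude the Tate case) is correct, but the stated mechanism is not, and conflating the two would mislead you when spelling out the verification in full.
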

The proof we propose is purely analytic. It will be sketched in the next subsections. It is based on an equivalence of categories between $\cF\cS(Y)$ and a category $\cP\cS(Y)$  whose objects are  partitions of $Y$ into a finite family of open annuli and of affinoids with good canonical reduction.  The statement of the theorem, translated for $\cP\cS(Y)$, becomes an amusing discussion of intersections of disks and annuli in $Y$ in the style of \cite[Prop. 5.4]{BL0}.  For a complete proof, see \cite{FormalCurves}. 
\par
We admit here the following relatively standard result discussed in \cite{FormalCurves} (\cf \cite[III.1, Th. 5.4.5]{EGA}, \cite[Cor. 2.7]{BL0}, \cite[Thm. 3.1]{WL}.)

\begin{lemma} \label{completion} Let $\cY$ be a  smooth projective $k$-algebraic curve and let $Y = \cY^{\an}$ be the associated compact $k$-analytic curve. The completion functor  $\sZ \mapsto  \what{\sZ}$ induces  equivalences of categories 
\beq \cP\cS\cS(\cY) \xrightarrow{\ \sim\ } \cF\cS(Y) \; ,
\eeq
and
\beq \cP\cS\cS^{\st}(\cY) \xrightarrow{\ \sim\ } \cF\cS^{\st}(Y) \; .
\eeq
 
\end{lemma}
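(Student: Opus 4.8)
The completion functor $\sZ \mapsto \what{\sZ}$ induces equivalences of categories $\cP\cS\cS(\cY) \xrightarrow{\ \sim\ } \cF\cS(Y)$ and $\cP\cS\cS^{\st}(\cY) \xrightarrow{\ \sim\ } \cF\cS^{\st}(Y)$.

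**Proof.**
The plan is to show that completion along the closed fiber is essentially surjective and fully faithful on each of the two subcategories, deducing both from a single statement about proper admissible $\kc$-schemes with projective smooth generic fiber, and then checking that the (strict) semistability condition is preserved and reflected. First I would treat the ambient categories: the completion functor $\sZ \mapsto \what{\sZ}_{/\sZ_s}$ from proper admissible $\kc$-schemes whose generic fiber is $\cY$ (i.e. objects of $\cS\cA(\cY)$ that are moreover proper over $\kc$) to admissible formal models of $Y$ is an equivalence. Essential surjectivity here is the formal-to-algebraic algebraization: given an admissible $\kc$-formal scheme $\fY$ with $\fY_\eta = Y$ compact, hence $\fY$ proper over $\kc$ (its closed fiber is proper over $\kt$ since $Y$ is compact), one algebraizes by Elkik's theorem \cite[Thm. 7 and Rmk. 2 on p. 587]{elkik} locally to produce a finitely generated $\kc$-algebra at each affine chart and then glues using properness — this is exactly the argument already used in the proof of Proposition \ref{projemb}, Step 2, and in the proof of Theorem \ref{modelaffintro}; alternatively cite \cite[III.1, Th. 5.4.5]{EGA} in the noetherian case and \cite[Thm. 3.1]{WL}, \cite[Cor. 2.7]{BL0} in general. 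Full faithfulness: a morphism $\fY' \to \fY$ of such formal models, being an admissible blow-up by Theorem \ref{Q-analss}, algebraizes uniquely to a morphism $\sZ' \to \sZ$ of the corresponding proper $\kc$-schemes (blow up the algebraized ideal sheaf, which exists since coherent ideals of finite presentation on a formal completion of a proper scheme come from coherent ideals, again by \cite[III.1, Th. 5.4.5]{EGA} / \cite{WL}); conversely distinct algebraic morphisms induce distinct completions because the generic fibers already differ or the special fibers do.

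Next I would pin down that completion carries (strict) semistability back and forth. By Theorem \ref{evviva}, a normal admissible $\kc$-formal scheme $\fX$ is (strictly) semistable if and only if its closed fiber $\fX_s$ is a (strictly) semistable $\kt$-scheme, and by the displayed remark after the Definition of semistable $\kc$-schemes, a normal admissible $\kc$-scheme $\sX$ is (strictly) semistable if and only if its completion is. Since $\what{\sZ}_{/\sZ_s}$ and $\sZ$ have literally the same closed fiber $\sZ_s$, the scheme $\sZ$ is (strictly) semistable exactly when $\what{\sZ}$ is. Together with the previous paragraph this shows: the completion functor restricts to an equivalence between the full subcategory of $\cS\cA(\cY)$ on proper (strictly) semistable models and $\cF\cS^{(\st)}(Y)$. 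It remains to identify the source category with $\cP\cS\cS^{(\st)}(\cY)$, i.e. to check that "proper admissible semistable $\kc$-scheme with generic fiber $\cY$" is the same thing, as a category, as $\cP\cS\cS(\cY)$ — but that is precisely the definition of $\cP\cS\cS(\cY)$ (respectively $\cP\cS\cS^{\st}(\cY)$) given in Definition \ref{ssmodels}, so there is nothing to prove there beyond matching definitions.

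The main obstacle is the algebraization input: one must be careful that the formal models in $\cF\cS(Y)$ need not be noetherian (when $\kc$ is not noetherian), so \cite[III.1, Th. 5.4.5]{EGA} does not apply directly and one must invoke the non-noetherian versions \cite[Thm. 3.1]{WL} together with Elkik's approximation to supply, Zariski-locally, a finitely presented $\kc$-model with smooth (hence, after the semistable-reduction bookkeeping, semistable) closed fiber, and then glue these along the already-algebraic closed fiber; and one must verify that morphisms of semistable formal models, which by Theorem \ref{Q-analss} are admissible blow-ups of open finitely presented ideals invertible outside a finite set, algebraize compatibly. All of this is carried out in detail in \cite{FormalCurves}, to which we refer; here we content ourselves with the reduction indicated above. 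Since $\cY$, being a smooth projective curve, is neither affected by the blow-up constructions away from finitely many points nor fails properness, the hypotheses needed to apply these algebraization results are met throughout, and the functor is an equivalence on both the semistable and the strictly semistable levels.
\hfill$\square$
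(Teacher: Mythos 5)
The paper itself does not prove this lemma; it is admitted with references to \cite{FormalCurves} and to the same algebraization sources you invoke, so there is no proof in the text to compare against. Your overall route is the one those references support: algebraize the proper formal model to a proper $\kc$-scheme, transfer (strict) semistability through Theorem \ref{evviva} since $\sZ$ and $\what{\sZ}$ share a special fiber, and match the source category with $\cP\cS\cS^{(\st)}(\cY)$ by unwinding Definition \ref{ssmodels}.

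One caveat on the essential-surjectivity paragraph, though. The clause ``algebraize by Elkik's theorem locally to produce a finitely generated $\kc$-algebra at each affine chart and then glue using properness'' does not work as stated, and it is also not what Proposition \ref{projemb} or Theorem \ref{modelaffintro} actually do: those arguments treat the \emph{affine} formal scheme case (Prop.~\ref{projemb} then uses \cite[Lemma 9.4]{BerkContr} for compactification), where a single application of Elkik suffices. For a proper formal scheme covered by several affine charts, Elkik gives \emph{some} finitely generated model on each chart but no compatibility of the choices on overlaps, so there is nothing to glue. The correct mechanism is the global Grothendieck existence / formal GAGA theorem for proper schemes — in the non-noetherian setting, \cite[Thm.~3.1]{WL} — which produces the proper $\kc$-scheme directly and at the same time yields an equivalence on coherent sheaves, giving full faithfulness without needing to pass through the blow-up structure of Theorem \ref{Q-analss}. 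You do list \cite{WL} and \cite[III.1, Th.~5.4.5]{EGA} as alternatives, so the argument is sound once the Elkik-plus-gluing phrasing is dropped in favour of that global input; I am only noting that it is not an independent second proof.
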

%

The following corollary of theorem \ref{Q-analss} and lemma \ref{completion} gives a complete description of the category of proper semistable models of a given smooth projective curve. We are not aware of a direct algebraic proof in the style of Temkin's theorem \ref{strongsemistability}.
\begin{thm} \label{Q-semistability} (Semistable Reduction of Projective Curves) Let   $\cY$ be a  smooth projective $k$-algebraic curve.   A proper semistable model of $\cY$ is necessarily projective. The category $\cP\cS\cS(\cY)$ of projective semistable models of $\cY$,  and morphisms of $\kc$-schemes inducing the identity on the generic fiber, is a non-empty partially ordered set, where $x \to y \Leftrightarrow x \geq y$.  For any objects $x,y$ of $\cP\cS\cS(\cY)$, $x \vee y = \sup \{x,y\}$ exists. Every morphism $\sY^{\p\p} \to \sY^{\p}$ in $\cP\cS\cS(\cY)$ is a blow-up of a closed finite $a$-torsion subscheme  of $\cO_{\sY^{\p}}$, for some non-zero $a \in \kcc$ (in particular, it is projective).  Unless $Y$ is  a rational or a Tate  projective curve, for any object $x$ of $\cP\cS\cS(\cY)$, there is a unique minimal object $m_{x}$ of $\cP\cS(\cY)$, such that $x \geq m_{x}$ and, for any objects $x,y$ of $\cP\cS\cS(\cY)$, $x \wedge y = \inf \{x,y\}$ exists. In particular, $\cP\cS\cS(\cY)$ admits a minimum.
\end{thm}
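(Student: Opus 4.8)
The plan is to obtain the statement by transporting Theorem \ref{Q-analss} from the analytic to the algebraic side through the equivalence of Lemma \ref{completion}; this detour is essentially forced, since no direct algebraic argument in the spirit of Temkin's Theorem \ref{strongsemistability} is at hand. First I would dispose of the projectivity claim. Let $\sY$ be a proper semistable $\kc$-model of $\cY$. Its closed fibre $\sY_{s}$ is a proper semistable --- hence at worst nodal --- curve over the field $\kt$, so it is projective and carries an ample invertible sheaf; as $\sY$ is flat and proper over $\kc$ with one-dimensional fibres, the obstruction to lifting this sheaf through the infinitesimal neighbourhoods of $\sY_{s}$ lies in $H^{2}(\sY_{s},\cO_{\sY_{s}})$ and therefore vanishes, so one produces a $\kc$-relatively very ample invertible sheaf on $\sY$ exactly as in \cite[\S 7]{BL0} --- the very argument already invoked in Step 3 of the proof of Proposition \ref{projemb}. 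Hence a proper semistable model is projective, and $\cP\cS\cS(\cY)$ is precisely the full subcategory of $\cS\cS(\cY)$ cut out by the properness condition.

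Since $\cY$ is projective, for any proper $\kc$-scheme $\sZ$ the generic fibre of the completion $\what{\sZ}$ is canonically $(\sZ_{\eta})^{\an}$; consequently the completion functor of Lemma \ref{completion} is an equivalence $\cP\cS\cS(\cY) \xrightarrow{\ \sim\ } \cF\cS(Y)$, with $Y = \cY^{\an}$, which carries $\eta$-modifications to $\eta$-modifications and respects the partial orderings $x \to y \Leftrightarrow x \geq y$ on both sides. Being an equivalence, it is essentially surjective, so every object of $\cF\cS(Y)$ is the completion of a projective semistable $\kc$-model of $\cY$. Therefore every structural statement of Theorem \ref{Q-analss} about $\cF\cS(Y)$ transfers verbatim to $\cP\cS\cS(\cY)$: it is a non-empty poset, $x \vee y = \sup\{x,y\}$ always exists, and --- unless $\cY$ is a rational or a Tate curve --- every object dominates a unique minimal object $m_{x}$, binary infima $x \wedge y$ exist, and $\cP\cS\cS(\cY)$ admits a minimum. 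The rational and Tate cases are genuine exceptions, inherited from \ref{Q-analss}.

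It remains to identify the morphisms. Let $\sY^{\p\p} \to \sY^{\p}$ be a morphism in $\cP\cS\cS(\cY)$; by Theorem \ref{Q-analss} its completion $\fY^{\p\p} \to \fY^{\p}$ is the admissible blow-up of an open ideal $\fA \subset \cO_{\fY^{\p}}$ of finite presentation which is invertible off a finite set of points, and $\fA \supset (\pi^{n})$ for suitable $\pi \in \kcc \setminus \{0\}$ and $n$. By Grothendieck's existence theorem for the projective $\kc$-scheme $\sY^{\p}$ --- in the possibly non-noetherian form collected in \cite{FormalCurves} --- $\fA$ is the completion of a coherent ideal $\cA \subset \cO_{\sY^{\p}}$ with $(\pi^{n}) \subset \cA$; the closed subscheme $V(\cA)$ is then supported at the finitely many points where $\fA$ fails to be invertible, hence is finite over $\kc$, and is annihilated by $a := \pi^{n}$. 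Since completion commutes with blow-up along $\cA$, the morphism ${\rm Bl}_{\cA}\sY^{\p} \to \sY^{\p}$ --- of proper $\kc$-schemes, with generic fibre still $\cY$ --- has completion $\fY^{\p\p} \to \fY^{\p}$, and by full faithfulness of completion on proper flat $\kc$-schemes (again \cite{FormalCurves}) it coincides, over $\sY^{\p}$, with $\sY^{\p\p} \to \sY^{\p}$. Thus every morphism of $\cP\cS\cS(\cY)$ is the blow-up of a closed finite $a$-torsion subscheme of $\sY^{\p}$, and is in particular projective.

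The one genuinely delicate point is this last step: matching admissible \emph{formal} blow-ups with honest \emph{algebraic} blow-ups of $a$-torsion subschemes over a base $\kc$ that need not be noetherian, which is exactly where one must invoke the strengthened Grothendieck-existence and Elkik-approximation statements assembled in \cite{FormalCurves}. By contrast the lattice-theoretic content is a purely formal consequence of Theorem \ref{Q-analss} once the equivalence of Lemma \ref{completion} is in hand, and the projectivity of proper semistable models is the standard argument of \cite[\S 7]{BL0}.
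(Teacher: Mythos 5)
Your strategy --- importing Theorem \ref{Q-analss} through the completion equivalence of Lemma \ref{completion} --- is exactly the route the paper takes: Theorem \ref{Q-semistability} is introduced in the text merely as ``the following corollary of theorem \ref{Q-analss} and lemma \ref{completion}'', with the details deferred to \cite{FormalCurves}. Your treatment of projectivity (ampleness on the nodal special fibre, vanishing of the $H^{2}$-obstruction, algebraization as in \cite[\S 7]{BL0}) and the purely formal transfer of the lattice-theoretic assertions through the equivalence both match the paper's intent.

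There is, however, one step in your blow-up argument that is not sound as written: you assert that ``$V(\cA)$ is supported at the finitely many points where $\fA$ fails to be invertible, hence is finite over $\kc$.'' This does not follow. An ideal can be invertible at a point and still be a proper ideal of the local ring there --- for instance $\fA = (\pi)\cdot\fA'$ is invertible wherever $\fA'$ is, yet $V(\fA)$ then contains the whole special fibre --- so the invertibility locus and the complement of $V(\cA)$ need not coincide. Theorem \ref{Q-analss} only gives invertibility of $\fA$ outside a finite set, which is strictly weaker than the finiteness of $V(\cA)$ that you need. The gap is filled by appealing not to the abstract statement of \ref{Q-analss} but to the \emph{explicit} ideal $\fA_{\varphi}$ constructed in subsection \ref{blow-ups}: it is defined as the sections of $\cO_{\fX}$ whose pull-back to $X$ is $<1$ on the compact region $C_{\varphi}$, and since $C_{\varphi}$ specializes to a finite set of closed points, $\fA_{\varphi}$ actually equals $\cO_{\fX}$ off that finite set, so $V(\fA_{\varphi})$ is finite and supported in the special fibre. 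With that substitution the rest of your paragraph --- algebraization of the ideal via Grothendieck existence, compatibility of completion with blow-up, and full faithfulness on proper flat $\kc$-schemes --- goes through as you describe and yields the stated $a$-torsion closed finite subscheme.
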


 \subsection{Formal coverings and formal models} \label{formalmodels}
  \par
 For the definition and basic properties of formal affinoid coverings (always assumed to be \emph{strictly affinoid} in this paper) we refer to the first part of \cite[\S 4.3]{Berkovich}. We want to describe a functorial equivalence between (resp. strictly) semistable models and the (resp. strictly) semistable coverings of $X$ described below. 
 \medskip   
 \par
The set of  formal coverings  of $X$ forms a 
category ${\cC}ov(X)$ in which an arrow  $\cU \to \cV$ is a {\it refinement} of coverings, \ie a map of sets $\Phi : \cU \to \cV$ such that $ U \subset \Phi(U)$, $\forall \, U \in \cU$. 
If  $\Phi: \cU \to \cV$ is a refinement of formal coverings of $X$, the inclusions $U \hookrightarrow \Phi(U) = V$ induce   morphisms of canonical reductions $\wtilde{U} \to \wtilde{V}$ which patch together to a morphism of $\kt$-curves  
$\wtilde{\varphi}_{\Phi}: \wtilde{X}_{\cU} \to \wtilde{X}_{\cV}$. We obtain a natural functor $\cU \mapsto  \wtilde{X}_{\cU}$, $\Phi \mapsto \wtilde{\varphi}_{\Phi}$,  from ${\cC}ov(X)$  to the category ${\cS}ch_{\kt}$ of $\kt$-schemes. 
\par
For any formal covering $\cU$ of $X$ there is a canonical  {\it specialization  map} $\sp_{\cU} : X  \to  \wtilde{X}_{\cU}$,  obtained by patching together the canonical reduction maps for all affinoids in the covering (their compatibility being precisely the definition of a {\it formal} affinoid covering). Specialization maps also have a  functorial behaviour: if $\Phi : \cU \to \cV$ is a refinement, 
$\sp_{\cV} = \wtilde{\varphi}_{\Phi} \circ \sp_{\cU}$.
\par
Two formal coverings $\cU$ and $\cV$ of $X$ are {\it equivalent} if  $\cU \cup \cV$ is a formal covering of $X$. Equivalence of formal coverings is an equivalence relation: we then write  $\cU \sim \cV$, and denote by $[\cU]$ the equivalence class of $\cU$. 
\begin{defn} A refinement of equivalent coverings $\Phi: \cU \to \cV$ is called a
\emph{quasi-isomorphism}.
\end{defn}
It is easy to check that the class of quasi-isomorphisms in ${\cC}ov(X)$ admits calculus of right fractions. The corresponding localized category  is denoted ${\cC}ov(X)/\sim$ and is called \emph{the category of formal affinoid coverings of $X$ up to equivalence}. Its objects are equivalence classes $[\cU]$ of  formal coverings of $X$, and, for any pair $\cU$, $\cV$ of  formal coverings of $X$, 
\beq
{\rm Hom}_{{\cC}ov(X)/\sim}([\cU],[\cV]) = \lim_{\cU^\p \to \cU} {\rm Hom}_{{\cC}ov(X)}(\cU^\p,\cV) \; ,
\eeq
where $\cU^\p \to \cU$ is a quasi-isomorphism of  formal coverings. 
If $\Phi$ is a quasi-isomorphism, 
$\wtilde{\varphi}_{\Phi}$ canonically identifies $\wtilde{X}_{\cU}$ to $\wtilde{X}_{\cV}$.
In particular, if $\cU$ and $\cV$ are any pair of equivalent formal coverings  of $X$, the inclusions $\Phi:\cU \hookrightarrow \cU \cup \cV$ and  $\Psi:\cV \hookrightarrow \cU \cup \cV$ are quasi-isomorphisms and 
$\wtilde{\varphi}_{\Phi}$ and $\wtilde{\varphi}_{\Psi}$ are isomorphisms which canonically identify   $\sp_{\cU}$ with $\sp_{\cV}$. In other words, the natural  functor $\cU \mapsto  \wtilde{X}_{\cU}$ factors through the \emph{reduction functor} 
$ {\cC}ov(X)/ \sim \, \to {\cS}ch_{\kt}$.

\par Let us now assume that the formal covering $\cU$ is distinguished. One then defines a sheaf of topological rings, flat and topologically of finite type \cite[\S 6.4.1 Cor. 5]{BGR}, hence topologically of finite presentation \cite[\S 2.3 Cor. 5]{bosch} over $\kc$, $\cO_{\cU}$ on $\wtilde{X}_{\cU}$ by setting 
 $\cO_{\cU}(V) = \l((\sp_{\ast}\cO_{X_G})(V)\r)^{\circ}$, for any open subset $V$ of $\wtilde{X}_{\cU}$. 
The topologically ringed space $\fX_{\cU} = (\wtilde{X}_{\cU}, \cO_{\cU})$ is a normal formal model of $X$  whose specialization map of $G$-ringed spaces $\sp_{\fX_{\cU}} : X_G \to \fX_{\cU}$ set-theoretically identifies with $\sp_{\cU}$. 
It is easy to check that  if  $\Phi: \cU \to \cV$ is a refinement of formal coverings of $X$, there is a natural morphism of sheaves of topological rings $ \varphi_{\Phi}^{\flat}: \cO_{\cV} \to (\wtilde{\varphi}_{\Phi})_{\ast}\cO_{\cU}$ lifting $\wtilde{\varphi}_{\Phi}$ to a  morphism of formal schemes $\varphi_{\Phi} = (\wtilde{\varphi}_{\Phi} , \varphi_{\Phi}^{\flat}): \fX_{\cU} \to \fX_{\cV}$ whose generic fiber is  the identity of the $k$-analytic curve $X$. Moreover, if $\Phi$ is a quasi-isomorphism of distinguished coverings, $\varphi_{\Phi}: \fX_{\cU} \to \fX_{\cV} =  \fX_{\cU}$ is the identity of the formal scheme $ \fX_{\cU}$.
\par
\begin{defn} We denote by ${\cC}ov^{\dis}(X)$ the full subcategory of ${\cC}ov(X)$ whose objects are distinguished formal coverings of $X$ . 
\end{defn}
The class of quasi-isomorphisms in ${\cC}ov^{\dis}(X)$ admits calculus of right fractions. The corresponding localized category  ${\cC}ov^{\dis}(X)/\sim$ is a full subcategory of ${\cC}ov^{\dis}(X)/\sim$, and is called \emph{the category of distinguished formal coverings of $X$ up to equivalence}. Its objects are equivalence classes $[\cU]$ of distinguished formal coverings of $X$. 
The following result will be detailed in \cite{FormalCurves}. 
\begin{thm}
The
construction $\cU \mapsto \fX_{\cU}$, $\Phi \mapsto \varphi_{\Phi}$ extends to a functor  ${\cC}ov^{\dis}(X) \to {\cF\cA}(X)$, which induces an equivalence of categories ${\cC}ov^{\dis}(X)/\sim \,  \xrightarrow{\ \sim\ } {\cF\cA}(X)$. 
\end{thm}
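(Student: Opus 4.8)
The plan is to construct the functor $\cU \mapsto \fX_{\cU}$, $\Phi \mapsto \varphi_{\Phi}$ carefully on the level of $\mathcal{C}ov^{\dis}(X)$, check it sends quasi-isomorphisms to isomorphisms (so it descends to $\mathcal{C}ov^{\dis}(X)/\!\!\sim$), and then build a quasi-inverse out of the formal affinoid covering associated to a model. First I would verify that the assignment is functorial on $\mathcal{C}ov^{\dis}(X)$ itself: given composable refinements $\Psi \circ \Phi$ of distinguished coverings, the reduction morphisms $\wtilde{\varphi}_{\Psi\Phi} = \wtilde{\varphi}_{\Psi}\circ\wtilde{\varphi}_{\Phi}$ patch, and the maps on structure sheaves $\varphi^{\flat}$, being induced by the inclusions of affinoid subdomains and the rule $\cO_{\cU}(V) = ((\sp_{\ast}\cO_{X_G})(V))^{\circ}$, compose correctly because taking power-bounded subrings is functorial. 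The generic fiber of $\varphi_{\Phi}$ is the identity on $X$ since on each affinoid piece it is induced by an inclusion of affinoid domains whose union is all of $X$. This produces a functor $\mathcal{C}ov^{\dis}(X) \to \mathcal{F}\mathcal{A}(X)$; combined with the already-noted fact that quasi-isomorphisms go to identities (hence to isomorphisms) of formal schemes, it factors through the localization $\mathcal{C}ov^{\dis}(X)/\!\!\sim$.

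Next I would produce the quasi-inverse. Given an admissible normal formal model $(\fY,\phi)$ of $X$, choose a finite affine open cover $\fY = \bigcup_i \fV_i$ of $\fY_s$; then $\cU_{\fY} := \{\sp_{\fY}^{-1}(\fV_i)\}_i$ is a distinguished formal affinoid covering of $X$ (distinguishedness here reflecting normality of $\fY$ and the fact that $\cO_{\fY}(\fV_i)$ is the power-bounded subring of the affinoid algebra of $\sp_{\fY}^{-1}(\fV_i)$ — this is where one uses flatness and topological finite presentation over $\kc$). Different choices of affine open cover of $\fY_s$ give equivalent coverings of $X$, so $[\cU_{\fY}]$ is well-defined, and a morphism $f:(\fY,\phi)\to(\fZ,\psi)$ in $\mathcal{F}\mathcal{A}(X)$ pulls an affine cover of $\fZ_s$ back (after common refinement) to an affine cover of $\fY_s$, inducing a refinement of coverings; thus $(\fY,\phi)\mapsto[\cU_{\fY}]$ is a functor $\mathcal{F}\mathcal{A}(X)\to\mathcal{C}ov^{\dis}(X)/\!\!\sim$.

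Finally I would check the two composites are naturally isomorphic to the identities. For $\fX_{\cU_{\fY}}$ versus $\fY$: the underlying reduced scheme $\wtilde{X}_{\cU_{\fY}}$ is canonically $\fY_s$ (by the patching of canonical reductions and the comparison of $\sp_{\cU_{\fY}}$ with $\sp_{\fY}$), and the structure sheaves agree because on each affine piece $\cO_{\cU_{\fY}}(\fV_i) = ((\sp_{\ast}\cO_{X_G})(\sp^{-1}(\fV_i)))^{\circ} = \cO_{\fY}(\fV_i)$, using normality of $\fY$ to identify $\cO_{\fY}$ with the subsheaf of power-bounded sections of $\sp_{\ast}\cO_{X_G}$. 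For $[\cU_{\fX_{\cU}}]$ versus $[\cU]$: an affine cover of $(\fX_{\cU})_s = \wtilde{X}_{\cU}$ refining the images of the $U\in\cU$ pulls back under $\sp$ to a refinement of $\cU$, so the two coverings are equivalent. The main obstacle I anticipate is the structure-sheaf bookkeeping in step three — precisely, proving that for a normal admissible formal model $\fY$ one has $\cO_{\fY}=(\sp_{\fY*}\cO_{X_G})^{\circ}$ locally, so that $\fX_{\cU_{\fY}}$ literally recovers $\fY$ rather than merely its normalization; this is exactly the point where normality (imposed throughout in Definition~\ref{ssmodels}) is indispensable, and it is the substance deferred to \cite{FormalCurves}. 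The combinatorial/topological parts (patching reductions, comparing specialization maps, checking generic fibers) are routine given the material already recalled.
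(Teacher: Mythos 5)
The paper itself does not prove this theorem; it says only ``The following result will be detailed in \cite{FormalCurves},'' and the paragraphs immediately preceding the theorem statement already carry out the easy half of your plan (construction of $\cU \mapsto \fX_{\cU}$ and $\Phi \mapsto \varphi_{\Phi}$ on distinguished coverings, and the observation that a quasi-isomorphism $\Phi$ induces the identity on $\fX_{\cU}$, hence that the functor descends to the localization). So there is nothing in the text to compare against, but your plan is the standard Raynaud-theoretic argument and is the expected one: build the quasi-inverse $(\fY,\phi) \mapsto [\cU_{\fY}]$ by pulling back affine open covers of $\fY_s$ through $\sp_{\fY}$, check well-definedness via the stability of affine covers under union, check functoriality via $\sp_{\fZ}=f_s\circ\sp_{\fY}$ and common refinement, and verify both composites.

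You have also correctly isolated where the real content lies, namely in showing that $\cO_{\fY}$ is \emph{locally} the sheaf of power-bounded sections of $\sp_{\fY*}\cO_{X_G}$, which is exactly why the paper restricts $\cF\cA(X)$ to normal models. One small caution on your wording: in the sentence about distinguishedness of $\cU_{\fY}$ you attribute the equality $\cO_{\fY}(\fV_i)=(\cO_{\fY}(\fV_i)\otimes_{\kc}k)^{\circ}$ to flatness and topological finite presentation, but those alone do not give it — an arbitrary admissible formal scheme can fail this. The equality needs reducedness of $\fY_s$ (equivalently, that the $\fV_i$ have reduced canonical reduction, which is what makes the covering distinguished), and it is the normality of $\fY$ that guarantees this, via \cite[Prop.~1.1]{BL0}--type arguments. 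You in effect acknowledge this later in the paragraph, so the plan is consistent, but the parenthetical is misleading as stated. Apart from that, the proposal correctly reflects the structure of the proof that the paper is pointing to.
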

As explained in  \cite[\S 2.8, Step (b) in the proof of Thm. 3]{bosch}, for two admissible formal models $\fX$ and $\fY$ of $X$, there is at most one morphism $\varphi: \fX \to \fY$ inducing the identity on generic fibers. Therefore, the category ${\cF\cA}(X)$ is really a partially ordered directed set. 
It then follows that,  for any pair $\cU$, $\cV$ of distinguished formal coverings of $X$, 
\beq
{\rm Hom}_{{\cC}ov^{\dis}(X)/\sim}([\cU],[\cV]) =  \left\{ \begin{array}{ll}
         \{ \geq \} & \mbox{if $\exists \; \Phi : \cU^\p \to \cV^\p\, , \; \cU^\p \sim \cU \, , \; \cV^\p \sim \cV$};\\
        \; \; \emptyset \; & \mbox{otherwise}.\end{array} \right.  \; ,
\eeq
where $\Phi : \cU^\p \to \cV^\p$ is a refinement of  formal coverings. 

\begin{defn} Let $X$ be as before, and let  $\cU$ be a distinguished  formal strictly affinoid covering of $X$. We say that 
$\cU$ is a (resp. \emph{strictly}) \emph{semistable affinoid covering} of $X$, if $\wtilde{X}_{\cU}$ is a (strictly) semistable curve over $\kt$. 
 It is clear that these notions are compatible with  equivalence  of formal coverings. We define $\cC\cS(X)$ (resp. $\cC\cS^{\st}(X)$) as  the full subcategory of ${\cC}ov^{\dis}(X)/\sim$ whose objects are  equivalence classes of (resp. strictly) semistable affinoid coverings of $X$. 
\end{defn}
\begin{rmk}
A semistable covering $\cU$ of $X$ is distinguished by assumption and  the reduction $\wtilde{X}_{\cU}$ is geometrically reduced.  We deduce from \cite[Prop. 1.2]{BL0}  that for any non-archimedean extension field $K$ of $k$ and for $\cU 
\what{\otimes} K = \{U\what{\otimes} K\}_{U \in \cU}$, one has 
 $$\wtilde{X\what{\otimes} K}_{\cU\what{\otimes} K} = \wtilde{X}_{\cU} \otimes \wtilde{K} \; .$$
In particular,  $\cU 
\what{\otimes} K $ is a semistable covering of $X\what{\otimes} K$. 
\end{rmk}

 \par
It follows from Theorem \ref{evviva} that if the covering $\cU$ is (strictly) semistable, then $\fX_{\cU}$ is a (strictly) semistable model of $X$. 
\begin{cor} The construction $\cU \mapsto \fX_{\cU}$  induces an equivalence of the category $\cC\cS(X)$ (resp. $\cC\cS^{\st}(X)$)  of (resp. strictly)  semistable coverings  up to equivalence, and the category 
${\cF}{\cS}(X)$ (resp. ${\cF}{\cS}^{\st}(X)$). 
It has the property that ${\rm Hom}(\fX_{\cU}, \fX_{\cV})$ is nonempty if and only if there are formal coverings $\cU^\p$ and $\cV^\p$ respectively equivalent to $\cU$ and $\cV$, and a refinement 
 $\Phi : \cU^\p \to \cV^\p$. In that case ${\rm Hom}(\fX_{\cU}, \fX_{\cV}) = \{\varphi_{\Phi}\}$, via the canonical identifictions of $\fX_{\cU^\p}$  and $\fX_{\cV^\p}$ with $\fX_{\cU}$  and $\fX_{\cV}$, respectively. 
\end{cor}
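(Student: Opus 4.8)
The plan is to derive this corollary directly from the equivalence ${\cC}ov^{\dis}(X)/\sim \xrightarrow{\ \sim\ } {\cF\cA}(X)$ of the preceding theorem, by restricting it to the full subcategories cut out by the semistability condition and checking that those conditions match under the equivalence. First I would identify the closed fiber of the formal model attached to a distinguished covering: by construction $\fX_{\cU} = (\wtilde{X}_{\cU},\cO_{\cU})$, the sheaf $\cO_{\cU}$ is flat and topologically of finite presentation over $\kc$, and on an affinoid piece $U = \cM(\sA)$ of a distinguished covering one has $\cO_{\cU}(\wtilde{U}) = \sA^{\circ}$, whose reduction modulo $\kcc$ is the reduced $\kt$-algebra $\wtilde{\sA}$ defining the canonical reduction $\wtilde{U}$. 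Patching over $\cU$ this gives $(\fX_{\cU})_{s} = \wtilde{X}_{\cU}$ as $\kt$-schemes.

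With this in hand, Theorem~\ref{evviva} does the rest of the matching: since $\fX_{\cU}$ is a normal admissible $\kc$-formal scheme, it is (strictly) semistable if and only if its closed fiber $(\fX_{\cU})_{s} = \wtilde{X}_{\cU}$ is a (strictly) semistable $\kt$-curve, which is by definition exactly the requirement that $\cU$ be a (strictly) semistable covering. Hence the functor $\cU \mapsto \fX_{\cU}$ carries $\cC\cS(X)$ (resp.\ $\cC\cS^{\st}(X)$) onto the full subcategory of ${\cF\cA}(X)$ consisting of (strictly) semistable models, \ie ${\cF\cS}(X)$ (resp.\ ${\cF\cS}^{\st}(X)$), and also reflects that property; since the functor is already an equivalence onto ${\cF\cA}(X)$, and in particular essentially surjective, it restricts to the asserted equivalences $\cC\cS(X) \xrightarrow{\ \sim\ } {\cF\cS}(X)$ and $\cC\cS^{\st}(X) \xrightarrow{\ \sim\ } {\cF\cS}^{\st}(X)$.

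For the statement about morphisms I would simply transport the Hom-description already obtained for the localized category: $\cC\cS(X)$ being a full subcategory of ${\cC}ov^{\dis}(X)/\sim$, the set ${\rm Hom}([\cU],[\cV])$ is the one-element set $\{\geq\}$ precisely when there are equivalent refinements $\Phi : \cU^{\p} \to \cV^{\p}$ with $\cU^{\p} \sim \cU$, $\cV^{\p} \sim \cV$, and is empty otherwise, the morphism being $\varphi_{\Phi}$ read through the canonical identifications $\fX_{\cU^{\p}} \cong \fX_{\cU}$, $\fX_{\cV^{\p}} \cong \fX_{\cV}$; pushing this across the equivalence gives the claimed description of ${\rm Hom}(\fX_{\cU},\fX_{\cV})$, and the fact that this Hom set has at most one element is also a consequence of \cite[\S 2.8]{bosch}, which makes ${\cF\cA}(X)$ a partially ordered set. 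The only step that takes any real work is the identification $(\fX_{\cU})_{s} = \wtilde{X}_{\cU}$ together with the two-way matching of semistability it yields via Theorem~\ref{evviva} (one direction of which --- semistable covering $\Rightarrow$ semistable model --- is already recorded in the text just above); everything else is formal manipulation with the equivalence of the preceding theorem and with the localized category ${\cC}ov^{\dis}(X)/\sim$.
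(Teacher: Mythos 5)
Your argument matches the paper's, which itself leaves the corollary without a formal proof and merely notes just above it that ``It follows from Theorem \ref{evviva} that if the covering $\cU$ is (strictly) semistable, then $\fX_{\cU}$ is a (strictly) semistable model of $X$.'' You supply the small amount of bookkeeping the paper leaves implicit — the identification $(\fX_{\cU})_{s} = \wtilde{X}_{\cU}$, the two-way transfer of semistability via Theorem \ref{evviva}, and the transport of the Hom-description across the already-established equivalence ${\cC}ov^{\dis}(X)/\!\sim \; \xrightarrow{\ \sim\ } \cF\cA(X)$ — and this is exactly the intended route.
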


 \par
 \subsection{Disks and annuli} \label{annuli} 
For $a \in \A^1(k)$ and $r \in \R_{>0}$ 
we denote by 
 $$D(a,r^-) = \{x \in \A^1 :  |T(x) - a| < r \} \subset \A^1  
$$ 
(resp. 
$$D(a,r^+) = \{x \in \A^1 : |T(x) - a|  \leq r\} \subset \A^1 \text{) ,}$$
 the \emph{ standard open} (resp. \emph{closed}) \emph{$k$-disk} of radius $r>0$ centered at $a$. 
The maximal point point  $t_{a,r}$ of $D(a,r^+)$ is defined by the multiplicative norm 
$$ |\sum_{i=0}^m a_i (T-a)^i (t_{a,r})| = \max_{i = 0, \dots,m} |a_i|r^i $$ on $k[T]$. 
We also denote by 
$$B(a;r_1,r_2) = \{x \in \A^1 : r_1  < |T(x) - a| < r_2   \} \subset \A^1 $$ 
(resp. 
$$B[a;r_1,r_2]  = \{x \in \A^1 :  r_1  \leq  |T(x) - a| \leq  r_2   \} \subset \A^1\text{) ,}$$
the  \emph{standard open} (resp. \emph{closed}) \emph{$k$-annulus} of radii $0 < r_1 < r_2$
(resp. $0 < r_1 \leq r_2$), centered  at $a \in \A^1(k)$.  We write $B(r_1,r_2)$ (resp. $B[r_1,r_2]$) for the standard annulus  $B(0;r_1,r_2)$ (resp. $B[0;r_1,r_2]$).  The ratio $r(V) = r_1/r_2 \in (0,1)$ is the \emph{height} of   $V = B(a;r_1,r_2)$ or $B[a;r_1,r_2]$. 
In general, an open or closed $k$-disk (resp. $k$-annulus) is a $k$-analytic space isomorphic to a standard open or closed $k$-disk (resp. $k$-annulus). It is $k$-rational if, moreover, the radii of the corresponding disk or annulus are in $|k|$. Two $k$-rational $k$-annuli, both open or both closed, are isomorphic iff they have the same height.

Moreover, if a $k$-analytic curve $V$  is isomorphic  via the coordinate $T$ to 
$D(0,r^{-})$ (resp. to $B(r(V),1)$),  for any automorphism $\varphi \in \Aut (V)$, and any $x \in V$, $|T(\varphi(x))|$ equals $|T(x)|$ (resp. either $|T(x)|$ or $ r(V)/ |T(x)|$: $\varphi$ is \emph{direct} in the former case and \emph{inverse} in the latter). 
An open  $k$-disk  (resp. $k$-annulus)  is a simply-connected quasi-polyhedron; it  has precisely one (resp. two) endpoint(s). 
\par   
A $k$-disk or $k$-annulus in  a $k$-analytic curve  $X$ will simply be called a \emph{disk} or an \emph{annulus} in $X$. 
\par 
\begin{defn} \label{basicannuli} The \emph{standard formal disk} over $\kc$ is the formal affine line $\what{\A}^1 := \what{\A^1_{\kc}} = \Spf   \kc\{T\}$. The \emph{standard formal annulus of height $r \in |k| \cap (0,1)$} over $\kc$ is the $\kc$-formal scheme $\fB[r,1] := \Spf \kc\{S,T\}/(ST -a)$, where $a \in \kc$, $|a| =r$. Notice that $\fB[r,1]$ is well-defined. 
A \emph{basic formal disk} (resp. a  \emph{basic formal annulus}) over $\kc$ is an affine connected $\kc$-formal scheme $\fB = \Spf A$ equipped with a dominant \'etale morphism $T$ to the standard formal disk (resp. to a standard formal annulus) over $\kc$, such that the inverse image of the point $0 \in \A^{1}(\kt)$ (resp. $(S,T) \in \Spec \kt[S,T]/(ST)$) consists of a single point, with residue field $\kt$.
A  \emph{basic affinoid disk}  (resp. \emph{basic affinoid annulus}) over $k$ is the generic fiber of a basic formal disk (resp. annulus) over $\kc$. We write $(\fB,T)$ 
for a formal disk (resp. annulus) and $(\cB,T)$ for the corresponding affinoid disk (resp. annulus). 
\end{defn}
Notice that if $(\fB,T)$ is a basic formal disk (resp. annulus) as in the definition, the generic fiber of $T$ induces an isomorphism of $k$-analytic spaces  $T^{-1}(D(0,1^{-})) \xrightarrow{\ \sim\ } D(0,1^{-})$
 (resp. $T^{-1}(B(|a|,1)) \xrightarrow{\ \sim\ } B(|a|,1)$)

\par \noindent

We will make essential use of the following  immediate consequence of definition \ref{STRss}.
\begin{lemma} \label{cover}
Every semistable $\kc$-formal scheme has a finite covering by \'etale neighborhoods which are basic formal annuli or disks over $\kc$. 
\end{lemma}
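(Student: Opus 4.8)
The plan is to localize the problem using the definition of (strict) semistability and then to reduce to a careful discussion of the standard local model $\Spf \kc\{S,T\}/(ST-a)$. First, since $\fX$ is semistable, there is a surjective \'etale morphism $\fX' \to \fX$ with $\fX'$ strictly semistable; as \'etale morphisms are open and the source has an \'etale covering by basic pieces iff the target does (an \'etale neighborhood of a point of $\fX$ is, after pullback, an \'etale neighborhood of a point of $\fX'$), it suffices to treat the strictly semistable case. So assume $\fX$ is strictly semistable. By Definition \ref{STRss}, $\fX$ is covered, Zariski-locally, by open formal subschemes $\fU$ each admitting an \'etale morphism $\phi: \fU \to \Spf \kc\{S,T\}/(ST-a)$ for some $a \in \kcc \setminus \{0\}$ (with the convention that $a$ a unit, i.e.\ $\Spf \kc\{T\}$, is the ``disk'' case). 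Since $\fX$ is quasi-compact, finitely many such $\fU$ suffice; it remains to show each $\fU$ is itself covered by \'etale neighborhoods that are \emph{basic} formal disks or annuli in the sense of Definition \ref{basicannuli} --- that is, the extra condition that the fibre over the distinguished point of the special fibre of the standard model is a single point with residue field $\kt$.

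Next I would fix a point $x \in \fU_s$ and produce a basic piece around it. Let $\bar x = \phi_s(x) \in (\Spec \kt[S,T]/(ST))$. If $\bar x$ is \emph{not} the origin (the node or, in the disk case, the origin of $\A^1$), then near $\bar x$ the special fibre of the standard model is smooth over $\kt$, hence $\phi$ is an \'etale morphism to a (localized) smooth $\kc$-formal affine line, and after a further Zariski localization on the target we may replace the standard model by $\Spf \kc\{T'\}$ for a suitable coordinate $T'$ and arrange, by shrinking $\fU$, that the fibre over $0$ is the single point $x$ with $\widetilde{\sH}$ equal to $\kt$ (here one uses that $x$ is $\kt$-rational, which holds because $k$ is algebraically closed, so $\kt$ is too, together with the standard fact that an \'etale morphism can be cut down to an isomorphism on a neighborhood of a point lying over a point with trivial residue extension). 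If $\bar x$ \emph{is} the node, one argues similarly: $\phi$ is \'etale at $x$ over $\Spf \kc\{S,T\}/(ST-a)$, the preimage of the node is finite \'etale over $\Spec \kt$, hence a disjoint union of $\kt$-points (again $k$ algebraically closed), and by replacing $\fU$ by the open formal subscheme isolating the component of this preimage containing $x$ we obtain a basic formal annulus of height $|a|$. Doing this for each point of $\fU_s$ and using quasi-compactness of $\fU_s$ yields the desired finite \'etale covering of $\fU$, and assembling over the finitely many $\fU$ covering $\fX$ finishes the proof.

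The step I expect to be the main obstacle --- or at least the one needing the most care --- is the ``isolation'' step: given an \'etale $\phi: \fU \to \Spf \kc\{S,T\}/(ST-a)$ and a point $x$ of $\fU_s$ over the node, one must exhibit an \emph{open} (Zariski) formal neighborhood $\fB$ of $x$ in $\fU$ on which the preimage of the node consists of $x$ alone, while $\phi|_\fB$ is still dominant \'etale onto a \emph{standard} formal annulus (possibly after adjusting the coordinate, e.g.\ rescaling $S$ and $T$ so that the height stays in $|k|$). This is where one invokes that the preimage of the node under an \'etale morphism is a finite \'etale $\kt$-scheme, hence (over an algebraically closed $\kt$) a finite set of $\kt$-rational points, so a Zariski-open neighborhood of $x$ in $\fU_s$ omitting the others pulls back to the required $\fB$; then $\fB_\eta$ is the corresponding basic affinoid. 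All of this is routine given the machinery already in place (Theorem \ref{evviva} is not even needed, only Definition \ref{STRss}), but one should be attentive that the étale-locally-standard structure of a strictly semistable formal scheme genuinely refines to the \emph{basic} notion, rather than merely to a Zariski-local one.
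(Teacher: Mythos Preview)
The paper gives no proof of this lemma, stating only that it is ``an immediate consequence of Definition~\ref{STRss}''; your plan is a correct and careful unpacking of precisely that definition, and the isolation step you flag as the main point is handled exactly as you describe (the fibre over the node of an \'etale map is finite \'etale over the algebraically closed field $\kt$, hence a finite set of $\kt$-points, so one shrinks to a connected affine open meeting only one of them). One small wording issue: in reducing from semistable to strictly semistable you only need the direction that an \'etale covering of $\fX'$ by basic pieces, composed with the surjective \'etale $\fX' \to \fX$, yields an \'etale covering of $\fX$ by basic pieces --- your parenthetical is phrased for the unneeded converse.
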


In our discussion below, it will be more convenient to use a different normalization for the coordinate function $T$ on a formal annulus. Namely, 
for $r_{1},r_{2}\in |k| \cap (0,1]$, we define, 
\beq \label{genbasannuli}
A[r_{1},r_{2}] = \kc\{S,T,U\}/(a_{2}S - T, TU -a_{1}) \; \; , \;\;  \fB[r_{1},r_{2}] = \Spf  A[r_{1},r_{2}] \; ,
\eeq
where $a_{i} \in \kc$, $|a_{i}| =   r_{i}$, $i=1,2$. So, $\fB[r_{1},r_{2}]$ is isomorphic to the formal annulus $\fB[r_{1}/r_{2},1]$, via the coordinate $T/a_{2}$.  Let $\fB$ be a $\kc$-formal scheme and $T:\fB \to \fB[r_{1},r_{2}]$ be an \'etale morphism, such that the composite 
$\fX  \xrightarrow{\ T \ }  \fB[r_{1},r_{2}]  \xrightarrow{\ T/a_{2}\ }  \fB[r_{1}/r_{2},1]$ is a basic formal annulus of height $r_{1}/r_{2}$ over $\kc$. We still say that $(\fB,T)$ (resp. its generic fiber $(\cB,T)$) is a \emph{basic formal (resp. affinoid) annulus of radii $r_{1}$, $r_{2}$ over $\kc$.} 

\subsection{Semistable partitions}
\label{formalpartitions} 

\par 
We introduce here a very simple notion, equivalent to the ones discussed in the previous subsections of semistable formal model  and of semistable affinoid covering of $X$. It is the notion of  a ``semistable partition" of $X$. It has the virtue of making completely obvious two well-known statements for which there is no canonical reference in the literature. Namely, the existence of a minimum semistable model of $X$ (unless $X$ is the analytification of either a rational or a Tate curve), and the existence of a common minimum semistable refinement of two semistable formal coverings. Again, we need such explicit descriptions, in view of application to differential systems on $X$. \par
We refer to \cite{FormalCurves} for full proofs in a more general setting.
\begin{defn} Let $X$ be a connected compact rig-smooth strictly $k$-analytic curve over an algebraically closed  non-trivially valued non-archimedean field $k$. A partition of $X$ into a disjoint union of a finite set of open annuli $\cB = \{ B_1, \dots, B_N\}$ and a finite set $\cC = \{C_1, \dots,C_r\}$  of connected distinguished strictly affinoid domains with good  canonical reduction will be called a  \emph{semistable partition} of $X$, and will be denoted by $\sP(\cB,\cC)$. The semistable partition $\sP(\cB,\cC)$ is a \emph{strictly semistable partition} if every annulus $B \in \cB$ has two distinct boundary points in $X \setminus B$. 
If $X$ is the analytification of a smooth projective curve with good reduction, we also regard $\sP(\emptyset,\{X\})$ as a  (strictly) semistable partition. 
For two semistable partitions $\sP = \sP(\cB,\cC)$ and $\sP^\p = \sP(\cB^\p,\cC^\p)$ of $X$, we say that 
$\sP^\p \geq \sP$ if for every $B \in \cB$ (resp. $C \in \cC$), the elements of  $\cB^\p$ and $\cC^\p$ contained in $B$ (resp. $C$) give a partition of $B$ (resp. $C$). We denote by  $\cP\cS(X)$ (resp. $\cP\cS^{\st}(X)$) the category, in fact a partially ordered set,  of semistable (resp.  strictly semistable) partitions of $X$. 
\end{defn} 
\begin{rmk} The condition that the affinoids of $\cC$ be distinguished is equivalent to the requirement that the boundary points of the annuli of $\cB$ in $X$ be points of type (2) \cite[6.4.3]{BGR}.
\end{rmk}
\begin{rmk} If $\sP(\cB,\cC)$ is a strictly semistable partition of $X$,  and $B \in \cB$, the two boundary points of $B$  in $X \setminus B$ are the maximal points of two distinct affinoids $C_{B,1}$ and $C_{B,2} \in \cC$.
\end{rmk}
 Let $\fX$ be   a semistable (resp. a strictly semistable) formal model of $X$ and let $\sp: X \to \fX_s$ be the corresponding specialization  map to a curve over $\kt$. Let ${\rm Sing}(\fX_s)$ be the singular locus of $\fX_s$, and  let $\fX_s^{\sm} = \fX_s \setminus {\rm Sing}(\fX_s)$ be the smooth part of $\fX_s$. Then ${\rm Sing}(\fX_s) = \{z_1,\dots,z_N\}$ where each $z_i$ is a closed point of $\fX_s$, and $B_i := \sp^{-1}(z_i)$ is an open annulus in $X$. Similarly, let ${\fc_1},\dots,{\fc_r} $ be the connected (hence irreducible) components  of the smooth locus $\fX_s^{\sm}$, and let  
 $\wtilde{\eta}_{\fc_j}$ be the generic point of ${\fc}_j$. Then either $\fX$ is smooth  in which case $r=1$ and $\fc_{1} = \fX_{s}$, 
 or  $\sp^{-1}(\fc_j)$ is a connected affinoid domain $C_j$ in $X$ with good canonical reduction, whose Shilov boundary consists of the single point $\eta_{C_j}$ of type (2), unique inverse image of $\wtilde{\eta}_{\fc_j}$.
Let $\cB_{\fX} = \{B_1,\dots,B_N\}$ and $\cC_{\fX} = \{C_1,\dots,C_r\}$.  We conclude that $\sP_{\fX} = \sP(\cB_{\fX},\cC_{\fX})$ is a  semistable (resp. strictly semistable) partition of $X$. If $\varphi: \fY \to \fX$ is a morphism of semistable (resp.  strictly semistable) formal models of $X$, then $\sP_{\fY} \geq \sP_{\fX}$:
we have defined a functor $\fX \mapsto \sP_{\fX}$ from the category of   semistable (resp. strictly semistable) formal models of $X$ to the category of  semistable (resp. strictly semistable)  partitions of $X$. 
\par
\medskip
 
\begin{thm}
The functor $\fX \mapsto \sP_{\fX}$ induces an equivalence of categories between $\cF\cS(X)$      (resp. $\cF\cS^{\st}(X)$) and 
 $\cP\cS(X)$  (resp. $\cP\cS^{\st}(X)$).
\end{thm}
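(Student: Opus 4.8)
The plan is to deduce the statement from the equivalence $\cC\cS(X)\xrightarrow{\ \sim\ }\cF\cS(X)$ (resp. $\cC\cS^{\st}(X)\xrightarrow{\ \sim\ }\cF\cS^{\st}(X)$) already established, using that $\cF\cS(X)$ and $\cP\cS(X)$ are partially ordered sets. Since $\fX\mapsto\sP_{\fX}$ is order-preserving by construction, it will be an equivalence of categories as soon as we produce an order-preserving quasi-inverse, i.e. an assignment $\sP\mapsto\fX_{\sP}$ on semistable (resp. strictly semistable) partitions together with natural identifications $\sP_{\fX_{\sP}}=\sP$ and $\fX_{\sP_{\fX}}\cong\fX$ over $X$. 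So the real task is to build $\fX_{\sP}$.

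First I would make the incidence combinatorics of a semistable partition $\sP=\sP(\cB,\cC)$ explicit. A boundary point of an annulus $B\in\cB$ in $X\setminus B$ cannot lie in another, necessarily open, annulus of $\cB$, hence it lies in some $C\in\cC$; and it must be the Shilov point $\eta_C$ of $C$, because $C$ has good canonical reduction, so $C\setminus\{\eta_C\}$ is a disjoint union of open disks that are open in $X$, none of which can contain a limit of points of $B\subset X\setminus C$. Thus $\sP$ determines a finite connected graph with vertex set $\cC$ and edge set $\cB$, each $B$ joining the one or two affinoids carrying its ends (two distinct ones precisely in the strictly semistable case). Next I would glue the pieces: over each $C\in\cC$ take $\Spf C^{\circ}$, an admissible $\kc$-formal scheme whose special fiber is the smooth affine $\kt$-curve $\wtilde C$ (here one uses that $C$ is distinguished with good reduction); over each $B\in\cB$ of height $h(B)\in|k|\cap(0,1)$ take a standard formal annulus $\fB[h(B),1]$; and glue these along the formal open immersions prescribed by the graph, identifying at each end of $\fB[h(B),1]$ a formal boundary sub-annulus with the corresponding formal neighbourhood of $\eta_C$ inside $\Spf C^{\circ}$. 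The result $\fX_{\sP}$ is an admissible $\kc$-formal scheme whose closed fiber is the nodal curve obtained by gluing the $\wtilde C$ along the nodes of the graph, which is a (strictly, in the second case) semistable $\kt$-curve; hence $\fX_{\sP}$ is (strictly) semistable by Theorem \ref{evviva}. By construction its specialisation map restricts to the canonical reductions over the $C$'s and realizes each $B$ as the preimage of a node of the closed fiber, so $\sP_{\fX_{\sP}}=\sP$; and applied to $\sP=\sP_{\fX}$ this recipe produces a formal model carrying the same affine covering of its closed fiber and the same specialisation map on $X$ as $\fX$, hence defining the same equivalence class of semistable affinoid covering, whence $\fX_{\sP_{\fX}}\cong\fX$ over $X$ by $\cC\cS(X)\xrightarrow{\ \sim\ }\cF\cS(X)$. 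Finally, if $\sP'\geq\sP$ then each $C\in\cC$ (resp. $B\in\cB$) inherits from $\sP'$ a semistable partition of its own, corresponding to an admissible modification of $\Spf C^{\circ}$ (resp. of $\fB[h(B),1]$); regluing these along the refined graph of $\sP'$ recovers $\fX_{\sP'}$, and the contractions over the individual pieces glue to a morphism $\fX_{\sP'}\to\fX_{\sP}$ inducing the identity on $X$, so $\fX_{\sP'}\geq\fX_{\sP}$.

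The step I expect to be the main obstacle is precisely the gluing construction: one must check that the combinatorial data carried by a semistable partition --- a finite graph, decorated by affinoids with good reduction at the vertices and by heights in $|k|\cap(0,1)$ at the edges, subject to the distinguishedness condition (equivalently, that the ends of the annuli be points of type (2)) of the remark above --- matches \emph{exactly} the data of a (strictly) semistable $\kc$-formal scheme, and that the identification of a formal neighbourhood of $\eta_C$ in $\Spf C^{\circ}$ with a boundary sub-annulus of an adjacent formal annulus is canonical, independent of the auxiliary choices of coordinates. This is the analytic counterpart of the elementary ``intersections and unions of disks and annuli'' analysis of \cite[Prop.~5.4]{BL0}, and it is carried out in full in \cite{FormalCurves}; granting it, Theorem \ref{evviva} and the covering equivalence reduce everything else to bookkeeping, and in particular exhibit $\sP\mapsto\fX_{\sP}$ as a quasi-inverse of $\fX\mapsto\sP_{\fX}$.
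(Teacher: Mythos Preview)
Your overall strategy---build an order-preserving quasi-inverse $\sP\mapsto\fX_\sP$ and check the two composites---is the paper's, but the \emph{construction} of $\fX_\sP$ is genuinely different. The paper does not glue formal pieces directly. Instead, for each $B\in\cB$ with adjacent affinoids $C,C'\in\cC$ (those whose Shilov points lie in $\ol B$), it removes finitely many maximal open disks from $B\cup C\cup C'$ to produce an \emph{affinoid} $U_B$ with $B\subset U_B\subset B\cup C\cup C'$, and proves (Lemma~\ref{GLUE}, using the projective embedding of Proposition~\ref{projemb} and a gluing-of-disks trick to manufacture an auxiliary proper curve) that the canonical reduction of $U_B$ is two smooth curves crossing normally (or one curve with a single ordinary double point if $C=C'$). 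The family $\cU_\sP=\cC\cup\{U_B:B\in\cB\}$ is then a semistable affinoid \emph{covering} of $X$, and one sets $\fX_\sP:=\fX_{\cU_\sP}$ via the equivalence $\cC\cS(X)\xrightarrow{\ \sim\ }\cF\cS(X)$ already available. The gain is that all gluing is absorbed into that prior equivalence; the only new analytic input is the reduction computation for a single $U_B$.

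Your direct formal gluing is plausible in spirit but, as you suspect, the step you flag is where it breaks down as written. The phrase ``identifying at each end of $\fB[h(B),1]$ a formal boundary sub-annulus with the corresponding formal neighbourhood of $\eta_C$ inside $\Spf C^\circ$'' does not parse: $\eta_C$ is the Shilov point of $C$, hence the \emph{generic} point of $\wtilde C$, and has no proper open formal neighbourhood. What one must actually do is glue one branch of the formal annulus (the open formal subscheme where one coordinate is invertible, a formal $\G_m$) to the open of $\Spf C^\circ$ lying over $\wtilde C$ minus the single closed point that is the specialisation of the residue class of $C$ abutting $B$; and one must identify \emph{which} closed point this is and exhibit a canonical isomorphism of the two formal $\G_m$'s over $X$. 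Note also that the generic fibre of $\fB[h(B),1]$ is a \emph{closed} annulus, not $B$ itself, so the generic fibres of your pieces are not disjoint and you must arrange the overlaps so that the glued generic fibre is exactly $X$. The paper's covering route sidesteps all of this: the overlap $U_B\cap C$ is a formal affinoid in both $U_B$ and $C$, and the machinery of $\cC\cS(X)\xrightarrow{\ \sim\ }\cF\cS(X)$ handles the identification automatically.
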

\begin{proof} 
In order to construct a quasi-inverse of the functor $\fX \mapsto \sP_{\fX}$, we first associate to a semistable (resp. strictly semistable)  partition $\sP = \sP(\cB,\cC)$ of $X$,  a (resp. strictly) semistable covering $\cU_{\sP}$ of $X$, as follows.   For any $B \in \cB$, let $\{ C,C^\p \} \subset \cC$ be the set of $C \in \cC$  such that the closure $\ol{B}$ of $B$ in $X$ contains the maximal point $\eta_C$ of $C$. (We do not exclude that $C = C^{\p}$). Notice that $B \cup C \cup C^\p = \ol{B} \cup C \cup C^\p$ is a compact connected analytic domain in $X$, hence it is either an affinoid or a projective curve. In both cases, cutting off a finite number of  open disks contained in either $C$ or $C^{\p}$, we get an affinoid domain $U_{B}$ such that 
$B \subset U_B \subset B \cup C \cup C^\p$.
\begin{lemma} \label{GLUE} Let $B,C,C^\p$ be as before and let $U_{B}$ be any affinoid domain of $X$ such that $B \subset U_B \subset B \cup C \cup C^\p$ and such that $\l( B \cup C \cup C^\p \r) \setminus U_{B}$ is the union of a finite number of \emph{maximal} open disks contained in either $C$ or $C^{\p}$. Then, 
the canonical reduction of $U_B$ is a reduced curve which is either
irreducible  with a single ordinary double point, if $C=C^\p$, or  
the union of two smooth irreducible curves crossing normally, if $C \not= C^\p$.
\end{lemma} 
\begin{proof} 
We apply Prop. \ref{projemb} to realize  $U_{B}$  as an affinoid domain in $Y = \sY_{\eta}^{\an} = \fY_{\eta}$, where $\sY$ is a strictly semistable projective $\kc$-scheme of formal completion $\fY$, and  $U_{B}$ is of the form $\sp_{\fY}^{-1}(\cZ)$, for $\cZ$ an affine open subset of $\fY_{s}$.  Notice that this \emph{does not} imply that the canonical reduction map of $U_{B}$ be just the restriction of  $\sp_{\fY}$ to $U_{B} \to \cZ$. All we know is that there is a morphism  compatible with  specialization maps  $\cZ \to \widetilde{U}_B$. We now construct a semistable partition $\sP_{B} = (\fB_{B},\fC_{B}) \leq \sP_{\fY}$ of $Y$ as follows. On $\sp_{\fY}^{-1}(\fY_{s} \setminus \cZ)$, we let our partition be defined as $\sP_{\fY^{\p}}$, where  $\fY^{\p}$ is the open formal subscheme of $\fY$ supported on $\fY^{\p}_{s} := \fY_{s} \setminus \cZ$. On $U_{B} \subset Y$, the partition $\sP_{B}$ is given by the single open annulus $B$ and the (one or two) affinoids $\{C\cap U_{B} ,C^{\p}\cap U_{B} \}$ (the $\cap$ being taken in the original curve $X$).
Let $B_1, \dots, B_m$ (resp. $B^\p_1, \dots, B^\p_{m^\p}$) be the elements of $\fB_{B}$, different from $B$, whose closure contains  $\eta_C$ (resp. $\eta_{C^\p}$). As in the proof of \cite[3.6.1]{BerkovichEtale}, we cut thin subannuli, for small positive  $\veps$, $B^{(\veps)}_1, \dots, B^{(\veps)}_m$ (resp. $B^{\p (\veps)}_1, \dots, 
B^{\p (\veps)}_{m^\p}$), $B^{(\veps)}_i \subset B_i$, $i = 1, \dots, m$ (resp.  $B_{j}^{\p (\veps)} \subset B^\p_j$, $j = 1, \dots, m^\p$) having $\eta_C$ (resp. $\eta_{C^\p}$) as a cluster point. We assume that, for every $i$, $B_i$ is identified with $B(r_{i},1)$ in such a way that $t_{0,r} \to \eta_{C}$ as $r \to 1$, and $B^{(\veps)}_i$ is identified with $B(1 -\veps,1) \subset B(r_{i},1)$, and similarly for $B_{j}^{\p (\veps)} \subset B^\p_j$, for every $j$.
We now 
glue open disks to $C$ (resp. $C^\p$), via the natural embeddings $B^{(\veps)}_i = B(1 -\veps,1) \subset D(0,1^-)$  (resp.
$B_j^{\p (\veps)} = B(1 -\veps,1) \subset D(0,1^-)$) as $r \to 1$, to obtain ${\bf C}$ (resp. ${\bf C}^\p$).  The resulting $k$-analytic  curve $T = U_B \cup {\bf C} \cup {\bf C}^\p$  is smooth and proper, and is therefore algebraizable $T = \sT^{\an}$, where $\sT$ is a smooth projective curve over $k$. Moreover, the formal covering $\{ U_B , {\bf C} , {\bf C}^\p \}$ determines a formal model $\fT$ of $T$ whose special fiber is the union of the canonical reductions $\{ \widetilde{U}_B ,\widetilde{\bf C} , \widetilde{\bf C}^\p \}$ of the three affinoids in the covering. As for $\widetilde{\bf C}$ and $\widetilde{\bf C}^\p$, they are disjoint affine open smooth $\kt$-subvarieties of $\fT_{s}$. 
Moreover, $X \setminus \sp_{\fT}^{-1}(\widetilde{\bf C} \cup \widetilde{\bf C}^\p) = B$  implies that 
$\fT_{s} \setminus (\widetilde{\bf C} \cup \widetilde{\bf C}^\p)$ consists of a single $\kt$-rational point. 
  By \cite[2.3]{BL0}, that point is an ordinary double point of $\fT_{s}$. This concludes the proof. 
\end{proof}
We now define the affinoid  covering $\cU_{\sP}$ of $X$, as the union of $\cC$ and of the set of all  affinoinds $U_B$ constructed as above, for all $B \in \cB$. It is clear that, for any choice of $B,B^\p \in \cB$ and of affinoids 
$U_B$ and $U^\p_{B^\p}$, as above, the intersection $U_B \cap U^\p_{B^\p}$ is a formal affinoid in both $U_B$ and $U^\p_{B^\p}$. In other words, 
we have shown that   $\cU_{\sP}$ is a formal semistable covering of $X$. 
We  set $\fX_{\sP}$ for the formal scheme  $\fX_{\cU_{\sP}}$. 
If $\sP^\p \geq \sP$ there  a canonical refinement $\Phi: \cU_{\sP^\p}  \to  \cU_{\sP}$, and  therefore  a morphism $\fX_{\sP^\p} \to \fX_{\sP}$.
\par The composite functor $\fX \mapsto \sP := \sP_{\fX} \mapsto \cU_{\sP} \mapsto\fX_{\cU_{\sP}}$ is easily seen to be the identity  of $\cF\cS(X)$ (resp. of $\cF\cS^{\st}(X)$). Similarly, 
$\sP \mapsto \cU_{\sP} \mapsto \fX := \fX_{\cU_{\sP}} \mapsto \sP_{\fX}$ is the identity of   $\cP\cS(X)$  (resp.  $\cP\cS^{\st}(X)$). 
\end{proof}
\par
The proof  of the following results will be detailed in \cite{FormalCurves}.

\begin{lemma}
\label{uniondiskannuliCLOSED} Let $X$ be a compact rig-smooth strictly $k$-analytic curve. Then
\begin{enumerate}
\item The  union of two closed disks  with nonempty intersection in $X$ is either 
one of the two disks, or else it is all of $X$ and then $X$ is the analytification of a projective smooth curve of genus 0. In the former case the intersection of the two disks is one of them; in the latter, it is a closed annulus. 
\item
A non-empty  intersection of a closed  disk  and a closed annulus in $X$  is either the disk or the  annulus  or a strictly smaller closed annulus. Their union is, respectively, the annulus, the disk or a strictly bigger closed disk. 
\item
A non-empty intersection  of two closed annuli in $X$ consists of either one or two disjoint  closed annuli. Their union is a closed annulus in the former case and a Tate curve in the latter.
\end{enumerate}
\end{lemma}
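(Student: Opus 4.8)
The plan is to reduce everything to the case of standard disks and annuli sitting inside $\A^1$ (or $\P^1$), in the spirit of \cite[Prop. 5.4]{BL0}, and to exploit the fact that any compact analytic domain appearing here is either affinoid or the analytification of a smooth projective curve. First I would treat statement $(1)$. Let $D_1$ and $D_2$ be closed disks in $X$ with $D_1 \cap D_2 \neq \emptyset$. Since $D_1$ is a closed disk, it is affinoid; embed a neighborhood of $D_1$ in $\A^1$ using a normalized coordinate so that $D_1 = B[a_1;0,r_1]$. The trace on $D_1$ of the boundary point $\eta_{D_2}$ and the combinatorics of the inclusion $\A^1 \hookrightarrow \P^1$ force $D_1 \cap D_2$ to be a sub-disk of $D_1$ and $D_1 \cup D_2$ to be a disk, \emph{unless} $D_2$ is not contained in any affine chart adapted to $D_1$ — equivalently, unless $D_2$ contains the point "at infinity" relative to the coordinate on $D_1$. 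In that remaining case $D_1 \cup D_2$ covers all of $\P^1_k{}^{\an}$, whence $X = \P^1_k{}^{\an}$ and $X$ is the analytification of a genus-$0$ curve; the two disks are then complementary closed disks whose intersection is the closed annulus between their boundary points. The key technical input is that two points of type (2) or (3) on the skeleton of an annulus, together with the retraction onto that skeleton, determine the nested/complementary position of the disks they bound.

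Next, for $(2)$, let $D$ be a closed disk and $A$ a closed annulus with $D \cap A \neq \emptyset$. Pick a coordinate $T$ on (a neighborhood of) $A$ identifying $A$ with $B[0;r_1,r_2]$; the intersection $D \cap A$ is an analytic domain in $A$, hence determined by the interval of the skeleton $[r_1,r_2]$ it meets and by which residue classes of $A$ it contains. A connected closed disk meeting $A$ either contains one of the two "inner" or "outer" circles of $A$ (then $D \supset A$, so $D \cap A = A$ and $D \cup A = D$), or it is swallowed by $A$ ($D \subset A$, so $D \cap A = D$ and $D \cup A = A$), or it straddles exactly one boundary circle of $A$, in which case $D \cap A$ is a strictly smaller sub-annulus and $D \cup A$ is a strictly larger closed disk. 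Here one uses that $D$ is simply connected (a quasi-polyhedron with one endpoint) so it cannot contain the full loop that $A$ retracts onto unless it contains $A$ entirely; this rules out the genus-$0$-ambient pathology of part $(1)$. The only delicate point is checking that the "straddling" union is again a \emph{disk}: this follows by producing an explicit coordinate in which $D$ is standard and observing that adjoining the thin end of $A$ enlarges the radius.

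For $(3)$, let $A_1, A_2$ be closed annuli with $A_1 \cap A_2 \neq \emptyset$. Using the skeleta: $A_1 \cap A_2$ is an analytic domain in $A_1$, so its image under $\tau_{A_1}$ is a union of intervals in the skeleton $S(A_1) \cong [r_1,r_2]$, and likewise in $S(A_2)$. The intersection of skeleta is either a single segment or a disjoint pair of segments (two annuli can "overlap at both ends" when their union closes up into a loop). Accordingly $A_1 \cap A_2$ is one closed annulus or two disjoint closed annuli. In the first case a direct coordinate computation shows $A_1 \cup A_2$ is again a closed annulus. In the second case $A_1 \cup A_2$ has a skeleton which is a circle, it is proper and smooth of genus $1$ over $k$, hence the analytification of an elliptic curve with bad reduction, i.e. a Tate curve — here one invokes the remark that a smooth projective genus-$1$ curve without smooth model over $\kc$ is $\G_{m,k}/q^{\Z}$, and the uniformization is built from the two annular charts.

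The main obstacle, and the step requiring genuine care, is the bookkeeping of \emph{orientations} and of \emph{which} boundary circle of each annulus is adjacent to the intersection, since the gluing in $(3)$ that produces a Tate curve depends on the relative orientation of the two annular skeleta; the surgery argument is exactly of the style of \cite[2.3, Prop. 5.4]{BL0}, and I expect the cleanest route is to pass through a common semistable partition (Theorem preceding this lemma) refining the trivial partitions attached to the given disks/annuli, thereby translating all assertions into elementary statements about one-dimensional skeleta and their retractions; the full details are deferred to \cite{FormalCurves}.
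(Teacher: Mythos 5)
The paper does \emph{not} supply a proof of this lemma: immediately before the statement it says ``The proof of the following results will be detailed in \cite{FormalCurves},'' so there is nothing in this paper to compare your argument against. What you have proposed is a plausible sketch in the right spirit (reduce to skeleta, invoke the Fresnel--Matignon dichotomy, pass through semistable partitions à la \cite[Prop.\ 5.4]{BL0}), and it is the same kind of argument the paper invokes for the nearby statements.

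That said, several steps would not survive scrutiny. In part $(1)$, ``embed a neighborhood of $D_1$ in $\A^1$'' does not suffice: there is no reason $D_2$ should lie in that chart, and the dichotomy you want is not ``$D_2$ hits $\infty$ or not'' but rather the structure of $V:=D_1\cup D_2$, which is compact and connected in $X$, hence either strictly affinoid or the analytification of a smooth projective curve \cite{FM}; only after identifying $V$ (via Prop.\ \ref{projemb} or the partition formalism) does the nested-vs.-complementary picture in $\P^1$ become available. In part $(2)$ you assert ``$D$ is simply connected \dots\ so it cannot contain the full loop that $A$ retracts onto,'' but a \emph{closed} annulus $B[r_1,r_2]$ retracts onto the segment $[t_{0,r_1},t_{0,r_2}]$, not a loop, and is itself simply connected; the dichotomy you need is about which Shilov boundary points of $A$ the disk $D$ contains, not about $\pi_1$. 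In part $(3)$ the claim ``the intersection of skeleta is either a single segment or a disjoint pair of segments, accordingly $A_1\cap A_2$ is one or two closed annuli'' skips the case where $S(A_1)$ and $S(A_2)$ meet transversally at a single point: if, e.g., $A_1=B[0;1/4,1]$ and $A_2=B[b;1/8,1/2]$ with $|b|=1/2$, then $S(A_1)\cap S(A_2)=\{t_{0,1/2}\}$, and $A_1\cap A_2 = B[0;1/4,1/2]\setminus D(b,(1/8)^-)$ has Shilov boundary $\{t_{0,1/4},t_{0,1/2},t_{b,1/8}\}$, so it is not of the shape your ``accordingly'' asserts. Either an additional positional hypothesis must be imposed on the disks and annuli (as is implicit when they arise from a common semistable partition, the setting in which the lemma is used), or the case analysis must be enlarged; your sketch conceals this in the same place the paper conceals it, namely the deferral to \cite{FormalCurves}. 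Finally, your own hedge at the end --- that the clean route passes through a common semistable partition and elementary statements about skeleta --- is almost certainly the intended proof, but it is precisely the part you have not carried out, so what you have written is a plan rather than a proof.
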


\begin{cor}\label{uniondiskannuli} The lemma holds by replacing ``closed'' by ``open'' everywhere in the statement.
\end{cor}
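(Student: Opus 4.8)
The plan is to deduce the open case from the already-established ``closed'' Lemma \ref{uniondiskannuliCLOSED} by approximation. First I would fix, for each open disk (resp.\ open annulus) occurring in the statement, an increasing exhaustion $D = \bigcup_n E_n$ by closed disks $E_n \subseteq E_{n+1}$ (resp.\ $B = \bigcup_n A_n$ by closed sub-annuli $A_n \subseteq A_{n+1}$); such exhaustions exist because a disk or annulus in $X$ is, by definition, isomorphic to a standard one in $\A^1$, and $D(0,r^-) = \bigcup_{s<r} D(0,s^+)$, $B(r_1,r_2) = \bigcup_{r_1<s_1<s_2<r_2} B[s_1,s_2]$. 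The elementary observation driving the argument is that each point of an open disk or annulus lies in all but finitely many members of its exhaustion, so a point in the intersection of two of the open sets in the statement lies, for all large $n$, in the two corresponding closed members; in particular those closed members have nonempty intersection for $n \gg 0$.

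Then I would apply Lemma \ref{uniondiskannuliCLOSED} at each large level $n$ and use the pigeonhole principle to extract a single alternative that holds for infinitely many $n$. For (1): for $n \gg 0$, $E_n^{(1)} \cup E_n^{(2)}$ equals $E_n^{(1)}$, or $E_n^{(2)}$, or all of $X$ (the last forcing $X$ of genus $0$). If, say, $E_n^{(2)} \subseteq E_n^{(1)}$ for infinitely many $n$, cofinality of the exhaustion yields $D_2 = \bigcup_n E_n^{(2)} \subseteq D_1$, whence $D_1 \cup D_2 = D_1$ and $D_1 \cap D_2 = D_2$; if the third alternative occurs infinitely often, then $D_1 \cup D_2 = X$ with $X$ rational, and $D_1 \cap D_2$ is the complement in $(\P^1)^{\an}$ of the two disjoint closed ``disks'' $X \setminus D_1$ and $X \setminus D_2$, which one reads off the standard model to be an open annulus. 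Parts (2) and (3) run identically with Lemma \ref{uniondiskannuliCLOSED}(2) and (3): in (3), the first time $A_n \cap A_n'$ acquires two connected components the union $A_n \cup A_n'$ is a Tate curve, hence is all of $X$, and the same persists for all larger $n$, so $B_1 \cup B_2 = X$ is a Tate curve and $B_1 \cap B_2 = \bigcup_n (A_n \cap A_n')$ is a disjoint union of two open annuli; otherwise one remains in the single-annulus regime throughout.

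The step carrying the real content --- and the expected main obstacle --- is the passage to the limit in the non-degenerate cases: one must show that an increasing union of nested closed disks (resp.\ closed annuli) in $X$ is again an open disk (resp.\ open annulus), the only exception being when it exhausts all of $X$, which can happen only for $X \cong (\P^1)^{\an}$ in the disk case and for $X$ a Tate curve in the annulus case. I would handle this by realizing all the disks and annuli in play simultaneously inside one strictly semistable model via Proposition \ref{projemb} and comparing their skeleta, so that nested annuli become concentric for a common coordinate and the union can be computed on the standard models in $\A^1$ or $\P^1$; this is precisely the bookkeeping carried out in detail in \cite{FormalCurves}, to which the precise proof is deferred.
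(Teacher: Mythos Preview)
The paper gives no proof of this corollary: both it and Lemma~\ref{uniondiskannuliCLOSED} are among the results whose detailed proofs are explicitly deferred to \cite{FormalCurves} (see the sentence immediately preceding the lemma). There is therefore no in-paper argument to compare against.

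Your approximation strategy --- exhaust each open disk or annulus by closed ones, apply Lemma~\ref{uniondiskannuliCLOSED} level by level, stabilize the alternative by pigeonhole, and pass to the limit --- is a natural and essentially correct route. You have also correctly isolated the one step carrying real content: that an increasing nested union of closed disks (resp.\ closed annuli) in $X$ is again an open disk (resp.\ open annulus), except in the degenerate cases where it fills all of $X$. Your plan to handle this by embedding everything in a common strictly semistable model via Proposition~\ref{projemb} and computing in standard coordinates is reasonable, and is precisely the sort of verification the paper itself postpones to \cite{FormalCurves}.

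One small point worth making explicit in the write-up: in the non-degenerate alternatives of parts (2) and (3), the closed annuli $E_n \cap A_n$ (or $A_n \cap A_n'$) are not merely increasing as subsets of $X$ but are genuinely nested as annuli (their skeleta form an increasing chain of closed subintervals of a single segment), so that your limiting statement applies. This follows from the nesting of the chosen exhaustions, but it is the hypothesis your limit step actually uses, and it is what rules out pathologies such as a union of closed annuli whose ``inner'' and ``outer'' ends wander independently.
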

\begin{cor}\label{anninterCLOSED} Let $C$ and $C^{\p}$ be connected strictly affinoid domains of $X$, with good canonical reduction.
Then
\begin{enumerate}
\item  Assume $X$ is not rational. Then $C \cap C^{\p}$ equals either $\emptyset$, or $C$ or $C^{\p}$, or else it is a formal affinoid domain in both $C$ and $C^{\p}$.
\item If the union $U = C \cup C^{\p}$ is an affinoid domain in $X$, properly containing both $C$ and $C^{\p}$, then $U$ has good canonical reduction, and its specialization map is induced by those of $C$ and $C^{\p}$.
\item The intersection of a closed annulus $B$ of $X$ with $C$, equals either $\emptyset$, or $B$, or $C$, or the complement in $C$ of a finite number of open disks contained in distinct residue classes, and coinciding with the full residue class except for at most one of them. 
\end{enumerate} 
\end{cor}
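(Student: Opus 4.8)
The plan is to reduce the three assertions to the structure theory of semistable models above, the key point being the following claim: \emph{if $C\subset X$ is a connected strictly affinoid domain with good canonical reduction, then there are a semistable formal model $\fX$ of $X$ and an open subscheme $\cW_C\subseteq\fX_s$ with $C=\sp_{\fX}^{-1}(\cW_C)$.} To prove it, recall that such a $C$ has a single Shilov point $\eta_C$, of type $(2)$, with canonical reduction $\widetilde C$ a smooth connected affine $\kt$-curve; the branches of $X$ at $\eta_C$ are the closed points of the smooth projective curve $\overline{\widetilde{\sH(\eta_C)}}\supseteq\widetilde C$, and those contained in $C$ are exactly the residue disks $\sp_C^{-1}(\widetilde x)$, $\widetilde x\in\widetilde C$ a closed point. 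Starting from any semistable model of $X$ (which exists by Theorem~\ref{modelaffintro} or Proposition~\ref{projemb}) and performing admissible blow-ups one arranges that $\eta_C$ be the generic point $\widetilde\eta_{\fc}$ of a component $\fc$ of $\fX_s^{\sm}$ and that every branch of $X$ at $\eta_C$ \emph{not} contained in $C$ be realised in $\fX$ either as a residue disk of $\fc$ or as the open annulus $\sp_{\fX}^{-1}(z)$ of a node $z$ of $\fX_s$ adjacent to $\fc$; then the open subscheme $\cW_C$ of $\fX_s$ obtained from $\fc\cap\fX_s^{\sm}$ by deleting the finitely many closed points whose residue disk is not in $C$ satisfies $\sp_{\fX}^{-1}(\cW_C)=\{\eta_C\}\cup(\text{union of the residue disks over the closed points of }\cW_C)=C$.

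Granting the claim, (1) follows at once. Assume $W:=C\cap C'\neq\emptyset$ with $C,C'$ incomparable, and pick, by the claim, semistable formal models $\fX,\fX'$ of $X$ with $C=\sp_{\fX}^{-1}(\cW_C)$ and $C'=\sp_{\fX'}^{-1}(\cW_{C'})$, $\cW_C\subseteq\fX_s$ and $\cW_{C'}\subseteq\fX'_s$ open. By Theorem~\ref{modelaffintro}, applied to a semistable $\eta$-modification of the normalisation of the join of $\fX$ and $\fX'$, there is a semistable formal model $\fX''$ of $X$ together with proper morphisms $q\colon\fX''\to\fX$ and $\rho\colon\fX''\to\fX'$ inducing the identity on $X$. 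Then $C=\sp_{\fX''}^{-1}(q^{-1}(\cW_C))$ and $C'=\sp_{\fX''}^{-1}(\rho^{-1}(\cW_{C'}))$, with $q^{-1}(\cW_C),\rho^{-1}(\cW_{C'})$ quasi-compact open subschemes of the separated $\kt$-scheme $\fX''_s$; hence $W=\sp_{\fX''}^{-1}(q^{-1}(\cW_C)\cap\rho^{-1}(\cW_{C'}))$ is the preimage of a quasi-compact open subscheme of $\fX''_s$. As $q^{-1}(\cW_C)$ supports an open formal subscheme of $\fX''$ which is a formal model of $C$, and $q^{-1}(\cW_C)\cap\rho^{-1}(\cW_{C'})$ is open inside it, $W$ is a formal affinoid subdomain of $C$, and symmetrically of $C'$.

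Assertion (2) now follows. Let $U=C\cup C'$ be affinoid and properly contain both $C$ and $C'$. If $C\cap C'=\emptyset$ then $\widetilde U=\widetilde C\sqcup\widetilde{C'}$ is smooth. Otherwise $W:=C\cap C'\notin\{\emptyset,C,C'\}$, so by (1) $W$ is a non-empty formal affinoid subdomain of $C$ and of $C'$; being $\sp_C^{-1}$ (resp. $\sp_{C'}^{-1}$) of a non-empty open subscheme of the irreducible curve $\widetilde C$ (resp. $\widetilde{C'}$), $W$ has good canonical reduction with Shilov point $\eta_C$, and likewise $\eta_{C'}$, so $\eta_C=\eta_{C'}$; thus $\widetilde C$ and $\widetilde{C'}$ are open subschemes of $\overline{\widetilde{\sH(\eta_C)}}$ and $\widetilde U=\widetilde C\cup\widetilde{C'}$ is open in that smooth projective curve, hence smooth — i.e.\ $U$ has good canonical reduction — with $\sp_U$ induced by $\sp_C,\sp_{C'}$ (\emph{cf.}~\cite[Prop.~1.2]{BL0}). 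For (3) I would argue directly. Let $W:=B\cap C\notin\{\emptyset,B,C\}$ and write $X\setminus B$ as the disjoint union of the connected open regions $R_i$ ``beyond'' the ends $\varepsilon_i$ of $B$ (so $\overline{R_i}=R_i\cup\{\varepsilon_i\}$, with one region if $B$ is a loop), so that $W=C\setminus\bigcup_i(C\cap R_i)$. Using Corollary~\ref{uniondiskannuli} one finds that $C\cap R_i$ is a union of full residue disks of $C$ when $\varepsilon_i=\eta_C$; is empty when $\varepsilon_i\notin C$ — for then $\eta_C\notin R_i$ (otherwise $C\subseteq R_i$, since $\varepsilon_i\notin C$ prevents $C$ from leaving $R_i$, whence $W\subseteq B\cap R_i=\emptyset$), so $R_i$ lies in a single branch of $X$ at $\eta_C$ disjoint from $C$; and is a single proper open sub-disk $D\cap R_i$ of one residue disk $D$ of $C$ when $\varepsilon_i$ is interior to $D$ (here $R_i\subseteq D$, so Corollary~\ref{uniondiskannuli}$(2)$ applies). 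Moreover at most one end of $B$ can be interior to a residue disk of $C$: the skeleton of $B$ joins $\varepsilon_1$ to $\varepsilon_2$, two distinct residue disks of $C$ are separated by $\eta_C$ while a single one contains the whole joining arc, so in the contrary case $\eta_C\notin B$ and $B\subseteq C$, i.e.\ $W=B$, which is excluded. Therefore $W$ is $C$ with finitely many full residue disks removed, together with at most one further proper open sub-disk — exactly the asserted form.

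The substance of the argument is not the reduction to models, but the geometry underlying the claim — that a point of type $(2)$ has, in a suitable semistable model of $X$, a fundamental system of neighbourhoods which are ``stars'' of residue disks and of open annuli emanating from it — and, for (3), the combinatorics of intersections of disks and annuli recorded in Lemma~\ref{uniondiskannuliCLOSED} and Corollary~\ref{uniondiskannuli}; I expect the verification that an intersection eats into at most one residue class, and the control of the canonical reductions of $C$, $C'$ and $C\cup C'$, to be the delicate points. The full details, of the same nature as those of \cite[Prop.~5.4]{BL0}, are carried out in \cite{FormalCurves}.
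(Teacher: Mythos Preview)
The paper gives no proof of this corollary (it defers entirely to \cite{FormalCurves}), so there is nothing to compare your approach against directly. Your treatment of (3) via the components $R_i$ of $X\setminus B$ and Corollary~\ref{uniondiskannuli} is sound. But your argument for (1) has a real gap, which then undermines (2).

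The problematic step is the last inference in your proof of (1): from $W=\sp_{\fX''}^{-1}(\cW)$ with $\cW$ open in $\fX''_s$ you conclude that $W$ is a formal affinoid subdomain of $C$. The open formal subscheme $\fU_C\subset\fX''$ supported on $q^{-1}(\cW_C)$ is indeed a semistable model of $C$, but it is not the canonical one $\fC=\Spf\cO(C)^\circ$: the map $\fU_C\to\fC$ is an admissible blow-up, and $(\fU_C)_s\to\widetilde C$ contracts exceptional chains of rational curves to closed points. An open $\cW\subset(\fU_C)_s$ need not be a union of fibres of this contraction, so $W=\sp_{\fU_C}^{-1}(\cW)$ need not equal $\sp_C^{-1}(V)$ for any open $V\subset\widetilde C$; if $\cW$ cuts across an exceptional fibre, $W$ is a closed sub-annulus of a single residue class of $C$, which is \emph{not} formal in $C$. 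Your proof of (2) then invokes exactly the unproven statement that $W=\sp_C^{-1}(\text{open in }\widetilde C)$ to force $\eta_C=\eta_{C'}$.

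The clean route---and where the hypothesis ``$X$ not rational'' actually enters---is to prove $\eta_C=\eta_{C'}$ directly, \emph{before} (1), by a case analysis in the style of Lemma~\ref{uniondiskannuliCLOSED}. If $\eta_C\neq\eta_{C'}$ and neither lies in the other affinoid, $C\cap C'$ is clopen in $X$, hence empty. If, say, $\eta_{C'}$ lies in a residue class $D_0$ of $C$ but $\eta_C\notin C'$, then every component of $X\setminus\{\eta_{C'}\}$ other than the one through $\eta_C$ is an open disk inside $D_0$; since the component through $\eta_C$ cannot be a residue class of $C'$, one gets $C'\subset D_0\subset C$. If each Shilov point lies in a residue class of the other, those two open disks meet without either containing the other, which by Corollary~\ref{uniondiskannuli} forces their union to be $X$ and $X$ to be rational. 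Once $\eta_C=\eta_{C'}$ is known, (1) and (2) are immediate: $C$, $C'$, $C\cap C'$ and $C\cup C'$ are all preimages of opens in the smooth projective curve $\overline{\widetilde{\sH(\eta_C)}}$. Your formal-model machinery is then unnecessary for (1)--(2), and in fact obscures the role of non-rationality.
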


\begin{cor}\label{anninter}  \hfill
\begin{enumerate}
\item The intersection of an open disk $D$ of $X$ with a connected strict affinoid domain $C$ of $X$, with good canonical reduction, equals either $\emptyset$  or $D$ or $C$, or the complement of a closed disk contained in a single residue class of $C$. 
\item The intersection of an open annulus $B$ of $X$ with a connected strict  affinoid domain $C$ of $X$, with good canonical reduction, equals either $\emptyset$  or $B$ or $C$.
\end{enumerate} 
\end{cor}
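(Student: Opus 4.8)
The plan is to deduce both assertions from their ``closed'' counterparts in Corollary~\ref{anninterCLOSED} by an exhaustion argument. Since $k$ is algebraically closed, $|k^\times|$ is divisible, hence dense in $\R_{>0}$; consequently any open disk $D$ of $X$ can be written as an increasing union $D=\bigcup_{n}V_{n}$ of $k$-rational closed disks $V_{1}\subset V_{2}\subset\cdots$, and any open annulus $B$ as an increasing union $B=\bigcup_{n}W_{n}$ of $k$-rational closed annuli, choosing the successive radii in $|k|$. Then $D\cap C=\bigcup_{n}(V_{n}\cap C)$ and $B\cap C=\bigcup_{n}(W_{n}\cap C)$ are increasing unions whose terms are described by Corollary~\ref{anninterCLOSED}, and it remains to analyse how these sequences stabilise.

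For part~(1): if $V_{n}\cap C=C$ for some $n$, then $C\subseteq V_{n}\subseteq D$ and $D\cap C=C$; if $V_{n}\cap C=\emptyset$ for every $n$, then $D\cap C=\emptyset$; if $V_{n}\cap C=V_{n}$ for all large $n$, then $D=\bigcup_{n}V_{n}\subseteq C$ and $D\cap C=D$. In the remaining case $V_{n}\cap C$ is, for $n\gg0$, the complement in $C$ of a nonempty finite family of open disks lying in distinct residue classes of $C$; as $n$ grows this family shrinks, and in the union the removed part becomes a single \emph{closed} disk contained in one residue class, which is the last alternative. What makes this last step work is that $D\cap C$ is closed in the open disk $D$, and an open disk has a single free end, so that deleting two or more whole residue classes of $C$ is incompatible with the result being of the form $D\cap C$. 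Part~(2) is treated the same way, using the closed annuli $W_{n}$ and Corollary~\ref{anninterCLOSED}(3) and the analogous bookkeeping of endpoints. (The rational case $X=\P^{1}$ is elementary and may be done separately from Corollary~\ref{uniondiskannuli}, $C$ being then the complement in $\P^{1}$ of finitely many disjoint open disks.)

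A second, more transparent, route avoids the exhaustion. Let $\eta_{C}$ be the unique point of the Shilov boundary of $C$, a point of type $(2)$, and write $C\setminus\{\eta_{C}\}=\bigsqcup_{j}\Delta_{j}$ for the partition of $C$ into its open residue disks, each of which has $\eta_{C}$ as its only boundary point in $X$. Intersecting $D$ (resp.\ $B$) with the $\Delta_{j}$ and using Corollary~\ref{uniondiskannuli}, one first notes that a connected subset of $X$ meeting two distinct $\Delta_{j}$ must contain $\eta_{C}$. Hence if $\eta_{C}\notin D$ (resp.\ $\eta_{C}\notin B$) the intersection lies inside a single $\Delta_{j_{0}}$, and Corollary~\ref{uniondiskannuli}, together with the elementary fact that two open disks with a common boundary point are equal or disjoint, gives the conclusion immediately. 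The one delicate point is the case where $\eta_{C}$ is an interior point of $D$ (resp.\ of $B$): here $C\setminus D$ (resp.\ $C\setminus B$) is closed in $C$, hence compact, so it cannot contain an entire open residue disk of $C$; this forces $C\subseteq D$ (resp.\ pins the shape of $B\cap C$ down to the stated form). I expect this interior-Shilov-point case, disposed of by the compactness remark, to be the main obstacle; everything else is bookkeeping with the tree structure of $X$ and the intersection patterns for disks and annuli already established in Corollary~\ref{uniondiskannuli}.
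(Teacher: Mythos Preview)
The paper does not prove this corollary; together with Lemma~\ref{uniondiskannuliCLOSED} and Corollaries~\ref{uniondiskannuli} and~\ref{anninterCLOSED} it is explicitly deferred to \cite{FormalCurves}. There is therefore no argument here to compare against, and I assess your proposal on its own.

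Your second route for part~(1) is sound. The case split on whether $\eta_C\in D$ is the right organisation, and the closure argument (if $\eta_C\in D$ then no residue disk $\Delta_j$ can lie entirely in the closed set $C\setminus D$, since its closure would drag $\eta_C$ in) is correct. One point to tighten: the step ``$\Delta_j\cap D\neq\emptyset\Rightarrow\Delta_j\subseteq D$'' uses Corollary~\ref{uniondiskannuli}(1), whose exceptional clause (union equal to all of $X$, with $X$ rational) is precisely what produces the fourth alternative $C\setminus(\text{closed disk})$. So your parenthetical about $\P^1$ is not a side remark but the actual source of that case and should be folded into the main argument. Your first route, by contrast, has a real gap: Corollary~\ref{anninterCLOSED} does not give the shape you claim for $V_n\cap C$ when $V_n$ is a closed \emph{disk}. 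Part~(3) there concerns closed annuli, and part~(1) only says ``formal affinoid domain in both'', which is weaker than the description ``$C$ minus open disks in distinct residue classes'' that your limiting step relies on.

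For part~(2) there is a more serious issue: the trichotomy $\{\emptyset,B,C\}$ is not exhaustive, so any argument aiming at exactly that list must fail. Take $X=D(0,r_2^+)$, $C=D(0,1^+)$ and $B=B(r_1,r_2)$ with $r_1<1<r_2$: then $B\cap C=C\setminus D(0,r_1^+)$, which is none of $\emptyset$, $B$, $C$. In your second route this is exactly the third branch of Corollary~\ref{uniondiskannuli}(2) (the intersection $\Delta_0\cap B$ a strictly smaller annulus), which you neither rule out nor can. On $\P^1$ one can even obtain $C$ minus two closed disks in distinct residue classes (remove two small closed disks from $\P^1$ to form $B$, with their centres in different residue classes of $C$). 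The printed statement appears to be missing these cases; your method, carried through honestly, would prove a corrected version of~(2) allowing $C$ minus one or two closed disks in residue classes, not the stated trichotomy.
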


Straightforward consequences of the previous lemmas are the following propositions \cite{FormalCurves}.
\begin{prop}  \label{directed} The partially ordered set of semistable partitions of $X$ is directed: if $\sP = \sP(\cB,\cC)$ and $\sP^\p = \sP(\cB^\p,\cC^\p)$ are semistable partitions of $X$, there exists a minimum semistable partition $\sP^{\p \p}$ of $X$ with $\sP^{\p \p} \geq \sP$ and 
$\sP^{\p \p} \geq \sP^\p$.
\end{prop}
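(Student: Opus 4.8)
The plan is to reduce the existence of a common refinement $\sP''\geq\sP,\sP'$ to a purely local statement about intersections of the constituent pieces of $\sP=\sP(\cB,\cC)$ and $\sP'=\sP(\cB',\cC')$, and then to invoke Corollaries \ref{uniondiskannuli}, \ref{anninterCLOSED} and \ref{anninter}. First I would form all pairwise intersections $B\cap B'$ (for $B\in\cB$, $B'\in\cB'$), $B\cap C'$, $C\cap B'$, and $C\cap C'$ (for $C\in\cC$, $C'\in\cC'$); since $X$ is the disjoint union of the pieces of $\sP$ and likewise of $\sP'$, these intersections partition $X$. By Corollary \ref{uniondiskannuli} an intersection $B\cap B'$ of two open annuli is either a single open annulus or two disjoint open annuli; by Corollary \ref{anninter}(2) an intersection $B\cap C'$ of an open annulus with a good-reduction connected affinoid is $\emptyset$, $B$, or $C'$; and by Corollary \ref{anninterCLOSED}(1) together with the good-reduction hypothesis, $C\cap C'$ is $\emptyset$, $C$, $C'$, or a formal affinoid domain in both, which one checks again has good canonical reduction. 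The only piece types that occur are therefore open annuli and connected distinguished affinoids with good reduction; so the collection of all nonempty pairwise intersections is a candidate semistable partition $\sP''$.

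The next step is to verify that $\sP''$ is genuinely a semistable partition, i.e.\ that each affinoid piece is \emph{distinguished} (equivalently, its Shilov/boundary points are of type (2), by \cite[6.4.3]{BGR} and the Remark following the definition of semistable partition) and that the annular pieces have their boundary points among these type-(2) points. Here the key observations are: the boundary points of $B\cap B'$ lie among the boundary points of $B$ and of $B'$, hence are of type (2) since $\sP,\sP'$ are semistable; an affinoid piece $C\cap C'$, being a formal affinoid subdomain of the distinguished affinoid $C$, is itself distinguished; and the fact, from Corollary \ref{anninterCLOSED}(3)/Corollary \ref{anninter}(1), that cutting $C$ down by a good-reduction annulus or disk removes only full (or almost full) residue classes, so the canonical reduction of the truncated affinoid remains a smooth affine $\kt$-curve. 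One must also check connectedness of the pieces, or else replace each intersection by its connected components — which is harmless since a disjoint union of annuli/good affinoids is again an admissible collection. Finally, by construction every piece of $\sP''$ is contained in a (unique) piece of $\sP$ and in a piece of $\sP'$, and the pieces of $\sP''$ lying inside a fixed $B\in\cB$ (resp.\ $C\in\cC$) exhaust $B$ (resp.\ $C$) because the pieces of $\sP'$ do; this is exactly the relation $\sP''\geq\sP$ and $\sP''\geq\sP'$.

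For minimality, I would argue that any semistable partition $\sQ$ with $\sQ\geq\sP$ and $\sQ\geq\sP'$ refines each intersection piece of $\sP''$: a piece $Q$ of $\sQ$ lies in some piece of $\sP$ and in some piece of $\sP'$, hence in their intersection, i.e.\ in a piece of $\sP''$; so $\sQ\geq\sP''$, which is the asserted minimality and in particular forces $\sP''$ to be the well-defined $\sup\{\sP,\sP'\}$.

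The main obstacle is the bookkeeping in the middle step: ensuring that the affinoid intersection pieces really are connected, distinguished, and of good canonical reduction, and — most delicately — handling the "almost full residue class" exception in Corollary \ref{anninterCLOSED}(3) and Corollary \ref{anninter}(1), where the truncation of a good-reduction affinoid by an annulus may fail to be a formal (i.e.\ distinguished) subdomain unless one is careful about which residue class is only partially removed. One resolves this by passing, if necessary, to a slightly finer partition inside that single residue class (inserting one more annulus and one more good affinoid there), which does not disturb the $\geq\sP,\sP'$ relations and still yields a partition $\geq$-below any common refinement; alternatively, one appeals directly to the detailed analysis in \cite{FormalCurves}. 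Everything else is the routine verification that disjoint unions of annuli and good affinoids assemble into a semistable partition, which is immediate from the definitions.
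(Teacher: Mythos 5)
The paper itself does not spell out a proof of Proposition~\ref{directed}: it is stated as one of the ``straightforward consequences of the previous lemmas'' with the details deferred to \cite{FormalCurves}. Your construction --- take the connected components of all pairwise intersections $P\cap P'$ of pieces of $\sP$ and $\sP'$, show via Lemma~\ref{uniondiskannuliCLOSED}/Corollaries~\ref{uniondiskannuli}, \ref{anninterCLOSED}, \ref{anninter} that each piece is again an open annulus or a connected distinguished affinoid with good reduction, and then establish $\sP''\geq\sP,\sP'$ and minimality by the ``any $\sQ\geq\sP,\sP'$ has each piece landing in some component of some $P\cap P'$'' argument --- is precisely the intended route, and your minimality step is correctly and completely argued (including the point that a connected $Q$ meeting a component $W$ of $P_0\cap P_0'$ must be contained in that component).

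One small miscalibration in your last paragraph: the delicate ``complement of a not-quite-full residue class'' phenomenon of Corollary~\ref{anninterCLOSED}(3) and Corollary~\ref{anninter}(1) does not actually arise in this argument. The annular pieces $B\in\cB$ are \emph{open} annuli, and the relevant statement is Corollary~\ref{anninter}(2): the intersection of an open annulus with a connected good-reduction affinoid is $\emptyset$, $B$, or $C$ --- never a partially truncated affinoid. So there is no need for the fall-back of inserting an extra annulus inside a residue class; the only genuine bookkeeping is (i) passing to connected components of $C\cap C'$, and (ii) observing that a formal affinoid subdomain of a distinguished good-reduction affinoid is itself distinguished with good reduction (its canonical reduction is an open subscheme of a smooth $\kt$-curve). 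With that correction your proposal matches the paper's approach and is correct.
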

\begin{prop}  \label{minimumpart} Assume $X$ is neither the analytification of  a rational nor of a Tate curve. Then the partially ordered set of semistable partitions of $X$ has  a minimum $\sP_0 = \sP(\cB_0,\cC_0)$.
\end{prop}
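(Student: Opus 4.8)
The plan is to show directly that the intersection, over all semistable partitions $\sP$ of $X$, of the corresponding skeleta $|\Gamma_{\sP}| \subset X$ is itself the support of a semistable partition; this will be the minimum $\sP_0$. First I would fix the bijective dictionary between semistable partitions $\sP(\cB,\cC)$ and complete subpolygons $\Gamma_{\sP} = (|\Gamma_{\sP}|, \cV_{\sP}, \cE_{\sP})$ of $X$: the vertices $\cV_{\sP}$ are the maximal points $\eta_C$ of the affinoids $C \in \cC$, and the open edges $\cE_{\sP}$ are the skeleta of the annuli $B \in \cB$ (each the skeleton of a uniquely determined $k$-rational open segment, cf.\ \S\ref{annuli}), and $X \setminus |\Gamma_\sP|$ is a disjoint union of open disks. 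Under this correspondence $\sP^{\p} \geq \sP$ if and only if $|\Gamma_{\sP}| \subseteq |\Gamma_{\sP^{\p}}|$ with $\cV_\sP \subseteq \cV_{\sP^{\p}}$, so $\sP_0$ is the minimum iff $|\Gamma_{\sP_0}|$ is contained in every $|\Gamma_\sP|$.

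The key step is to construct a candidate for $|\Gamma_{\sP_0}|$. Since $X$ is neither rational nor a Tate curve, $X$ is \emph{not} contained in a single open disk nor in a single open annulus, so for \emph{every} semistable partition the skeleton $|\Gamma_\sP|$ is non-empty and, in the Tate-curve-free case, is a finite tree (it carries no loop, because a loop would force $X$ to contain — indeed be — an annulus whose skeleton is that loop, and iterating would produce a Tate curve). Now I would show that the intersection $K := \bigcap_{\sP} |\Gamma_\sP|$ is non-empty: given two semistable partitions $\sP$, $\sP^{\p}$, Proposition \ref{directed} produces a common refinement $\sP^{\p\p} \geq \sP,\sP^{\p}$, whose skeleton contains both $|\Gamma_\sP|$ and $|\Gamma_{\sP^{\p}}|$; but inside the \emph{finite tree} $|\Gamma_{\sP^{\p\p}}|$ the two subtrees $|\Gamma_\sP|$ and $|\Gamma_{\sP^{\p}}|$ cannot be disjoint, for otherwise the unique open segment of $|\Gamma_{\sP^{\p\p}}|$ joining them would correspond to an annulus $B$ with $B \cup (\text{parts of the two sides})$ forcing $X$ to be reducible or to have $|\Gamma_\sP| = \emptyset$ — a contradiction. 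Hence any two skeleta meet, and by the finite-intersection property for closed connected subsets of the compact quasi-polyhedron $X$, together with the tree structure (Helly's property for subtrees), $K$ is non-empty, closed and connected.

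Next I would check that $K$ \emph{is} the support of a semistable partition, i.e.\ that $K = |\Gamma_{\sP_0}|$ for a genuine $\sP_0$. Locally this is the content of Lemma \ref{uniondiskannuliCLOSED} and Corollaries \ref{uniondiskannuli}--\ref{anninter}: an intersection of finitely many skeleta, cut out inside one fixed ambient skeleton $|\Gamma_{\sP^{\p\p}}|$ that refines them all, is a subgraph whose vertices are still points of type $(2)$ and whose edges are still $k$-rational open segments (this uses that intersections of closed disks/annuli are again disks/annuli of the listed types, so the complementary components of the intersection are again open disks — completeness of the subpolygon). Since $X$ has only finitely many vertices available "below" any fixed model, the descending family $\{|\Gamma_\sP|\}$ stabilizes: there is a single $\sP_0$ with $|\Gamma_{\sP_0}| = K$, and then $\sP_0 \leq \sP$ for all $\sP$ by the order dictionary above. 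Finally, $\sP_0$ is automatically semistable because each of its pieces is a maximal open disk, an open annulus, or an affinoid with good reduction, the reductions gluing along ordinary double points exactly as in Lemma \ref{GLUE}.

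The main obstacle I anticipate is the compactness/stabilization argument in the last paragraph: one must rule out an \emph{infinite} strictly decreasing chain of skeleta whose intersection is, say, a single point of type $(3)$ (which would fail to be a vertex and hence fail to give a semistable partition). This is where the hypothesis "neither rational nor Tate" is used a second time, and where one needs the precise finiteness built into the equivalence $\cF\cS(X) \cong \cP\cS(X)$: every semistable partition refining a fixed one is obtained by finitely many cuts, the vertices always lie on the \emph{analytically defined} skeleton ${\bf S}(X)$, and that skeleton is a \emph{finite} graph — so a point of $K$ of type $(3)$ would already have to be an interior point of some open edge of ${\bf S}(X)$, and then the subpartition of $X$ cutting exactly at the endpoints of that edge shows $K$ is not minimal unless those endpoints, which are type-$(2)$ vertices, already lie in $K$. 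Carefully phrasing this — essentially, that $K$ must contain ${\bf S}(X) = S(\fX_0)$ and conversely — is the delicate point; everything else is the disk-and-annulus combinatorics already assembled in \cite{FormalCurves}.
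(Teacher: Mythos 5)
Your overall strategy — intersecting the supports $|\Gamma_\sP|$ over all semistable partitions and showing the intersection is the support of the minimum — is the right one and matches the philosophy the paper delegates to \cite{FormalCurves}. But there are two concrete errors in your argument as written, and the correct route is actually shorter.

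The first error is your claim that, when $X$ is not a Tate curve, each $|\Gamma_\sP|$ is a tree. This is false: the skeleton of a semistable partition has first Betti number equal to the genus of the relevant graph, and this is positive for, e.g., a Mumford curve of genus $\ge 2$, or for the strict affinoid obtained by deleting a small closed disk from a Tate curve (its skeleton is still a circle). Your parenthetical — that a loop forces $X$ to be an annulus, and iterating gives a Tate curve — does not track; a loop in $|\Gamma_\sP|$ reflects two nodes joining the same pair of components of the reduction, not an annulus structure on $X$. Consequently the Helly-for-subtrees step breaks down. The second error is the assertion near the end that ``the vertices always lie on the analytically defined skeleton ${\bf S}(X)$''; in fact $\bigcup_{\fY} S(\fY) = \H(X)$ (display \ref{hyperb} of the paper) is the set of \emph{all} points of type $(2)$ or $(3)$, and a refined partition will introduce vertices far from $S(X)$, e.g. $t_{0,1/2}$ inside the closed unit disk.

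Both problems disappear if you lean directly on the definition of the analytic skeleton. Every complete subpolygon $\Gamma_\sP$ satisfies, by definition, that $X \setminus |\Gamma_\sP|$ is a union of open disks, and $S(X)$ is the \emph{minimum} closed set with that property; hence $S(X) \subseteq |\Gamma_\sP|$ for every $\sP$, and $S(X) \neq \emptyset$ because $X$ is not rational. That gives nonemptiness of the intersection for free, with no tree hypothesis. Similarly, if $v$ is an intrinsic vertex of $X$ (no open annulus neighborhood) then $v \in S(X) \subseteq |\Gamma_\sP|$ and $v$ cannot lie in the interior of an open edge of $\Gamma_\sP$, so $v \in \cV_\sP$; this gives $\cV(X) \subseteq \cV_\sP$ for every $\sP$. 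What then remains — and is the genuine content of the proposition — is to show that $S(X)$ equipped with vertex set $\cV(X)$ is the support of an actual semistable partition: that each connected component of $S(X) \setminus \cV(X)$ is a $k$-rational open segment, and that the corresponding decomposition of $X$ into open annuli $\cB_0$ and good-reduction affinoids $\cC_0$ satisfies the definition. That is what Lemma \ref{uniondiskannuliCLOSED} and Corollaries \ref{uniondiskannuli}--\ref{anninter} are assembled for; your proposal gestures at it in the third paragraph but does not carry it out, and the stabilization argument in your last paragraph is rendered unnecessary by the direct identification $\bigcap_\sP |\Gamma_\sP| = S(X)$. So: right target, but the tree argument should be discarded, and the proof should be reorganized around the minimality characterization of $S(X)$.
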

\par

\subsection{Subpolygons of an analytic curve} \label{subsect:graphs}
There is a fourth equivalent viewpoint we need to master for dealing with differential equations on $X$. This is the notion of a ``subpolygon of $X$". Actually, this notion has been used implicitly by Dwork, Robba, Christol,... since the infancy of the theory of $p$-adic differential equations. The framework of Berkovich analytic spaces gives substance to what has been for a long time a somewhat artificial tool to describe the ``generic" behaviour of radii of convergence of power series solutions to differential equations. The global description of radii of convergence on the full Berkovich analytic space is the main novelty of this paper. 
This section is entirely due to Berkovich, but we are unable to give precise references for most of the material we have collected here. Full details are given in \cite{FormalCurves}.
\par 
We recall that the  non-archimedean field $k$  is supposed to be non-trivially valued and algebraically closed.
\begin{defn} Let $Y$ be any rig-smooth strictly  $k$-analytic curve. The \emph{analytic skeleton} $S(Y)$ of $Y$ is the minimum closed subset $T$ of $Y$, such that $Y \setminus T$ is a union of open disks. A \emph{vertex} of $Y$ is a point contained in no  open annulus.  We denote by $\cV(Y) \subset Y$ the set of vertices of $Y$.
\end{defn}
Notice that a vertex of $Y$ necessarily belongs to $S(Y)$ and that, if $Y$ is affinoid, any point in the Shilov boundary of $Y$ is a vertex. 
Points of $S(Y)$ are of type (2) or (3). We denote by 
$\H(Y) \subset Y$ the subset consisting of points of type (2) or (3), and call it the \emph{Berkovich hyperbolic subspace} of the curve $Y$. For any algebraically closed non-archimedean extension field $k^\p/k$,  the canonical projection $Y \wt k^\p  \to Y$  induces a homeomorphism $S(Y \wt k^\p)  \xrightarrow{\ \sim\ } S(Y)$ (resp. $\cV(Y \wt k^\p)  \xrightarrow{\ \sim\ } \cV(Y)$).
\begin{defn}
An \emph{open segment}   $E$ in $Y$ is  the skeleton of an open annulus  in $Y$. Its (\emph{multiplicative}) \emph{length} $\ell(E)$ is the inverse of the height of the corresponding annulus. An \emph{open half-line}   in $Y$ is  the skeleton of an  open disk  punctured at a $k$-rational point  in $Y$. We say that it has  \emph{infinite length}. 
\end{defn} 
\begin{exa} For the standard $k$-annulus $B(r_1,r_2)$, the skeleton is the subset $\{ t_{0,\rho} | r_1 < \rho < r_2\}$, homeomorphic to $(r_1,r_2)$ via $\rho \mapsto t_{0,\rho}$. The elements of $\Aut(B(r_1,r_2))$ act on $S(B(r_1,r_2))$ via the identity, if they are direct, and via  $\rho \mapsto r_1r_2/\rho$ otherwise. Similarly for  punctured open disks.
\end{exa}
For an open segment (resp. an open half-line) $E \subset Y$, the open annulus  (resp. the open punctured disk)  $A \subset Y$ such that $S(A) = E$ is uniquely determined as the union of open disks $D$ in $Y \setminus E$ such that the closure of $D$ in $Y$ intersects $E$. We denote it by $A_E$.    An open segment (resp. half-line) $E$ is  \emph{$k$-rational}  if $A_E$ is a $k$-rational  open annulus (resp. disk) in $Y$, \ie if the ends of $E$ in  $Y$ are points of type (2) (resp. one point of type (2) and one $k$-rational point). 
\par
Let $E$ be an open $k$-rational segment of $Y$ of length $1/r(E)$. An isomorphism  $T: A_{E} \xrightarrow{\ \sim\ } B(r(E),1)$ induces a homeomorphism  $\rho = |T(-)|: E \xrightarrow{\ \sim\ }  (r(E),1)$, which is canonical up to the inversion $\rho \mapsto r(E)/ \rho$.  
\par
Notice that the previous construction associates canonically, on any simply connected  sub-quasi-polyhedron  $U$ of $\H(X)$, a (multiplicative) \emph{length} $\ell_U([x,y]) \in (1,\infty)$ where the  \emph{link} $[x,y]$, with $x, y \in U$ is the unique path in $U$ joining $x$ to $y$.  In fact, $[x,y]$ is a disjoint union of a finite set $E_1, \dots,E_N$ of open segments of $X$ contained in $U$, and $\ds \ell([x,y]) = \prod_{i=1}^N \ell(E_i)$. The function $(x,y) \mapsto \ell([x,y])$ is clearly continuous on $U \times U$. 
\par

\begin{defn} A \emph{subpolygon} of $X$ is a triple  ${\bf S} = (S,\cV, \cE)$, where $S$ is a compact connected subset of $\H(X)$ and $\cV$ is a finite subset of points of type $(2)$ of $S$  called the \emph{vertices} of $\bf S$.  We assume that the connected components of $S \setminus \cV$ are a finite set of open (necessarily $k$-rational) segments $E_1, \dots,E_N$ of $X$. We call them \emph{open edges} of $\bf S$ and write $\cE = \{E_1, \dots,E_N\}$. 
For two subpolygons ${\bf S} = (S,\cV, \cE)$  and  ${\bf S}^\p = (S^\p,\cV^\p,\cE^\p)$ of $X$, we write 
${\bf S} \leq {\bf S}^\p$, and say that ${\bf S}$ is a \emph{subpolygon of} ${\bf S}^\p$ if $S \subset S^\p$,       $\cV \subset \cV^\p$, and every open edge $E \in \cE$ is a union of vertices and open edges of ${\bf S}^\p$. A \emph{morphism} $\varphi: {\bf S}^\p  \to {\bf S}$ of subpolygons of $X$ is a continuous retraction $\varphi: S^\p \to S$ such that for any open edge $E^\p \in \cE^\p$, either $\varphi(E^\p)$ is a vertex of  $\bf S$ or $\varphi$ induces the inclusion of $E^{\p}$ in an open edge of  $\bf S$. 
A subpolygon ${\bf S} = (S,\cV, \cE)$ of $X$ is \emph{complete} if $X\setminus S$ is a union of open disks. 
We denote by $\cG\cP(X)$ the category of  subpolygons of $X$, and by $\cG\cP^c(X)$ the full subcategory consisting of complete subpolygons of $X$.
\end{defn}
Notice that, if a morphism $\varphi: {\bf S}^\p  \to {\bf S}$  in $\cG\cP(X)$ exists, $\bf S$ is a subpolygon of ${\bf S}^\p$, and the fibers of $\varphi$ over a vertex are simply connected trees. We also denote by $(\cG\cP(X), \leq)$ the category associated to the order relation $\leq$: a morphism ${\bf S} \leq {\bf S}^\p$ in this category is called an \emph{inclusion} ${\bf S} \to {\bf S}^\p$ of subpolygons of $X$. 
Let  $\iota: {\bf S} \to {\bf S}^\p$ be an inclusion of subpolygons of $X$. We say that $\iota$ is a \emph{subdivision} (and that \emph{ ${\bf S}^\p$ is obtained by subdivision of 
${\bf S}$}) if $S = S^\p$.
A subdivision is visibly the composition of a finite number of subdivisions obtained by the addition of one vertex inside an open edge. We call  \emph{1-step subdivision} such a subdivision. 
In general, a subdivision ${\bf S} \leq {\bf S}^\p$ in which ${\bf S}^\p$ has $N$ vertices more than ${\bf S}$, \ie an \emph{$N$-step subdivision}, is a product of $N$ 1-step subdivisions.
An inclusion ${\bf S} \leq {\bf S}^\p$ is called \emph{exact}   if every vertex (resp. open edge) of ${\bf S}^\p$  contained in $S$ is a vertex (resp. open edge) of ${\bf S}$. An inclusion 
$\iota : {\bf S} \to {\bf S}^\p$ canonically decomposes into a composition $\iota = \veps \circ \sigma$, where $\veps : {\bf S}^s \to {\bf S}^\p$  is an exact inclusion and $\sigma :  {\bf S} \to {\bf S}^s$ is a subdivision.
\par
For a morphism $\varphi: {\bf S}^\p  \to {\bf S}$  in $\cG\cP(X)$, we say that it is a \emph{trivial retraction} if ${\bf S} \leq {\bf S}^\p$ is a subdivision, and a \emph{neat retraction} if ${\bf S} \leq {\bf S}^\p$ 
 is an exact inclusion. 
 Any morphism $\varphi: {\bf S}^\p  \to {\bf S}$  in $\cG\cP(X)$
  canonically decomposes as 
$\varphi^{\rm triv} \circ \varphi^{\rm neat}$, where $\varphi^{\rm neat}: {\bf S}^\p \to {\bf S}^s$ is neat and  $\varphi^{\rm triv}: {\bf S}^s \to {\bf S}$ is trivial. Notice that a neat retraction sends vertices to vertices. 
 A neat retraction $\varphi: {\bf S}^\p  \to {\bf S}$ is \emph{simple} if there are precisely  one open edge and  one vertex of ${\bf S}^\p$ which are not, respectively,  an open edge and  a vertex  of ${\bf S}$. 
Every neat retraction $\varphi: {\bf S}^\p  \to {\bf S}$  is a composition of a finite number $N$ of simple retractions. The number $N$ is the number of open edges of ${\bf S}^\p$ which are not open edges of $\bf S$.  Similarly, every trivial retraction is a product of a finite number  of trivial 1-step retractions, each corresponding to a 1-step subdivision. 
\par
 To any semistable partition $\sP = \sP(\cB=\{ B_1, \dots,B_N \},\cC= 
\{C_1,\dots,C_r \} )$ of $X$, we naturally associate a complete  subpolygon ${\bf S}(\sP)$, whose vertices  are the maximal points of the affinoids $C_1,\dots,C_r$ and whose open edges  are the skeleta of the open annuli $B_1, \dots,B_N$. If $\sP^\p \geq \sP$ in $\cP\cS(X)$, it is clear that 
${\bf S}(\sP^\p) \geq {\bf S}(\sP)$ as  subpolygons of $X$, and that there is a morphism 
${\bf S}(\sP^\p) \to {\bf S}(\sP)$  in $\cG\cP^c$. 
\par
The following results are proven in \cite{FormalCurves}.
\begin{thm} \label{skelfunctor}
The functor $\bf S$ establishes an equivalence of categories between $\cP\cS(X)$ and 
$\cG\cP^c(X)$. Moreover, $\cG\cP^c(X)$ is the opposite category to $(\cG\cP^c(X),\leq)$. We also denote by $\bf S$ the composite functor $\fX \mapsto \sP_{\fX} \mapsto {\bf S}(\sP_{\fX} ) =: {\bf S}(\fX)$, which induces an equivalence of categories between $\cF\cS(X)$ and $\cG\cP^c(X)$.
\end{thm}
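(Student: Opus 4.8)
The plan is to exhibit an explicit quasi-inverse of $\bf S$ and then to check that both functors are order-reversing equivalences of posets, so that everything reduces to the equivalence $\fX \mapsto \sP_{\fX}$ just established together with the disk/annulus combinatorics of Lemma \ref{uniondiskannuliCLOSED} and its corollaries. First I would spell out $\bf S$ on objects and morphisms. Given a semistable partition $\sP = \sP(\cB,\cC)$ with $\cB = \{B_1,\dots,B_N\}$ and $\cC = \{C_1,\dots,C_r\}$, the triple ${\bf S}(\sP)$ has vertex set $\{\eta_{C_1},\dots,\eta_{C_r}\}$ (the maximal points of the $C_j$, of type $(2)$ since the $C_j$ are distinguished) and open edge set $\{S(B_1),\dots,S(B_N)\}$ (each a $k$-rational open segment, again by distinguishedness); it is a \emph{complete} subpolygon because $X\setminus|{\bf S}(\sP)|$ is the disjoint union of the open disks making up $C_j\setminus\{\eta_{C_j}\}$ and $B_i\setminus S(B_i)$. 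Monotonicity of $\bf S$, recorded before the statement, shows that $\sP^\p\geq\sP$ yields a morphism ${\bf S}(\sP^\p)\to{\bf S}(\sP)$ in $\cG\cP^c(X)$, functorially. For the quasi-inverse, given a complete subpolygon $\Gamma=(|\Gamma|,\cV_\Gamma,\cE_\Gamma)$ with retraction $\tau_\Gamma\colon X\to|\Gamma|$, I would set $\cB_\Gamma=\{A_E\mid E\in\cE_\Gamma\}$, the unique open annulus with skeleton $E$ (it exists because $E$ is a $k$-rational open segment), and $\cC_\Gamma=\{\tau_\Gamma^{-1}(v)\mid v\in\cV_\Gamma\}$.

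The substantive point — and the step I expect to be the main obstacle — is to check that $\sP(\cB_\Gamma,\cC_\Gamma)$ really is a semistable partition: that each $\tau_\Gamma^{-1}(v)$ is a connected distinguished strictly affinoid domain with good canonical reduction and Shilov point $v$, that the annuli $A_E$ are pairwise disjoint and disjoint from the affinoids, and that together they exhaust $X$. Here one uses that $v$ is of type $(2)$, so $\wtilde{\sH(v)}$ is the function field of a smooth connected $\kt$-curve whose $\kt$-points index the residue classes (open disks) with boundary $v$; deleting the finitely many directions at $v$ pointing into the edges of $\Gamma$ leaves a smooth affine $\kt$-curve, which one identifies with the canonical reduction of $\tau_\Gamma^{-1}(v)$. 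Disjointness and exhaustion follow from completeness of $\Gamma$ combined with the intersection and union statements of Lemma \ref{uniondiskannuliCLOSED}, Corollary \ref{uniondiskannuli}, Corollary \ref{anninterCLOSED} and Corollary \ref{anninter}; the full bookkeeping is the content of \cite{FormalCurves}. Granting this, the two composites are the identity on objects: for a partition, $\tau_{{\bf S}(\sP)}^{-1}(\eta_{C_j})=C_j$ because a good-reduction affinoid minus its Shilov point is a disjoint union of open disks each retracting to that point, while every half-disk of $B_i\setminus S(B_i)$ retracts to an interior point of $S(B_i)$; and $A_{S(B_i)}=B_i$ by uniqueness of the annulus with a given skeleton. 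Conversely, the Shilov point of $\tau_\Gamma^{-1}(v)$ is $v$ and $S(A_E)=E$, so ${\bf S}(\sP(\cB_\Gamma,\cC_\Gamma))=\Gamma$ on the nose; in particular $\bf S$ is bijective on objects.

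It remains to match morphisms. The poset $\cP\cS(X)$ is directed (Proposition \ref{directed}), and I claim $\mathrm{Hom}_{\cG\cP^c(X)}({\bf S}^\p,{\bf S})$ has exactly one element when ${\bf S}\leq{\bf S}^\p$ and is empty otherwise. For existence, when ${\bf S}\leq{\bf S}^\p$ the restriction to $|{\bf S}^\p|$ of the retraction $X\to|{\bf S}|$ (which exists because ${\bf S}$ is complete) satisfies the edge condition, since each open edge of ${\bf S}^\p$ either embeds into an open edge of ${\bf S}$ or is collapsed to a single vertex; necessity of the inequality ${\bf S}\leq{\bf S}^\p$ is the remark following the definition of a morphism of subpolygons. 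Uniqueness is inherited from the preceding theorem: $\cP\cS(X)$ is a partially ordered set (equivalently, between two admissible formal models of $X$ there is at most one morphism inducing the identity on generic fibres), and the bijections on objects established above transport this to $\cG\cP^c(X)$. Since a morphism ${\bf S}^\p\to{\bf S}$ exists precisely when ${\bf S}\leq{\bf S}^\p$, the category $\cG\cP^c(X)$ is the opposite of the poset $(\cG\cP^c(X),\leq)$. Finally, composing the equivalence $\fX\mapsto\sP_{\fX}$ of the preceding theorem with ${\bf S}\colon\cP\cS(X)\xrightarrow{\ \sim\ }\cG\cP^c(X)$ yields the equivalence $\cF\cS(X)\xrightarrow{\ \sim\ }\cG\cP^c(X)$, $\fX\mapsto{\bf S}(\fX)$, which completes the argument.
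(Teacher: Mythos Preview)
The paper does not give a proof of this theorem: the line immediately preceding it reads ``The following results are proven in \cite{FormalCurves}'', so there is no argument in the present text to compare against. Your outline is a sensible reconstruction of how such a proof would go --- construct the inverse $\Gamma\mapsto\sP(\cB_\Gamma,\cC_\Gamma)$ via the retraction to $|\Gamma|$, check that each fibre $\tau_\Gamma^{-1}(v)$ is a good-reduction affinoid with Shilov point $v$, and then match the poset structures --- and you correctly flag the affinoid step as the genuine work, yourself deferring it to \cite{FormalCurves}. One caution: you invoke the retraction $\tau_\Gamma$ for an arbitrary complete subpolygon, but in the paper the retractions $\tau_{\fX}$ are built from the inverse system of semistable models (equation \eqref{retr}), which presupposes the very equivalence you are proving; your argument needs the direct construction (each open disk of $X\setminus|\Gamma|$ has a unique boundary point on $|\Gamma|$, so collapse it there), which you should make explicit to avoid circularity.
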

Notice that \cite[1.7]{BerkContr} the generic point of a component $\fc_i$ of $\fX_s$ has a unique inverse image $\eta_{i}$ in $X$ under $\sp_{\fX}$, and the set of vertices $\cV(\fX)$ of ${\bf S}(\fX)$ is precisely the set of those inverse images. The set of open edges $\cE(\fX)$ is the set of analytic skeleta of the inverse images under $\sp_{\fX}$ of the singular points of $\fX_{s}$. 
For a morphism $\varphi: \fY \to \fX$ in $\cF\cS(X)$, we denote by $\tau_{\fX,\fY}: {\bf S}(\fY) \to {\bf S}(\fX)$ the corresponding morphism in $\cG\cP^c(X)$. Then, as a topological space,
\beq \label{limin}
X = \limpr_{\fY \geq \fZ}(S(\fY), \tau_{\fZ, \fY}) \; ,
\eeq
while the subset $\H(X) \subset X$ of points of type (2) or (3) is
\beq
\label{hyperb}  \H(X) = \bigcup S(\fY) \; ,
\eeq
where $\fY$ varies among  semistable models of $X$. In particular, we get, for any  semistable model $\fX$ of $X$, a retraction $\tau_{\fX}: X \to S(\fX)$, as the natural map 
\beq
\label{retr}
\tau_{\fX} :=  \limpr_{\fY \geq \fX}(S(\fY), \tau_{\fX, \fY}) \;, 
\eeq 
to $S(\fX)$. It has the property  that the inverse image under $\tau_{\fX}$ of any vertex or closed edge of ${\bf S}(\fX)$, is a strict  affinoid domain in $X$. It follows that the same is true for any closed $k$-rational segment contained in a closed edge of 
${\bf S}(\fX)$ (because it is a closed edge of ${\bf S}(\fX)$, for some $\fY \geq \fX$). 
If $\fX = \fX_{0}$ is the minimum semistable model of $X$, then ${\bf S}(\fX_{0}) = {\bf S}(X)$ and 
$\tau_{\fX_{0}} = \tau_X$. 
\begin{defn} \label{defcoord} For any  semistable model $\fX$ of $X$, and  any rational point $y \in X$ (necessarily, $y \in X \setminus S(\fX)$), we denote by $D_{\fX}(y,1^-)$ the maximal open disk neighborhood of $y$ contained in $X \setminus S(\fX)$. 
\end{defn}
\begin{defn} Assumptions as in definition \ref{defcoord}.
An isomorphism 
\beq D_{\fX}(y,1^-)  \xrightarrow{\ \sim\ }  D_k(0,1^-)
\eeq
 is called an \emph{$\fX$-normalized coordinate at $y$}.
\end{defn}
Notice that the map $\tau_{\fX}$ takes the disk  $D_{\fX}(y,1^-)$ to the unique boundary point of 
$D_{\fX}(y,1^-)$ in $X$,  not in $D_{\fX}(y,1^-)$,   $\tau_{\fX}(y) \in S(\fX)$. The fiber  $\tau_{\fX}^{{-1}}(\tau_{\fX}(y)) \setminus \{\tau_{\fX}(y)\}$ is the disjoint union of a family of disks having the same limit point $\tau_{\fX}(y) \in S(\fX)$. It follows from theorem \ref{Q-analss} that for any compact rig-smooth strictly $k$-analytic curve $X$, which is not rational, and any rational point $y \in X$ (necessarily, $y \in X \setminus S(X)$), there exists a maximal open disk neighborhood $D_{X}(y,1^{-})$ of $y$ in $X$. It generalizes the neighborhood $\cD_{y}(X)$ defined in the introduction for an affinoid $X$ in $\A^{1}$. 

 \subsection{Admissible blow-ups} \label{blow-ups}

  Let us recall that for any admissible $\kc$-formal scheme  $\fX$, any $a \in \kcc \setminus \{0\}$, and any open ideal of finite presentation $\fA \subset \cO_{\fX}$, one defines the blow-up of $\fX$ along $\fA$ as the morphism  $\varphi: \fY \to \fX$ of formal schemes, inductive limit  as $n \to \infty$, of  the blow-up $\varphi_n: \fY_n \to \fX_n$ of the scheme $\fX_n = (\fX, \cO_{\fX}/(a)^{n+1})$ along the ideal $\fA \otimes \cO_{\fX}/(a)^{n+1}$. 
Such a morphism  is called  an \emph{admissible blow-up}, and is independent of the choice of $a$. 
\par
We now show that a morphism $\varphi: \fX^{\p} \to \fX$  in $\cF\cS(X)$ is an admissible blow-up. We use  the category $\cP\cS(X)$ for our description. Let us  consider two strictly semistable partitions $\sP = \sP(\cB,\cC)$ and $\sP^\p = \sP(\cB^\p,\cC^\p)$ of $X$, with 
$\sP^\p \geq \sP$, and let $\varphi: \fX^{\p} := \fX_{\sP^{\p}} \to \fX := \fX_{\sP}$ be the corresponding morphism of  $\cF\cS(X)$.
Consider an affinoid with good canonical reduction $C^\p \in \cC^\p$. Then either $C^{\p}$ is contained in an open annulus $B \in \cB$ or it is contained in an affinoid $C \in \cC$. In the second case,  either $C^\p$ is contained in a single residue class $D$ of $C$, or  the maximal point $\eta_{C^{\p}}$ of $C^{\p}$ coincides with  the maximal point $\eta_{C}$ of $C$. In the latter case,   $C^{\p}$ is the complement in $C$ of a finite number of residue classes $D_{1},\dots,D_{r}$ of $C$. Notice that for each $i = 1, \dots,r$, $D_{i}$ must contain a finite number of disjoint affinoids with good canonical reduction belonging to $\cC^{\p}$.
The  partition $\sP^\p$ may be reconstructed from the family $\sF = \sF(\sP^\p,\sP)$ of the elements of $\cC^\p$ contained in either an open annulus $B \in \cB$, or in a maximal disk $D$ of some affinoid with good canonical reduction  $C \in \cC$. In fact, let $C^\p_{0} \in \cC^\p$ be an affinoid with good canonical reduction which is not in $\sF$. Then, $C^\p_{0}$ is obtained from one affinoid $C \in \cC$, by subtracting those residue classes which do contain some $C^{\p} \in \sF $. On the other hand, the annuli in $\cB^\p$ are simply the connected components of the complement in $X$ of the union of all affinoids in $\cC^\p$.
Now  if $B$ (resp. $D$) is an open $k$-rational annulus (resp disk),  the description of connected strictly affinoid domains with good canonical reduction in $B$ or $D$ is elementary. They are the complement of a finite number of maximal open disks in a closed strictly affinoid disk. We define $C_{\varphi}$ as the union of all affinoids $V \in \sF$ and $\fA_{\varphi}$ as the sheaf of ideals of  $\cO_{\fX}$ consisting of sections $f$ of $\cO_{\fX}$ whose pull-back under $\sp_{\fX}$ is $<1$ on  $C_{\varphi}$. It is clear (by explicit description of generators) that  $\fA_{\varphi}$ is an open ideal of finite presentation of $\cO_{\fX}$.
\par
It is easily seen  (\cf  \cite{FormalCurves} for more details) that 
\begin{thm} 
The morphism $\varphi$ is the admissible blow-up of $\fX$ along $\fA_{\varphi}$. 
\end{thm}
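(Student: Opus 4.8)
The plan is to reduce the statement to a purely local assertion on a single affinoid with good canonical reduction, where the blow-up description is elementary. Recall that $\varphi: \fX^\p = \fX_{\sP^\p} \to \fX = \fX_{\sP}$ is a morphism in $\cF\cS(X)$ coming from a refinement $\sP^\p \geq \sP$ of strictly semistable partitions, and that $\fA_\varphi \subset \cO_{\fX}$ is the open, finitely presented ideal sheaf of sections whose $\sp_{\fX}$-pullback has absolute value $< 1$ on the affinoid $C_\varphi := \bigcup_{V \in \sF} V$. Since admissible blow-ups commute with Zariski-localization on the base and the formation of both $\varphi$ and $\fA_\varphi$ is compatible with the Zariski open covering of $\fX$ by the formal affinoids $\fX_{\cU_{\sP}}$ attached to the members of $\cU_{\sP}$, it suffices to check the claim over each such open piece. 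Away from $C_\varphi$ the ideal $\fA_\varphi$ is the unit ideal, $\varphi$ is an isomorphism, and there is nothing to prove; so we may assume we work over a single affinoid $\fC$ with good canonical reduction sitting in $\cU_\sP$, and that the refinement affects only the residue classes of $\fC$ meeting $C_\varphi$.

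First I would use the elementary description (recorded in \S\ref{formalpartitions} and in Corollaries \ref{anninterCLOSED}, \ref{anninter}) of the connected strictly affinoid domains with good canonical reduction inside an open $k$-rational disk or annulus: each is the complement of finitely many maximal open disks in a closed strictly affinoid disk, lying in distinct residue classes. Thus, residue-class by residue-class, the passage from $\sP$ to $\sP^\p$ amounts to: inside a residue class $D \cong D_k(0,1^-)$ of $\fC$ (with $\fX$-normalized coordinate $t$), one removes the open subdisks $D_k(c_i, r_i^-)$, $r_i \in |k|$, corresponding to the new affinoid members of $\cC^\p$ lying in $D$ (and then iterates inside the removed disks, but by induction on the number of steps of the subdivision it is enough to treat one layer). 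On the formal side, the ideal $\fA_\varphi$ restricted to the formal model of $\fC$ is generated, on the chart dominating this residue class, by the images of the $\pi$-adic units together with the functions $(t - c_i)/\alpha_i$ with $|\alpha_i| = r_i$; one then checks directly that its blow-up produces exactly the formal scheme $\fX^\p$, by comparing charts. The model case to verify by hand is $\fX = \Spf \kc\{T, U\}/(TU - a)$ (a basic formal annulus) with $\fA = (T, s)$ for an element $s \in \kcc$ with $|s| = r$ between the two radii: its blow-up is covered by the two evident charts $\Spf \kc\{T, U, T/s\}/(\dots)$ and $\Spf \kc\{T, U, s/T\}/(\dots)$, which are precisely the two basic formal annuli appearing in $\fX^\p$ when the annulus is cut at the circle of radius $r$; the general disk case $\fA = (\pi^n, t - c)$ over $\Spf \kc\{t\}$ is analogous and even simpler. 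Gluing these local computations over $X$, and invoking the fact (\S\ref{bosch} / \cite[\S 2.8]{bosch}, already used above) that there is at most one morphism in $\cF\cA(X)$ between two admissible formal models inducing the identity on the generic fiber, forces the globally-defined blow-up of $\fX$ along $\fA_\varphi$ to coincide with $\varphi$.

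The main obstacle is the bookkeeping in the iterated/nested situation: a residue class removed at one stage may itself contain further new affinoids, so $\fA_\varphi$ is genuinely an iterated product of the ``one-layer'' ideals, and one must check that the blow-up of $\fX$ along the product ideal agrees with the composite of the layer-by-layer blow-ups and that this composite is itself an admissible blow-up (equivalently, that $\fA_\varphi$ as defined — a single open finitely presented ideal — is the correct one, not merely some power or product of ideals). This is handled by the standard fact that a composite of admissible blow-ups is an admissible blow-up (one can take the product of the successive pulled-back ideals, or equivalently note that the category $\cF\cS(X) \cong \cG\cP^c(X)$ has all the needed refinements), together with the explicit generator description of $\fA_\varphi$ which makes its finite presentation and openness manifest; the detailed verification of these chart computations is exactly what is deferred to \cite{FormalCurves}.
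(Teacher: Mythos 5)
Your plan matches the approach the paper sketches and defers to \cite{FormalCurves}: localize Zariski-locally on $\fX$, describe the refinement $\sP^\p \geq \sP$ residue class by residue class using the elementary structure of connected strict affinoids with good canonical reduction inside an open $k$-rational disk or annulus, verify the model cases $\Spf \kc\{T,U\}/(TU-a)$ and $\Spf\kc\{t\}$ by explicit chart computations, iterate for nested subdivisions, and conclude using the uniqueness of morphisms in $\cF\cA(X)$ inducing the identity on the generic fiber. In substance this is the argument the paper has in mind.

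Two points in your write-up need correction, one minor, one more substantive. The minor one: on $D_k(c_i, r_i^+)$ the element $(t-c_i)/\alpha_i$ with $|\alpha_i| = r_i$ has supnorm exactly $1$, not $<1$, so it does \emph{not} lie in $\fA_\varphi$ as you wrote it; the intended local generators are $\alpha_i$ and $t-c_i$, which is consistent with your own model case $(\pi^n, t-c)$ a few lines later — and ``$\pi$-adic units'' should be elements of $\kcc$ of suitable norm, since genuine units cannot lie in a proper ideal. The more substantive point, which your proposal inherits from the paper's phrasing without flagging: the set of all sections whose pullback is $<1$ on $C_\varphi$ is \emph{not} a finitely generated ideal once $|k^\times|$ is dense (in particular for $k$ algebraically closed). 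Indeed, on a chart where the Shilov boundary of the relevant piece of $C_\varphi$ is a single point $t_{c,r}$ with $r \in |k^\times|\cap(0,1)$, that set is $\{f : |f(t_{c,r})| < 1\} = \mathrm{ev}_c^{-1}(\kcc)$, whose image $\kcc$ in $\kc$ is not finitely generated. The ideal to blow up is the (open, finitely presented) ideal \emph{generated by} $\alpha_i$ and $t - c_i$ for a fixed choice of $\alpha_i$ with $|\alpha_i| = r_i$; your chart computation in fact handles precisely this ideal. A careful version of the proof should either redefine $\fA_\varphi$ by these local generators or replace ``$<1$'' by ``$\leq r$'' for a fixed $r\in|k^\times|\cap(0,1)$, and then observe that the resulting admissible blow-up is independent of the choice.
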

\begin{defn}
We will say that the morphism $\varphi: \fY \to \fX$  in $\cF\cS(X)$ is a \emph{trivial} (resp. \emph{simple}) \emph{1-step blow-up}, if the corresponding morphism ${\bf S}(\varphi): 
{\bf S}(\fY) \to {\bf S}(\fX)$ in  $\cG\cP^c(X)$ is a trivial (resp. simple) 1-step retraction.  
\end{defn}
\begin{cor} Any  morphism $\varphi: \fY \to \fX$  in $\cF\cS(X)$ is  a composition  
\beq
\fY =: \fY^{(N)} \to  \fY^{(N-1)}\dots \to   \dots \to \fY^{(1)} \to \fY^{(0)}:=  \fX^{(M)} \to \fX^{(M-1)}\dots   
\to \fX^{(1)} \to \fX^{(0)}:= \fX \;,
\eeq 
where $\fY^{(i+1)} \to \fY^{(i)}$  is the blow-up of a single closed disk contained in an open disk $D_{\fY^{(i)}}(x,1^{-})$, for some $x \in X \setminus S(\fY^{(i)})$ (resp. 
 $\fX^{(i+1)} \to \fX^{(i)}$  is the blow-up of a single closed annulus of height 1 contained in an open annulus $B_{i} \in \cB_{i}$ and intersecting its skeleton, where  $\sP_{i} = (\cB_{i},\cC_{i})$ is the semistable partition associated to $\fX_{i}$).
 \end{cor}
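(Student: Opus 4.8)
The plan is to obtain the factorization purely formally, by pushing $\varphi$ through the skeleton equivalence and decomposing on the side of subpolygons. First I would apply Theorem \ref{skelfunctor}: the equivalence ${\bf S}:\cF\cS(X)\xrightarrow{\ \sim\ }\cG\cP^c(X)$ carries $\varphi:\fY\to\fX$ to a retraction ${\bf S}(\varphi):{\bf S}(\fY)\to{\bf S}(\fX)$ between complete subpolygons of $X$. Writing ${\bf S}^\p:={\bf S}(\fY)$, ${\bf S}:={\bf S}(\fX)$, I would invoke the canonical decomposition of morphisms in $\cG\cP(X)$ recalled in \S\ref{subsect:graphs} to factor ${\bf S}(\varphi)=\varphi^{\rm triv}\circ\varphi^{\rm neat}$, with $\varphi^{\rm neat}:{\bf S}^\p\to{\bf S}^s$ neat and $\varphi^{\rm triv}:{\bf S}^s\to{\bf S}$ trivial. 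Since $\varphi^{\rm triv}$ presents ${\bf S}\leq{\bf S}^s$ as a subdivision, ${\bf S}^s$ and ${\bf S}$ have the same underlying set, so $X\setminus|{\bf S}^s|=X\setminus|{\bf S}|$ is a union of open disks; thus ${\bf S}^s$ is again complete and, $\bf S$ being an equivalence, equals ${\bf S}(\fX^{(M)})$ for a semistable model $\fX^{(M)}$ of $X$. Full faithfulness of $\bf S$ then yields a factorization $\fY\to\fX^{(M)}\to\fX$ of $\varphi$ in $\cF\cS(X)$.

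Next I would refine each half into $1$-step moves. By \S\ref{subsect:graphs}, $\varphi^{\rm neat}$ is a composition of finitely many simple neat retractions, and $\varphi^{\rm triv}$ is a composition of finitely many trivial $1$-step retractions, each of the latter a $1$-step subdivision. I would note that every subpolygon occurring in these two chains is again complete: in the trivial chain the underlying set never moves, while in the neat chain each step attaches a single whisker inside one of the open disks of $X\setminus|{\bf S}^s|$, and removing such a whisker from an open disk again leaves a disjoint union of open disks, so completeness propagates inductively. Transporting the two chains back through ${\bf S}^{-1}$, and using the definition of trivial, resp.\ simple, $1$-step blow-up recalled just above, I would get a factorization of $\varphi$ of exactly the shape asserted, with $\fY^{(0)}:=\fX^{(M)}$, in which each $\fY^{(i+1)}\to\fY^{(i)}$ is a simple $1$-step blow-up and each $\fX^{(i+1)}\to\fX^{(i)}$ a trivial $1$-step blow-up.

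Finally I would spell out what these two kinds of blow-up are, using the description of admissible blow-ups from \S\ref{blow-ups}. For a simple $1$-step blow-up $\fY^{(i+1)}\to\fY^{(i)}$, the associated simple neat retraction introduces exactly one new vertex and one new edge, both lying inside a single residue class, i.e.\ inside a maximal open disk $D_{\fY^{(i)}}(x,1^-)$ with $x\in X\setminus S(\fY^{(i)})$; reading off the family $\sF$ and the ideal $\fA_\varphi$ from \S\ref{blow-ups}, $\sF$ reduces to one affinoid, a closed disk contained in $D_{\fY^{(i)}}(x,1^-)$, and the morphism is the admissible blow-up of $\fY^{(i)}$ along the ideal of sections of $\cO_{\fY^{(i)}}$ whose pull-back is $<1$ on that closed disk. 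Symmetrically, for a trivial $1$-step blow-up $\fX^{(i+1)}\to\fX^{(i)}$, the associated $1$-step subdivision inserts one new vertex at a point of the skeleton of an open annulus $B_i\in\cB_i$, where $\sP_i=\sP(\cB_i,\cC_i)$ is the semistable partition of $\fX^{(i)}$; here $\sF$ consists of a single closed annulus of height $1$ contained in $B_i$ and meeting $S(B_i)$, and the morphism is the corresponding admissible blow-up along the ideal of sections with pull-back $<1$ on it. This is precisely the claimed description.

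I expect the only genuinely substantive point to be this last identification --- matching the combinatorial moves (attaching a whisker; subdividing an edge) with the concrete affinoids (a closed disk inside a residue class; a closed annulus of height $1$ straddling the skeleton) and with the corresponding blow-up ideals of \S\ref{blow-ups} --- together with the elementary but slightly delicate check that the intermediate subpolygons stay complete; both are carried out in detail in \cite{FormalCurves}, and the remainder is a formal assembly of the equivalences and decompositions recalled above.
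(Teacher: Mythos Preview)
Your proposal is correct and follows precisely the route the paper intends: transport $\varphi$ through the skeleton equivalence ${\bf S}$ of Theorem~\ref{skelfunctor}, apply the canonical decomposition $\varphi^{\rm triv}\circ\varphi^{\rm neat}$ and the further splitting into $1$-step trivial and simple retractions from \S\ref{subsect:graphs}, check that all intermediate subpolygons remain complete, and then read off the concrete blow-up centers via the description of $\sF$ and $\fA_\varphi$ in \S\ref{blow-ups}. The paper states the corollary without proof, leaving exactly this assembly to the reader (with details deferred to \cite{FormalCurves}), so your write-up is in fact more explicit than what the paper provides.
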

In particular, every morphism $\varphi:\fX^{\p} \to \fX$ in $\cF\cS(X)$ is an admissible blow-up, it is a product of 1-step trivial or simple blow-ups,   and each non-empty fiber of $\fX^{\p}_{s} \to \fX_{s}$ is a connected union of rational smooth projective curves whose graph is a tree.

\subsection{\'Etale morphisms of formal schemes} \label{etalechange}
For any morphism $\varphi : \fY \to \fX$ of  semistable $\kc$-formal schemes, with generic fiber  $\varphi_{\eta}: Y \to X$, the following diagram is commutative \cite[4.4.2]{BerkDwork}
\beq
\label{diagram1}
\begin{array}{ccc} S(\fY)&\mathop{\hbox to 80pt{\rightarrowfill}}
\limits^{\tau_{{\fX}} \circ \varphi_{\eta}}&S(\fX) \\
\tau_{\fY}\Bigg\uparrow&&\Bigg\uparrow \tau_{\fX}\\
Y&\mathop{\hbox to 80 pt{\rightarrowfill}}
\limits^{\varphi_{\eta}}&X
\end{array}
\eeq

If, moreover, $\varphi : \fY \to \fX$ is   \'etale,  then  $\varphi_{\eta}(S(\fY)) \subset 
S(\fX)$, and 
$\varphi_{\eta}^{-1}(S(\fX)) = 
S(\fY)$, 
so that the previous diagram becomes

\beq\label{diagram2}
\begin{array}{ccc} S(\fY)&\mathop{\hbox to 80pt{\rightarrowfill}}
\limits^{(\varphi_{\eta})_{|S(\fY)}}&S(\fX) \\
\tau_{\fY}\Bigg\uparrow&&\Bigg\uparrow \tau_{\fX}\\
Y&\mathop{\hbox to 80 pt{\rightarrowfill}}
\limits^{\varphi_{\eta}}&X
\end{array}
\eeq
and 
$\varphi_{\eta}$ induces isomorphisms $D_{\fY}(y,r^{-}) \xrightarrow{\ \sim\ } D_{\fX}(x,r^{-})$, for any $y \in Y \setminus S(\fY)$, $\varphi_{\eta}(y) =x$, $r \in (0,1]$ [4.3.2 \lc]. 
\par
Suppose the point $y \in Y$ is a vertex of ${\bf S}(\fY)$. This means that it is the unique inverse image under $\sp_{\fY}$ of the generic point of a component $\fc$ of $\fY_{s}$. But $\varphi$ induces an \'etale morphism from the smooth part $\fc^{\sm}$ to a component $\fc^{\p}$ of $\fX_{s}$ \cite[2.2 $(i)$]{BerkContr}. Therefore, $\varphi_{\eta}(\cV(\fY)) \subset \cV(\fX)$. 

\section{Continuity of real valued functions on rig-smooth $k$-analytic curves} \label{ContLemma}

\begin{lemma}\label{lemma:fieldext} Let $k$ be any non-archimedean field, $Y$ be any $k$-analytic space, $L$ a non-archimedean field over $k$ and $Y_L = Y \what\otimes_k L$ be the extension of $Y$ to $L$. Then the natural topology of $Y$ is the quotient topology of the natural topology of $Y_{L}$ via the projection map  $\psi_L = \psi_{Y,L/k}:Y_L \to Y$.
\end{lemma}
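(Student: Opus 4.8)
The statement is that the natural (Berkovich) topology on $Y$ coincides with the quotient topology induced from $Y_L$ via $\psi_L$. First I would recall that $\psi_L$ is a continuous surjective map of topological spaces: continuity is built into the definition of the ground-field extension functor, and surjectivity follows from the fact that for any point $y \in Y$ one can find a point of $Y_L$ lying over it (e.g.\ take the canonical point associated to the complete residue field $\sH(y)\what\otimes_k L$, or more simply invoke that $\sH(y)$ admits an isometric embedding into some valued extension of $L$). Continuity of $\psi_L$ already gives that the quotient topology on $Y$ is \emph{finer} than or equal to the natural topology; the content is the reverse inequality, i.e.\ that every subset $U \subseteq Y$ with $\psi_L^{-1}(U)$ open is itself open.

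The key step is a \emph{local, affinoid} reduction. Both topologies in question are defined on a good $k$-analytic space by gluing along affinoid domains, and $\psi_L$ restricts over an affinoid domain $V = \cM(\sA) \subset Y$ to the canonical map $\cM(\sA\what\otimes_k L) \to \cM(\sA)$. So it suffices to treat $Y = \cM(\sA)$ affinoid. Here the natural topology on $\cM(\sA)$ is the one induced by the family of seminorms $x \mapsto |f(x)|$, $f \in \sA$; a basis of the topology is given by the sets $\{x : |f_i(x)| < \varepsilon_i,\ |g_j(x)| > \delta_j\}$ with $f_i, g_j \in \sA$. To prove that $\psi_L^{-1}(U)$ open forces $U$ open, I would argue contrapositively: if $U$ is not open, pick $x_0 \in U$ that is not interior, so there is a net (or sequence, using that affinoids are metrizable when the relevant topology is second countable — but in general one works with nets or with the compactness of $\cM(\sA)$) $x_\alpha \to x_0$ with $x_\alpha \notin U$; then lift to $Y_L$: choose $\tilde x_0 \in \psi_L^{-1}(x_0)$ and, using surjectivity of $\psi_L$ together with the fact that $\psi_L$ is a \emph{closed} map (it is even proper: $\cM(\sA\what\otimes_k L) \to \cM(\sA)$ is a morphism of compact Hausdorff spaces), produce a lift $\tilde x_\alpha \to \tilde x_0$ of a subnet, contradicting openness of $\psi_L^{-1}(U)$.

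The cleanest route, which I would actually follow, replaces the net argument by a clean topological lemma: \emph{a continuous surjection $f : Z \to W$ between compact Hausdorff spaces is a topological quotient map} (indeed it is closed, because it sends closed — hence compact — sets to compact — hence closed — sets, and a continuous closed surjection is a quotient map). Thus over each affinoid domain $V \subset Y$, $\psi_L|_{\psi_L^{-1}(V)}$ is a quotient map, since $\cM(\sA\what\otimes_k L)$ and $\cM(\sA)$ are both compact Hausdorff in their natural (= spectral seminorm) topologies. Finally I would globalize: the affinoid domains form an admissible cover of $Y$ in a way compatible with $\psi_L$, and the property of being a quotient map is local on the target for such covers (a subset of $Y$ is open iff its trace on each $V$ is open, and likewise its preimage in $Y_L$ is open iff its trace on each $\psi_L^{-1}(V)$ is open), so the local quotient-map statements assemble to the global one.

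**Main obstacle.** The genuinely delicate point is \emph{surjectivity of $\psi_L$ at the level of points}, and more precisely having enough control over fibers to lift convergent nets (or, in the packaged version, to know $\psi_L$ is onto each affinoid piece). This rests on the non-obvious fact that $\sH(y)\what\otimes_k L$ is a nonzero Banach ring with a multiplicative seminorm extending both absolute values — equivalently that the category of valued field extensions has enough amalgams — which is exactly \cite[1.2]{Berkovich}-type input. Everything else (continuity of $\psi_L$, compactness and Hausdorffness of the affinoid spectra, the quotient-map criterion for compact Hausdorff spaces, and the gluing) is formal. I would therefore organize the write-up as: (1) reduce to $Y$ affinoid; (2) invoke surjectivity and continuity of $\psi_L$ on affinoids; (3) apply the compact-Hausdorff quotient-map lemma; (4) glue.
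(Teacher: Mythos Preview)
Your proposal is correct, and the core mechanism---$\psi_L$ is a closed continuous surjection, hence a quotient map---is exactly what the paper uses. The organizational difference is that you first localize to affinoids (where compact-Hausdorff makes closedness automatic) and then glue, whereas the paper proves closedness of $\psi_L$ \emph{globally} in one stroke: given closed $C \subset Y_L$ and $y \notin \psi_L(C)$, pick a compact neighborhood $D_2$ of $y$ in $Y$; then $\psi_L^{-1}(D_2)$ is compact, so $\psi_L(C \cap \psi_L^{-1}(D_2))$ is compact hence closed, and $D_2 \setminus \psi_L(C \cap \psi_L^{-1}(D_2))$ is a neighborhood of $y$ missing $\psi_L(C)$. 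This uses only local compactness of $Y$ and properness of $\psi_L$ (preimages of compacts are compact), then cites the same general-topology lemma you invoke. The paper's route thus bypasses your gluing step entirely and applies uniformly to arbitrary $k$-analytic spaces, sidestepping the implicit goodness hypothesis in your step (1). Your identification of surjectivity of $\psi_L$ as the one non-formal input is accurate; the paper takes it as known.
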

\begin{proof} We first prove that the map $\psi_L$ is closed. Let $C$ be a closed subset of $Y_L$. Let $y$ be a point of $Y \setminus \psi_L(C)$, and let $D_2$ be a compact neighborhood of $y$ in $Y$. Then $D_1 = \psi_L^{-1}(D_2)$ is a compact subset of $Y_L$. The intersection $C \cap D_1$ is then compact; its image $\psi_L(C \cap D_1)$ is then closed, so that $D_2 \setminus \psi_L(C \cap D_1)$ is a neighborhood of $y$ in $Y$ not intersecting $\psi_L(C)$. 
\par \noindent The conclusion follows from  \cite[2.4]{GenTop}.
\end{proof} 
We assume from now on in this section  that the  non-archimedean field $k$ is non-trivially valued.
\begin{defn}\label{etalefct}
Let  $\fX$ be a semistable $\kc$-formal scheme, $\Gamma$ a topological space, 
and let $X = \fX_{\eta}$ be the generic fiber of $\fX$. Let $f:X \to \Gamma$ be any function and, for any non-archimedean field extension $k^\p/k$ and  \'etale morphism of semistable $k^{\p \circ}$-formal schemes $\psi:\fY \to \fX \wt k^{\p \circ}$, let $f_{\psi}: \fY_{\eta} \to \Gamma$ be the composite 
$$ f_{\psi} =  f  \circ \psi_{\fX_{\eta},k^\p/k} \circ \psi_{\eta} \; .
$$
We say that $(f_{\psi})_{\psi}$ is the \emph{\'etale-local system of functions on $\fX$ with values in $\Gamma$} associated to the function $f$. We identify $f =  f_{{\rm id}_{\fX}}$ with the system $(f_{\psi})_{\psi}$.
\end{defn}
Notice that if $f : X \to \Gamma$ is continuous, every component $f_{\psi}$ of the \'etale-local system of functions associated with $f$ is also continuous.   We present some basic results in the opposite direction. The first lemma is a consequence of lemma \ref{lemma:fieldext} and  \cite[Lemma 5.11]{BerkContr}.
\begin{lemma}\label{etaleloc} Let $\fX$ be a semistable $\kc$-formal scheme and $f = (f_{\psi})_{\psi}$ be an \'etale-local system of functions on $\fX$ with values in the topological space $\Gamma$, associated to the function $f: X = \fX_{\eta} \to \Gamma$. 
Assume there is a non-archimedean  extension field $k^\p/k$ and an \'etale covering  $\{\psi_{\alpha} : \fY \to \fX \wt k^{\p \circ} \}_{\alpha} $  of $\fX \wt k^{\p \circ}$, such that $\forall \alpha$, $f_{\psi_{\alpha}}$  is continuous. 
Then $f$ is continuous.
\end{lemma}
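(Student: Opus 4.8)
The plan is to prove Lemma \ref{etaleloc} in two moves: first discard the auxiliary field extension $k^\p/k$, reducing to the case $k^\p=k$; then turn the étale covering into an honest topological quotient situation, in which continuity becomes a purely local matter. The only two external ingredients I expect to need are Lemma \ref{lemma:fieldext} and \cite[Lemma 5.11]{BerkContr}.

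Set $X:=\fX_\eta$. For the first move I would apply Lemma \ref{lemma:fieldext} to $Y=X$ and the extension $k^\p/k$: the projection $\psi':=\psi_{X,k^\p/k}\colon X\wt_k k^\p\to X$ is a topological quotient map, so $f$ is continuous if and only if $g:=f\circ\psi'\colon (\fX\wt k^{\p\circ})_\eta\to\Gamma$ is continuous. Now unwinding Definition \ref{etalefct}, for any étale $\psi\colon\fY\to\fX\wt k^{\p\circ}$ one has $f_\psi=f\circ\psi'\circ\psi_\eta=g\circ\psi_\eta$; in particular $f_{\psi_\alpha}=g\circ(\psi_\alpha)_\eta$ for each $\alpha$. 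Since $\fX\wt k^{\p\circ}$ is again a semistable formal scheme over a non-trivially valued field, I may rename and reduce to the following statement, which is what I will actually prove: \emph{if $\{\psi_\alpha\colon\fY_\alpha\to\fX\}_\alpha$ is an étale covering of $\fX$ and $g\colon X=\fX_\eta\to\Gamma$ is a function such that each $g\circ(\psi_\alpha)_\eta$ is continuous, then $g$ is continuous.}

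For this reduced statement the key point is that an étale morphism $\psi_\alpha$ of admissible formal schemes induces an étale, hence open, morphism of Berkovich analytic spaces $(\psi_\alpha)_\eta\colon(\fY_\alpha)_\eta\to X$ (étaleness formalism of \cite{BerkovichEtale}), and that, $\{\psi_\alpha\}$ being a covering, the resulting family is jointly surjective onto $X$; both facts are furnished by \cite[Lemma 5.11]{BerkContr}. Consequently the map $q:=\coprod_\alpha(\psi_\alpha)_\eta\colon\coprod_\alpha(\fY_\alpha)_\eta\to X$ is a continuous open surjection, hence a topological quotient map. The restriction of $g\circ q$ to the $\alpha$-component is $g\circ(\psi_\alpha)_\eta=f_{\psi_\alpha}$, which is continuous by hypothesis; since a function on a disjoint union is continuous iff it is continuous on each piece, $g\circ q$ is continuous, and because $q$ is a quotient map, $g$ itself is continuous. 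Feeding this back through the first move ($\psi'$ is also a quotient map), $f$ is continuous.

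I expect the only genuinely delicate point to be the input extracted from \cite[Lemma 5.11]{BerkContr}: namely that a surjective family of étale morphisms of formal schemes induces an open, jointly surjective family on generic fibres. Joint surjectivity must be obtained from surjectivity on special fibres through the specialization map (any $x\in X$ specializes into the image of some $\fY_{\alpha,s}$, and flatness of $\psi_\alpha$ then produces a point of $(\fY_\alpha)_\eta$ above $x$), while openness is the analytic shadow of étaleness. Since both are packaged in the cited lemma, no new argument is needed beyond it; everything else is the soft topological fact that open continuous surjections are quotient maps, together with the quotient-topology statement of Lemma \ref{lemma:fieldext}.
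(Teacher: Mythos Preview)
Your proposal is correct and follows exactly the approach the paper indicates: the paper's own proof is just the one-line remark that the lemma ``is a consequence of lemma \ref{lemma:fieldext} and \cite[Lemma 5.11]{BerkContr}'', and you have supplied precisely those details, first using the quotient-topology statement of Lemma \ref{lemma:fieldext} to eliminate the field extension, then using \cite[Lemma 5.11]{BerkContr} to obtain an open surjective (hence quotient) map from the disjoint union of the $(\fY_\alpha)_\eta$ onto $X$.
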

\begin{notation} \label{gamma}Let $[a,b]$ be an interval in $\R$, and let $\Gamma$ be a topological subspace of the metric space of all continuous functions   $[a,b] \to \R$, where $d(h,g) = \sup_{a \leq x \leq b}|h(x) - g(x)|$. We consider from now on  \'etale-local systems of functions on $\fX$ with values in a topological space $\Gamma$ of this type. Notice that $\Gamma$ has a partial ordering $\leq$, where $h \leq g$ if $h(x) \leq g(x)$ $\forall x \in [a,b]$, and that if $g - \veps < h < g + \veps$, with $\veps >0$ a constant function on $[a,b]$, then $d(g,h) < 2 \, \veps $.
\end{notation}
We recall  for completeness that a function $\varphi:T \to \Gamma$, where $T$ is
any topological space is \emph{upper semicontinuous} or USC
(resp. \emph{lower semicontinuous}  or LSC) if $\forall t_0 \in
T$ and $\veps >0$, there exists a neighborhood $U_{t_0,\veps}$ of
$t_0$ in $T$ such that
$$
\varphi(t) <  \varphi(t_0)  + \veps \;\; {\rm ( resp.}\;\; \varphi(t) >  \varphi(t_0)  - \veps \; {\text )}\;
$$
$\forall t \in U_{t_0,\veps}$. If $\forall \a \in I$, $\varphi_{\a}$
is USC (resp. LSC), then
$$
\varphi = \inf_{\a \in I} \varphi_{\a} \;\; {\rm ( resp.}\;\; \varphi = \sup_{\a \in I} \varphi_{\a}  \; {\text )}\;
$$
is USC (resp. LSC). Notice that if $\varphi$ is both USC and LSC at $t_0 \in T$, then it is continuous at $t_0$.
From now on in this section  $k$ is algebraically closed and $X$ be a rig-smooth strictly $k$-affinoid curve.
\begin{defn}  A strictly affinoid domain $V \subset X$ is said to be \emph{special} if it is either isomorphic to a closed disk in $\tau_{X}^{-1}(x)$ for a  point $x \in S(X)$ of type $(2)$, or it is $\tau_{X}^{-1}(L)$, where $L$ is a $k$-rational closed interval in a closed edge of ${\bf S}(X)$, such that at most one of the ends of $L$ is a vertex (the case of $L$ consisting of  a single  point is not excluded). In the former case, $S(V) = \Gamma (V)$ is the maximal point of the disk and, in the latter case, $S(V) = L$ and $\Gamma(V) = \cV(V)$ consists of the ends of $L$.
\end{defn}
In the following theorem, for a function $f:X \to \Gamma$ and for any algebraically closed non-archimedean field $k^\p$ over $k$, we let $X_{k^\p} := X \wt k^\p$, $\psi_{X,k^\p/k} : X_{k^\p}  \to X$ be the  canonical projection, and 
$f_{k^\p} := f \circ \psi_{X,k^\p/k}$ be the pull-back of $f$ to $X_{k^\p}$.

\begin{thm}\label{contthm} Any function $f:X \to \Gamma$ with the following five properties is continuous:
\begin{enumerate}
\item  $f$ is continuous at every $k$-rational point of $X$; 
\item the restriction of $f$ to every special affinoid subdomain $V$ of $X$ is such that $\ds f(x) \geq \min_{y \in \Gamma(V)} f(y)$ for all points $x \in V$;
\item \label{contannuluszz}
for any algebraically closed non-archimedean field $k^\p$ over $k$, the restriction of $f_{k^\p}$ to any 
open segment of $X_{k^\p}$ is continuous;
\item
the restriction of $f$ to $S(X)$ is continuous at all vertices of ${\bf S}(X)$;
\item
$f$ is \emph{USC}.
\end{enumerate}
\end{thm}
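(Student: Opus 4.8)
The strategy is to show that under hypotheses (1)--(5) the function $f$ is also lower semicontinuous at every point, which together with (5) yields continuity. The only points where LSC is not immediate are the points of $S(X)$, since at a $k$-rational point we already have full continuity by (1), and a $k$-rational point has a fundamental system of neighbourhoods which are open disks contained in the fibres of $\tau_X$. So fix $x_0 \in S(X)$; we want: for every $\veps > 0$ there is a neighbourhood $U$ of $x_0$ in $X$ with $f(y) > f(x_0) - \veps$ for all $y \in U$.

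\emph{First I would} reduce to the case $x_0$ of type $(2)$. If $x_0$ is of type $(3)$ it lies in the interior of a unique open edge $E$ of ${\bf S}(X)$ (after passing to a fine enough semistable model); the restriction of $f$ to $E$ is continuous by (3) (applied over $k$ itself), so $f$ is continuous along the skeleton near $x_0$, and a neighbourhood of $x_0$ in $X$ is the union of $\tau_X^{-1}$ of a small closed $k$-rational subinterval $L \ni x_0$ of $E$ with both ends of type $(3)$, which is a special affinoid $V = \tau_X^{-1}(L)$ with $\Gamma(V)$ the two ends of $L$; property (2) then gives $f(y) \geq \min_{z \in \Gamma(V)} f(z)$, and by continuity along $E$ the right-hand side is $> f(x_0)-\veps$ once $L$ is small. \emph{Next,} for $x_0$ a vertex: by (4) the restriction of $f$ to $S(X)$ is continuous at $x_0$, so there is a subpolygon-neighbourhood of $x_0$ inside $S(X)$, i.e. a finite union of half-open $k$-rational segments $E_1,\dots,E_m$ emanating from $x_0$ together with $\{x_0\}$, on which $f$ stays above $f(x_0)-\veps/2$; here each $E_i$ is chosen short, with its far end $v_i$ of type $(2)$ a vertex of some finer semistable model $\fX' \geq \fX$. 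Let $N$ be this open neighbourhood of $x_0$ in $S(X)$ and put $W = \tau_{\fX'}^{-1}(N)$, an open neighbourhood of $x_0$ in $X$.

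\emph{The remaining point} is to control $f$ on the ``transversal'' directions, i.e. on $W$ but off $S(X)$. Decompose $W \setminus \{x_0\}$: over each point $x \in N \setminus \cV(\fX')$ the fibre $\tau_{\fX'}^{-1}(x)\setminus\{x\}$ is a disjoint union of open disks, and over the vertex $x_0$ (or the $v_i$, but those we handle by shrinking the $E_i$) $\tau_{\fX'}^{-1}(x_0)\setminus\{x_0\}$ is a disjoint union of open disks $\cD$ with boundary point $x_0$. For each such disk $\cD$, its closure $\cD\cup\{x_0\}$ meets $S(X)$ only in $x_0$; I claim $\inf_{\cD} f \geq f(x_0)$. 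To see this, exhaust $\cD$ by closed $k$-rational sub-disks $V \subset \cD$ whose unique boundary point in $X$ lies over $x_0$ — more precisely, choose $V$ a closed disk in the fibre $\tau_X^{-1}(x_0)$ — so $V$ is a \emph{special} affinoid with $\Gamma(V)$ its single maximal point $\eta_V$, hence (2) gives $f(y) \geq f(\eta_V)$ for $y \in V$; since $\eta_V$ lies on a segment (inside $\cD$, not inside $S(X)$, but still an open segment of $X$ after a field extension if needed) whose closure contains $x_0$, hypothesis (3) — applied over a suitable algebraically closed $k'$ so that this radial path becomes an honest open segment of $X_{k'}$ — forces $f_{k'}$ along that radius to be continuous, and letting $V \uparrow \cD$, i.e. $\eta_V \to x_0$, yields $\liminf f \geq f(x_0)$ on $\cD$, as the USC hypothesis (5) at $x_0$ prevents the radial limit from exceeding $f(x_0)$ and continuity along the radius pins it to $f(x_0)$. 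Patching: on $W$ we now have $f(y) > f(x_0) - \veps$, which is the desired LSC at $x_0$.

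\emph{The main obstacle} I expect is the last step — making the ``transversal/radial'' estimate rigorous, namely passing from a closed disk $V$ in a fibre $\tau_X^{-1}(x_0)$ (to which (2) applies cleanly) to statements about the full open disk $\cD$ and about how $f(\eta_V)$ behaves as $\eta_V \to x_0$ along a radius. The subtlety is that such a radius is a half-line (skeleton of an open \emph{punctured} disk, or a segment of a disk that is not in $S(X)$), so hypothesis (3) is phrased over field extensions precisely to cover it; one must check that after base change to an algebraically closed $k'$ the relevant path is genuinely an open segment of $X_{k'}$ and that $f_{k'}$ equals $f$ composed with the closed projection $\psi_{k'}$, so that continuity of $f_{k'}$ on that segment descends to the needed estimate for $f$ via Lemma~\ref{lemma:fieldext}. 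Once that is set up, Lemma~\ref{etaleloc} is not even needed here; it is the field-extension bookkeeping, not any genuinely new geometric input, that is delicate, together with the combinatorial point that finitely many segments $E_i$ and finitely many disk-families suffice to cover a neighbourhood of $x_0$ — which is exactly the finiteness built into the notion of a complete subpolygon and the structure theory of \S\ref{Graph}.
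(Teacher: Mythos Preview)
There are two problems, one minor and one genuine.

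\textbf{Minor gap.} You assert that ``the only points where LSC is not immediate are the points of $S(X)$'', but this is false: there are plenty of non--$k$-rational points lying \emph{off} $S(X)$, namely points of type $(2)$ or $(3)$ (and type $(4)$ if $k$ is not maximally complete) sitting inside the maximal disks $D_X(y,1^-)$. Property~$(1)$ says nothing about them. The paper handles this case explicitly (and it is easy once you see it: such an $x_0$ equals $t_{0,r}$ in a normalized coordinate on its disk, and one chooses $R>r$ close to $r$ using~$(3)$, then applies~$(2)$ to the closed disk $D(0,R^+)$).

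\textbf{Main gap.} In your vertex argument the ``radial limit'' step is circular. You apply~$(2)$ to closed sub-disks $V\subset\cD$ (a disk hanging off the vertex $x_0$), getting $f(y)\ge f(\eta_V)$, and then want to let $\eta_V\to x_0$ along the radius and conclude $f(\eta_V)\to f(x_0)$. Hypothesis~$(3)$ gives continuity of $R\mapsto f(t_{0,R})$ only on the \emph{open} interval $(0,1)$; it says nothing about the limit at the endpoint $x_0$. Hypothesis~$(5)$ gives $\limsup_{R\to 1^-}f(t_{0,R})\le f(x_0)$, but you need $\liminf\ge f(x_0)$, which is exactly the lower semicontinuity at $x_0$ you are trying to prove. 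Nothing in your list of hypotheses closes this loop.

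The paper's device avoids the circularity by using the \emph{other} kind of special affinoid: $V_i=\tau_X^{-1}(L_i)$ for a short $k$-rational closed interval $L_i\subset S(X)$ with one end at $x_0$ and the other end $y_i$ in the interior of an edge. Then $\Gamma(V_i)=\{x_0,y_i\}$, so~$(2)$ gives $f(x)\ge\min(f(x_0),f(y_i))$ for \emph{all} $x\in V_i$ --- including every point of every disk hanging off $L_i$ --- in one stroke. Since $y_i\in S(X)$ and $f|_{S(X)}$ is continuous at $x_0$ by~$(3)+(4)$, one can make $f(y_i)>f(x_0)-\veps$. The union $\bigcup_i V_i$ is a neighbourhood of $x_0$, and you are done. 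The point is that the Shilov boundary of this special affinoid already contains $x_0$ (or points on $S(X)$ arbitrarily close to it), so the lower bound comes for free and no radial limit is needed. The same idea disposes of the off-skeleton non-rational case you omitted.
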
 
Notice that the properties $(3)$ and $(4)$ of the theorem imply that the restriction of $f$ to $S(X)$ is continuous.
\begin{proof} 
First of all, we observe  that, given an algebraically closed non-archimedean field $k^\p$ over $k$, the function $f_{k^\p} : X_{k^\p} \to \Gamma$ possesses the properties  $(1) - (5)$. 
Indeed, this is trivial for the property $(3)$. Property $(4)$ follows from the isomorphism ${\bf S}(X_{k^\p})  \xrightarrow{\ \sim\ }  {\bf S}(X)$. Property $(5)$ is a consequence of lemma \ref{lemma:fieldext}. Furthermore, the validity of $(2)$ for $X_{k^\p}$ follows from its validity for $X$and the property $(3)$. Finally, let $x^{\p}$ be a $k^\p$-rational point of 
$X_{k^\p}$, which is not the preimage of a $k$-rational point of $X$. If $x$ is the image of $x^{\p}$ in $X$, then the preimage of $x$ in $X_{k^\p}$ contains an open neighborhood of $x^{\p}$ in $X_{k^\p}$ (isomorphic to an open disk), and the property $(1)$ follows.
\par
By lemma \ref{lemma:fieldext} we may increase the field $k$ and assume that it is maximally complete (and algebraically closed). By the property $(5)$, it suffices to verify that, for every point $x_0 \in X$ and every $\veps >0$, the set $\{ x \in X \,|\, f (x) > f(x_0) - \veps \}$ contains a neighborhood of the point $x_0$. According to $(1)$, we may assume that $x_{0} \notin X(k)$.
\par
\medskip
Consider first the case when $x_0 \notin S(X)$. We may then assume that $x_0 \in D_{X}(y,1^-)$, for some $y \in X(k)$. Then $D_{X}(y,1^-) \cong D_k(0,1^-)$, $y \mapsto 0$, and $x_0 \mapsto  t_{0,r} \in D_k(0,1^-)$ for some $0 < r <1$. Take now a number  $r < R < 1$. It follows from the property $(3)$ that the restriction of $f$ to the interval $\{t_{0,r^\p} | r \leq r^\p \leq R \}$ is continuous and, therefore, we can find $R$ sufficiently close to $r$ with $f ( t_{0,R}) > f (x_0) - \veps$. Let $V$ be the closed disk $D(0,R^+)$. It is an affinoid neighborhood of $x_0 = t_{0,r}$ in $D(0,1^-)$ with $\Gamma(V) = \{ t_{0,R}\}$ and, by the property $(2)$, we get $f(x) \geq f( t_{0,R}) > f(x_0) - \veps$ for all points $x \in V$.
\par
 Suppose now that $x_0 \in S(X)$. First of all, if $x$ lies in an open edge of ${\bf S}(X)$, then the preimage of that edge under the retraction $\tau_{X} : X \to S(X)$ is isomorphic to an open  $k$-annulus isomorphic to $B(r_1,r_2)$, where $x_0 \mapsto t_{0,r}$, with $r_1 < r < r_2$. By the property $(3)$, the function $(r_1,r_2) \to \Gamma$~: $r^\p \mapsto f(t_{0,r^\p})$ is continuous, and so we can find numbers $r_1 <R_1<r<R_2 <r_2$ sufficiently close to $r$ such that $f(y_1)$, $f(y_2) > f(x_0) - \veps$, where $y_1 =  t_{0,R_1}$ and $y_2 =  t_{0,R_2}$. Let $V$ be the closed annulus $\tau_{\fX}^{-1}([R_1,R_2]) \cong D(0,R_2^+) \setminus D(0,R_1^-)$.
Then $V$ is a neighborhood of $x_0$, and $\Gamma(V) = \{y_1,y_2\}$. Property $(2)$ implies that $f(x) \geq \min (f(y_1),f(y_2)) >  f(x_0) - \veps$ for all  $x \in V$.
\par \medskip
Furthermore, suppose that $x_0$ is a vertex of ${\bf S}(X)$. If $S(X) = \{ x_0 \}$, then $\{ x_0 \}$ is the Shilov boundary of $X$ and, by property $(2)$, one has $f(x) \geq f(x_{0})$ for all $x \in X$. 
Assume therefore that $S(X) \not= \{ x_0\}$.
Consider a connected closed neighborhood $L$ of $x_0$ in $S(X)$ which does not contain vertices of ${\bf S}(X)$ other than $x_0$. One has $L = \cup_{i=1}^n L_i$, where each $L_i$ is homeomorphic to $[r_i,1]$ with $r_i \in |k^{\times}|$ and $x_0$ corresponds to the point 1 of $[r_i,1]$. Let $V_i$ be the affinoid domain $\tau_{X}^{-1}(L_i) \subset X$. 
The union $ \bigcup_{i=1}^n V_i$  is a compact neighborhood of $x_0$ in $X$, so that it suffices to verify that the set $\{x \in V_{i} | f(x) > f(x_{0}) -\veps \}$ is open in $V_{i}$.  
Since the restriction of $f$ to $S(X)$, hence to $L$, is continuous at $x_0$, for any given $\veps >0$, we may find $r_i \in |k^{\times}| \cap (0,1)$ so close to 1 that $f(y_i)>  f(x_0) - \veps$. Property $(2)$ then implies that $f(x) > f(x_0) - \veps $  for all $x \in V_{i}$, and the theorem follows.
\end{proof}

\section{Normalized radius of convergence} \label{raddef}

\subsection{Formal structures} \label{formalstructures}
\begin{notation} \label{merosing}  
In this section,   $k$ is supposed to be non trivially valued, algebraically closed and  \emph{of characteristic zero}. 
Here $X$ is the complement 
\beq
 X = \ol{X} \setminus \cZ \;  ,
\eeq
of a finite set (possibly empty)   $\cZ = \{z_1,\dots,z_r \}$,  consisting of $r$ distinct $k$-rational points  in a compact rig-smooth connected $k$-analytic curve $\ol{X}$.  We assume  that  $\ol{X}$ is the generic fiber of a semistable $\kc$-formal scheme  $\ol{\fX}$. 
It is well known that  the reduced positive divisor $\cZ$  determines a finite flat closed reduced  subscheme $\fZ$ of $\ol{\fX}$.  We call $\fZ$ the \emph{schematic closure} of $\cZ$ in $\ol{\fX}$.
\par
It is known \cite{Te} that there is an admissible blow-up 
$\fY$ of  $\ol{\fX}$ such that the inverse image of $\fZ^{\p}$ of $\fZ$ in $\fY$ is an effective  relative Cartier divisor  of $\fY$ finite \'etale over $\Spf \kc$ which does not intersect  the singular locus of $\fY$.
\par
So, \emph{we will assume in the following that  the schematic closure $\fZ$ of $\cZ$ in $\ol{\fX}$  is  \'etale and is contained in the maximal smooth open formal subscheme of $\ol{\fX}$}. We point out that this fact is equivalent to 
 the requirement  that  the disks  $D_{\ol{\fX}}(z_1,1^-), \dots, D_{\ol{\fX}}(z_r,1^-)$ are distinct and that their boundary points $\tau_{\ol{\fX}}(z_1), \dots, \tau_{\ol{\fX}}(z_r)$ are vertices of ${\bf S}(\ol{\fX})$.  
 \begin{rmk}
 We keep the notational distinction between $\cZ$ and $\fZ$ mainly for future use. 
\end{rmk}

\end{notation}

We generalize the skeleton  functor 
\beq \begin{array} {cccc}
 {\bf S} ~: & \cF\cS(X)& \longrightarrow & \cG\cP^{c}(X)
\\
  & \fX & \longmapsto & {\bf S}(\fX) \;  ,
\end{array}
\eeq 
described in  theorem \ref{skelfunctor} for compact $X$, to the present non compact case, as follows. 
\begin{defn}
A \emph{subpolygon} of $X$ is a triple ${\bf S} =(S, \cV, \cE)$ consisting of a connected closed subset $S$ of $X$, of a finite set $\cV \subset X$ of points $v$ of type $(2)$ (the \emph{vertices of $\bf S$}), and of a finite set $\cE$ of \emph{open edges} of $\bf S$. It is required that the elements of $\cE$ are either open  $k$-rational segments (the \emph{open edges of finite length of $\bf S$}) or  open $k$-rational half-lines of $\ol{X}$ (the \emph{open edges of infinite length of $\bf S$}), and that $S$ is the disjoint union of the elements of $\cE$ and of $\cV$.  A  subpolygon ${\bf S} =(S, \cV, \cE)$ of $X$ is \emph{complete} if $X \setminus S$ is a union of open disks. 
\end{defn}
\begin{defn} \label{openskeleton} Let $\fY \to \ol{\fX}$ be a semistable model of $\ol{X}$ above $\ol{\fX}$. We define the \emph{$\fY$-skeleton of $X = \ol{X} \setminus \cZ$} as the subpolygon of $X$,
${\bf S}_{\fZ}(\fY) = (S_{\fZ}(\fY),\cV(\fY), \cE(\fY) \cup \{\ell_{1}, \dots,\ell_{r}\})$, where  $S_{\fZ}(\fY) =S(\fY) \cup  \bigcup_{i=1}^{r}\ell_{i}$, and $\ell_{i}$ is the half-line
 $S(D_{\fY}(z_i, 1^-) \setminus \{z_i\})$, for $i = 1, \dots, r$.
 A \emph{$\fY$-normalized coordinate at $z_{i}$} is an isomorphism $T_{z_i}: D_{\fY}(z_i, 1^-)  \xrightarrow{\ \sim\ }  D(0,1^-)$ with $T_{z_i}(z_i ) = 0$. 
 We will say that the open half-line  $\ell_{i}$ \emph{connects} $z_i$ to $\tau_{\fY}(z_i)$.  For any rigid point $x \in X_{0}$, we define the \emph{$\fY$-maximal open disk  $D_{\fY,\fZ}(x, 1^-)$ in $X$, centered at $x$} as the maximal open disk contained in $D_{\fY}(x, 1^-) \cap X$. 
\end{defn} 
 The boundary point 
   $\tau_{\fY,\fZ} (x)$ of  $D_{\fY,\fZ}(x, 1^-)$ in $\ol{X}$ belongs to $S_{\fZ}(\fY)$ and, as
   in (\ref{limin}) we have a canonical continuous retraction 
  \beq \label{eq:openskeleton}
  \tau_{\fY,\fZ} : X \to S_{\fZ}(\fY) \; .
  \eeq

\par

 Let  $\cE$ be  a locally free $\cO_X$-Module of finite type equipped with an $X/k$-connection $\na$.  Notice that the abelian sheaf 
$$
\cE^{\na} = \ker (\na : \cE \to \cE \otimes \Omega^1_{X/k}) 
$$
 for the $G$-topology of $X$, is not in general locally constant. We use a canonical coordinate $T: D_{\fY,\fZ}(x, 1^-) \xrightarrow{\ \sim\ } D(0,1^-)$, with $T(x) =0$, on $D_{\fY,\fZ}(x, 1^-)$, and define, for $r \in (0,1)$, $D_{\fY,\fZ}(x, r^\pm)$ as  $T^{-1}(D(0,r^\pm))$. 
 For any $r \in (0,1) \cap |k|$, the restriction of $\cE$ to $D = D_{\fY,\fZ}(x, r^+) \subset 
 D_{\fY,\fZ}(x, 1^-)$  is a free $\cO_D$-Module of finite type. Let us choose a basis $\ul e : \cE_{|D} \xrightarrow{\ \sim\ } \cO_D^{\mu}$. The coordinate column vector $\vec y$  with respect to the basis $\ul e$ of a horizontal section of $(\cE_{|D},\na_{|D})$ satisfies a system of differential equations of the form (\ref{diffCHDWintro})
where $G \in M_{\mu \times \mu}(\cO_X(D))$ depends on $r$ and on the choice of the basis $\ul e$. 
By iteration of  the system (\ref{diffCHDWintro}) we obtain,
for any $i \in \N$, the equations
\beq\label{eq:stratintro}
(i!)^{-1}(\frac{d}{dT})^i \, \vec y   = G_{[i]}\, \vec y \;\;\;\;,
\eeq
with $G_{[i]} \in M_{\mu \times \mu}(\cO_X(D))$.
\par
The  power series $Y(T) = \sum_iG_{[i]}(x)(T-x)^i$ 
converges in a neighborhood $U$ of $x \in D$ to the unique solution matrix of the system (\ref{diffCHDWintro})  such that $Y(x)$ is the $\mu \times \mu$ identity matrix. So, $\ul e \,Y \in \cE^{\na}(U)^{\mu}$ is a row of horizontal sections of $\cE (U)$. 
For any $r \in (0,1) \cap |k|$, the value
\beq 
\cR_r (x)  := \min (r, \liminf_{i \to \infty}|G_{[i]}(x)|^{-1/i}) \in (0,r] \; ,
\eeq
is the radius of the maximal open disk $D(0,\rho^-)$, contained in $D(0,r^+)$, such that the power series $Y(T)$ converges in $D(0,\rho^-)$. It is independent of the choice of $T$ and of the basis $\ul e$ of $\cE$ restricted to $D_{\fY,\fZ}(x,r^+)$. 
The  matrix $Y(T)$ is in fact an {\it invertible}  solution matrix of the system (\ref{diffCHDWintro}) holomorphic in $D(0,\cR_r(x)^-)$, since the differential system for the wronskian $w := \det Y$, namely
\beq\label{eq:wronskian}
 \frac {d \, w}{dT} = ({\rm Tr} \, G) \,  w \;\; ,
\eeq
has no singularity in $D(0,1^-)$. We conclude that $\cE^{\na}_{| D_{\fY,\fZ}(x, \cR_r(x)^-)}$ is 
 the constant sheaf  $k^{\mu}$ on $D_{\fY,\fZ}(x, \cR_r(x)^-)$. 
 
\par
For $r_1 < r_2 < 1$ we have $\cR_{r_1}(x) \leq \cR_{r_2}(x) \leq 1$,  and  therefore the 
quantity 
\beq \label{limrad}
\cR(x)   := \lim_{r \to 1} \cR_r (x)  \;\;\;  {\rm (} \; r \in |k| \cap (0,1) \; {\rm )} \; ,  
\eeq
is well-defined and belongs to $(0,1]$. We conclude that $\cE^{\na}_{| D_{\fY,\fZ}(x, \cR(x)^-)}$ is a locally constant sheaf with fiber $k^{\mu}$ on $D_{\fY,\fZ}(x, \cR(x)^-)$ (for both the natural and the $G$-topology). Since 
$D_{\fY,\fZ}(x, \cR(x)^-)$ equipped with the natural topology  is contractible, $\cE^{\na}_{| D_{\fY,\fZ}(x, \cR(x)^-)}$ is in fact 
 the constant sheaf  $k^{\mu}$ on $D_{\fY,\fZ}(x, \cR(x)^-)$, and $\cE_{| D_{\fY,\fZ}(x,\cR(x)^-)}$ is free of rank $\mu$.
 
\par
 From the previous discussion, we conclude the following
 \begin{prop} If for some  analytic domain $V \subset X$,   $\cE^{\na}_{|V}$ is  locally constant, then it is necessarily a local system of $k$-vector spaces of rank $\mu$ on $V$ and the canonical 
monomorphism
\beq
\label{eq:RHsheafintro2}
\cE^{\na}_{|V}   \otimes_k \cO_V \hookrightarrow  \cE_{|V} \;  ,
\eeq
is  an isomorphism.  For any $x \in X(k)$, $D_{\fY,\fZ}(x,\cR(x)^-)$ is the maximal open disk $\cD$ centered at $x$, not intersecting 
$S_{\fZ}(\fY)$, and 
such that $\cE^{\na}$ is a locally constant sheaf on $\cD$, or, equivalently, such that the restriction of $(\cE,\na)$ to $\cD$ is isomorphic to the trivial connection on $\cO_{\cD}^{\mu}$.
\end{prop}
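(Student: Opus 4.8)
The plan is to treat the two assertions separately, both resting on the computation carried out just above at $k$-rational centres. Write $\phi\colon \cE^{\na}_{|V}\otimes_k\cO_V\hookrightarrow\cE_{|V}$ for the canonical monomorphism of (\ref{eq:RHsheafintro2}); it is injective because horizontal sections which are $\cO_V$-linearly dependent over a connected open are already $k$-linearly dependent, as one sees by applying $\na$ to a minimal relation. For the first assertion I would first pin down the fibre dimension of the sheaf of $k$-vector spaces $\cE^{\na}_{|V}$: it is a locally constant function on $V$, and near any $k$-rational point $x$ the discussion preceding the proposition shows that $\cE^{\na}$ is the constant sheaf $k^{\mu}$ on $D_{\fY,\fZ}(x,\cR(x)^-)$. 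Since every non-empty open subset of $X$ meets a disk $D_{\fY,\fZ}(x,1^-)$ with $x\in X(k)$, every connected component of $V$ contains such a point, so the fibre dimension of $\cE^{\na}_{|V}$ equals $\mu$ everywhere; hence $\cE^{\na}_{|V}$ is a local system of $k$-vector spaces of rank $\mu$ and $\cE^{\na}_{|V}\otimes_k\cO_V$ is locally free of rank $\mu=\rk\,\cE$.

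It then remains to prove that $\phi$ is surjective. Its cokernel $\cC$ is a coherent $\cO_V$-Module which is torsion: in local bases $\phi$ is given by a square matrix of size $\mu$, injective as a map of free modules, hence with determinant a nonzerodivisor, so each stalk $\cC_v$ is annihilated by a nonzerodivisor of the one-dimensional local ring $\cO_{V,v}$; therefore $\mathrm{supp}\,\cC$ is a closed analytic subset of $V$ of dimension $\le 0$, i.e. a set of rigid points, which are $k$-rational because $k$ is algebraically closed. But near every $k$-rational point $y$ of $V$ the preamble provides $\cE_{|D_{\fY,\fZ}(y,\cR(y)^-)}$ free of rank $\mu$ together with $\mu$ independent horizontal sections, so $\phi$ is already an isomorphism there and $y\notin\mathrm{supp}\,\cC$. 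Hence $\mathrm{supp}\,\cC=\emptyset$, $\cC=0$, and $\phi$ is an isomorphism; it is moreover horizontal for the connection $1\otimes d$ on the source, so it identifies $(\cE,\na)_{|V}$ with the trivial connection attached to the local system $\cE^{\na}_{|V}$. Specialising to the case where $V$ is an open disk $\cD$, which is contractible, a rank-$\mu$ local system on $\cD$ is constant, which yields at once the stated equivalence: $\cE^{\na}_{|\cD}$ is locally constant if and only if $(\cE,\na)_{|\cD}\cong(\cO_{\cD},d_{\cD/k})^{\mu}$.

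For the second assertion, set $\cD_0:=D_{\fY,\fZ}(x,\cR(x)^-)$. It is contained in $D_{\fY,\fZ}(x,1^-)$ and therefore disjoint from $S_{\fZ}(\fY)$ by the construction in (\ref{openskeleton}), while $\cE^{\na}_{|\cD_0}$ is the constant sheaf $k^{\mu}$ by the preamble; so $\cD_0$ belongs to the family of open disks centred at $x$, disjoint from $S_{\fZ}(\fY)$, on which $\cE^{\na}$ is locally constant. Conversely let $\cD'$ be any member of that family. Being disjoint from $S_{\fZ}(\fY)\supseteq S(\fY)$ and connected through $x$, $\cD'$ lies in $D_{\fY}(x,1^-)\cap X$, hence in $D_{\fY,\fZ}(x,1^-)$, so in the normalized coordinate $T$ with $T(x)=0$ we may write $\cD'=D(0,\rho^-)$ for some $\rho\in(0,1]$. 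By the equivalence just obtained $(\cE,\na)_{|\cD'}$ is trivial, so for each $r\in|k|\cap(0,\rho)$ we may take the restriction of a horizontal $\cO_{\cD'}$-basis of $\cE_{|\cD'}$ as the basis $\ul e$ over $D(0,r^+)$; then (\ref{diffCHDWintro}) becomes $\tfrac{d\vec y}{dT}=0$, its fundamental solution matrix at $x$ is the identity, and $\cR_r(x)=r$. As $r\mapsto\cR_r(x)$ is non-decreasing and $|k|$ is dense in $\R_{>0}$, passing to the limit (\ref{limrad}) gives $\cR(x)\ge\rho$, that is $\cD'=D(0,\rho^-)\subseteq D(0,\cR(x)^-)=\cD_0$. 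Hence $\cD_0$ is the maximal such disk.

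The step I expect to be the genuine obstacle is the surjectivity of $\phi$ at points of $V$ which are not $k$-rational: the preliminary discussion only controls the radius of convergence around $k$-rational centres, via convergent solution matrices, and offers no analogous device at points of type $(2)$ or $(3)$. The dimension-and-support argument of the second paragraph is exactly what transfers the triviality known at rational points to arbitrary points; everything else is bookkeeping with the definitions of $D_{\fY,\fZ}$, $\cR_r$ and $\cR$ recalled immediately before the proposition.
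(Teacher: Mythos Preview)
Your proof is correct. The paper does not give a separate proof of this proposition: it is stated as a direct consequence of the preceding discussion (``From the previous discussion, we conclude the following''), so there is no author's argument to compare against beyond the computations with the matrices $G_{[i]}$, the radii $\cR_r(x)$, and the conclusion that $\cE^{\na}$ is constant of rank $\mu$ on $D_{\fY,\fZ}(x,\cR(x)^-)$. Your write-up is a faithful and careful unpacking of exactly those ingredients; the torsion-cokernel argument for surjectivity of $\phi$ at non-rational points, and the basis-change trick (choosing a horizontal basis on $\cD'$ so that $G=0$ and hence $\cR_r(x)=r$) to bound $\cR(x)\ge\rho$, are the natural ways to make the implicit reasoning explicit.
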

\par

We may then give the 
\begin{defn} \label{def:altradius} Let $X = \ol{X} \setminus \fZ$ be a rig-smooth connected $k$-analytic curve  as in \ref{merosing}. Let $(\cE,\na)$ be an object of 
${\bf MIC}(X/k)$, with $\cE$ locally free of rank $\mu$ for the $G$-topology. We fix a semistable model $\fY$ of $\ol{X}$ over $\ol{\fX}$ as in (\ref{merosing}). For any $k$-rational point $x \in X$, we define the \emph{$\fY$-normalized radius of convergence 
$\cR_{\fY,\fZ} (x,(\cE,\na))$ (or $\cR_{\fY,z_{1},\dots,z_{r}} (x,(\cE,\na))$) of $(\cE,\na)$ at $x$} as the radius $\rho \in (0,1]$ of the  maximal open disk $\cD$ centered at $x$, contained in $X$ and not intersecting the closed subset   $S_{\fZ}(\fY) \subset X$, such that  $(\cE,\na)_{|\cD}$ is isomorphic to the trivial connection $(\cO_{\cD},d_{\cD})^{\mu}$. 
\end{defn}

\begin{lemma} \label{etalechange2} Let $X$, $\ol{X}$, $\fY$, $(\cE,\na)$ be as in the previous definition, and let $\psi: \fY^{\p} \to \fY$ be an \'etale morphism of semistable $\kc$-formal schemes. Let 
$\ol{X}^{\p} = \fY^{\p}_{\eta}$, $\cZ^{\p} = \psi_{\eta}^{-1}(\cZ)$, $\fZ^{\p} = \psi^{\ast}(\fZ)$, $X^{\p} = \ol{X}^{\p} \setminus \cZ^{\p}$, and $g: X^{\p} \to X$ be the morphism induced by $\psi_{\eta}$. Then 
$g^{\ast}(\cE,\na)$ is an
  object of 
${\bf MIC}(X^{\p} /k)$ and
 \beq
\label{etalechange3} 
\cR_{\fY^{\p},\fZ^{\p}} (x,g^{\ast}(\cE,\na)) = \cR_{\fY,\fZ} (g(x),(\cE,\na)) \;, \; \forall x \in X^{\p} \;.
\eeq
\end{lemma}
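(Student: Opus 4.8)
The plan is to mimic the argument establishing the étale invariance \eqref{etaleinv} of the polystable normalized radius, carrying along the extra datum of the divisor $\fZ$. First I would dispose of the two preliminary points. That $g^{\ast}(\cE,\na)$ is an object of ${\bf MIC}(X^{\p}/k)$ is immediate: $X^{\p}=\ol{X}^{\p}\setminus\cZ^{\p}$ is rig-smooth, $g$ is a morphism of rig-smooth $k$-analytic curves, and the pull-back of a $G$-locally free module of rank $\mu$ with an integrable $k$-linear connection is of the same kind; moreover, since $\psi$ is étale, $\fZ^{\p}=\psi^{\ast}(\fZ)$ is again étale over $\kc$ and contained in the maximal smooth open formal subscheme of $\fY^{\p}$, so the triple $(\ol{X}^{\p},\cZ^{\p},\fY^{\p})$ satisfies the hypotheses of \ref{merosing} and the left-hand side of \eqref{etalechange3} is defined. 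Next I would reduce to a $k$-rational point: for arbitrary $x\in X^{\p}$ both sides of \eqref{etalechange3} are, by definition, computed after base change to $\sH(x)$, resp.\ to $\sH(g(x))\subseteq\sH(x)$ (as in the Introduction); since $\psi\wt\sH(x)^{\circ}$ is again an étale morphism of semistable formal schemes, since the formation of the normalized radius commutes with ground-field extension (cf.\ \eqref{basechange}), and since the base change of $g$ carries the canonical $\sH(x)$-rational point over $x$ to the canonical $\sH(x)$-rational point over $g(x)$, it suffices to treat $x\in X^{\p}(k)$, whence also $g(x)\in X(k)$.

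For $x\in X^{\p}(k)$ the core of the argument uses the material of subsection \ref{etalechange}: $\psi$ étale gives $\psi_{\eta}^{-1}(S(\fY))=S(\fY^{\p})$, $\psi_{\eta}(\cV(\fY^{\p}))\subseteq\cV(\fY)$, and $\psi_{\eta}$ restricts to isomorphisms $D_{\fY^{\p}}(y,\rho^{-})\xrightarrow{\ \sim\ }D_{\fY}(\psi_{\eta}(y),\rho^{-})$ for $y\notin S(\fY^{\p})$, $\rho\in(0,1]$ (diagram \eqref{diagram2} and the remarks following it). In particular, for each $z^{\p}\in\cZ^{\p}$ with $\psi_{\eta}(z^{\p})=z_{i}$, $\psi_{\eta}$ identifies $D_{\fY^{\p}}(z^{\p},1^{-})$ with $D_{\fY}(z_{i},1^{-})$, hence the half-line $S(D_{\fY^{\p}}(z^{\p},1^{-})\setminus\{z^{\p}\})$ with $\ell_{i}$; together with $\psi_{\eta}^{-1}(S(\fY))=S(\fY^{\p})$ and $\cZ^{\p}=\psi_{\eta}^{-1}(\cZ)$ this yields the identity $\psi_{\eta}^{-1}(S_{\fZ}(\fY))=S_{\fZ^{\p}}(\fY^{\p})$. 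Consequently $\psi_{\eta}$ restricts to an isomorphism $D_{\fY^{\p}}(x,1^{-})\xrightarrow{\ \sim\ }D_{\fY}(g(x),1^{-})$ which, because $\cZ^{\p}=\psi_{\eta}^{-1}(\cZ)$, carries $D_{\fY^{\p}}(x,1^{-})\cap X^{\p}$ onto $D_{\fY}(g(x),1^{-})\cap X$, and therefore restricts further to an isomorphism $\iota:D_{\fY^{\p},\fZ^{\p}}(x,1^{-})\xrightarrow{\ \sim\ }D_{\fY,\fZ}(g(x),1^{-})$ of maximal open disks; $\iota$ can be chosen compatibly with normalized coordinates, since $\iota^{\ast}$ of an $\fY$-normalized coordinate at $g(x)$ is an $\fY^{\p}$-normalized coordinate at $x$, and any two of those differ by an automorphism of $D(0,1^{-})$ fixing $0$, which does not change the radii of concentric open sub-disks.

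It then remains to observe that $g^{\ast}(\cE,\na)|_{D_{\fY^{\p},\fZ^{\p}}(x,1^{-})}\cong\iota^{\ast}\bigl((\cE,\na)|_{D_{\fY,\fZ}(g(x),1^{-})}\bigr)$, that for every $\rho\in(0,1)$ the isomorphism $\iota$ carries $D_{\fY^{\p},\fZ^{\p}}(x,\rho^{-})$ onto $D_{\fY,\fZ}(g(x),\rho^{-})$, and that triviality of a connection is both preserved and reflected by pull-back along an isomorphism of analytic curves; hence $(\cE,\na)$ is trivial on $D_{\fY,\fZ}(g(x),\rho^{-})$ if and only if $g^{\ast}(\cE,\na)$ is trivial on $D_{\fY^{\p},\fZ^{\p}}(x,\rho^{-})$. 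Taking the supremum over such $\rho$ and invoking Definition \ref{def:altradius} (with the characterization of the radius discussed around \eqref{limrad}) on each side gives exactly \eqref{etalechange3}. The only genuinely delicate step — and it is routine granted subsection \ref{etalechange} — is the identity $\psi_{\eta}^{-1}(S_{\fZ}(\fY))=S_{\fZ^{\p}}(\fY^{\p})$, i.e.\ the compatibility of the adjoined half-lines $\ell_{i}$ with étale pull-back; the rest is the formal remark that pull-back along an isomorphism neither creates nor destroys triviality of connections.
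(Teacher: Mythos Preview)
Your proposal is correct and follows exactly the approach the paper intends: the paper's proof is the single sentence ``This follows from the discussion in (\ref{etalechange}) and the definitions,'' and you have simply unpacked that sentence, using the disk isomorphisms $D_{\fY^{\p}}(y,\rho^{-})\xrightarrow{\ \sim\ }D_{\fY}(\psi_{\eta}(y),\rho^{-})$ from subsection~\ref{etalechange} together with Definition~\ref{def:altradius}. Your explicit treatment of the half-lines $\ell_i$ (i.e.\ the identity $\psi_{\eta}^{-1}(S_{\fZ}(\fY))=S_{\fZ^{\p}}(\fY^{\p})$) and the reduction to $k$-rational points are the natural elaborations of what the paper leaves implicit.
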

\begin{proof}
This follows from the discussion in (\ref{etalechange}) and the definitions.
\end{proof}
Notice that if $\cE$ is free over $\cD = D_{\fY,\fZ}(x, 1^-)$, $x \in X(k)$, and we use a  global basis $\ul e$ of $\cE$ on $\cD$ to transform $\na$ into the system (\ref{diffCHDWintro}),  where the coordinate $T$ is normalized so that $\cD \xrightarrow{\ \sim\ }  D(0,1^-)$, $x \mapsto 0$, formula \ref{limrad} collapses to 
\beq \label{eq:raddefsmooth}
\cR(x, \Sigma) = \min (1, \liminf_{n \to \infty}|G_{[n]}(x)|^{-1/n})\; .
\eeq 
\begin{defn} \label{mersing} Let $\fY$ be a semistable model of $\ol{X}$ above $\ol{\fX}$ and $X = \ol{X} \setminus \cZ$, as before. We will say that the $\cO_{\ol{X}_G}$-Module $\ol{\cE}$ is \emph{coherent} (resp \emph{locally free}) \emph{over} $\fY$ if it is of the form $\sp_{\fY}^{\ast} (\ol{\fE})$, for a coherent (resp. locally free) $\cO_{\fY}$-Module $\ol{\fE}$, where $\sp_{\fY}: \ol{X}_G \to \fY$ is viewed as a morphism of $G$-ringed spaces. We denote by ${\bf MIC}_{\fY}(X/k)$
(resp. ${\bf MIC}_{\fY}(\ol{X}(\ast \cZ)/k)$) the full subcategory of $ {\bf MIC}(X/k)$ consisting  of pairs $(\cE,\na)$, where $\ol{\cE}$ is  an $\cO_{\ol{X}_G}$-Module  coherent and locally free over $\fY$ (resp. and $\nabla$ extends to a connection on $\ol{\cE}$ with meromorphic singularities at $\cZ$
$$\ol{\na}: \ol{\cE} \to \ol{\cE} \otimes \Omega^1_{\ol{X}_G/k}(\ast \cZ) \; \; \text{).}
$$ 
\end{defn}
\par
We will  prove the following 
\begin{thm} \label{context} Let $(\cE,\na)$ be an object of ${\bf MIC}_{\fY}(X/k)$. 
The function $X(k) \to \R_{>0}$, $x \mapsto \cR_{\fY,\fZ} (x,(\cE,\na)) $,   extends (uniquely) to a continuous function $X \to \R_{>0}$. 
\end{thm}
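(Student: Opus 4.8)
The strategy is to reduce everything to the continuity criterion of Theorem \ref{contthm}, applied to the function $f:X\to\Gamma$ obtained by packaging the family $\cR_r(x)$, $r\in|k|\cap(0,1)$, into a single function with values in a space of monotone functions on an interval. First, by \'etale localization: using Lemma \ref{etalechange2} (formula \ref{etalechange3}) we may assume, by Lemma \ref{cover}, that $\ol\fX$ is covered by basic formal disks and basic formal annuli, and since continuity is a local property on $\ol X$, it suffices to treat the case where $\ol\fX$ is a basic formal disk or annulus $\fB$ and $\cE$ is \emph{free} of rank $\mu$ on $\fB$, carrying a connection expressed, with respect to a global basis $\ul e$, by a matrix $G\in M_{\mu\times\mu}(\cO_X(\cB))$ as in (\ref{diffCHDWintro}). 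In the half-line directions $\ell_i$ emanating from the $z_i$ one uses instead the meromorphic structure of Definition \ref{mersing} and the classical result \cite[Prop. 4]{Advances} (condition $\bf NL$ is not needed for mere continuity here, only for the asymptotic formula (\ref{advances})). Then, on each such basic piece, formula (\ref{eq:raddefsmooth}) gives $\cR(x,\Sigma)=\min(1,\liminf_n|G_{[n]}(x)|^{-1/n})$, and the task becomes: verify that this function satisfies the five hypotheses of Theorem \ref{contthm}.

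Carrying this out, the five verifications run as follows. Property $(1)$, continuity at $k$-rational points: this is classical, e.g.\ a consequence of the continuity of the individual $\cR_r$ together with the Dwork--Robba estimates of \S\ref{Dwork-Robba} (Theorem \ref{cor:DworkRobba1} and corollaries), which control the tail uniformly near a $k$-rational center. Property $(5)$, upper semicontinuity: each $|G_{[n]}(x)|^{-1/n}$ is continuous (indeed the functions $x\mapsto|G_{[n]}(x)|$ are continuous on the affinoid), so $\liminf$ of continuous functions is USC, hence $\cR$ is USC; this is the ``non-trivial part of \cite[2.5]{ChristolDwork}'' referred to in the introduction, and is precisely where the generalized Dwork--Robba theorem is needed — it gives the effective bound on $|G_{[n]}(x)|$ that upgrades the pointwise $\liminf$ to genuine upper semicontinuity. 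Property $(2)$, the minimum-principle on special affinoids $V$: on a closed disk or closed annulus inside a fiber of $\tau_X$, the radius of convergence at an interior point is bounded below by its values at the boundary points $\Gamma(V)$; this follows from the fact that a solution matrix convergent on a disk centered at a boundary point, when re-expanded around an interior point, still converges on a disk reaching at least back to that boundary point — essentially the transfer principle (\ref{contiguity}) combined with the geometry of nested disks. Property $(3)$, continuity along open segments: this is exactly the Christol--Dwork theorem \cite[Th.\ 2.5]{ChristolDwork} in its generalized form of \S\ref{CHDWHigherGenus}, valid now over basic affinoid annuli that need not embed in $\A^1$. Property $(4)$, continuity of the restriction to $S(X)$ at vertices: one reduces, via a neat retraction, to a neighborhood of the vertex inside the skeleton, which is a finite union of closed $k$-rational segments $L_i$, and applies $(3)$ on each together with the compatibility of the various normalizations at the vertex.

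\textbf{The main obstacle.} The hard part is property $(5)$, the upper semicontinuity of $x\mapsto\cR(x)$, and more precisely the passage from the $r$-truncated radii $\cR_r$ (each manifestly continuous) to the limit $\cR=\lim_{r\to1}\cR_r$ while retaining semicontinuity uniformly up to the boundary of the disk $D_{\fY,\fZ}(x,1^-)$ — the limit of continuous functions need not be continuous, and the $\liminf$ over $n$ of the $|G_{[n]}(x)|^{-1/n}$ is only automatically USC, not LSC. This is exactly the point where the generalized Dwork--Robba theorem of \S\ref{Dwork-Robba} enters: it provides effective, \emph{uniform} bounds for the growth of local solution matrices $|G_{[n]}(x)|$ in terms of $n$ and of the already-known behavior of $\cR$ on the skeleton, so that the error made in replacing $\cR$ by $\cR_r$ is controlled uniformly in $x$ over a neighborhood. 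Once $(5)$ is in hand, the remaining four properties are either classical ($(1)$, $(2)$) or instances of the (generalized) Christol--Dwork continuity along one-dimensional families ($(3)$, $(4)$), and Theorem \ref{contthm} then yields continuity of $f$, hence of $x\mapsto\cR_{\fY,\fZ}(x,(\cE,\na))$, on all of $X$; uniqueness of the continuous extension from $X(k)$ is automatic since $X(k)$ is dense in $X$.
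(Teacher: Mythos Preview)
Your overall architecture matches the paper's: reduce to basic formal annuli/disks via Lemmas \ref{cover} and \ref{etaleloc} (and \'etale invariance, Lemma \ref{etalechange2}), then verify the five hypotheses of Theorem \ref{contthm} for the explicit function $\cR(x,\Sigma)$ of (\ref{eq:raddefsmooth21})/(\ref{eq:raddefsmooth2}). Two points, however, need correction.

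\textbf{The USC argument is misstated.} You write that ``$\liminf$ of continuous functions is USC'' and, in the obstacle paragraph, that the $\liminf$ ``is only automatically USC, not LSC''. This is false: for a sequence of continuous $f_n$, the function $\liminf_n f_n=\sup_m\inf_{n\ge m}f_n$ is a supremum of USC functions, which need not be USC (take $f_n(x)=1-\max(0,1-n|x|)$ on $[-1,1]$: the liminf is $0$ at $0$ and $1$ elsewhere, hence LSC and not USC). What is automatic is that each $\varphi_s(x):=\inf_{n\ge s}|G_{[n]}(x)|^{-1/n}$ is USC, being an infimum of continuous functions. The role of Dwork--Robba (Theorem \ref{cor:DworkRobba1}) is then exactly what the paper does in \S\ref{subsection:USC}: it shows that $\varphi_s\to\wtilde R$ \emph{uniformly on $X$}, so that $\wtilde R$ is a uniform limit of USC functions and hence USC. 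Your sentence about Dwork--Robba ``upgrading the pointwise $\liminf$ to genuine upper semicontinuity'' is the right intuition, but it contradicts your earlier claim that USC is automatic; drop that claim and state the uniform-convergence mechanism explicitly.

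\textbf{Handling of the points $z_i$.} You propose to treat the half-lines $\ell_i$ via the meromorphic structure and \cite{Advances}. The paper's route is simpler and avoids any Turrittin-type input for continuity: Lemma \ref{compact} excises disks $D_{\ol\fX}(z_i,\veps^-)$ and produces, for each $\veps\in|k|\cap(0,1)$, a genuine semistable model $\fX^{(\veps)}$ of the compact curve $X^{(\veps)}$ with $\cR_{\fX^{(\veps)}}=\cR_{\fY,\fZ}$ on $X^{(\veps)}$. Continuity on $X$ then follows from continuity on each $X^{(\veps)}$. Your approach would also work (and is closer to what is needed later for Proposition \ref{advances2}), but it is not required here.

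Finally, your justifications for $(1)$ and $(2)$ are heavier than necessary. For $(1)$: if $x_0\in X(k)$ then $\cR(x,\Sigma)=\cR(x_0,\Sigma)$ for every $x\in D_X(x_0,\cR(x_0,\Sigma)^-)$, so continuity at $x_0$ is immediate. For $(2)$: on a special affinoid $V$ one has $|G_{[n]}(x)|\le\max_{y\in\Gamma(V)}|G_{[n]}(y)|$ by the maximum principle, whence $\wtilde R(x)\ge\min_{y\in\Gamma(V)}\wtilde R(y)$ directly, with no appeal to transfer.
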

The proof of theorem \ref{context} will be based in fact on the construction of a function  $x \mapsto \cR_{\fY,\fZ} (x,(\cE,\na))$, at all $x \in X$, extending the definition given in this section for $k$-rational points $x \in X(k)$.  Namely, we set the following 
\begin{defn} \label{nonrigrad} Let $(\cE,\na)$ be an object of ${\bf MIC}_{\fY}(X/k)$. 
For  any $x \in X$, we consider the base-field extension 
$$\psi_{\ol{X},\sH(x)/k} : \ol{X}^\p := \ol{X} \wt \sH(x) \to \ol{X} \; ,
$$
inducing
$$\psi_{X,\sH(x)/k} : X^\p := X \wt \sH(x) \to X \; ,
$$
and  the canonical $\sH(x)$-rational point $x^\p$ of $X^\p$ above $x$ (\cf the comments following proposition 1.4.1 of \cite{BerkovichEtale}). For any  semistable model $\fY$ of $\ol{X}$, $\fY^\p = 
\fY \hat{\otimes} \sH(x)^{\circ}$ is a semistable model of $\ol{X}^\p$; let $z^\p_i$ be the canonical inverse image of $z_i \in \ol{X}$ in $\ol{X}^\p$.  
We set 
$$ (\cE^\p,\na^\p) = \psi_{X,\sH(x)/k}^{\ast} (\cE,\na) \;, 
$$
and 
\beq \label{nonrigrad2}
 \cR_{\fY,z_1,\dots,z_r} (x,(\cE,\na)) := \cR_{\fY^\p,z^\p_1,\dots,z^\p_r} (x^\p,(\cE^\p,\na^\p)) \; .
\eeq
\end{defn} 

Let $\veps \in |k| \cap (0,1)$, and let 
$$X^{(\veps)} = \ol{X} \setminus \bigcup_{i=1}^r D_{\fY}(z_i,\veps^-) \; .
$$
Recall that we have assumed that all the points $\tau_{\fY}(z_{i})$ are vertices of ${\bf S}(\fY)$. Therefore, if $\sP_{\fY} = \sP(\cB,\cC)$ is the semistable partition of $\ol{X}$ associated to $\fY$, each of the open disks $D_{\fY}(z_i,1^-)$ is a maximal disk in an affinoid $C_{i} \in \cC$, for  $i =1,\dots,r$. 
We consider the semistable partition $\sP^{(\veps)} = \sP(\cB^{(\veps)},\cC^{(\veps)})$, where, for each $i$, we have replaced the affinoid $C_{i}$ by the two affinoids $C_{i} \setminus D_{\fY}(z_i,1^-)$ and $D_{\fY}(z_i,\veps^+)$, and have added the open annulus $D_{\fY}(z_i,1^-) \setminus D_{\fY}(z_i,\veps^+)$.
So, $\sP^{(\veps)} \geq \sP_{\fY}$ corresponds to a morphism $\fY^{(\veps)} \to \fY$ in $\cF\cS(\ol{X})$, which is the admissible blow-up of the $\cO_{\fY}$-ideal of sections of $\cO_{\fY}$ whose pull-back to $\ol{X}$ is $<1$ 
 on the affinoid disks $D_{\fY}(z_i,\veps^+)$, 
  $i =1,\dots,r$. The closed fiber $\fY^{(\veps)}_{s}$ is the union of $\fX_{s}$ and of $r$ projective lines  $\ell_{i}^{(\veps)} \cong \P^{1}_{\kt}$. The line  $\ell_{i}^{(\veps)}$ is at the same time the canonical reduction of $D_{\fY}(z_i,\veps^+)$, and the image of $D_{\fY}(z_i,\veps^+)$ via $\sp_{\fY^{(\veps)} }$. 
Let $\fX^{(\veps)} \subset \fY^{(\veps)}$ be the open formal subscheme $\sp_{\fY^{(\veps)}}^{-1}(\fY^{(\veps)}_{s} \setminus \bigcup_{i=1}^{r} \ell_{i}^{(\veps)} )$. \par
We summarize the previous discussion:
 \begin{lemma} \label{compact} There is a morphism of  semistable formal schemes $\fX^{(\veps)} \to \fY$, composite of an open immersion $\fX^{(\veps)} \to \fY^{(\veps)}$ and of an admissible blow-up $\fY^{(\veps)} \to \fY$, whose generic fiber identifies with the embedding $X^{(\veps)} \subset \ol{X}$. 
 \end{lemma}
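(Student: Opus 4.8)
The plan is to make explicit the formal scheme $\fY^{(\veps)}$ as an admissible blow-up of $\fY$, identify $\fX^{(\veps)}$ as an open formal subscheme of it, and read off the generic fibre. First I would use the equivalence of categories $\cF\cS(\ol X) \xrightarrow{\ \sim\ } \cP\cS(\ol X)$ of Theorem \ref{skelfunctor} (via the functor $\fX \mapsto \sP_{\fX}$) to pass from the formal-geometric statement to a statement about semistable partitions. Under that equivalence, the morphism $\fY^{(\veps)}\to\fY$ corresponds exactly to the refinement $\sP^{(\veps)}\ge\sP_{\fY}$ described just before the lemma, in which each affinoid $C_i \in \cC$ carrying the maximal disk $D_{\fY}(z_i,1^-)$ is replaced by the three pieces $C_i\setminus D_{\fY}(z_i,1^-)$, $D_{\fY}(z_i,\veps^+)$, and the open annulus $D_{\fY}(z_i,1^-)\setminus D_{\fY}(z_i,\veps^+)$. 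By Corollary \ref{anninterCLOSED}(3) (or \ref{anninter}) these three pieces do form a semistable partition of $C_i\cup D_{\fY}(z_i,1^-)$, so $\sP^{(\veps)}$ is a genuine semistable partition with $\sP^{(\veps)}\ge\sP_{\fY}$. This is the crucial (and only slightly delicate) geometric input, and it is already granted by the corollaries of Lemma \ref{uniondiskannuliCLOSED} cited in the excerpt.

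Next I would invoke the description of morphisms in $\cF\cS(\ol X)$ as admissible blow-ups (Theorem \ref{Q-analss}, together with the explicit construction in \S\ref{blow-ups}): the morphism $\varphi\colon\fY^{(\veps)}\to\fY$ is the admissible blow-up of $\fY$ along the open ideal $\fA_\varphi\subset\cO_{\fY}$ of finite presentation consisting of sections of $\cO_{\fY}$ whose pullback under $\sp_{\fY}$ has supremum $<1$ on the union $C_\varphi := \bigcup_{i=1}^r D_{\fY}(z_i,\veps^+)$ of the newly cut affinoid disks. This is exactly the recipe $\varphi\mapsto(\fA_\varphi,C_\varphi)$ of \S\ref{blow-ups}, specialised to the present $\sF$ consisting of the $r$ disks $D_{\fY}(z_i,\veps^+)$ sitting inside the maximal disks $D_{\fY}(z_i,1^-)$ of the $C_i$. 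Since blow-ups do not change the generic fibre, $\varphi_\eta$ is the identity of $\ol X$.

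Finally, I would identify the closed fibre and the open subscheme. Blowing up the disk $D_{\fY}(z_i,\veps^+)$ inside $D_{\fY}(z_i,1^-)$ adds, for each $i$, a single new irreducible component of $\fY^{(\veps)}_s$, which is the canonical reduction of the affinoid disk $D_{\fY}(z_i,\veps^+)$, hence a $\P^1_{\kt}$ (by the discussion of good reduction of affinoid disks in \S\ref{annuli}); by the corollary at the end of \S\ref{blow-ups} the fibre of $\fY^{(\veps)}_s\to\fY_s$ over $\tau_{\fY}(z_i)$ is a tree of rational curves, here the single line $\ell_i^{(\veps)}\cong\P^1_{\kt}$ meeting $\fX_s$ in one point. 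Then $\fX^{(\veps)}:=\sp_{\fY^{(\veps)}}^{-1}\bigl(\fY^{(\veps)}_s\setminus\bigcup_{i=1}^r\ell_i^{(\veps)}\bigr)$ is an open formal subscheme of $\fY^{(\veps)}$, and since $\sp_{\fY^{(\veps)}}(D_{\fY}(z_i,\veps^+))=\ell_i^{(\veps)}$ and $\sp$-preimages are compatible with the partition, its generic fibre is precisely $\ol X\setminus\bigcup_{i=1}^r D_{\fY}(z_i,\veps^-)=X^{(\veps)}$, as in Definition \ref{defspec} and the formal-covering description of \S\ref{formalmodels}. Composing the open immersion $\fX^{(\veps)}\hookrightarrow\fY^{(\veps)}$ with the admissible blow-up $\fY^{(\veps)}\to\fY$ gives the asserted morphism, and all three formal schemes are semistable by Theorem \ref{evviva} since their closed fibres are visibly semistable $\kt$-curves. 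The main obstacle, such as it is, is purely bookkeeping: checking that cutting the disks $D_{\fY}(z_i,\veps^+)$ out of the $C_i$ genuinely yields a semistable partition refining $\sP_{\fY}$ — but this is exactly what Corollary \ref{anninter} supplies, so no new argument is needed.
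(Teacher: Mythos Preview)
Your proposal is correct and follows essentially the same approach as the paper: the paper's proof is precisely the discussion immediately preceding the lemma (the lemma is introduced with ``We summarize the previous discussion''), which constructs $\sP^{(\veps)}\ge\sP_{\fY}$ by replacing each $C_i$ with $C_i\setminus D_{\fY}(z_i,1^-)$ and $D_{\fY}(z_i,\veps^+)$ and inserting the annulus, passes to the corresponding admissible blow-up $\fY^{(\veps)}\to\fY$ via \S\ref{blow-ups}, identifies the new components $\ell_i^{(\veps)}\cong\P^1_{\kt}$, and defines $\fX^{(\veps)}$ as $\sp_{\fY^{(\veps)}}^{-1}\bigl(\fY^{(\veps)}_s\setminus\bigcup_i\ell_i^{(\veps)}\bigr)$. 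Your write-up is slightly more explicit in citing the supporting results (Corollary \ref{anninter}, Theorem \ref{evviva}), but the argument is the same.
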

 The restriction 
$(\cE,\na)_{|X^{(\veps)}}$ of  $(\cE,\na)$ to $X^{(\veps)}$, is an object of ${\bf MIC}_{\fX^{(\veps)} }(X^{(\veps)}/k)$, and, since $\forall x \in X^{(\veps)}(k)$, $D_{\fY,\fZ}(x,1^{-}) = D_{\fX^{(\veps)} }(x,1^{-})$, and, according to our definition, we are allowed to replace the field $k$ by its extension $\sH(x)$, we conclude that
$\cR_{\fX^{(\veps)} } (x,(\cE,\na)_{|X^{(\veps)}}) = \cR_{\fY,z_1,\dots,z_r} (x,(\cE,\na))$, for any $x \in X^{(\veps)}$.
So, we may restrict our attention to the case where there are no $z_i$'s and $\ol{X} = X = \fY_{\eta}$, for a semistable formal scheme $\fY$. 

Let $(\fB,T)$ be an \'etale neighborhood of $\fY$ which is a basic formal annulus or disk. Suppose that 
the image $B$ of $(\cB,T) = (\fB,T)_{\eta}$ in $\ol{X}$ is contained in $X$, and contains the point $x \in X$. 
Now, 
the canonical point $x^{\p} \in X^{\p}$ above $x$ is an interior point of the image  $B^{\p}$ in $\ol{X}^{\p}$, of 
$\cB^{\p} :=  \cB \wt \sH(x) = \fB^{\p}_{\eta}$, where $\fB^{\p} := \fB \wt \sH(x)^{\circ}$. More precisely, $D_{\fY^\p,z^\p_1,\dots,z^\p_r}(x^{\p},1^{-})$ is contained in $B^{\p}$, and, for any choice of an inverse image $y^{\p}\in \cB^{\p}$ of  $x^{\p}$, it  is canonically isomorphic to $D_{\fB^{\p} }(y^{\p},1^{-})$. If $\fB$ is a disk, the coordinate $T$ of $\fB$ induces the normalized coordinate $T_x = T-T(x)$ on $D_{\fY^\p,z^\p_1,\dots,z^\p_r}(x^\p,1^-)$. If $\fB$ is an annulus, 
the \'etale coordinate $T$ on $\fB$ (which is also an \'etale coordinate on $\fB^{\p}$)  induces a normalized coordinate $T_x$ on $D_{\fY^\p,z^\p_1,\dots,z^\p_r}(x^\p,1^-)$, with $T_x(x^\p) =0$, via (\cf (\ref{coorcann}))
\beq \label{coorcann2} 
T_x(y) = (T(y) - T(x))/T(x)\; , \;  \forall y \in D_{\fY^\p,z^\p_1,\dots,z^\p_r}(x^\p,1^-) \; .
\eeq
If the pull-back of $\cE$ on $\cB$ is free, there exists a global basis $\ul e$ of $\cE$ on the basic affinoid annulus $(\cB,T)$, and we use such a global basis to transform $\na$ into the system (\ref{diffCHDWintro}),  formulas \ref{limrad}  and \ref {eq:raddefsmooth} become
\beq \label{eq:raddefsmooth21}
\cR (x,\Sigma)  = \min (1, \liminf_{n \to \infty}|G_{[n]}(x)|^{-1/n})  \; ,  
\eeq
for all  $x \in B$, if $B$ is a disk, and  
\begin{multline} \label{eq:raddefsmooth2}
\cR (x,\Sigma)  = \min (1, |T(x)|^{-1}\liminf_{n \to \infty}|G_{[n]}(x)|^{-1/n})  \\
= \min (|T(x)|, \liminf_{n \to \infty}|G_{[n]}(x)|^{-1/n}) |T(x)|^{-1}
\; ,  
\end{multline} 
for all  $x \in B$, if $B$ is an annulus. 
Notice that the previous formulas are independent of the choice of the global basis $\ul e$.
\par
The previous formulas are especially useful if $(\cE,\na)$ is an object of 
${\bf MIC}_{\fY}(\ol{X}(\ast \cZ)/k)$ as in \ref{mersing} and $(\cB,T)$ is a basic affinoid  annulus or disk, generic fiber of a basic formal annulus or disk $(\fB,T)$ varying in a covering of the type described in lemma \ref{cover}. Then, $\cR_{\fY,\fZ} (x,(\cE,\na))$ may be calculated locally on $\cB$, that is 
\beq \label{eq:raddefsmooth3}
\cR_{\fY,\fZ} (x,(\cE,\na))  = \cR_{\fB}(x,(\cE,\na)_{|\cB}) \; , \; \forall x \in \cB \; ,
\eeq
and, if $\cE_{|\cB}$ is free of finite type and the connection is described via (\ref{diffCHDWintro})  in terms of an $\cO(\cB)$-basis $\ul e$ of $\cE(\cB)$, this in turn is computed as in (\ref{eq:raddefsmooth2}).

\subsection{Apparent singularities and change of formal model} \label{apparent}
We  describe here how the function $x \mapsto \cR_{\fY,\fZ} (x,(\cE,\na))$ gets modified, 
by a change of formal model $\fY$ of $\ol{X}$ or by addition of  an extra point $z = z_{r+1} \in X(k)$ to $\cZ = \{z_1, \dots, z_r\}$. 
Notice that, if $\fY_1 \to \fY$ is a morphism of   semistable models of $\ol{X}$ inducing the identity on $\ol{X}$, there is a well-defined function $\rho_{\fY_1/\fY} : \ol{X}(k) \to (0,1) \cap |k|$, such that 
\beq \label{eq:dilatation1} D_{\fY_1}(x,1^-) = D_{\fY}(x,\rho_{\fY_1/\fY}(x)^-) \; , \; \forall \, x \in \ol{X}(k) \; .
\eeq
The function $\rho_{\fY_1/\fY} $ is extended to all points  $x \in \ol{X}$, by using the canonical $\sH(x)$-rational point $x^\p \in  \ol{X}^\p := \ol{X} \what{\otimes} \sH(x)$ above $x$. Then $\fY^\p := \fY \what{\otimes} \sH(x)^{\circ}$  (resp. $\fY_1^\p := \fY_1 \what{\otimes} \sH(x)^{\circ}$) is a semistable model of $ \ol{X}^\p$, and 
\beq \label{eq:dilatation2} \rho_{\fY_1/\fY}(x) := \rho_{\fY^\p _1/\fY^\p }(x^\p ) \; .
\eeq

The following lemma follows easily from the  description \ref{hyperb} of $\H(\ol{X}) \subset \ol{X}$. 
\begin{lemma} Let $\varphi:\fY_1 \to \fY$ be a morphism in $\cF\cS(\ol{X})$ and $\tau_{\fY}: \ol{X} \to  {\bf S}(\fY)$ (resp. $\tau_{\fY_1}: \ol{X} \to  {\bf S}(\fY_1)$, resp. 
$\tau_{\fY, \fY_1}: {\bf S}(\fY_1) \to  {\bf S}(\fY)$) be the retraction described in  (\ref{retr}) (resp. in (\ref{retr}), resp. in (\ref{limin})). Then, for any $x \in \ol{X}$, $\tau_{\fY_1}(x)$ and $\tau_{\fY}(x)$ belong to the same fiber $\cC$ of $\tau_{\fY, \fY_1}: {\bf S}(\fY_1) \to {\bf S}(\fY)$.
We have
\beq \rho_{\fY_1/\fY}(x) = \ell_{\cC}([\tau_{\fY_1}(x),\tau_{\fY}(x)])^{-1} \; ,
\eeq
where $\ell_{\cC}([\tau_{\fY_1}(x),\tau_{\fY}(x)])$ denotes the length of the segment $[\tau_{\fY_1}(x),\tau_{\fY}(x)] \subset \cC \subset \H( \ol{X})$. In particular, the function $ \rho_{\fY_1/\fY}$ is continuous.
\end{lemma}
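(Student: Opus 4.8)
The plan is to reduce the statement to a purely combinatorial fact about the skeleton and then to verify that fact edge-by-edge. First I would unwind the definitions: by (\ref{eq:dilatation1}) and (\ref{eq:dilatation2}), the claim is about comparing the maximal open disk neighborhood of $x$ that avoids $S(\fY_1)$ with the one that avoids $S(\fY)$. Since $S(\fY) \subset S(\fY_1)$ (the morphism $\varphi$ being an admissible blow-up, $\sP_{\fY_1} \geq \sP_{\fY}$, so ${\bf S}(\fY) \leq {\bf S}(\fY_1)$ as subpolygons), the disk $D_{\fY_1}(x,1^-)$ is contained in $D_{\fY}(x,1^-)$, and we need to identify the radius of the former inside the latter's normalized coordinate. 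Using the description (\ref{hyperb}) $\H(\ol{X}) = \bigcup S(\fY)$ and the retractions (\ref{retr}), I would observe that both $\tau_{\fY}(x)$ and $\tau_{\fY_1}(x)$ lie on the path from $x$ into $\H(\ol{X})$, and since $\tau_{\fY} = \tau_{\fY,\fY_1} \circ \tau_{\fY_1}$ (compatibility of the projective limits in (\ref{retr})), the point $\tau_{\fY}(x)$ is the image of $\tau_{\fY_1}(x)$ under $\tau_{\fY,\fY_1}$; hence both lie in the same fiber $\cC$ of $\tau_{\fY,\fY_1} : {\bf S}(\fY_1) \to {\bf S}(\fY)$, which is a simply connected tree inside $\H(\ol{X})$. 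This gives the first assertion.

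Next I would compute $\rho_{\fY_1/\fY}(x)$ for $x$ a $k$-rational point. The key local picture: if $\fB$ is a basic formal disk with coordinate $T$ in an \'etale neighborhood of $\fY$ containing $x$ (such a covering exists by Lemma \ref{cover}), then $D_{\fY}(x,1^-)$ is identified with $D_k(0,1^-)$ via $T_x = T - T(x)$, and the skeleton $S(\fY_1) \cap D_{\fY}(x,1^-)$, being a finite union of $k$-rational open segments terminating at the boundary point $\tau_{\fY}(x)$, contains a distinguished point — namely $\tau_{\fY_1}(x)$ — which is $t_{T(x),\rho}$ for $\rho = \rho_{\fY_1/\fY}(x) \in |k| \cap (0,1)$. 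The disk $D_{\fY_1}(x,1^-)$ is then precisely $D(T(x),\rho^-)$. Now in the hyperbolic space $\H(\ol{X})$, the canonical multiplicative length defined in \S\ref{subsect:graphs} assigns to the segment from $t_{T(x),\rho}$ to $t_{T(x),1} = \tau_{\fY}(x)$ the value $1/\rho$ (the inverse of the height of the corresponding open annulus $B(\rho,1)$). Since $\cC$ is simply connected and both $\tau_{\fY_1}(x), \tau_{\fY}(x) \in \cC$, the link $[\tau_{\fY_1}(x),\tau_{\fY}(x)]$ is the unique path in $\cC$, and $\ell_{\cC}$ of it equals $1/\rho_{\fY_1/\fY}(x)$, i.e. $\rho_{\fY_1/\fY}(x) = \ell_{\cC}([\tau_{\fY_1}(x),\tau_{\fY}(x)])^{-1}$.

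To pass from $k$-rational $x$ to arbitrary $x \in \ol{X}$, I would invoke the base-change recipe (\ref{eq:dilatation2}): replacing $k$ by $\sH(x)$, the point $x$ is replaced by the canonical $\sH(x)$-rational point $x^\p$, the formal schemes by $\fY^\p = \fY \wt \sH(x)^{\circ}$, $\fY_1^\p = \fY_1 \wt \sH(x)^{\circ}$, and by \cite[5.2]{BerkContr} the skeleta, retractions and lengths are preserved under this ground-field extension, so both sides of the displayed formula are invariant and the $k$-rational case applies to $x^\p$. Continuity of $\rho_{\fY_1/\fY}$ then follows from the fact that $(y,z) \mapsto \ell([y,z])$ is continuous on any simply connected sub-quasi-polyhedron (noted at the end of \S\ref{subsect:graphs}), composed with the continuity of the retractions $\tau_{\fY_1}$ and $\tau_{\fY}$ and with the partition of ${\bf S}(\fY_1)$ into the finitely many fibers $\cC$ of $\tau_{\fY,\fY_1}$, on each of which the formula is continuous and across whose (vertex) boundaries it glues continuously.

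The main obstacle I anticipate is not any single deep input but rather the bookkeeping needed to make the identification $D_{\fY_1}(x,1^-) = D(T(x),\rho^-)$ fully rigorous when $x$ lies over a singular point of $\fY_s$ or near a vertex — there one must choose the \'etale chart carefully (possibly a basic formal annulus rather than a disk) and check that the distinguished point $\tau_{\fY_1}(x)$ on the relevant edge of ${\bf S}(\fY_1)$ really is the near endpoint of $D_{\fY_1}(x,1^-)$ and that its multiplicative distance to $\tau_{\fY}(x)$ is read off correctly through the chain of $1$-step trivial and simple blow-ups decomposing $\varphi$ (Corollary following Theorem in \S\ref{blow-ups}). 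Once the local normalized coordinate is pinned down on each edge, the length computation is the routine identity $\ell(B(\rho,1)) = 1/\rho$, and gluing over vertices is automatic since $\ell_{\cC}$ is continuous on the whole tree $\cC$.
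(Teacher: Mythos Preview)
Your argument is correct, but it takes a more laborious route than the paper's. The paper's proof is two lines: it observes the multiplicativity $\rho_{\fY_2/\fY} = \rho_{\fY_2/\fY_1}\cdot \rho_{\fY_1/\fY}$ for a composite $\fY_2 \to \fY_1 \to \fY$, and then invokes the decomposition of any morphism in $\cF\cS(\ol{X})$ into 1-step trivial and simple blow-ups (exactly the corollary from \S\ref{blow-ups} that you cite only at the very end as a fallback for ``bookkeeping''). After that reduction, the trivial case (subdivision of an edge) leaves $D_{\fY_1}(x,1^-) = D_{\fY}(x,1^-)$ so both sides equal $1$, and in the simple case one is literally inserting a single extra vertex $v$ at height $\rho$ inside one open disk $D_{\fY}(a,1^-)$, where the formula is the elementary identity $\ell([t_{0,\rho},t_{0,1}]) = 1/\rho$ that you also use.

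Your approach instead attempts a direct computation at each point via an \'etale chart, and only afterwards appeals to the 1-step decomposition to patch the awkward cases near vertices or singular points. This works, but it front-loads exactly the chart-dependent verifications that the paper's multiplicative reduction avoids altogether. The advantage of the paper's argument is that multiplicativity of $\rho$ matches additivity of $\log\ell$ along a path, so once the two elementary cases are checked the general formula and continuity follow without ever choosing coordinates globally. The advantage of yours is that it makes the geometric content (the maximal disk inside $D_{\fY}(x,1^-)$ missing $S(\fY_1)$ has boundary point $\tau_{\fY_1}(x)$) fully explicit, which is conceptually clarifying even if not needed for the proof.
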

\begin{proof} If $\varphi_1:\fY_1 \to \fY$ and $\varphi_2:\fY_2 \to \fY_1$ are morphisms in $\cF\cS(X)$, clearly $\rho_{\fY_2/\fY} = (\rho_{\fY_2/\fY_1}\circ (\varphi_1)_{\eta}) \cdot \rho_{\fY_1/\fY} $. So, we are reduced to proving the statement when either $\tau_{\varphi}$ is trivial, in which case it is trivial, or simple, in which case it is simple. 
\end{proof}
Similarly, if $\ol{\fX}$ is as before, $\fY_1 \to \fY \to \ol{\fX}$ are morphisms  in $\cF\cS(\ol{X})$, and $z_{r+1},\dots,z_{r+s} \in X(k)$ are such that $\{ D_{\fY_1}(z_1,1^-), \dots, D_{\fY_1}(z_{r+s},1^-) \}$ consists of $r+s$ distinct disks, with $\tau_{\fY_1}(z_{1}), \dots, \tau_{\fY_1}(z_{r+s})$ vertices of ${\bf S}(\fY_{1})$, 
let $\cZ = \{z_{1},\dots,z_{r}\}$ (resp. $\cZ_1 = \{z_{1},\dots,z_{r+s}\}$). We may define a function $\rho_{(\fY,\fZ)/(\fY_1,\fZ_1)} : \ol{X} \setminus \cZ_1 \to (0,1) \cap |k|$, such that
\beq  \label{eq:dilatation3}
D_{\fY_1,\fZ_1}(x,1^-) = D_{\fY,\fZ}(x,\rho_{(\fY,\fZ)/(\fY_1,\fZ_1)}(x)^-) \; , \; \forall \, x \in \ol{X}(k) \setminus \cZ_1 \; ,
\eeq
extended as before to all points of $\ol{X} \setminus \cZ_1$ and such that
\beq \rho_{(\fY,\fZ)/(\fY_1,\fZ_1)}(x) = \ell_{\cC}([\tau_{\fY_1,\fZ_1}(x),\tau_{\fY,\fZ}(x)])^{-1} \; ,
\eeq
where $\tau_{\fY,\fZ}$ is the function of (\ref{eq:openskeleton}), $\cC$ is the connected (and simply connected) component of 
$S_{\fZ_1}(\fY_1)\setminus S_{\fZ}(\fY)$ such that  both $ \tau_{\fY_1,\fZ_1}(x)$ and $\tau_{\fY,\fZ}(x)$ belong to $\cC$, and $\ell_{\cC}([\tau_{\fY_1,\fZ_1}(x),\tau_{\fY,\fZ}(x)])$ denotes the length of the segment 
$[\tau_{\fY_1,\fZ_1}(x),\tau_{\fY,\fZ}(x)] \subset \cC  \subset  \ol{X} \setminus \cZ_1$. 
 
 \par
The general situation is as follows.
\begin{lemma} Let notation be as before. Let $(\cE,\na)$ is an object of ${\bf MIC}_{\fY}(\ol{X}(\ast \cZ )/k)$, let 
$\fY_1 \to \fY$ be an admissible blow-up, and $z_{r+1},\dots,z_{r+s} \in X(k)$ be such that $\{ D_{\fY_1}(z_1,1^-),\dots$ $ \dots, D_{\fY_1}(z_{r+s},1^-) \}$ consists of $r+s$ distinct disks. Let $\cZ_1 = \{z_{1},\dots,z_{r+s}\}$.
Then 
 $(\cE,\na)$ may be regarded as an object of ${\bf MIC}_{\fY_1}(\ol{X}(\ast \cZ_1\})/k)$ and
\beq \label{eq:change}
\cR_{\fY_1,\fZ_1} (x,(\cE,\na)) = \min (1, \cR_{\fY,\fZ} (x,(\cE,\na))/ \rho_{(\fY,\fZ)/(\fY_1,\fZ_1)}(x)) \;\; ,  \;\forall \, x \in  \ol{X} \setminus \cZ_1 \;.
  \eeq
 \end{lemma}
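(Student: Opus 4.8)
The plan is to observe that both $\cR_{\fY_1,\fZ_1}(x,(\cE,\na))$ and $\cR_{\fY,\fZ}(x,(\cE,\na))$ measure the \emph{same} maximal open disk of triviality of $(\cE,\na)$ around $x$, only with respect to two different normalizations, and that the ratio of these normalizations is exactly $\rho_{(\fY,\fZ)/(\fY_1,\fZ_1)}(x)$ by (\ref{eq:dilatation3}). First I would check that $(\cE,\na)$ is indeed an object of ${\bf MIC}_{\fY_1}(\ol{X}(\ast\cZ_1)/k)$: writing $\ol{\cE}=\sp_{\fY}^{\ast}(\ol{\fE})$ with $\ol{\fE}$ locally free over $\fY$ (Definition \ref{mersing}), the pull-back $\ol{\fE}_1$ of $\ol{\fE}$ along the admissible blow-up $\varphi\colon\fY_1\to\fY$ is a locally free $\cO_{\fY_1}$-Module, and since $\varphi$ induces the identity on generic fibers the specialization maps satisfy $\varphi\circ\sp_{\fY_1}=\sp_{\fY}$, so $\sp_{\fY_1}^{\ast}(\ol{\fE}_1)=\ol{\cE}$; the connection still has meromorphic singularities along $\cZ\subseteq\cZ_1$. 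Thus both sides of (\ref{eq:change}) are meaningful.

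Next I would reduce to the case of a $k$-rational point $x$. For a general $x\in\ol{X}\setminus\cZ_1$, extending the base field to $\sH(x)$ turns $\fY_1\to\fY$ into an admissible blow-up $\fY_1^{\p}\to\fY^{\p}$ of semistable models of $\ol{X}^{\p}:=\ol{X}\what{\otimes}\sH(x)$, and by Definition \ref{nonrigrad} together with the convention (\ref{eq:dilatation2})--(\ref{eq:dilatation3}) defining $\rho_{(\fY,\fZ)/(\fY_1,\fZ_1)}$ at non-rigid points, all three terms of (\ref{eq:change}) equal their analogues at the canonical $\sH(x)$-rational point $x^{\p}$ over $x$; a further enlargement of the ground field to an algebraically closed complete extension is harmless for the same reason. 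So I may assume $k$ algebraically closed and non-trivially valued and $x\in X_1(k)$, with $X_1:=\ol{X}\setminus\cZ_1$.

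Then comes the geometric heart. Fix a $\fY$-normalized coordinate identifying $D_{\fY,\fZ}(x,1^-)$ with $D_k(0,1^-)$, $x\mapsto0$. The open disks centered at $x$ and disjoint from $S_{\fZ}(\fY)$ are exactly the $D(0,r^-)$, $0<r\leq1$, and by Definition \ref{def:altradius} the connection $(\cE,\na)$ is trivial on $D(0,r^-)$ precisely for $r\leq R$, where $R:=\cR_{\fY,\fZ}(x,(\cE,\na))$. Since $\fY_1\geq\fY$ and $\cZ_1\supseteq\cZ$ one has $X_1\subseteq X$ and $S_{\fZ}(\fY)\subseteq S_{\fZ_1}(\fY_1)$: indeed $S(\fY)\subseteq S(\fY_1)$, while for $i\leq r$ the half-line $\ell_i=S(D_{\fY}(z_i,1^-)\setminus\{z_i\})$ is the union of $\ell_i^{(1)}=S(D_{\fY_1}(z_i,1^-)\setminus\{z_i\})$ and of a segment now contained in $S(\fY_1)$, because $D_{\fY_1}(z_i,1^-)\subseteq D_{\fY}(z_i,1^-)$ and $\tau_{\fY_1}(z_i)$ is a vertex of ${\bf S}(\fY_1)$. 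Writing $\rho:=\rho_{(\fY,\fZ)/(\fY_1,\fZ_1)}(x)\in(0,1]\cap|k|$, formula (\ref{eq:dilatation3}) gives $D_{\fY_1,\fZ_1}(x,1^-)=D(0,\rho^-)$, so the open disks centered at $x$, contained in $X_1$ and disjoint from $S_{\fZ_1}(\fY_1)$, are exactly the $D(0,t^-)$, $0<t\leq\rho$. Among those, the maximal one on which $(\cE,\na)$ is trivial is $D(0,\min(\rho,R)^-)$; its $\fY_1$-normalized radius, obtained by dividing by $\rho$, is $\min(\rho,R)/\rho=\min(1,R/\rho)$, which by Definition \ref{def:altradius} is $\cR_{\fY_1,\fZ_1}(x,(\cE,\na))$. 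This is (\ref{eq:change}).

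I do not anticipate a genuine obstacle here; the only slightly delicate points are the inclusion $S_{\fZ}(\fY)\subseteq S_{\fZ_1}(\fY_1)$ of skeleta and the observation that the open disks centered at a fixed $k$-rational point and contained in a given maximal disk form a totally ordered chain (so that ``the maximal disk of triviality'' is unambiguous and renormalizing its radius by $\rho$ is legitimate), both of which are routine consequences of the structure theory of Section \ref{Graph} and of the preceding lemma describing $\rho_{(\fY,\fZ)/(\fY_1,\fZ_1)}$.
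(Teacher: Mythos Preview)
Your proposal is correct and is exactly the argument the paper has in mind: the lemma is stated without proof, being an immediate consequence of Definition \ref{def:altradius} together with the defining relation (\ref{eq:dilatation3}), and your write-up simply unwinds those definitions carefully (including the routine reduction to $k$-rational points via Definition \ref{nonrigrad}). The inclusion $S_{\fZ}(\fY)\subseteq S_{\fZ_1}(\fY_1)$ you discuss is not even strictly needed, since (\ref{eq:dilatation3}) already gives $D_{\fY_1,\fZ_1}(x,1^-)=D_{\fY,\fZ}(x,\rho^-)$ directly; once you have that, the identity $\cR_{\fY_1,\fZ_1}(x)=\min(1,\cR_{\fY,\fZ}(x)/\rho)$ is just a rescaling of the maximal disk of triviality.
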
 
\begin{rmk} Let $(\cE,\na)$ be an object of ${\bf MIC}_{\fY}(\ol{X}(\ast \cZ )/k)$ and let $z = z_{r+1} \in X(k) \cap (D_{\fY}(z_1,1^-) \setminus \{z_1\})$. Let  $T_{z_1}$ be a $\fY$-normalized coordinate at $z_1$ and  $\fY_1 \to \fY$ be the admissible blow-up of the ideal $(T_{z_1}(z),T_{z_1})$ of $\cO_{\fY}$. Then, for $\cZ_{1} =  \cZ \cup \{z_{r+1}\}$, with schematic closure $\fZ_{1}$ in $\fY$,
 \beq \cR_{\fY_1,\fZ_1} (x,(\cE,\na)) = 
       \left\{\begin{array}{ccc}\cR_{\fY,\fZ} (x,(\cE,\na)) ,&\mbox{if} &x \notin D_{\fY}(z_1,T_{z_1}(z)^-) , \\
        \min(1, \cR_{\fY,\fZ} (x,(\cE,\na))/|T_{z_1}(x)|) ,&\mbox{if} & x \in D_{\fY}(z_1,1^-) \setminus \{z_1\} ,
        \end{array} \right .
\eeq
The previous formula shows that the unnecessary consideration of the apparent singularity $z_{r+1}$ results in a loss of information. 
\end{rmk}

\section{Differential systems on an annulus: review of classical results} \label{review}

In this section $k$ is a non-archimedean field extension of $\Q_p$. We  let $k^\alg$ be an algebraic closure of $k$. 
\par
We recall the $k$-Banach algebra $\sH(r_1,r_2) \subset \cO(B(r_1,r_2))$ of 
\emph{analytic elements} 
on the open annulus (\ref{openann}), with $0< r_1 \leq r_2$. Its elements are uniform limits on $B(r_1,r_2)$
of rational functions in $k(T)$ without poles in $B(r_1,r_2)$. The Banach norm of $\sH(r_1,r_2)$  is of course the supnorm on $B(r_1,r_2)$. 
An analytic element $\ol{f}  \in \sH(r_1,r_2)$ defines an  analytic function  $f: B(r_1,r_2) \to \A^1$ and a 
continuous extension of $f$ to  the closure $B^+(r_1,r_2) = B(r_1,r_2) \cup \{t_{0,r_1}, t_{0,r_2} \}$ of 
$B(r_1,r_2)$ in $\A^1$, still denoted by $f: B^+(r_1,r_2) \to \A^1$. 
We consider here a differential system $\Sigma$ of the form 
(\ref{diffCHDWintro})
with $G$ a $\mu\times\mu$ matrix with elements  in $\sH(r_1,r_2)$.  The function radius of convergence of $\Sigma$ according to  Christol-Dwork \cite{ChristolDwork} is defined for $\rho \in [r_1,r_2]$ as
 \beq \label{radCHDW} 
R(\Sigma,\rho) := \min (\rho, \liminf_{k \to \infty} |G_{[k]}(t_{0,\rho})|^{-1/k}) \;  , 
\eeq
where $G_{[k]}$ is the matrix describing the action of $(k!)^{-1} ({\frac{d}{dT}})^k$ on any formal column solution $\vec y$. Notice that 
$R(\Sigma,\rho)/\rho$ coincides with  $\cR(t_{0,\rho},\Sigma)$, as defined  in (\ref{eq:raddefsmooth2}),  if the entries of $G$ are analytic functions on some basic affinoid annulus $B = \fB_{\eta}$ with 
$B(r_{1},r_{2}) \subset B \subset B[r_{1},r_{2}]$, and  $\cR(t_{0,\rho},\Sigma)$ is taken with respect to $\fB$. 
\par
We follow the common practice of saying that  a   function $F$,  defined on a subset of $\R_{>0}$ and taking values in $\R_{>0}$, has a certain property $\cal P$ {\it logarithmically} if the function $\log \circ F \circ \exp$ has the property $\cal P$. 
Christol-Dwork observe that the function $\rho \mapsto R(\Sigma,\rho)$ is logarithmically concave (\ie logarithmically $\cap$-shaped)  on the interval $[r_1,r_2]$.  
\par
This  result (first part of \cite[2.5]{ChristolDwork})  follows immediately from the well-known fact that, for any $f \in \sH(r_1,r_2)$, the function $\rho \mapsto |f(t_{0,\rho})|$ 
is  logarithmically convex and continuous  on the interval $[r_1,r_2]$. Moreover, if for $a < b$, $\forall i \in \N$, $\varphi_{i}: [a,b] \to \R$
is a convex (resp. concave) function, then
$$
\varphi = \limsup_{i \to \infty} \varphi_i \;\; {\rm ( resp.}\;\; \varphi = \liminf_{i \to \infty} \varphi_{i}  \; {\text )}\;
$$
is convex (resp. concave). 
So, $\rho \mapsto R(\Sigma,\rho)$ is continuous on $(r_1,r_2)$, and lower semicontinuous  in $r_1$ and $r_2$, by logarithmic concavity. 
\par
In the second part of \lc, Christol-Dwork show  that, as a consequence of their theory of 
the convergence polygon of a differential operator \cf \S 2.4 of \lc,
 $\rho \mapsto R(\Sigma,\rho)$ is also USC  at $r_1$ and $r_2$. They conclude that 
$\rho \mapsto R(\Sigma,\rho)$ is continuous on $[r_1,r_2]$. 
\par
We recall that the  system $\Sigma$ is said to be {\it solvable} at $t_{0,r}$, for $r \in [r_1,r_2]$,  if  $R(\Sigma, r) = r$.
\par
\medskip
Pons  \cite[2.2]{Pons} proves the following theorem.
\begin{thm} \label{slopes} The function 
$\rho \mapsto R(\Sigma,\rho)$  is logarithmically a concave polygon with a finite number of sides  on $[r_1,r_2]$. The slopes of the sides are rational numbers with denominator at most $\mu$. 
\end{thm}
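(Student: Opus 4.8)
The plan is to combine the known global analytic structure of $R(\Sigma,\rho)$ (concavity and continuity, from Christol--Dwork as reviewed above) with a local, Frobenius-descent analysis that forces each slope to be rational with denominator at most $\mu$. First I would reduce to the case where the entries of $G$ extend to analytic functions on a basic affinoid annulus $\cB = \fB_{\eta}$ with $B(r_1,r_2) \subset \cB \subset B[r_1,r_2]$; up to shrinking $[r_1,r_2]$ slightly this costs nothing, and then $\rho \mapsto R(\Sigma,\rho)/\rho = \cR(t_{0,\rho},\Sigma)$ is exactly the normalized radius function attached to the connection on $\cB$, so the main theorem's machinery applies. By logarithmic concavity and continuity on $[r_1,r_2]$, to prove that the graph is a finite polygon it suffices to show it is locally piecewise affine near each interior $\rho_0$ and near the two endpoints, with controlled slopes. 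Concavity already guarantees that once local finiteness of the set of breakpoints is established, the polygon has only finitely many sides globally.

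The heart of the argument is the local description of $\rho \mapsto R(\Sigma,\rho)$ near a fixed $\rho_0$. Here I would invoke the generalized Dwork--Robba theorem (theorem \ref{cor:DworkRobba1} and its corollaries, announced in \S\ref{Dwork-Robba}) to bound the growth of $G_{[n]}$ and thereby pin down $\cR$ on a small subannulus, and then run Dwork's Frobenius-descent technique on basic affinoid annuli (\S\ref{Frobenius}) to reduce, after finitely many pull-backs by $p$-power Frobenius, to a situation where the radius is either maximal (solvable, slope forced by solvability) or is governed by the ``generic'' exponents / Turrittin-type data of the equation. Descent by Frobenius multiplies slopes by $p$ and shifts the relevant $|p|$-contribution by $|p|^{1/(p-1)p^h}$; running it backwards produces exactly the candidate functions of the form $|p|^{1/(p-1)p^h}|b|^{1/jp^h}\rho^{s/j}$ with $j \in \{1,\dots,\mu-1\}$, $s \in \Z$, $h \in \Z \cup \{\infty\}$ that appear in (\ref{rslinearity1}). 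Since $R(\Sigma,\rho)$ is the infimum of $\rho$ (the solvability bound, slope $1$, denominator $1$) and finitely many such functions, each side has logarithmic slope $s/j$ with $1 \le j \le \mu$, and there are finitely many breakpoints. Concavity then selects a subchain of these sides and gives the global finite concave polygon.

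The main obstacle I expect is controlling the denominators uniformly by $\mu$ rather than by some larger quantity like $\mu!$. The naive Frobenius descent can, at intermediate stages, introduce equations of rank larger than the number one would like, and one must argue — essentially via the structure of the exponents of a rank $\le \mu$ module, i.e. that the relevant Newton-polygon slopes of the ``residual'' equation at a point of $S(\cB)$ have denominators dividing some $j \le \mu$ — that the slope $s/j$ that finally emerges satisfies $j \le \mu$. This is where one needs the precise form of the Dwork--Robba estimate (which gives a bound depending only on $\mu$) together with the observation that the exceptional high-order behavior ($h$ large, even $h = \infty$) occurs only in the rank-$1$ factors, as in the $x^\alpha$ example \cite[IV.7.3]{DGS}, so it does not worsen the denominator. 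A secondary technical point is the behavior at the two endpoints $r_1,r_2$: there one must check that the USC property (established, as in Christol--Dwork, via the Dwork--Robba bound and the upper semicontinuity argument of \S\ref{subsection:USC}) prevents an infinite accumulation of sides near $r_1$ or $r_2$, which again follows from concavity once finitely many candidate slopes are known.
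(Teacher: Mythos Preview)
Your overall architecture is right---Frobenius descent to peel off factors of $|\pi|^{1/p^h}$ until the normalized radius drops below $|\pi|$, then an explicit local computation---and this is exactly the route the paper takes. But you are missing the key lemma that actually produces the denominator bound, and you have correctly flagged this as your main obstacle without identifying the tool that resolves it.

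The missing ingredient is Young's theorem \cite{Young} (see also \cite[VI.2.1]{DGS}). Once Frobenius descent (Christol--Dwork \cite[Thm.~5.4]{ChristolDwork} together with Kedlaya \cite[Thm.~6.15]{Ke}) has brought you to a system $\Sigma_h$ with $R(\Sigma_h,r)/r < |\pi|$ on the relevant subinterval, you pass via the cyclic vector theorem over the field $\sH(t_{0,r})$ to a single operator $L = \delta^\mu - C_1\delta^{\mu-1} - \cdots - C_\mu$, and Young's formula gives
\[
R(\Sigma_h,r)/r \;=\; |\pi|\,\min_{1\le j\le\mu}|C_j(t_{0,r})|^{-1/j}.
\]
Since each $r\mapsto |C_j(t_{0,r})|$ is piecewise of the form $|a|r^s$ ($a\in k$, $s\in\Z$), each side has logarithmic slope $s/j$ with $1\le j\le\mu$, and pulling back by $\varphi_h$ preserves that slope. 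This replaces your appeal to ``generic exponents / Turrittin-type data'' and eliminates the worry about $\mu!$: the cyclic vector has order exactly $\mu$, and Frobenius descent preserves rank, so no intermediate step ever sees a larger system. The Dwork--Robba estimate, by contrast, gives only the growth bound $|G_{[n]}(x)|\le Cn^{\mu-1}R^{-n}$; it drives upper semicontinuity, not the rationality or the denominator bound of the slopes.

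There is a second gap in the solvable case $M=1$. Here the subintervals $(a_{h-1},a_h)$ accumulate at the solvability locus as $h\to\infty$, and knowing that each carries finitely many sides does not by itself prevent infinitely many sides globally. The paper follows the Christol--Mebkhout argument \cite[4.2]{ChMe3}: on the increasing side the slopes are positive, nonincreasing (by concavity), and lie in $\tfrac{1}{\mu}\Z_{\ge 0}$, hence must stabilize to a single value $\beta$ for all large $h$; continuity at the solvability endpoint then pins the constant. Your sentence ``concavity then selects a subchain'' does not supply this step.
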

\begin{rmk}
The simpler case of this theorem, for a system solvable at $r_1$ and $r_2$, appears in \S 4 of \cite{ChMe3}. 
We cannot follow the topological arguments of Pons \lc. We prefer to review entirely the  proof of her theorem, based on \cite{ChristolDwork} and \cite{Young}, to make it completely clear. Actually, we need 
to combine the main theorem of \cite[Thm. 5.4]{ChristolDwork}, with its variation proved by Kedlaya  \cite[Thm. 6.15]{Ke}.
\end{rmk}
\begin{proof} Let $\pi \in k^{\alg}$ be such that $\pi^{p-1} = -p$. 
By logarithmic concavity, the continuous function $R(\Sigma,r)/r$ is constantly equal to its maximum value $M$ on an interval $[r_1^\p,r_2^\p] \subset [r_1,r_2]$, with $r_1^\p \leq r_2^\p$, and it is strictly increasing (resp. decreasing) for $r \leq r_1^\p$ (resp. $r \geq r_2^\p$). Then precisely one of the following holds:
\begin{enumerate}
\item \label{solvable} $M = 1$,
\item $M = |\pi|^{1/p^h}$, for some $h \in \Z_{\geq 0}$,
\item $ |\pi|^{1/p^{h-1}} < M < |\pi|^{1/p^h}$, for some $h \in \Z_{>0}$,
\item $M < |\pi|$.
\end{enumerate}

If $M<1$, then the interval $[r_1,r_2]$ can be subdivided in a finite number of intervals of the form $I = [a,b]$, where 
$R(\Sigma,r)/r$ is either strictly increasing or strictly decreasing, and for which precisely one of the following holds:
\begin{enumerate}
\item there exists $h = h(I) \in \Z_{>0}$ such that 
$ |\pi|^{1/p^{h-1}} < R(\Sigma,r)/r < |\pi|^{1/p^h}$, for $r \in (a,b)$,
\item $R(\Sigma,r)/r  <  |\pi|$, for $r \in (a,b)$.
\end{enumerate}

If $M =1$, we can  find an increasing  (resp. decreasing) sequence of points $h \mapsto a_h \in [r_1, r_1^\p)$ (resp. $h \mapsto b_h \in (r_2^\p , r_2]$), defined for $h \in \Z_{\geq 0}$ as soon as 
$R(\Sigma,r_1)/r_1 \leq |\pi|^{1/p^h}$ 
(resp. $R(\Sigma,r_2)/r_2 \leq |\pi|^{1/p^h}$), converging to $r_1^\p$ (resp. $r_2^\p$),  and  such that  $R(\Sigma,a_h)/a_h = |\pi|^{1/p^h}$
(resp. $R(\Sigma,b_h)/b_h = |\pi|^{1/p^h}$). The function $R(\Sigma,r)/r$ satisfies 
$ |\pi|^{1/p^{h-1}} < R(\Sigma,r)/r < |\pi|^{1/p^h}$ for $r \in  (a_{h-1},a_h)$ (resp. $r \in  (b_h,b_{h-1})$), as soon as $a_{h-1}$ (resp. $b_{h-1}$) is defined. If $R(\Sigma,r_1)/r_1 < |\pi|$ 
(resp. $R(\Sigma,r_2)/r_2 < |\pi|$), then $R(\Sigma,r)/r < |\pi|$ in $[r_1,a_0)$ (resp. $(b_0,r_2]$).
\par
When 
  $R(\Sigma,r)/r < |\pi|$,  the precise value of $R(\Sigma,r)$ has been evaluated by Young \cite{Young}. We recall that the residue field  $\sH(t_{0,r})$ is the completion of the rational field $k(T)$ under the absolute value $f(T) \mapsto |f(t_{0,r})|$, where 
  $$ |(a_nT^n + \dots + a_1 T + a_0)(t_{0,\rho})| = \max_{i=0,1,\dots,n} |a_i|r^i
  $$
  for a polynomial $a_nT^n + \dots + a_1 T + a_0 \in k[T]$.
 The derivation $d/dT$ (resp. $\delta := Td/dT$) extends to a continuous derivation of $\sH(t_{0,r})/k$, of operator norm $1/r$ (resp. $1$). 
  By the theorem of the cyclic vector, the  system $\Sigma$ is equivalent over the field $\sH(t_{0,r})$, for any $r \in [r_1,r_2]$, to a single differential operator 
$$L = \delta^{\mu} - C_1\delta^{\mu - 1} - \dots - C_{\mu} \; , \;\;\; \delta = T \frac d{dT} \; .
$$
Young's theorem asserts that if  $R(\Sigma,r)/r <  |\pi|$, or if $\exists \, j_0 = 1, \dots,\mu$ such that $|C_{j_0}(t_{0,r})| > 1$, then 
$$R(\Sigma,r)/r = |\pi| \min_j |C_j(t_{0,r})|^{- 1/j} \;  .
$$
In particular, if $R(\Sigma,r)/r < |\pi|$, then $R(\Sigma,r)/ r$ can only take values of the form $|\pi| (|a|r^s)^{1/j}$, where $a \in k$ and $s \in \Z$ and $j$ is an integer  between 1 and $\mu$. 

\medskip

Let us consider a finite \'etale covering  $\varphi : B(s_1,s_2) \to B(r_1,r_2)$ which is a composite of maps  of the following forms: a {\it Kummer covering} $x \mapsto x^N$, for some $N = 1,2,\dots$, where $r_i = s_i^N$, $i=1,2$, a {\it dilatation} $x \mapsto ax$, for $a \in k^{\times}$, in which case $r_i = |a| s_i$, for $i=1,2$, and an {\it inversion} $x \mapsto \gamma x^{-1}$, with $r_1 = |\gamma| s_2^{-1}$ and $r_2 = |\gamma| s_1^{-1}$, assuming such a $\gamma \in k^{\times}$ exists. Among Kummer coverings we have the \emph{Frobenius covering\footnote{The semilinear version of the map $\varphi_{1}$ is often used instead. Namely, for any continuous lifting $\sigma \in \Aut (k)$ of the absolute Frobenius of $\kt$, one considers $\phi^{(\sigma)} : B(r_1,r_2) \to B(r^{p}_1,r^{p}_2) \wt_{k,\sigma}k$, which is $\sum_ia_{i}T^{i} \mapsto \sum_ia^{\sigma}_{i}T^{pi}$ at the ring level. See section \ref{Frobenius} below. } of degree $h$}: $\varphi_h: B(r_1,r_2) \to B(r^{p}_1,r^{p}_2)$, $x \mapsto x^{p^h}$. For any $\varphi$ as above, we may pull-back the system to a similar system on $B(s_1,s_2)$, which we denote by $\varphi^{\ast}\Sigma$, in an obvious way. 

\begin{rmk}
 The dilatation map $\varphi: x \mapsto ax$, for $a \in k^{\times}$, transforms isomorphically the disc $D(0,1^\pm)$ into  the disc $D(0,|a|^\pm)$, sending $t_{0,1}$ to  $t_{0,|a|}$.  If $r \in |k^{\times}|$, and $|a| = r$, the system $\varphi^{\ast}\Sigma$ is associated to the operator  
$$\varphi^{\ast} L = \delta^{\mu} - C_1(ax)\delta^{\mu - 1} - \dots - C_{\mu} (ax)
$$
satisfies $R(\varphi^{\ast} \Sigma,1) = R(\Sigma,r)/r$. If $r \notin |k^{\times}|$, one can reason in the same way after the base change to $k_r := k\{r^{-1}T_1, rT_2\}/(T_1T_2 -1)$. In this form Young's result appears with full details in \cite[VI, 2.1]{DGS}. In the same vein, one can avoid  consideration of $R(\Sigma,r)$ as $r \to r_1$, and only discuss the case $r \to r_2$, by using the inversion $x \mapsto x^{-1}$. 
\end{rmk}

\begin{lemma}\label{invariance} Let $\gamma \in k^{\times}$ and $\varphi: B(|\gamma|,1) \to B(|\gamma|,1)$ be the inversion $x \mapsto \gamma/x$. Let $\Sigma$ be a system on $B(|\gamma|,1)$ as above. Then, for any $r \in [|\gamma|,1]$, 
$$R(\varphi^{\ast} \Sigma,r)/r = R(\Sigma,|\gamma|/r)/(|\gamma|/r) \; .
$$
\end{lemma}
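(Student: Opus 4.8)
The plan is to reduce the statement to a computation about how the radius of convergence transforms under the inversion $\varphi: x \mapsto \gamma/x$ on the annulus $B(|\gamma|,1)$, point by point on the skeleton. Recall that for $r \in [|\gamma|,1]$, the point $t_{0,r}$ is the maximal point of the closed disk of radius $r$, and $R(\Sigma,r)/r = \cR(t_{0,r},\Sigma)$ is the normalized radius at that skeletal point. The inversion $\varphi$ is an automorphism of $B(|\gamma|,1)$ which acts on the skeleton by $\rho \mapsto |\gamma|/\rho$; indeed $|\varphi(t_{0,r})| = |\gamma|/|T(t_{0,r})| = |\gamma|/r$, so $\varphi(t_{0,r}) = t_{0,|\gamma|/r}$. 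The key point is that $\varphi$ is an \emph{isometric} isomorphism of $k$-analytic spaces in a sense compatible with normalized coordinates: it carries the maximal open disk $D_{\fB}(t_{0,r},1^-)$ inside $B(|\gamma|,1)$ onto the maximal open disk $D_{\fB}(t_{0,|\gamma|/r},1^-)$.

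First I would make precise the normalized-coordinate compatibility. On $B(|\gamma|,1)$, viewed as (the generic fiber of) the basic formal annulus $\fB$ with coordinate $T$, the $\fB$-normalized coordinate at $t_{0,r}$ is, up to the inherent ambiguity, $T_r = (T - \cdot)/\cdot$ in the sense of (\ref{coorcann}) — but what matters is only that the map $\varphi^\ast$ on the completed residue fields $\sH(t_{0,r}) \to \sH(t_{0,|\gamma|/r})$ is an isometric isomorphism, and that it transports the derivation $d/dT$ appropriately. Concretely, writing $T' = \gamma/T$ for the coordinate on the target, we have $d/dT' = -(T^2/\gamma)\, d/dT$, and since $|T^2/\gamma|$ at $t_{0,r}$ is $r^2/|\gamma|$, the operator norms of the two derivations differ by exactly the factor $(r^2/|\gamma|)$; equivalently the normalized derivations $\delta = T\,d/dT$ and $\delta' = T'\,d/dT'$ satisfy $\delta' = -\delta$, hence have the same operator norm $1$. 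This is precisely why the \emph{normalized} radius is the natural invariant here: the normalized radius of convergence of a $\delta$-system at $t_{0,r}$ depends on the pair $(\sH(t_{0,r}), \delta)$ only up to isometric isomorphism, and $\varphi^\ast$ provides such an isomorphism between the data at $t_{0,r}$ and the data at $t_{0,|\gamma|/r}$ for the system $\Sigma$ versus $\varphi^\ast\Sigma$.

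Next I would assemble the computation. By definition $R(\varphi^\ast\Sigma, r)/r = \cR(t_{0,r}, \varphi^\ast\Sigma)$ computed via formula (\ref{eq:raddefsmooth2}) in the coordinate $T'$; but pulling back along $\varphi^\ast$ and using the isometric identification of residue fields and normalized derivations just described, this equals $\cR(\varphi(t_{0,r}), \Sigma) = \cR(t_{0,|\gamma|/r}, \Sigma)$, which by definition is $R(\Sigma,|\gamma|/r)/(|\gamma|/r)$. More explicitly, if $G_{[n]}$ is the iteration matrix for $\Sigma$ in the coordinate $T$ and $G'_{[n]}$ the one for $\varphi^\ast\Sigma$ in the coordinate $T'$, the chain rule gives $G'_{[n]}(\varphi(x))$ as a combination of the $G_{[m]}(x)$ for $m \le n$ twisted by powers of $dT/dT' = -\gamma/T^2$; evaluating at $x = t_{0,|\gamma|/r}$ and taking $\liminf |{\cdot}|^{-1/n}$, the twist contributes exactly a factor $|T(x)|^2/|\gamma| = (|\gamma|/r)^2/|\gamma| = |\gamma|/r^2$, while the base point change replaces $|T(x)|$ by $r$ in the normalization, and the bookkeeping collapses to the claimed identity. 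This kind of substitution is already used in the remark preceding the lemma for the dilatation case, so the mechanism is standard.

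The main obstacle, and the only place requiring care, is the bookkeeping with the iteration matrices $G_{[n]}$ under a non-linear change of variable: unlike dilatation (where $dT/dT'$ is constant) or Kummer (where it is monomial), the inversion has $dT/dT' = -\gamma/T^2$, so the Leibniz-type recursion expressing $G'_{[n]}$ in terms of the $G_{[m]}$ involves all lower indices with binomial-type coefficients. One must check that, after evaluating at the skeletal point $t_{0,|\gamma|/r}$ and passing to the $\liminf$ of $n$-th roots, the extra terms are dominated and the leading behaviour is governed purely by the factor $|dT/dT'(t_{0,|\gamma|/r})|$, so that no loss occurs. The cleanest way to avoid this entirely is the conceptual route: invoke that $\cR(x,\Sigma)$ for $x$ a point of type (2) or (3) depends only on the isometrically-normalized pair $(\sH(x), \text{normalized derivation})$ together with $(\cE,\na)_{|\cD}$ for the maximal disk $\cD$, and that $\varphi$ induces an isomorphism of all this data between $t_{0,r}$ (for $\varphi^\ast\Sigma$) and $t_{0,|\gamma|/r}$ (for $\Sigma$) — this is immediate from the fact that $\varphi$ is an automorphism of the $k$-analytic annulus carrying one maximal disk isomorphically onto the other and commuting with the connections by construction of the pullback. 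I would present the conceptual argument as the main line and relegate the explicit matrix identity to a remark or omit it.
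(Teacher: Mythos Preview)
Your proposal is correct, and the conceptual route you ultimately advocate is exactly what the paper does. The paper's proof is a single paragraph: an elementary calculation shows that $\varphi$ maps the disk $D(a,R^+)$ isomorphically onto $D(\gamma/a,(R\,|\gamma/a^2|)^+)$, and then the geometric interpretation of $R(\Sigma,r)$ as the radius of the maximal convergence disk at the canonical point over $t_{0,r}$ immediately gives $R(\Sigma,|\gamma|/r) = (|\gamma|/r^2)\,R(\varphi^\ast\Sigma,r)$, which rearranges to the claim.

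Your extended discussion of the iteration matrices $G_{[n]}'$ under the non-linear substitution, and of the normalized derivations $\delta' = -\delta$, is correct but unnecessary; the paper bypasses it entirely by working directly with the disk-to-disk radius formula, which is the cleanest realization of the ``isometric in normalized coordinates'' principle you describe. You should present only the conceptual argument, and you can make it as short as the paper does by stating the explicit disk-image formula.
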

\begin{proof}  An immediate calculation shows that for any $k$-rational point $a \in B(|\gamma|,1)$ and any $R \in (|\gamma|,|a|)$, $\varphi$ induces an isomorphism,  of the disk 
$D(a, R^+)$ onto the disk $D(\gamma/a,(R\, |\gamma/a^2|)^+)$. Therefore, by the interpretation of $R(\Sigma,r)$ as the radius of the maximal disk centered at the canonical point $a $ above $t_{0,r}$ in $\sH(t_{0,r}) \what{\otimes} B(|\gamma|,1)$, for which $|x(a)| = r$,  where  a fundamental solution matrix of the base change of $\Sigma$ at $a$ converges, we deduce 
$$ R(\Sigma, |\gamma|/r) = (|\gamma|/r^2) R(\varphi^{\ast} \Sigma,r) \; ,
$$
as promised. 
\end{proof}

Of special interest is the pullback by Frobenius: the main theorem of \cite{ChristolDwork} combined with \cite[Thm. 6.15]{Ke} asserts that if, for some fixed $h \in \Z_{>0}$, a system $\Sigma$ of the form (\ref{diffCHDWintro}) satisfies 
$$ |\pi|^{1/p^{h-1}} < R(\Sigma,r)/r < |\pi|^{1/p^h}
$$
for any $r \in (r_1,r_2)$, then there exists a  system $\Sigma_h$ with coefficients in 
 $\sH(r_1^{p^h},r_2^{p^h})$, unique in the sense of  $\sH(r_1^{p^h},r_2^{p^h})/k$-differential modules, such that $\Sigma \cong  \varphi_h^{\ast} \Sigma_h$, where ``$\cong$'' means isomorphism of $\sH(r_1,r_2)/k$-differential modules. Moreover 
$$R(\Sigma_h,r^{p^h})/r^{p^h} =  (R(\Sigma,r)/r)^{p^h} \; ,
$$
so that $R(\Sigma_h,r)/r <  |\pi|$ in $(r_1^{p^h},r_2^{p^h})$, and Young's theorem can be applied. Notice that if the graph of $R(\Sigma_h,r)/r$ is logarithmically affine with slope $\alpha$  in the interval $[r_1^{p^h},r_2^{p^h}]$, so is $R(\Sigma,r)/r$ in $[r_1,r_2]$: 
$$R(\Sigma_h,r)/r = C r^{\alpha}  
\Rightarrow
R(\Sigma_h,r^{p^h})/r^{p^h} = C r^{\alpha p^h} 
\Rightarrow
R(\Sigma,r)/r = C^{1/p^h}r^{\alpha} \; .
$$
So theorem \ref{slopes} is proven in the case $R(\Sigma,r)/r <1$ in $[r_1,r_2]$. If the function $R(\Sigma,r)/r$ reaches the maximum value 1 in $[r_1,r_2]$, we are in the case treated by Christol-Mebkhout, and can follow their argument \cite[4.2]{ChMe3}. Namely, we operate as in case $M=1$ above, and reduce, possibly by an inversion, to the case when $R(\Sigma,r)/r$ is strictly increasing to 1 in $[r_1,r_2]$.  Then we consider the sequence $h \mapsto a_h \in [r_1,r_2]$, described above (where $r_1^\p = r_2$), defined for $h \in \Z_{\geq 0}$ as soon as 
$R(\Sigma,r_1)/r_1 \leq |\pi|^{1/p^h}$, converging to $r_2$, and  such that  $R(\Sigma,a_h)/a_h = |\pi|^{1/p^h}$. The function $R(\Sigma,r)/r$ satisfies 
$ |\pi|^{1/p^{h-1}} < R(\Sigma,r)/r < |\pi|^{1/p^h}$ for $r \in  (a_{h-1},a_h)$, as soon as $a_{h-1}$ (resp. $b_{h-1}$) is defined. If $R(\Sigma,r_1)/r_1 < |\pi|$, then $R(\Sigma,r)/r < |\pi|$ in $[r_1,a_0)$. So, by the previous argument, the function $R(\Sigma,r)/r$ is continuous, logarithmically concave and piecewise affine with rational slopes with denominator at most $\mu$. But in this special case, those slopes are positive and must be decreasing as $r \to r_2$. The constraint on the denominator shows that  there is a non negative rational number $\beta$ with denominator bounded by $\mu$, such that, for sufficiently big $h$, the function $R(\Sigma,r)/r$ on the interval $[a_{h-1},a_h]$ is of the form $C_hr^{\beta}$. So, $C_h$ is independent of $h$, and, since $R(\Sigma,r_2)/r_2 = 1$, $C_h = r_2^{-\beta}$. So, theorem \ref{slopes} is proven in every case. 
\end{proof}
\par 
\begin{cor} \label{piecewiselinear} Let $X$ be a closed $k$-annulus  in the analytic $k$-line $\A^1_k$, and (\ref{diffCHDWintro}) be a  $\mu \times \mu$ system of linear differential equations on $X$.  Let $r:\, S(X) \xrightarrow{\ \sim\ } [r_{1},r_{2}]$ be the function ``radius of a point''  of \cite[p.~78]{Berkovich}, restricted to $S(X)$. Then, the restriction of $x \mapsto R(\Sigma,r(x))/r(x)$ to $S(X)$  is the infimum of the constant 1 and a finite set of functions of the form
\beq |p|^{1/(p-1)p^h}|b|^{1/jp^h} r(x)^{s/j}\; ,
\eeq
 where $j \in \{ 1, 2, \dots,\mu\}$, $s \in \Z$,  $h \in \Z \cup \{\infty\}$, $b \in k^{\times}$.
 \end{cor}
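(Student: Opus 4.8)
The plan is to reduce the statement to Theorem \ref{slopes} applied to the induced system on the annulus, keeping careful track of the normalization factor $r(x)$. First I would recall that since $X$ is a closed $k$-annulus in $\A^1_k$, it is isomorphic (via the standard coordinate $T$) to a standard closed annulus $B[r_1,r_2]$, and under this identification $S(X)$ is homeomorphic to $[r_1,r_2]$ via $\rho \mapsto t_{0,\rho}$, with the ``radius of a point'' function $r$ corresponding precisely to $\rho \mapsto \rho$. Restricting the system $\Sigma$ to the open annulus $B(r_1,r_2)$, its coefficients lie in $\sH(r_1,r_2)$, and the Christol--Dwork radius $R(\Sigma,\rho)$ of \eqref{radCHDW} satisfies $R(\Sigma,\rho)/\rho = \cR(t_{0,\rho},\Sigma)$ by the discussion following \eqref{radCHDW}. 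So the function to be described is exactly $\rho \mapsto R(\Sigma,\rho)/\rho$ on $[r_1,r_2]$.

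Next I would invoke Theorem \ref{slopes}: the function $\rho \mapsto R(\Sigma,\rho)$ is logarithmically a finite concave polygon on $[r_1,r_2]$ with rational slopes of denominator at most $\mu$, hence so is $\rho \mapsto R(\Sigma,\rho)/\rho$ (dividing by $\rho$ is logarithmically a shift by a line of slope $-1$, preserving concavity and piecewise affineness, and does not change denominators). Being a finite concave piecewise-log-affine function, it is the infimum over its finitely many linear pieces of the (log-)affine functions extending those pieces. It therefore remains to identify the precise form of each such affine piece, i.e. to show each is either the constant $1$ or of the form $|p|^{1/(p-1)p^h}|b|^{1/jp^h}\rho^{s/j}$. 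For this I would follow the case analysis in the proof of Theorem \ref{slopes}: on any maximal subinterval where $R(\Sigma,\rho)/\rho<1$, one subdivides into subintervals where either $R(\Sigma,\rho)/\rho<|\pi|$ (with $|\pi|=|p|^{1/(p-1)}$), where Young's theorem gives $R(\Sigma,\rho)/\rho = |\pi|(|a|\rho^s)^{1/j}$ directly — this is the stated form with $h=0$ — or where $|\pi|^{1/p^{h-1}}<R(\Sigma,\rho)/\rho<|\pi|^{1/p^h}$ for some $h>0$, in which case Frobenius descent (the combination of \cite[Thm.~5.4]{ChristolDwork} and \cite[Thm.~6.15]{Ke}) produces $\Sigma_h$ over $\sH(r_1^{p^h},r_2^{p^h})$ with $R(\Sigma_h,\rho^{p^h})/\rho^{p^h} = (R(\Sigma,\rho)/\rho)^{p^h}$, to which Young applies; extracting the $p^h$-th root of $|\pi|(|a'|\rho'^{s'})^{1/j}$ yields the shape $|p|^{1/(p-1)p^h}|b|^{1/jp^h}\rho^{s/j}$. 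On the subinterval (if any) where the maximum value $1$ is attained, the relevant pieces are the constant $1$ together with, on the flanking intervals accumulating at the point where the value is $1$, pieces of the same Frobenius-descent form with $h$ growing; allowing $h\in\Z\cup\{\infty\}$ absorbs the limiting (constant) piece and matches the statement. Taking the infimum over this finite collection gives the claimed description, with $j$ ranging in $\{1,\dots,\mu\}$ and $b\in k^\times$ (a dilatation argument as in the remark after Theorem \ref{slopes} lets one take the constants in $k$ rather than in a larger field).

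The main obstacle is essentially bookkeeping rather than a new idea: one must verify that the algebraic manipulations (dividing by $\rho$, taking $p^h$-th roots, composing with Kummer/dilatation/inversion pullbacks) carry the coefficients $|a|$, $|\pi|^{1/p^{h-1}}$, etc. into precisely the normalized shape $|p|^{1/(p-1)p^h}|b|^{1/jp^h}\rho^{s/j}$ with integer $s$, and that the finitely many pieces really do patch to a function expressible as a single finite infimum including the constant $1$. The only genuinely delicate point is handling the accumulation of breakpoints near a point where $R(\Sigma,\rho)/\rho$ reaches $1$: there Theorem \ref{slopes} already shows the slope $\beta$ stabilizes for large $h$ and the constant equals $r_2^{-\beta}$ (in the normalization used there), so only finitely many distinct affine functions actually occur, and the $h=\infty$ entry in the statement accounts for the constant $1$ limit; I would make this explicit to close the argument.
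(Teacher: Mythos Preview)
Your proposal is correct and follows exactly the approach the paper intends: the corollary has no separate proof in the paper because it is meant to be read off directly from the case analysis in the proof of Theorem~\ref{slopes} (Young's theorem for the pieces with $R/\rho<|\pi|$, Frobenius descent for the intermediate bands, and the stabilization of the slope $\beta$ near the solvable locus). Your reduction via $S(X)\cong[r_1,r_2]$ and the identification $R(\Sigma,\rho)/\rho=\cR(t_{0,\rho},\Sigma)$ is precisely the intended translation.

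One small correction of bookkeeping: the constant $1$ is listed \emph{separately} in the statement and is not what $h=\infty$ encodes. With $h=\infty$ the expression $|p|^{1/(p-1)p^h}|b|^{1/jp^h}\rho^{s/j}$ becomes $\rho^{s/j}$, not the constant $1$. The role of $h=\infty$ is rather to represent the \emph{last} log-affine piece approaching a solvability point, namely the function $(r/r_0)^{\beta}$ with $\beta=s/j$ obtained from the slope-stabilization argument; when $r_0$ is an endpoint of the skeleton (so after normalizing $r_0=1$), this is exactly $\rho^{s/j}$. The constant $1$ itself accounts for the plateau where $R(\Sigma,\rho)/\rho=1$. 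With that adjustment your description of the finite infimum is complete.
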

 \begin{exa} (\cite[IV.7.3]{DGS}) \label{xalpha} Let $\alpha \in \kc$ and regard 
 \beq 
 \Sigma_{\alpha} ~: \frac {d\, y}{dT}= \, \frac{\alpha}{1+T} \, y \;  ,
 \eeq
 as an analytic differential equation on any strictly affinoid annulus of the form 
 $$X = B[-1;r_1,r_2] \subset  \A^{1}_{k} \setminus \{-1\} \; ,$$
  with its minimal semistable formal structure $\fX$. For any $x \in X$, the solution of $\Sigma_{\alpha}$ on $\A^{1}_{\sH(x)}$ which takes the value 1 at (the canonical point over) $x$  is 
$$g_{\alpha}(\frac{T- T(x)}{1+ T(x)}) = (1 + \frac{T- T(x)}{1+T(x)})^{\alpha} \in 1 + (T- T(x))\sH(x)[[T-T(x)]] \; . $$
 Let 
$$d(\alpha,\Z_{p}) = \inf \{|n - \alpha| \, | \, n \in \Z\,\}  \in |\kc|\; ,
$$
and assume   $p^{-m-1 }  < d(\alpha,\Z_{p}) \leq   p^{-m } $,  
with $m \in \Z_{\geq 0}$.
The radius of convergence of the power series  
$$g_{\alpha}(U) = 1 + \sum_{i=1}^{\infty} {{\alpha}\choose{i}} U^{i} \;,$$ 
is then 
$$r_{\alpha} := |\pi|^{\frac{1}{p^{m}}} (d(\alpha,\Z_{p})  p^{m })^{ - \frac{1}{p^{m+1}}} =
p^{-\frac{1}{(p-1)p^{m}} - \frac{m}{p^{m+1}}}  d(\alpha,\Z_{p})^{ - \frac{1}{p^{m+1}}}  \; ,$$
so that 
$$ \cR_{\fX}(x, \Sigma_{\alpha}) = R(\Sigma_{\alpha}, x)/|1+T(x)| = r_{\alpha} \; 
$$
is of the form predicted by corollary \ref{piecewiselinear} on $S(X) = S(\fX)$ and is in fact constant all over $X$.
 \end{exa}

\section{Dwork-Robba theory over basic annuli and disks}   \label{GeneralDR}

We keep the notation and assumptions of the previous section on the field $k$, but the  system $\Sigma$ of (\ref{diffCHDWintro}) is supposed to be defined on a basic affinoid annulus or disk $(X,T)$, as in definition \ref{basicannuli}. Recall that $(X,T)$ is the generic fiber of a formal coordinate neighborhood $(\fX,T)$ either in a standard formal annulus of height $|\gamma|$, $\Spf \kc\{S,T\}/(ST-\gamma)$, with $\gamma \in \kc$, or in the standard formal disk $\Spf \kc\{T\}$ over $\kc$, and in particular that the coordinate $T$ on $X$ is the pull-back of the formal coordinate $T$ on $\fX$. In this case, $X$ has canonical strictly semistable reduction, and the corresponding strictly semistable model of $X$ is the minimum one, and coincides with $\fX$. So, $D_{\fX}(x,1^{-}) = D_{X}(x,1^{-})$, for any $x \in X \setminus S(X) =  X \setminus S(\fX)$. We denote by $|| - ||_{X}$ the supnorm on $X$. Let $\cE \cong \cO_X^{\mu}$ and let $\na$ be the connection on $\cE$, whose solutions are the column solutions  of the system $\Sigma$. We consider the system of ordinary linear differential equations
(\ref{diffCHDWintro}) on $X$. We define $G_{[i]}$, for $i \in \N$, as in (\ref{eq:stratintro}), \emph{for the global coordinate $T$ on the formal annulus or disk $\fX$}.  
\par \noindent
If $X$ is a disk, we set
\beq \label{eq:raddefsmooth41}
\cR(x,\Sigma) = \min (1, \liminf_{n \to \infty}|G_{[n]}(x)|^{-1/n}) 
\; . 
\eeq
If $X$ is an annulus, we set 
\beq \label{eq:raddefsmooth4}
R(x,\Sigma) = \min (|T(x)|, \liminf_{n \to \infty}|G_{[n]}(x)|^{-1/n}) 
\; ,  
\eeq
and 
\beq
\label{eq:raddefsmooth42}
 \cR (x,\Sigma)  = R (x,\Sigma) / |T(x)| \; .
\eeq
 Then, according to  (\ref{eq:raddefsmooth21}) and (\ref{eq:raddefsmooth2}), we have in any case 
 $$\cR_{\fX}(x,(\cE,\na)) = \cR (x,\Sigma)  \; .$$

\subsection{The Dwork-Robba theorem} \label{Dwork-Robba}

We state here a useful form of the theorem of Dwork and Robba   \cite[Thm. 3.1, Chap. IV]{DGS}. A multivariable version of it is given in \cite{Gachet} and \cite[4.2]{Lucia}.

\begin{thm}
\label{cor:DworkRobba1}
Let $x \in X$, $\Sigma$,  $G_{[i]}$, for $i \in \N$, and $R = R(x,\Sigma)$ be as above. 
 Then, for any $n \in\N$ we have the
following estimate
\beq\label{eq:DworkRobba1}
|{G}_{[n]}(x)| \leq C n^{\mu -1}
R^{-n}\ ,
\eeq
where
$$
C=\max_{i \leq\mu - 1}
\l( R^i |i!|
\l|\l|{G}_{[i]}\r|\r|_X \r)\  .
$$
\end{thm}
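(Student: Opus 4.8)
The plan is to reduce the statement to the classical Dwork--Robba theorem \cite[Thm.~3.1, Chap.~IV]{DGS} applied to the system $\Sigma$ on a suitable open disk, after performing the base field extension from $k$ to $\sH(x)$ and choosing an appropriate coordinate. First I would recall that, since the quantities $|G_{[n]}(x)|$ are computed at a single point $x$, and both sides of \eqref{eq:DworkRobba1} are invariant under the base change $k \rightsquigarrow \sH(x)$ (the norm on $\sH(x)$ extending that of $k$, and the formal coordinate $T$ pulling back to a coordinate on $\fX\wt\sH(x)^{\circ}$), I may assume that $x$ is a $\sH(x)$-rational point; by abuse I again call the ground field $k$ and the point $x$. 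After a translation $T \mapsto T - T(x)$ if $X$ is a disk, or after the dilation/change of coordinate given by \eqref{coorcann2} if $X$ is an annulus, the point $x$ corresponds to $0$ in the standard open unit (poly)disk $D_{\fX}(x,1^-)\cong D(0,1^-)$, and the relevant radius $R = R(x,\Sigma)$ is exactly the radius of the maximal open subdisk on which the fundamental solution matrix $Y(T) = \sum_i G_{[i]}(x)(T-T(x))^i$ converges, in the normalized coordinate.

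Next I would invoke the classical Dwork--Robba estimate. The matrices $G_{[i]}$ are the iterates of the connection; in the normalized coordinate $T_x$ the system $\Sigma$ has coefficients which are analytic functions on the full basic affinoid annulus (resp.\ disk) $X$, hence in particular bounded on $X$, and the numbers $\|G_{[i]}\|_X$ used to define $C$ are the supnorms of these iterate-matrices over $X$. The hypothesis needed for \cite[IV.3.1]{DGS} is precisely that the $G_{[i]}$ extend to analytic functions on a disk (or annulus) containing the closed disk $D(0,1^+)$ --- which is where the passage from ``analytic elements on $B(r_1,r_2)$'' to ``analytic functions on the basic affinoid annulus'' enters: the basic affinoid annulus $X$ does play the role of the ambient neighborhood. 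Applying the Dwork--Robba theorem in this form yields
\[
|G_{[n]}(x)| \le \Big(\max_{i\le \mu-1} R^i |i!| \,\|G_{[i]}\|_X\Big)\, n^{\mu-1} R^{-n},
\]
which is exactly \eqref{eq:DworkRobba1} with the stated constant $C$.

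The main obstacle I expect is not the estimate itself --- which is a black box once the setup is right --- but the careful verification that the quantities entering it are the correct ones in our normalization. Specifically: (i) one must check that replacing the ``analytic elements on an open annulus'' of the original Dwork--Robba formulation by ``global analytic functions on the basic affinoid annulus $X = \fX_\eta$'' is legitimate, i.e.\ that $\cO(X)$ really provides the needed sup-norm bounds and that the iterate matrices $G_{[i]}$ (defined via the \emph{global} coordinate $T$ on $\fX$, as emphasized before \eqref{eq:raddefsmooth41}) lie in $M_{\mu\times\mu}(\cO(X))$ with finite $\|G_{[i]}\|_X$; and (ii) one must track the effect of the coordinate change \eqref{coorcann2} in the annulus case, so that the radius appearing is $R(x,\Sigma) = \min(|T(x)|, \liminf_n |G_{[n]}(x)|^{-1/n})$ and not the normalized $\cR(x,\Sigma)$, since the constant $C$ and the powers $R^i$ must be consistently expressed in the \emph{same} coordinate. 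Once these bookkeeping points are settled, the proof is just the citation of \cite[IV.3.1]{DGS}. I would also remark that the corollaries alluded to will follow by standard manipulations (e.g.\ taking $n$-th roots and letting $n\to\infty$ recovers $\liminf_n|G_{[n]}(x)|^{-1/n}\ge R$, consistent with the definition of $R$).
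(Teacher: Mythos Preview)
Your proposal is correct and follows the same overall strategy as the paper: reduce to the case of a rational point by extending the base field to (an algebraically closed extension of) $\sH(x)$, then invoke \cite[IV, Thm.~3.1]{DGS}. The paper's execution, however, sidesteps your ``main obstacle'' entirely. Rather than changing coordinates via \eqref{coorcann2} and tracking the resulting rescalings, the paper applies \cite[IV, Thm.~3.1 and Remark~3.2]{DGS} directly in the original global coordinate $T$: for the $k$-rational point $x$ one has an invertible analytic solution matrix $Y$ on $D_X(x,R^-)$, and the cited result bounds the \emph{boundary seminorm} $\|G_{[s]}\|_{x,R} := \limsup_{\rho\to R^-}|G_{[s]}(t_{x,\rho})|$ by
\[
\|G_{[s]}\|_{x,R} \le R^{-s}\, s^{\mu-1} \sup_{0\le i\le \mu-1}\bigl(R^i\,\|G_{[i]}\|_{x,R}\bigr).
\]
Since each $G_{[i]}$ is analytic on all of $X$, one has $|G_{[s]}(x)| \le \|G_{[s]}\|_{x,R}$ on the left and $\|G_{[i]}\|_{x,R}\le \|G_{[i]}\|_X$ on the right, and \eqref{eq:DworkRobba1} follows at once. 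Note also that your description of the hypothesis of \cite[IV.3.1]{DGS} is slightly off: the relevant input is not that the $G_{[i]}$ extend beyond some closed disk, but that an invertible (meromorphic) solution matrix exists on $D_X(x,R^-)$; the global domain $X$ enters only through the trivial inequality $\|G_{[i]}\|_{x,R}\le \|G_{[i]}\|_X$.
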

\begin{proof} We may assume that $(\fX,T)$ \emph{is} either a standard formal annulus of height $|\gamma|$, with $\gamma \in \kc$, or  the standard formal disk over $\kc$, \cf (\ref{basicannuli}), and that $X = \fX_{\eta}$.  We can also assume that $k$ is algebraically closed. We first explain the notation and the result of remark 3.2 in \cite[Chap. IV]{DGS}. So, let $t = t_{a,\rho} \in X$, for $a \in k$ and $0 < \rho \leq 1$, and let $\cA_{a,\rho} = \cO_{X}(D_{X}(a,\rho^{-}))$ be the ring of analytic functions on $D_{X}(a,\rho^{-})$. Let 
$\cA^{\p}_{a,\rho}$ be the quotient field of the ring $\cA_{a,\rho}$, that is the field of \emph{meromorphic functions} on $D_{X}(a,\rho^{-})$. Notice that the $k$-linear derivation $\partial : f \mapsto  \frac {d\, f}{dT}$ of $\cA_{a,\rho}$ extends uniquely to a $k$-linear derivation $\partial$ of $\cA^{\p}_{a,\rho}$. 

The \emph{boundary seminorm} 
$$||~||_{a,\rho} : \cA^{\p}_{a,\rho} \to \R_{\geq 0} \cup \{\infty \} \; ,
$$
 is defined, for any $f \in \cA^{\p}_{a,\rho}$ as
 $$|| f ||_{a,\rho} = \limsup_{R \to \rho^{-}} |f(t_{a,R})|  \; .
 $$
We refer to \lc for the properties of this seminorm, and in particular for the fact that, for any $f \in  \cA^{\p}_{a,\rho}$, 
\beq
||\frac {\partial^{s}  f}{s! \, f}||_{a,\rho} \leq \rho^{-s} \; \;\; ,  \; \;\; \forall s \geq 0 \;\;.
\eeq

Now, notice that the $k$-linear continuous So, let us assume that we are given a 
  $\mu \times \mu$ matrix of meromorhic functions on $D_{X}(a,\rho^{-})$, such that 
 \beq \label{diffCHDWintro1} 
\Sigma :  \partial \, Y =G\,Y \; . 
\eeq
Then, for any $s \geq 0$, 
$$\frac{1}{s!}\, (\partial^{s}\, Y) \, Y^{-1} = {G}_{[s]} \; ,
$$
and theorem 3.1 and remark 3.2 of \cite[Chap. IV]{DGS} assert that
\beq \label{est1}
|| {G}_{[s]}||_{a,\rho} \leq \rho^{-s} s^{\mu -1} \sup_{0 \leq i \leq \mu -1}  (\rho^{i} || {G}_{[i]}||_{a,\rho}) \; .
\eeq
Now, consider a $k$-rational point $x$ of $X$, and notice that there exists a solution matrix $Y$ of  (\ref{diffCHDWintro1}) analytic in $D_{X}(x, R^{-})$, with $R = R(x,\Sigma)$. If we insist that $Y(x)$ be the identity matrix, this solution is the sum of the convergent series of polynomial functions 
$$Y= \sum_{i=0}^{\infty}  {G}_{[s]}(x) (T-T(x))^{i} \; .$$
In this case, the matrices ${G}_{[s]}$ are analytic on $X$, and in particular on $D_{X}(x,R^{-})$, so that 
$$|| {G}_{[s]}||_{a,\rho} = \sup_{y \in D_{X}(x,R^{-})}| {G}_{[s]} (y)| \leq || {G}_{[s]}||_{X} \;\;,$$
and the previous estimates for $\rho = R$  imply
\beq
|{G}_{[s]}(x)| \leq R^{-s} s^{\mu -1} \sup_{0 \leq i \leq \mu -1}  (R^{i} || {G}_{[i]}||_{X}) \; .
\eeq
So, our result is proven for $x$ a $k$-rational point of $X$. For the general case, we extend the base field from $k$ to any algebraically closed non-archimedean field $k^{\p}$ over $\sH(x)$, and consider  the canonical $k^{\p}$-rational point $x^{\p} \in X \wt_{k} k^{\p} =: X^{\p}$ above $x$. The system $\Sigma$ is to be interpreted on $X^{\p}$, and we write $\fX^{\p} = \fX \wt_{\kc} k^{\p \circ}$ and $(\cE^{\p},\na^{\p})$ for the pull-back of $(\cE,\na)$.  The matrices ${G}_{[s]}$, however, do not change and $|| {G}_{[i]}||_{X} = || {G}_{[i]}||_{X^{\p}}$. Moreover, 
$R = R(x,\Sigma) =  R(x^{\p},\Sigma)$ (while $\cR_{\fX}(x,(\cE,\na)) = 
\cR_{\fX^{\p}}(x,(\cE^{\p},\na^{\p}))$  by definition!). 
So, we obtain 
\beq
|{G}_{[s]}(x^{\p})| \leq R^{-s} s^{\mu -1} \sup_{0 \leq i \leq \mu -1}  (R^{i} || {G}_{[i]}||_{X}) \; ,
\eeq
and the theorem is proven. 
\end{proof}
The next corollary is the prototype of a {\it transfer theorem to an ordinary contiguous disk}
\cite[V.5]{DGS}.

\begin{cor}\label{contiguity} Let $D$ be an open disk in $X$, and let $t$ be its limit point in $X\setminus D$.  Then $R(t,\Sigma) = \inf_{x \in D} R(x,\Sigma) $. 
\end{cor}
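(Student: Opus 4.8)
The plan is to reduce the statement to the Dwork--Robba estimate of Theorem~\ref{cor:DworkRobba1}, exactly as in the classical transfer theorem \cite[V.5]{DGS}. Write $D = D_X(a,1^-)$ via an $\fX$-normalized coordinate, so that $t = \tau$ is the unique boundary point of $D$ in $X \setminus D$; after a base change of $k$ (which changes neither the matrices $G_{[i]}$ nor the radii $R(x,\Sigma)$, by the argument in the proof of Theorem~\ref{cor:DworkRobba1}) we may assume $a$ is $k$-rational and $t = t_{a,1}$. The inequality $R(t,\Sigma) \leq R(x,\Sigma)$ for every $x \in D$ is the easy direction: one way to see it is the contiguity/transfer principle, but it is cleanest to argue directly from the description of $R(x,\Sigma)$ as the radius of the maximal disk around $x$ on which the fundamental solution matrix converges, together with the fact that for $x \in D$ the disk $D_X(x, R(t,\Sigma)^-)$ is contained in $D_X(t, R(t,\Sigma)^-)$ when $R(t,\Sigma) \leq 1$ (and when $R(t,\Sigma) = 1$, i.e.\ $t$ is already ``solvable'' as far as $D$ is concerned, the statement is immediate). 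Actually the inequality $\inf_{x\in D} R(x,\Sigma) \geq R(t,\Sigma)$ is the transfer theorem itself: a solution matrix converging on $D_X(t,R^-)$ restricts to a solution matrix converging on $D_X(x,R^-)$ for every $x$ in that disk, and $D \subset D_X(t,1^-) \subset D_X(t, R^-)$ precisely when $R = R(t,\Sigma) \leq 1$; if $R(t,\Sigma)=1$ this says $R(x,\Sigma) \geq 1$, hence $=1$, for all $x\in D$.

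For the reverse inequality $\inf_{x \in D} R(x,\Sigma) \leq R(t,\Sigma)$, I would apply Theorem~\ref{cor:DworkRobba1}. Fix $x \in D$ and set $R = R(x,\Sigma)$. The theorem gives, for all $n$,
\beq
|G_{[n]}(x)| \leq C\, n^{\mu-1} R^{-n}, \qquad C = \max_{i \leq \mu-1}\bigl(R^i |i!|\, \|G_{[i]}\|_X\bigr).
\eeq
Since $R \leq 1$ and $\|G_{[i]}\|_X$ is a fixed constant independent of $x$, the constant $C$ is bounded above by $C_0 := \max_{i\leq \mu-1}\bigl(|i!|\,\|G_{[i]}\|_X\bigr)$, a quantity depending only on $\Sigma$. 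Hence $|G_{[n]}(x)| \leq C_0\, n^{\mu-1} R^{-n}$ for all $x \in D$ and all $n$. Now each function $x \mapsto |G_{[n]}(x)|$ is continuous on $X$, so $|G_{[n]}(t)| = \lim_{x \to t} |G_{[n]}(x)|$ along, say, the segment $\{t_{a,\rho} : \rho < 1\}$ approaching $t$; more efficiently, $|G_{[n]}(t)| \leq \sup_{x \in D} |G_{[n]}(x)| \leq C_0\, n^{\mu-1}\bigl(\inf_{x\in D} R(x,\Sigma)\bigr)^{-n}$. Taking $n$-th roots and letting $n \to \infty$ kills the polynomial factor $C_0^{1/n} n^{(\mu-1)/n} \to 1$, yielding $\liminf_n |G_{[n]}(t)|^{-1/n} \geq \inf_{x\in D} R(x,\Sigma)$, hence $R(t,\Sigma) \geq \inf_{x\in D} R(x,\Sigma)$ (using that $R(t,\Sigma) = \min(1, \liminf_n|G_{[n]}(t)|^{-1/n})$ and that all the $R(x,\Sigma) \leq 1$).

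The main obstacle, and the only place requiring care, is the uniformity of the constant $C$ in Theorem~\ref{cor:DworkRobba1} as $x$ ranges over $D$: one needs that $R(x,\Sigma) \leq 1$ so that the factors $R^i$ ($i \geq 0$) do not blow the constant up, and one needs $\|G_{[i]}\|_X$ to be the supnorm over the \emph{whole} basic affinoid $X$ (not just over $D$), which is exactly the hypothesis built into the generalized Dwork--Robba theorem here — this is precisely why the extension from analytic elements on an open annulus to analytic functions on a basic affinoid annulus is needed. A secondary point is to handle $x$ not $k$-rational, which is dealt with by the base-change remarks already in the proof of Theorem~\ref{cor:DworkRobba1}, and to note that the case $X$ a disk and the case $X$ an annulus are treated identically since the coordinate $T$ used to define the $G_{[i]}$ is the global formal coordinate and $R(x,\Sigma)$ is the genuine (non-normalized) radius in both cases. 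Passing to the infimum and invoking continuity of the finitely many functions $|G_{[0]}|, \dots, |G_{[n]}|$ at $t$ completes the argument; combining with the transfer inequality of the first paragraph gives the equality $R(t,\Sigma) = \inf_{x\in D} R(x,\Sigma)$.
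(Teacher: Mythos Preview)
Your argument for the inequality $\inf_{x\in D} R(x,\Sigma) \leq R(t,\Sigma)$ is correct and matches the paper's: apply the Dwork--Robba bound at each $x\in D$, note that the constant is uniformly bounded (since $R(x,\Sigma)\leq 1$), and pass to $t$ via $|G_{[n]}(t)| = \sup_{x\in D}|G_{[n]}(x)|$. The paper uses the slightly sharper constant $\max_{i\leq\mu-1}\bigl(r^i|i!|\,\|G_{[i]}\|_D\bigr)$ with $r$ the radius of $D$, after first disposing of the trivial case where some $R(x,\Sigma)>r$; your cruder $C_0$ works just as well.

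The gap is in your ``easy'' direction, $R(x,\Sigma) \geq R(t,\Sigma)$ for $x\in D$. The point $t$ is \emph{not} $k$-rational, so the symbol $D_X(t,\rho^-)$ is undefined here; the disk around the canonical point $t'$ lives in $X\wt\sH(t)$, and it neither contains (the preimage of) $D$ nor any $k$-rational $x\in D$. Your chain of inclusions $D \subset D_X(t,1^-)\subset D_X(t,R^-)$ for $R\leq 1$ is also written backwards. So the ``solution matrix at $t$ restricts to $x$'' argument does not go through as stated.

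The paper handles this direction with the same ingredients you already assembled for the other one: since $t$ is the Shilov boundary point of the closure of $D$, one has $|G_{[n]}(x)| \leq \|G_{[n]}\|_D = |G_{[n]}(t)|$ for every $x\in D$, and then Theorem~\ref{cor:DworkRobba1} applied at $t$ gives $|G_{[n]}(t)|\leq C\,n^{\mu-1}R(t,\Sigma)^{-n}$, hence the same bound for $|G_{[n]}(x)|$, whence $R(x,\Sigma)\geq R(t,\Sigma)$. (One can also bypass Dwork--Robba here: from $|G_{[n]}(x)|\leq|G_{[n]}(t)|$ one gets $\liminf_n|G_{[n]}(x)|^{-1/n}\geq\liminf_n|G_{[n]}(t)|^{-1/n}$ directly.) Either way, the maximum-principle inequality $|G_{[n]}(x)|\leq|G_{[n]}(t)|$ is the step that replaces your transfer argument.
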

\begin{proof} Let $R = \inf_{x \in D} R(x,\Sigma)$, let 
$r$ be the radius of $D$ and let 
$$
C=\max_{i \leq\mu - 1}
\l( r^i |i!|
\l|\l|{G}_{[i]}\r|\r|_D \r) =\max_{i \leq\mu - 1}
\l( r^i |i!|
\l| {G}_{[i]}(t)\r| \r)\  .
$$
 If $R(x,\Sigma) > r$, for some $x \in D$, then $R(y,\Sigma) = R(x,\Sigma)$, for all $y$ in an open  disk $D^{\p}$ with  $D \subsetneqq D^{\p} \subset X$. The statement is obvious in this case. So, we may assume that $R \leq r$.
We have, for any $x \in D$ and $n \geq 0$, 
\beq 
|{G}_{[n]}(x)| \leq C n^{\mu -1} R(x,\Sigma)^{-n}  \leq C n^{\mu -1} R^{-n} \; .
\eeq
So, we also have 
\beq 
|{G}_{[n]}(t)| = ||{G}_{[n]}||_{D} \leq C n^{\mu -1} R^{-n} \; ,
\eeq
which shows that $R \leq  R(t,\Sigma)$. On the other hand 
\beq 
 |{G}_{[n]}(x)| \leq ||{G}_{[n]}||_{D} = |{G}_{[n]}(t)| \leq C n^{\mu -1} R(t,\Sigma)^{-n} \; ,
\eeq
shows that $R(x,\Sigma) \geq R(t,\Sigma)$, for any $x \in D$.
\end{proof}

\subsection{Upper semicontinuity of $x\mapsto  R(x,\Sigma)$}
\label{subsection:USC}

We proceed in our discussion of  the system  (\ref{diffCHDWintro}) on the basic affinoid annulus  or disk $(X,T)$ to show that the function $x\mapsto  R(x,\Sigma)$ is USC on $X$. Again, the multivariable version of  this discussion appears in \cite[4.3]{Lucia}.

We define 
\beq \label{eq:raddefsmooth5}
\wtilde{R}(x,\Sigma) =  \liminf_{n \to \infty}|G_{[n]}(x)|^{-1/n}  
\; ,  
\eeq
so that 
\beq R(x,\Sigma) = 
       \left\{\begin{array}{ccc} \min (1, \wtilde{R}(x,\Sigma) )  &\mbox{if} & \mbox{$X$ is a disk }  \\
     \min (|T(x)|, \wtilde{R}(x,\Sigma) ) &\mbox{if} & \mbox{$X$ is an annulus,}
        \end{array} \right . 
\eeq
and will equivalently prove that the function $x \mapsto \wtilde{R}(x,\Sigma)$ is USC.
We exclude the case when there exist $x \in X$ and $s$ such that $G_{[n]}(x) = 0$, for any $n >  s$, since $\wtilde R(y,\Sigma) = \infty$ and,  for any $y \in X$, 
\beq R(y,\Sigma) = 
       \left\{\begin{array}{ccc} 1 &\mbox{if} & \mbox{$X$ is a disk }  \\
      |T(y)| &\mbox{if} & \mbox{$X$ is a annulus,}
        \end{array} \right . 
\eeq
in that case. For $s = 1,2, \dots$ and for $x \in X$, let
\beq \varphi_s(x) =   \inf_{n \geq s} |G_{[n]}(\xi)|^{-1/n}  \; .
\eeq
So, $x \mapsto \varphi_s(x)$ is USC on $X$, and
\beq \wtilde R(x) = \wtilde R(x, \Sigma) = \lim_{s \to \infty} \varphi_s(x) \; ,
\eeq
is the function introduced in (\ref{eq:raddefsmooth5}). 
 The Dwork-Robba theorem   \ref{cor:DworkRobba1} implies that, $\forall\, \veps >0$,  $\exists \,s_{\veps}$ such that $\forall \, n$ with $n  \geq s_{\veps}$
\beq
  |G_{[n]}(x)|^{1/n} \leq \frac{1+\veps}{\wtilde R(x)} \; ,\;\; \forall \, x \in X \; ,
\eeq
that is 
\beq
 |G_{[n]}(x)|^{ - 1/n} \geq \frac{ \wtilde R(x)} {1+\veps} \; ,\;\; \forall \,  x \in X \; .
\eeq
Hence
\beq \begin{array} {cccc}
\forall \, \veps >0 & \exists\, s_{\veps} & \hbox{such that} & \forall \, s \geq s_{\veps}
\\
\\
 \varphi_s(x) \leq &\wtilde R(x)& \leq (1 + \veps)  \varphi_s(x)  &\forall \; x \in X\;  ,
\end{array}
\eeq
because the sequence $s \mapsto \varphi_s$ is an increasing sequence of functions on $X$. Then, $\forall\, \veps >0$,  $\exists \,s_{\veps}$ such that
\beq 0 \leq \wtilde R - \varphi_s \leq \veps \;\; {\rm on} \; X\; ,\;\;  \forall\, s \geq s_{\veps}\;.
\eeq
So,  $\wtilde R$ is a uniform limit of USC functions on $X$, and is therefore USC.

\subsection{The generalized theorem of Christol-Dwork}\label{CHDWHigherGenus}
\begin{thm} Let $\Sigma$ be a system of linear differential equations on the basic affinoid annulus\footnote{the case of a disk is here trivial.}  $(X,T)$, as above. The function $x \mapsto \wtilde R(x,\Sigma)$ restricts to a continuous function on $S(X)$. 
\end{thm}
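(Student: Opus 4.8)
The plan is to reduce the continuity of $x\mapsto\wtilde R(x,\Sigma)$ on $S(X)$ to the classical Christol--Dwork result recalled in Section~\ref{review}. Since $(X,T)$ is a basic affinoid annulus, its skeleton $S(X)=S(\fX)$ is an open $k$-rational segment together with its two endpoints, and under the coordinate $T$ it is canonically homeomorphic to a closed interval $[r_1,r_2]$ via $\eta_\rho\mapsto\rho=|T(\eta_\rho)|$, where $\eta_\rho$ is the Shilov point $t_{0,\rho}$ of the corresponding sub-annulus. Thus it suffices to prove that $\rho\mapsto\wtilde R(t_{0,\rho},\Sigma)$ is continuous on $[r_1,r_2]$. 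The subtlety, which is exactly what makes the generalization nontrivial, is that the entries of $G$ (hence of the iterated matrices $G_{[n]}$) are analytic functions on the \emph{basic} affinoid annulus $X=\fX_\eta$, which need not be an affinoid subdomain of $\A^1$; so the classical statement, phrased for analytic elements on $B(r_1,r_2)\subset\A^1$, does not apply verbatim.

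\textbf{Key steps.} First I would recall that, for any $f\in\cO(X)$, the function $\rho\mapsto|f(t_{0,\rho})|$ is logarithmically convex and continuous on $[r_1,r_2]$: this is a local property of the skeleton, and since the \'etale morphism $T:\fX\to\Spf\kc\{S,T\}/(ST-\gamma)$ identifies $S(X)$ isometrically with a segment of the standard annulus, one reduces to the classical fact about analytic elements on the standard annulus (the restriction of $f$ to each closed sub-annulus $\tau_X^{-1}([a,b])$ lies in the completed ring that contains the analytic elements on $B(a,b)$, by Corollary~\ref{anninter} and the good-reduction structure). Consequently each $\rho\mapsto|G_{[n]}(t_{0,\rho})|$ is logarithmically convex, so $\rho\mapsto|G_{[n]}(t_{0,\rho})|^{-1/n}$ is logarithmically concave, and $\rho\mapsto\varphi_s(\rho):=\inf_{n\ge s}|G_{[n]}(t_{0,\rho})|^{-1/n}$ is logarithmically concave as an infimum of concave functions. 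Second, by the upper semicontinuity argument of \S\ref{subsection:USC} — which I already have on all of $X$, hence a fortiori on $S(X)$ — the function $\wtilde R=\lim_s\varphi_s$ is the uniform limit on $S(X)$ of the $\varphi_s$, by the Dwork--Robba bound (Theorem~\ref{cor:DworkRobba1}), which holds precisely because it was established for basic annuli. A uniform limit of continuous functions is continuous; and each $\varphi_s$, being logarithmically concave on the compact interval $[r_1,r_2]$, is continuous on the open interval $(r_1,r_2)$ automatically, while at the endpoints concavity gives only lower semicontinuity. Third, combining: on $(r_1,r_2)$, $\wtilde R$ is a uniform limit of continuous functions, hence continuous; at $r_1$ and $r_2$, $\wtilde R$ is lower semicontinuous (as a limit of log-concave functions, itself log-concave, being $\limsup$ of concave functions is concave by the remark in \S\ref{review}) and, by \S\ref{subsection:USC}, upper semicontinuous, hence continuous.

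\textbf{Where the work lies.} The main obstacle is the first step: showing that $\rho\mapsto|f(t_{0,\rho})|$ is logarithmically convex and continuous for $f$ an arbitrary analytic function on the basic affinoid annulus $X$, rather than an analytic element on an annulus in $\A^1$. The resolution is to observe that the restriction of $X$ over any closed sub-interval $[a,b]\subset[r_1,r_2]$ is, by the structure theory of \S\ref{annuli}, the preimage $\tau_X^{-1}([a,b])$, a basic affinoid annulus of radii $a,b$, whose Shilov boundary is $\{t_{0,a},t_{0,b}\}$; the value $|f(t_{0,\rho})|$ for $a\le\rho\le b$ is then governed by the maximum modulus principle on this affinoid, and the \'etale coordinate $T$ identifies the relevant norms with those on the standard annulus, where logarithmic convexity is classical (this is exactly the content invoked in \S\ref{review}, now transported along the \'etale map). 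Once this is in place, everything else is a formal repackaging of the Christol--Dwork argument together with the Dwork--Robba estimate and the upper semicontinuity already proved.

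\begin{proof}
Since $(X,T)$ is a basic affinoid annulus, $S(X)=S(\fX)$ is an open $k$-rational segment with its two endpoints, and the coordinate $T$ induces a homeomorphism $\rho=|T(\cdot)|:S(X)\xrightarrow{\ \sim\ }[r_1,r_2]$, sending $t_{0,\rho}\mapsto\rho$. It therefore suffices to show that $\rho\mapsto\wtilde R(t_{0,\rho},\Sigma)$ is continuous on $[r_1,r_2]$. We may assume the exceptional case treated in \S\ref{subsection:USC} does not occur, so that $\wtilde R(t_{0,\rho},\Sigma)\in(0,\infty)$ for all $\rho$.

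For $[a,b]\subset[r_1,r_2]$ the affinoid $\tau_X^{-1}([a,b])$ is a basic affinoid annulus of radii $a,b$ with Shilov boundary $\{t_{0,a},t_{0,b}\}$, and the \'etale coordinate $T$ identifies the supnorm on it, and more generally the norms $f\mapsto|f(t_{0,\rho})|$ for $a\le\rho\le b$, with the corresponding norms pulled back from the standard annulus $\Spf\kc\{S,T\}/(ST-\gamma)$. On the latter, the classical fact (recalled in \S\ref{review}) that $\rho\mapsto|g(t_{0,\rho})|$ is logarithmically convex and continuous for an analytic element $g$ applies. Hence, for every $f\in\cO(X)$, the function $\rho\mapsto|f(t_{0,\rho})|$ is logarithmically convex and continuous on $[r_1,r_2]$.

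Applying this to $f=$ (each entry of) $G_{[n]}$, we see that $\rho\mapsto|G_{[n]}(t_{0,\rho})|$ is logarithmically convex, so $\rho\mapsto|G_{[n]}(t_{0,\rho})|^{-1/n}$ is logarithmically concave. Consequently, for each $s\ge1$, the function
\[
\varphi_s(\rho)=\inf_{n\ge s}|G_{[n]}(t_{0,\rho})|^{-1/n}
\]
is logarithmically concave on $[r_1,r_2]$, being an infimum of logarithmically concave functions. In particular $\varphi_s$ is continuous on the open interval $(r_1,r_2)$ and lower semicontinuous at $r_1$ and $r_2$. Since the sequence $s\mapsto\varphi_s$ is increasing and $\wtilde R(t_{0,\rho},\Sigma)=\lim_s\varphi_s(\rho)$, the limit $\wtilde R(t_{0,\cdot},\Sigma)$ is itself logarithmically concave (a pointwise limit of log-concave functions), hence continuous on $(r_1,r_2)$ and lower semicontinuous at $r_1,r_2$; moreover, by the Dwork--Robba estimate (Theorem~\ref{cor:DworkRobba1}), for every $\veps>0$ there is $s_\veps$ with $0\le\wtilde R-\varphi_s\le\veps$ on $X$ for all $s\ge s_\veps$, so the convergence $\varphi_s\to\wtilde R$ is uniform on $S(X)$; a uniform limit of continuous functions being continuous, $\wtilde R(t_{0,\cdot},\Sigma)$ is continuous on $(r_1,r_2)$.

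It remains to treat the endpoints. By \S\ref{subsection:USC}, $x\mapsto\wtilde R(x,\Sigma)$ is upper semicontinuous on all of $X$, hence in particular at $t_{0,r_1}$ and $t_{0,r_2}$; combined with the lower semicontinuity established above, $\wtilde R(t_{0,\cdot},\Sigma)$ is continuous at $r_1$ and $r_2$. Therefore $\rho\mapsto\wtilde R(t_{0,\rho},\Sigma)$ is continuous on $[r_1,r_2]$, i.e. $x\mapsto\wtilde R(x,\Sigma)$ restricts to a continuous function on $S(X)$.
\end{proof}
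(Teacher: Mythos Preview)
Your proof is correct and follows essentially the same approach as the paper's: establish that $\rho\mapsto\wtilde R(\eta_\rho,\Sigma)$ is logarithmically concave on $[r_1,r_2]$ (hence continuous on the interior and LSC at the endpoints), and then invoke the upper semicontinuity of \S\ref{subsection:USC} to conclude continuity at the endpoints. You supply more justification than the paper does for the log-convexity of $\rho\mapsto|f(t_{0,\rho})|$ when $f\in\cO(X)$ on a basic affinoid annulus (the paper simply says ``we follow the method of \cite{ChristolDwork}''); note, though, that your detour through the uniform convergence $\varphi_s\to\wtilde R$ via the Dwork--Robba bound is redundant here, since log-concavity of $\wtilde R$ already yields continuity on $(r_1,r_2)$ without it.
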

\begin{proof}
There is a natural homeomorphism $\eta: [r_0,1]  \xrightarrow{\ \sim\ }   S(X) $, $r \mapsto \eta_r$, with $r_0 \in (0,1] \cap |k|$. We follow the method of \cite{ChristolDwork}, recalled in \S \ref{review}: the function (\ref{eq:raddefsmooth5})
$$
 \wtilde{R}(\eta_r,\Sigma) =  \liminf_{n \to \infty}|G_{[n]}(\eta_r)|^{-1/n}  
\; ,  
$$
is logarithmically concave  on $[r_0,1]$, hence continuous on $(r_0,1)$ and  LSC at $r_0$ and 1. On the other hand, we showed above that $x \mapsto \wtilde{R}(x,\Sigma)$ is USC all over $X$. This proves the theorem.
\end{proof}
\subsection{Descent by Frobenius} \label{Frobenius}

We recall that, for any morphism of schemes $\pi:Y \to S$ where $p=0$ on $S$, we have a  canonical commutative diagram 
\beq
\label{diagram3}
\begin{array}{ccccc} Y&\mathop{\hbox to 40pt{\rightarrowfill}}
\limits^{F_{\rm rel}}& Y^{(p)}&\mathop{\hbox to 40pt{\rightarrowfill}}
\limits^{\Psi}&Y \\
&\pi \searrow&
\pi^{(p)}\Bigg\downarrow&\square& \pi\Bigg\downarrow  \\
&
&
S &\mathop{\hbox to 40 pt{\rightarrowfill}}
\limits^{F_{\rm abs}}&S
\end{array}
\eeq
 where $F_{\rm abs}$ (resp. $F_{\rm rel}$) denotes absolute (resp. relative) Frobenius, and $\Psi \circ F_{\rm rel} = F_{\rm abs}$.
\par 
Let $(\fX,T)$ be a basic formal annulus over $\kc$ of radii $r_{1},r_{2}\in |k| \cap (0,1]$, as in (\ref{genbasannuli}).  We denote $\pi : \fX \to \Spf \kc$ the structural morphism.
In our case diagram \ref{diagram3} for $\fX_{s}$ becomes
\beq
\label{diagram4}
\begin{array}{ccccc} \fX_{s}&\mathop{\hbox to 40pt{\rightarrowfill}}
\limits^{F_{\rm rel}}& (\fX_{s})^{(p)}&\mathop{\hbox to 40pt{\rightarrowfill}}
\limits^{\Psi_{\fX_{s}}}&\fX_{s} \\
&\pi_{s} \searrow &
\pi_{s}^{(p)}\Bigg\downarrow&\square& \pi_{s}\Bigg\downarrow  \\
  &&
\Spec \kt &\mathop{\hbox to 40 pt{\rightarrowfill}}
\limits^{F_{\rm abs}}&\Spec \kt 
\end{array}
\eeq
Let us choose a continuous ring automorphism $\sigma : \kc \to \kc$ lifting the absolute Frobenius $F_{\rm abs}^{\ast}: x \mapsto x^{p}$ of $\kt$.
We want to show 
\begin{lemma} \label{FrobLifting}
The left hand triangle in diagram \ref{diagram4} lifts to  a commutative diagram of $\kc$-formal schemes
\beq
\label{diagram5}
\begin{array}{ccc} \fX&\mathop{\hbox to 40pt{\rightarrowfill}}
\limits^{F_{\rm rel}(\sigma)}& \fX^{(\sigma)} \\
 &\pi\searrow &
\Bigg\downarrow  \pi^{(\sigma)}\\
&&
\Spf \kc   
\end{array}
\eeq
where $\pi_{\fX^{(\sigma)}} := \pi^{(\sigma)}:\fX^{(\sigma)} \to \Spf \kc$ is a formal annulus with special fiber $(\fX_{s})^{(p)} \to \Spec \kt$. 
\end{lemma}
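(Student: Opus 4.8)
## Proof plan for Lemma \ref{FrobLifting}

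The plan is to construct $\fX^{(\sigma)}$ and $F_{\rm rel}(\sigma)$ explicitly at the level of topological $\kc$-algebras, exploiting the fact that $\fX$ carries a global \'etale coordinate $T$ to the standard formal annulus $\fB[r_1,r_2]$. First I would treat the standard formal annulus itself: writing $\fX = \Spf A$ with $A$ \'etale over $A[r_1,r_2] = \kc\{S,T,U\}/(a_2 S - T, TU - a_1)$ via $T$, I would define $\fX^{(\sigma)} := \Spf A^{(\sigma)}$, where $A^{(\sigma)} := A \otimes_{\kc,\sigma} \kc$ is the base change of $A$ along the chosen Frobenius lift $\sigma:\kc\to\kc$. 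Since $\sigma$ is an isometric automorphism of $\kc$ (it lifts $F_{\rm abs}$, hence preserves the maximal ideal and the valuation), $A^{(\sigma)}$ is again admissible and topologically of finite presentation, and $T^{(\sigma)} := T\otimes 1$ realizes it as an \'etale neighborhood of the standard annulus $\Spf\big(A[r_1^{p},r_2^{p}]\otimes_{\kc,\sigma}\kc\big) \cong \Spf A[r_1,r_2]$ again of the same height $r_1/r_2$ — so $\pi^{(\sigma)}:\fX^{(\sigma)}\to\Spf\kc$ is a formal annulus (here one uses $|\sigma(a)| = |a|$ and the identification of standard formal annuli of equal height). Its special fiber is $\fX_s \otimes_{\kt, F_{\rm abs}^\ast}\kt = (\fX_s)^{(p)}$ by construction, which gives the displayed right-hand vertical map $(\fX_s)^{(p)}\to\Spec\kt$.

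Next I would build $F_{\rm rel}(\sigma):\fX\to\fX^{(\sigma)}$. The key point is that $A^{(\sigma)}$ is \'etale over $A[r_1,r_2]$ via $T^{(\sigma)}$, while $A$ receives a map from $A[r_1,r_2]$ sending the coordinate to $T^{p}$ (a \emph{Kummer / Frobenius} covering of the standard annulus, which exists at the formal level since $T^p$ is again a coordinate of a standard annulus of the relevant radii). By the \'etale lifting property — $A$ is $(\pi)$-adically complete and $A^{(\sigma)}$ is \'etale over $A[r_1,r_2]$ — the $\kc$-algebra map $A[r_1,r_2]\to A$, $\text{coord}\mapsto T^p$, together with the reduction-level relative Frobenius $F_{\rm rel}:\fX_s\to(\fX_s)^{(p)}$, lifts uniquely to a continuous $\kc$-algebra homomorphism $F_{\rm rel}(\sigma)^\flat : A^{(\sigma)}\to A$ compatible with the projections to $\Spf\kc$. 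Commutativity of triangle \ref{diagram5} and the fact that it reduces mod $\kcc$ to the left-hand triangle of \ref{diagram4} are then immediate from the uniqueness in \'etale lifting. For a general basic formal annulus one descends from the standard one: an \'etale morphism $T:\fX\to\fB[r_1,r_2]$ base-changes under $\sigma$ and, composed with the Frobenius covering on $\fB[\cdot]$, produces exactly the map $F_{\rm rel}(\sigma)$ by the universal property of fibered products of formal schemes. Finally I would note that the construction is canonical and glues, so it is independent of the presentation and functorial in \'etale morphisms of basic annuli.

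The main obstacle I anticipate is purely bookkeeping with \emph{radii}: one must check that $T^p$ genuinely defines the Frobenius covering $\fB[r_1,r_2]\to\fB[r_1^{p},r_2^{p}]$ at the formal (not just generic-fiber) level, matching the normalizations in (\ref{genbasannuli}), and that the base change by $\sigma$ does not disturb these radii because $\sigma$ is an isometry of $k$. Everything else — admissibility of $A^{(\sigma)}$, existence and uniqueness of the \'etale lift, and commutativity — follows formally from the \'etale lifting theorem for admissible formal schemes and from the definition of relative Frobenius, with no serious computation. I would therefore spend the bulk of the written proof making the radii match and invoking \cite{elkik} / the standard \'etale-rigidity statement (as in \cite[\S 2]{BerkovichCycles}) for the lift, and dispatch the rest in a sentence.
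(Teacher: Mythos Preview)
There is a genuine gap: your candidate $\fX^{(\sigma)}:=\Spf\bigl(A\what\otimes_{\kc,\sigma}\kc\bigr)$ is the wrong formal scheme, and the map $F_{\rm rel}(\sigma)$ you want does not exist for it. Since $\sigma$ is an isometric automorphism of $\kc$, base change along $\sigma$ does not change radii: one has $A[r_1,r_2]\what\otimes_{\kc,\sigma}\kc\cong A[r_1,r_2]$, \emph{not} $A[r_1^p,r_2^p]\what\otimes_{\kc,\sigma}\kc\cong A[r_1,r_2]$ as you write. So your $\fX^{(\sigma)}$ is a basic formal annulus of the \emph{same} height $r=r_1/r_2$ as $\fX$, whereas the target of a relative-Frobenius lift must have height $r^p$ (on generic fibres the map is the degree-$p$ covering $T\mapsto T^p$, which sends an annulus of height $r$ onto one of height $r^p$).

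Concretely, on the standard annulus $\fB[r,1]=\Spf\,\kc\{S,T\}/(ST-a)$ your proposal asks for a $\sigma$-semilinear ring endomorphism $\psi$ of $\kc\{S,T\}/(ST-a)$ with $\psi(S)\equiv S^p,\ \psi(T)\equiv T^p\bmod\kcc$. Then $\psi(S)\psi(T)$ must equal $\psi(a)=\sigma(a)$; but $S^pT^p=a^p$, and a short computation shows that any correction of the form $S^pg+T^pf+fg$ with $f,g$ topologically nilpotent contributes a constant term lying in $a^p\kc$. Hence one would need $|\sigma(a)-a^p|\le |a|^p$, which fails for instance whenever $|a|^p<|p|$. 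Thus no lift $\fX\to\fX\what\otimes_{\kc,\sigma}\kc$ of $F_{\rm rel}$ exists in general, and your \'etale-lifting step has no base map to lift along. The paper avoids this by first writing down, on the \emph{standard} annulus, the explicit $\kc$-morphism $\Phi^{(\sigma)}:\fB[r,1]\to\fB^{(\sigma)}:=\fB[r^p,1]\what\otimes_{\kc,\sigma}\kc$ (height $r^p$ on the target, so the relation $ST=a$ maps to $ST=a^p$ under $S,T\mapsto S^p,T^p$ with no obstruction), then \emph{defining} $\fX^{(\sigma)}$ as the formal \'etale cover of $\fB^{(\sigma)}$ lifting $\alpha_s^{(p)}:(\fX_s)^{(p)}\to(\fB_s)^{(p)}$ via the equivalence of \'etale sites of a formal scheme and its closed fibre, and finally obtaining $F_{\rm rel}^{(\sigma)}:\fX\to\fX^{(\sigma)}$ from the resulting cartesian square over $\Phi^{(\sigma)}$. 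Your \'etale-lifting instinct is exactly right; it just has to be applied over $\fB^{(\sigma)}$ (height $r^p$), not over $\fB\what\otimes_{\kc,\sigma}\kc$ (height $r$).
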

\begin{proof}
Let us start with the special case of $\fX = \fB(r_{1},r_{2})$. In this case we know that there exists a commutative diagram
\beq
\label{diagram6}
\begin{array}{ccc} \fB(r_{1},r_{2}) &\mathop{\hbox to 40pt{\rightarrowfill}}
\limits^{\Phi}&  \fB(r^{p}_{1},r^{p}_{2})  \\ && \\
\pi \searrow & &
\swarrow \pi^{(p)} \\
 &\Spf \kc &
\end{array}
\eeq
where 
$$ \fB(r^{p}_{1},r^{p}_{2})  = \Spf  \kc\{S,T,U\}/(a_{2}^{p}S - T, TU -a^{p}_{1}) \; ,$$
$\pi^{(p)}  : \fB(r^{p}_{1},r^{p}_{2}) \to \Spf \kc$ is the natural structural morphism, and the map $\Phi$ is defined at the ring level by
\beq
\Phi^{\ast} : \sum_{h,i,j}a_{h,i,j}S^{h}T^{i}U^{j} \mapsto  \sum_{h,i,j}a_{h,i,j}S^{ph}T^{pi}U^{pj} \; .
\eeq
We need to use a  semilinear version of this map, as follows. 
 We consider the diagram
\beq
\label{diagram7}
\begin{array}{ccc} \fB(r_{1},r_{2}) &\mathop{\hbox to 40pt{\rightarrowfill}}
\limits^{\Phi^{(\sigma)}}&  \fB(r^{p}_{1},r^{p}_{2}) \wt_{\kc,\sigma}\kc  \\ && \\
\pi  \searrow & &
\swarrow \pi^{(\sigma)} \\
 &\Spf \kc &
\end{array}
\eeq
where   the map $\Phi^{(\sigma)}$ is defined at the ring level by
\beq
(\Phi^{(\sigma)})^{\ast} : \sum_{h,i,j}a_{h,i,j}S^{h}T^{i}U^{j} \mapsto  \sum_{h,i,j}a^{\sigma}_{h,i,j}S^{ph}T^{pi}U^{pj} \; .
\eeq
We observe that the previous diagram \ref{diagram7} satisfies the requirements of the statements \ref{diagram6} for $\fX =  \fB(r_{1},r_{2})  =:\fB$, $\fX^{(\sigma)} = \fB(r^{p}_{1},r^{p}_{2}) \wt_{\kc,\sigma}\kc  =: \fB^{(\sigma)}$, and in particular that $\Phi^{(\sigma)}$ lifts $F_{\rm rel}: \fB_{s} \to \fB_{s}^{(p)}$. We call $\pi_{\fB}$, $\pi_{\fB}^{(\sigma)}$ instead of simply $\pi$, $\pi^{(\sigma)}$, the structural morphisms of $\fB$ and $\fB^{(\sigma)}$, respectively. Consider  the commutative diagram of cartesian squares
\beq
\label{diagram8}
\begin{array}{ccccc} \fX_{s}&\mathop{\hbox to 40pt{\rightarrowfill}}
\limits^{F_{\rm rel}}& (\fX_{s})^{(p)}&\mathop{\hbox to 40pt{\rightarrowfill}}
\limits^{\Psi_{\fX_{s}}}&\fX_{s} \\
\alpha_{s}\Bigg\downarrow&\square&
\alpha_{s}^{(p)}\Bigg\downarrow&\square& \alpha_{s}\Bigg\downarrow  \\
 \fB_{s}&\mathop{\hbox to 40pt{\rightarrowfill}}
\limits^{F_{\rm rel}}& (\fB_{s})^{(p)}&\mathop{\hbox to 40pt{\rightarrowfill}}
\limits^{\Psi_{\fX_{s}}}&\fB_{s}
\end{array}
\eeq
We now define $\alpha^{(\sigma)}: \fX^{(\sigma)} \to \fB^{(\sigma)}$ as the unique lifting of  $\alpha_{s}^{(p)}: \fX_{s}^{(p)} \to \fB_{s}^{(p)}$, according to \cite[Lemma 2.1]{BerkovichCycles}.  We obtain a unique map 
$F_{\rm rel}^{(\sigma)}: \fX \to \fX^{(\sigma)}$ lifting $F_{\rm rel}: \fX_{s} \to \fX_{s}^{(p)}$, and 
a cartesian square

\beq
\label{diagram9}
\begin{array}{ccc} \fX&\mathop{\hbox to 40pt{\rightarrowfill}}
\limits^{F_{\rm rel}^{(\sigma)}}& \fX^{(\sigma)} \\
\alpha\Bigg\downarrow&\square&
\Bigg\downarrow \alpha^{(\sigma)}   \\
 \fB&\mathop{\hbox to 40pt{\rightarrowfill}}
\limits^{\Phi^{(\sigma)}}& \fB^{(\sigma)} \end{array}
\eeq
We now define $\pi^{(\sigma)} :\fX^{(\sigma)} \to \Spf k$ as  $\pi_{\fB}^{(\sigma)} \circ \alpha^{(\sigma)}$, and obtain 
diagram \ref{diagram5}.
\end{proof}
\par
We recall that there exists a continuous map of $\kc$-modules 
\beq \label{psimap}
\Psi^{\ast} : \sum_{h,i,j}a_{h,i,j}S^{h}T^{i}U^{j} \mapsto  \sum_{h,i,j}a_{ph,pi,pj}S^{h}T^{i}U^{j} \; ,
\eeq
such that $\Psi^{\ast} \circ \Phi^{\ast} = \id_{A(r^{p}_{1},r^{p}_{2})}$.

The generic fibers $\varphi =  \Phi_{\eta}: B[r_{1},r_{2}] \to B[r^{p}_{1},r^{p}_{2}]$ and $\phi^{(\sigma)}  =  (\Phi^{(\sigma)})_{\eta}: B[r_{1},r_{2}] \to B[r^{p}_{1},r^{p}_{2}] \wt_{k,\sigma}k$
of the previous maps are both classically used  in the  theory of descent of differential modules by Frobenius, as we have illustrated (using the linear map) in the previous section (where $\varphi = \varphi_{1}$).  
We have the following generalization of the  theorem of descent by Frobenius of Dwork and Christol.

\begin{thm}\label{FrobDesc} {\bf (Descent by Frobenius)} Let $\fX$ be a basic formal annulus over $\kc$, and let $F_{\rm rel}^{(\sigma)}: \fX \to  \fX^{(\sigma)}$ be as in lemma \ref{FrobLifting}. We denote by $\phi^{(\sigma)} : X := \fX_{\eta} \to X^{(\sigma)} := \fX^{(\sigma)}_{\eta}$ the generic fiber of $F_{\rm rel}^{(\sigma)}$.  Let $(\cE,\na)$ be an object of ${\bf MIC}_{\fX}(X/k)$ and assume  that $\cR_{\fX}(x,(\cE,\na)) > p^{-\frac{1}{p-1}}$, $\forall x \in X$. Then, there exists a unique object  $(\cF,\na)$ of ${\bf MIC}_{\fX^{(\sigma)}}(X^{(\sigma)}/k)$, such that $(\cE,\na) = (\phi^{(\sigma)})^{\ast}(\cF,\na)$. For any $x \in X$, we have 
\beq \label{frobtrick}
\cR_{\fX^{(\sigma)}}(\phi^{(\sigma)}(x),(\cF,\na)) = \cR_{\fX}(x,(\cE,\na))^{p} \; .
\eeq
\end{thm}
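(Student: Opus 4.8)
The plan is to reduce the statement to the classical Dwork--Christol theorem of descent by Frobenius over an open annulus in $\A^1$, by working locally on a suitable \'etale covering and exploiting the base-change and \'etale-invariance properties of $\cR$ recorded in (\ref{etaleinv}), (\ref{basechange}), and lemma \ref{etalechange2}. First I would observe that the hypothesis $\cR_{\fX}(x,(\cE,\na)) > p^{-1/(p-1)} = |\pi|$ for all $x\in X$ guarantees, by the discussion of \S\ref{review} (Young's theorem and the Christol--Dwork--Kedlaya descent) applied on $S(\fX)$ together with the transfer corollary \ref{contiguity}, that on every basic affinoid subannulus $(\cB,T)\hookrightarrow X$ coming from an \'etale neighborhood $(\fB,T)\to\fX$ of the type furnished by lemma \ref{cover}, the restriction $(\cE,\na)_{|\cB}$ satisfies the radius condition $>|\pi|$ everywhere. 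Hence on such a $\cB$, after writing $\cE_{|\cB}$ freely and expressing $\na$ as a system $\Sigma$ of the form (\ref{diffCHDWintro}) in the coordinate $T$, the hypothesis of the classical Frobenius-descent theorem is met, and the classical result produces a differential module $\Sigma_1$ over the target annulus with $\Sigma\cong\phi^{(\sigma)\,\ast}\Sigma_1$.

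The key steps, in order, would be: (1) set up, using lemma \ref{FrobLifting}, the relative Frobenius lift $F_{\rm rel}^{(\sigma)}:\fX\to\fX^{(\sigma)}$ on a basic formal annulus, and reduce the global statement to this basic-annulus case via the covering lemma \ref{cover} and \'etale invariance --- the point is that the descent datum is unique (in the sense of differential modules), hence the local descents over the members of the covering automatically glue, because uniqueness forces compatibility on overlaps; (2) on a single basic formal annulus $\fX$, invoke the classical Dwork--Christol descent theorem (in the semilinear formulation with $\sigma$, as recalled in \S\ref{review}, combined with \cite[Thm. 6.15]{Ke}) applied to the system attached to a global basis of $\cE$ over $\cB=\fX_\eta$: this yields a system $\Sigma_h$ with $h=1$, i.e.\ a differential module over $\fX^{(\sigma)}_\eta$ whose pullback by $\phi^{(\sigma)}$ is $\Sigma$; (3) verify that the module so obtained underlies an object $(\cF,\na)$ of ${\bf MIC}_{\fX^{(\sigma)}}(X^{(\sigma)}/k)$, i.e.\ that it admits a formal model over $\fX^{(\sigma)}$ --- here one uses that $\phi^{(\sigma)}$ is finite \'etale on generic fibers and that $\cE$ already has a formal model over $\fX$, together with the explicit $\kc$-module splitting $\Psi^\ast\circ\Phi^\ast=\id$ of (\ref{psimap}) to descend the lattice; (4) establish the radius formula (\ref{frobtrick}). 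For step (4) I would argue pointwise: fix $x\in X$, pass to $\sH(x)$ and the canonical point $x^\p$, so that by definition of $\cR$ (\ref{nonrigrad2}) it suffices to treat the case where $x$ is the Gauss-type point $t_{0,\rho}$ on the skeleton of a basic annulus; there the relation $R(\Sigma_1,\rho^{p})/\rho^{p}=(R(\Sigma,\rho)/\rho)^{p}$, which is part of the classical descent package quoted in \S\ref{review}, translates (after dividing by the appropriate powers of $|T|$ and using $\cR=R/|T|$) into exactly $\cR_{\fX^{(\sigma)}}(\phi^{(\sigma)}(x),(\cF,\na))=\cR_{\fX}(x,(\cE,\na))^{p}$.

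The main obstacle I anticipate is step (1) together with the globalization: ensuring that the \emph{local} Frobenius antecedents descend to a \emph{global} object of ${\bf MIC}_{\fX^{(\sigma)}}(X^{(\sigma)}/k)$, and in particular that a coherent locally free $\cO_{\fX^{(\sigma)}}$-model exists and is compatible across the \'etale covering. The classical theorems are stated for annuli inside $\A^1$ with coefficients in rings of analytic elements $\sH(r_1,r_2)$, whereas here one has basic affinoid annuli which need not embed in $\A^1$; this is precisely the gap that the generalized Dwork--Robba theorem \ref{cor:DworkRobba1} of \S\ref{Dwork-Robba} is designed to bridge, since it gives the effective bounds (\ref{eq:DworkRobba1}) with $\|\,\cdot\,\|_X$ the supnorm on the basic annulus rather than on an annulus in $\A^1$. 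I would therefore run the Dwork--Christol descent argument verbatim but feed it the bounds from theorem \ref{cor:DworkRobba1} instead of the classical ones, and use the uniqueness clause of that descent (uniqueness up to isomorphism of $\sH(r_1^{p},r_2^{p})/k$-differential modules) to glue; the formal-model descent in step (3) is then a matter of checking that the $\Psi^\ast$-image of the given $\cO_{\fX}$-lattice is a finitely generated $\cO_{\fX^{(\sigma)}}$-lattice stable under $\na$, which follows from $\Psi^\ast\Phi^\ast=\id$ and the \'etaleness of $F_{\rm rel}^{(\sigma)}$ on generic fibers together with topological finite presentation.
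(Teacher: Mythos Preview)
Your plan is close in spirit to the paper's, but it misses the one device that actually makes the argument go through on a \emph{basic} (as opposed to standard) formal annulus, and it contains a redundant reduction step.

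First, the reduction in your step~(1) is vacuous: the statement is already formulated for a single basic formal annulus $\fX$, so there is nothing to cover and nothing to glue. Lemma~\ref{cover} is used elsewhere in the paper to reduce \emph{to} basic annuli, not to reduce \emph{from} them.

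More seriously, in step~(2) you invoke the classical Christol--Dwork/Kedlaya descent, but that theorem produces an antecedent over $\sH(r_1^p,r_2^p)$ (or over $\cO(B[r_1^p,r_2^p])$), i.e.\ over the \emph{standard} target annulus $\fB^{(\sigma)}_\eta$. What you need is an antecedent over $X^{(\sigma)}=\fX^{(\sigma)}_\eta$, which is only \'etale over $\fB^{(\sigma)}_\eta$ and need not sit inside $\A^1$. Feeding in the generalized Dwork--Robba bounds of theorem~\ref{cor:DworkRobba1} does not bridge this gap: those bounds control the size of the $G_{[n]}$, which is what you need for continuity and for the radius formula, but they do not by themselves supply the splitting operator $\Psi^\ast$ that underlies the \emph{construction} of the antecedent. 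The formula~(\ref{psimap}) you cite for $\Psi^\ast$ is written in the coordinates $S,T,U$ of the \emph{standard} annulus and has no obvious meaning on $\fX$.

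The paper resolves exactly this point by reinterpreting $\Psi^\ast$ group-theoretically. After adjoining the $p$-th roots of unity, one has a $\mu_p$-action on $\fB(r_1,r_2)$ by $\tau_\zeta:(S,T,U)\mapsto(\zeta S,\zeta T,\zeta^{-1}U)$, and $\Psi^\ast=p^{-1}\sum_{\zeta\in\mu_p}\tau_\zeta^\ast$, so that $\Phi$ is the quotient by $\mu_p$. Because $\fX\to\fB(r_1,r_2)$ is \'etale, the $\mu_p$-action lifts uniquely to $\fX$ (this is the same \'etale-lifting lemma \cite[Lemma~2.1]{BerkovichCycles} already used in lemma~\ref{FrobLifting}), and the quotient map $\fX\to\fX/\mu_p$ identifies with $F_{\rm rel}^{(\sigma)}:\fX\to\fX^{(\sigma)}$. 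One now has an intrinsic averaging operator $p^{-1}\sum_\zeta\tau_\zeta^\ast$ on $\fX$, and Kedlaya's coordinate-free version of the descent \cite[6.3]{Ke} (which the paper stresses is essential precisely to avoid the apparent-singularity issues that plague the older treatments) applies directly on $\fX$ to produce $(\cF,\na)$ together with the radius relation~(\ref{frobtrick}). Your step~(4) for the radius formula is fine once the antecedent exists; the substantive missing ingredient is this $\mu_p$-lifting, not the Dwork--Robba estimate.
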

\begin{proof}
The construction of $(\cF,\na)$ is the same, \emph{mutatis mutandis} as the one of Christol-Dwork \cite[4.3]{ChristolDwork} and of \cite[V.7]{DGS}. It is important however to use the coordinate-free 
  presentation of Kedlaya \cite[6.3]{Ke}, that eliminates the problem of apparent singularities, which causes so much technical complication in \cite{ChristolDwork} and \cite{DGS}. We do not give details here. Notice however that one may assume, without loss of generality, that the field $k$ contains the $p$-th roots of unity. 
The  group $\mu_p$ acts on $\fB(r_{1},r_{2})$
via $\zeta \mapsto \tau_{\zeta}$, where 
$\tau_{\zeta}:  (S,T,U) \mapsto (\zeta S,\zeta T,\zeta^{-1}U)$ and the map $\Psi^{\ast}$ of formula \ref{psimap} is 
$$\Psi^{\ast} = p^{-1} \sum_{\zeta \in \mu_{p}} \tau_{\zeta}^{\ast} \; .
$$
The quotient map $\fB(r_{1},r_{2}) \to \fB(r_{1},r_{2}) /\mu_{p}$ identifies with $\Phi$ in (\ref{diagram5}).
The action of $\mu_{p}$ on $\fB(r_{1},r_{2})$ uniquely lifts to an action on $\fX$ and on $\fX^{(\sigma)}$. The quotient map 
$\fX \to \fX^{(\sigma)}/\mu_{p}$ identifies then with $F_{\rm rel}^{(\sigma)}$ in  (\ref{diagram6}).

We may now follow the method of Kedlaya \lc to conclude. 
\end{proof}

\section{Continuity, log-piecewise linearity and log-concavity of $x \mapsto \cR_{\fY,\fZ} (x,(\cE,\na)) $}\label{Conclusion}

We use here the notation of definitions \ref{def:altradius} and \ref{mersing}. The assumption that $k$ is algebraically closed may be now relaxed. We assume that $k$ is a non-trivially valued non-archimedean  extension field of $\Q_p$, $\ol{X} = \fY_{\eta}$, for a semistable formal scheme $\fY$, $\fZ$ is a finite \'etale closed $\kc$-formal subscheme of $\fY$, supported in the smooth locus of $\fY$. The generic fiber $\cZ$ of $\fZ$ is supposed to consist of $r$ distinct $k$-rational points $\{z_1,\dots,z_r \} \subset \ol{X}(k)$ and $X = \ol{X} \setminus \fZ$.
\begin{thm} \label{context2} Let $(\cE,\na)$ be an object of ${\bf MIC}_{\fY}(\ol{X}(\ast\cZ)/k)$. 
The function $X \to \R_{>0}$, $x \mapsto \cR_{\fY,\fZ} (x,(\cE,\na)) $  is continuous.
\end{thm}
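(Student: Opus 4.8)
The strategy is to reduce everything to the local situation of a basic affinoid disk or annulus and then to apply the abstract continuity criterion, Theorem \ref{contthm}. First I would perform the reductions already sketched in Section \ref{raddef}. Using Lemma \ref{etalechange2} (invariance of $\cR_{\fY,\fZ}$ under \'etale morphisms) and Lemma \ref{etaleloc} (\'etale-local detection of continuity, which itself rests on Lemma \ref{lemma:fieldext}), it suffices to check continuity after pulling back along an \'etale covering of $\fY$ by basic formal disks and annuli, which exists by Lemma \ref{cover}. Moreover, by the discussion culminating in Lemma \ref{compact} and formula (\ref{eq:raddefsmooth3}), the presence of the singular divisor $\cZ$ can be absorbed: on the complement $X^{(\veps)} = \ol{X} \setminus \bigcup_i D_{\fY}(z_i,\veps^-)$ one has $\cR_{\fY,\fZ} = \cR_{\fX^{(\veps)}}$, and $\cR$ is continuous on $X$ iff it is continuous on each $X^{(\veps)}$ together with the correct limiting behaviour near the $z_i$ (this last piece uses (\ref{advances}), i.e.\ \cite[Prop.~4]{Advances}, under condition $\bf NL$, or else is avoided since Theorem \ref{context2} does not impose $\bf NL$ and one can handle the half-lines $\ell_i$ directly as open segments of a suitable model). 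So we are reduced to: $(\cE,\na)$ an object of ${\bf MIC}_{\fX}(X/k)$ with $X = \fX_\eta$ a basic affinoid disk or annulus, $\cE \cong \cO_X^\mu$, and the connection given by a system $\Sigma$ as in (\ref{diffCHDWintro}) with $G$ analytic on $X$; we must show $x \mapsto \cR_{\fX}(x,(\cE,\na)) = \cR(x,\Sigma)$ is continuous on $X$.

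\textbf{Verifying the five hypotheses of Theorem \ref{contthm}.} We may assume $k$ is algebraically closed (by Lemma \ref{lemma:fieldext} again). Property $(1)$, continuity at $k$-rational points, is classical: around $x \in X(k)$ the function $\cR$ restricted to the open disk $D_{\fX}(x,1^-)$ is controlled by the series $\sum_i G_{[i]}(x)(T-x)^i$ and one argues as in \cite[IV]{DGS}. Property $(5)$, upper semicontinuity of $x \mapsto \wtilde R(x,\Sigma) = \liminf_n |G_{[n]}(x)|^{-1/n}$ (hence of $R$ and $\cR$), is exactly the content of \S\ref{subsection:USC}: the Dwork-Robba bound (Theorem \ref{cor:DworkRobba1}) forces $\wtilde R$ to be a \emph{uniform} limit of the USC functions $\varphi_s(x) = \inf_{n \geq s}|G_{[n]}(x)|^{-1/n}$, so it is USC. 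Property $(2)$, the minimum principle on special affinoid subdomains $V$ (closed disks in a fibre $\tau_X^{-1}(x)$, or preimages $\tau_X^{-1}(L)$ of $k$-rational closed subintervals of edges), is precisely the transfer-to-a-contiguous-disk statement, Corollary \ref{contiguity}: there $R(t,\Sigma) = \inf_{y\in D} R(y,\Sigma)$ for the boundary point $t$ of an open disk $D$, and iterating/gluing this over the finitely many maximal disks cut off in $V$ gives $f(x) \geq \min_{y\in\Gamma(V)}f(y)$ for all $x \in V$ (for a closed disk $\Gamma(V)$ is its maximal point; for $\tau_X^{-1}(L)$ it is $\cV(V)$). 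Property $(3)$, continuity of $f_{k'}$ on open segments, follows from the Christol-Dwork logarithmic concavity together with Christol-Dwork upper semicontinuity, i.e.\ from \S\ref{CHDWHigherGenus}: on the skeleton $S(X) \cong [r_0,1]$, $\rho \mapsto \wtilde R(\eta_\rho,\Sigma)$ is log-concave (hence continuous on the open interval and LSC at the ends) and globally USC, hence continuous; for an arbitrary open segment $E$ of $X_{k'}$ one pulls back $\Sigma$ along the corresponding basic annulus and reduces to the same statement. Property $(4)$, continuity of $f|_{S(X)}$ at the vertices, is again the generalized Christol-Dwork theorem of \S\ref{CHDWHigherGenus} applied near each vertex, combined with the fact that at a vertex which is the whole skeleton the minimum principle $(2)$ pins the value down.

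\textbf{Main obstacle.} The genuinely hard input is the upper semicontinuity, property $(5)$: the naive expression for $\cR$ is an infimum-limit of countably many continuous functions, which is only USC pointwise, not USC as a function, unless one has a \emph{uniform} control on the tail. This uniformity is supplied by the generalized Dwork-Robba theorem (\ref{cor:DworkRobba1}), whose proof in the present generality requires replacing the classical analytic-elements framework on annuli in $\A^1$ by functions that extend analytically to a basic affinoid annulus $\fB_\eta$ containing $B(r_1,r_2)$ as an open analytic subspace, and using the boundary seminorm $\|\cdot\|_{a,\rho}$ on meromorphic functions on $D_X(a,\rho^-)$; the non-$\A^1$ case is what forces the formal-model bookkeeping and the globalization. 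Everything else --- the reductions via \'etale descent, the minimum principle, the behaviour on the skeleton --- is, once the local Dwork-Robba estimate is in hand, an application of Theorem \ref{contthm} and the classical one-dimensional theory recalled in Sections \ref{review} and \ref{GeneralDR}. So the proof concludes: the reduced function $\cR(x,\Sigma)$ on a basic disk/annulus satisfies $(1)$--$(5)$, hence is continuous by Theorem \ref{contthm}; lifting back through the reductions via Lemmas \ref{etalechange2}, \ref{etaleloc}, \ref{compact}, the function $x \mapsto \cR_{\fY,\fZ}(x,(\cE,\na))$ is continuous on all of $X$. $\hfill\square$
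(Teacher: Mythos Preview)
Your proposal is correct and follows essentially the same route as the paper: reduce via Lemma~\ref{compact} to the compact case, then via Lemmas~\ref{cover}, \ref{etalechange2}, \ref{etaleloc} to a basic affinoid disk or annulus, and verify the five hypotheses of Theorem~\ref{contthm} using the Dwork--Robba estimate (\S\ref{subsection:USC}) for upper semicontinuity and the generalized Christol--Dwork theorem (\S\ref{CHDWHigherGenus}) for the skeleton.

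The one point where your argument diverges from the paper's, and where it is slightly weaker, is property~$(2)$. You invoke Corollary~\ref{contiguity} (transfer to a contiguous disk) and speak of ``iterating/gluing over the finitely many maximal disks cut off in $V$''. This handles the case where $V$ is a closed disk, but for $V = \tau_X^{-1}(L)$ with $L$ a closed subinterval of an edge, Corollary~\ref{contiguity} only gives $R(x) \geq R(\tau_X(x))$ for $x$ off the skeleton; to bound $R$ at interior points of $L$ by $\min_{y\in\Gamma(V)} R(y)$ you would then need the log-concavity on $L$, i.e.\ property~$(3)$, so the verification of $(2)$ is no longer independent. The paper's argument for $(2)$ avoids this by using the maximum modulus principle directly: for any affinoid $V$ one has $|G_{[n]}(x)| \leq \max_{y\in\Gamma(V)} |G_{[n]}(y)|$, whence $\widetilde R(x) \geq \liminf_n \min_{y\in\Gamma(V)} |G_{[n]}(y)|^{-1/n} \geq \min_{y\in\Gamma(V)} \widetilde R(y)$ (the last step using that $\Gamma(V)$ is finite). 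This one-line argument is both simpler and logically cleaner than routing through the transfer theorem.
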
  
\begin{proof}  We have already shown in lemma \ref{compact}, that the statement reduces to the case when there are no $z_i$'s, \ie $X = \ol{X}$ is compact. We then set $\fY = \fX$ for the given semistable model of $X$. 
For any non-archimedean field extension $k^\p/k$ and  \'etale morphism of semistable $k^{\p \circ}$-formal schemes $\psi:\fY \to \fX \wt k^{\p \circ}$, the assignment 
\beq
\label{localrad}
f_{\psi} (y) =  \cR_{\fY} (y,\psi^{\ast}(\cE,\na))\;,\;\; \forall y \in \fY_{\eta}\; ,
\eeq
is an \'etale-local system of functions $(f_{\psi})_{\psi}$ on $\fX$ with values in $(0,1]$, as defined in (\ref{etalefct}),  by lemma \ref{etalechange2} and the definitions.

In the end, by lemmas \ref{cover} and  \ref{etaleloc}, we 
are reduced to the situation of \S \ref{GeneralDR}, and in particular to proving the following 
\begin{lemma} 
Let $(X,T) = (\fX,T)_{\eta}$ be a basic affinoid annulus, and let $x \in X$, $\Sigma$,  $G_{[i]}$, for $i \in \N$, $R(x) := R(x,\Sigma)$ be as in theorem \ref{cor:DworkRobba1}.
Then 
the function  $R: X \to \R_{>0}$ is continuous.
\end{lemma}
\begin{proof}  Let $\wtilde{R}(x) :=\wtilde{R}(x,\Sigma)$ be as in (\ref{eq:raddefsmooth5}). 
We check  conditions $(1)$ to $(5)$ in theorem \ref{contthm} for the function $R$. Notice that   $R(x,\Sigma)$  is given by formula \ref{eq:raddefsmooth} in terms of any formal \'etale coordinate on $\fX$, and over any non-archimedean extension field $k^{\p}/k$. 
Condition $(1)$ is obvious: if $x_0 \in X(k)$, $R(x,\Sigma) = R(x_0,\Sigma)$, $\forall x \in D_X(x_0, R(x_0,\Sigma)^-)$.
Condition $(2)$ is clear since, for any affinoid $V \subset X$,
$$|G_{[n]}(x)| \leq \max_{y \in \Gamma(V)}|G_{[n]}(y)| \; .$$
and therefore 
$$\wtilde{R}(x,\Sigma) =  \liminf_{n \to \infty} |G_{[n]}(x)|^{-1/n}  \geq \min_{y \in \Gamma(V)} \wtilde{R}(y,\Sigma)\; .$$
So, the same holds true for $R(x,\Sigma)$.
Conditions $(3)$ and $(4)$ have been proved together in section \ref{CHDWHigherGenus}.
Condition $(5)$ was proven in section \ref{subsection:USC}. 
\end{proof} 
This proves the theorem, too. 
\end{proof}
\begin{rmk} In \cite[5.3]{Lucia} we gave a direct proof of this statement, in the case of a system (\ie for $\cE$ free) over an affinoid domain of $\A^1$. 
\end{rmk}  
\begin{cor}\label{genpiecewiselinear} Assume $\fY$ is strictly semistable, and let $\mu$ be the rank of $\cE$ on $X = \ol{X}  \setminus 
\cZ$.  Let $E$ be an open edge  of ${\bf S}_{\fZ}(\fY)$ and let $\ol{E}$ be the closure of $E$ in $\ol{X}$.
We have the following possibilities:
\begin{enumerate}
\item  $E$ is  of finite length $r_E^{-1} \in [1,\infty)$. Then,   for any choice of one of the two canonical homeomorphism $\rho  : \, \ol{E} \xrightarrow{\ \sim\ } [r_E,1]$,
the restriction of $x \mapsto \cR(x) := \cR_{\fY,\fZ} (x,(\cE,\na))$ to  $E$ is the infimum of the constant 1 and a finite set of functions of the form
\beq \label{pentes} |p|^{1/(p-1)p^h}|b|^{1/jp^h}\rho(x)^{s/j}\; ,
\eeq
 where $j \in \{ 1, 2, \dots,\mu\}$, $s \in \Z$,  $h \in \Z \cup \{\infty\}$, $b \in k^{\times}$.
\item
if $E$ is of infinite length, and $z_i$ is a boundary point of $E$ in $\ol{X}$, and $\rho : \, \ol{E} \xrightarrow{\ \sim\ } [0,1]$ is the canonical homeomorphism with $\rho(z_i) = 0$, then, $\forall \; \veps \in |k| \cap (0,1)$ the restriction of $x \mapsto \cR_{\fY,\fZ} (x,(\cE,\na))$ to  $\rho^{-1}([\veps,1])$ is an infimum of a finite set of functions as in \ref{pentes}. 
\end{enumerate} 
\end{cor}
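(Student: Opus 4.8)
\textbf{Proof plan for Corollary \ref{genpiecewiselinear}.}
The plan is to reduce the statement to the classical case on a standard $k$-annulus treated in section \ref{review}, using the continuity result of theorem \ref{context2} and the \'etale-local description of $\cR$ via basic formal annuli. First I would localise: since $\fY$ is strictly semistable, by lemma \ref{cover} there is a finite covering of $\fY$ by \'etale neighbourhoods which are basic formal annuli or disks; the open edge $E$ (or a compact subinterval of it) is the skeleton of a $k$-rational open annulus $B = A_E$, and by shrinking one may assume $B$ is the image of a basic affinoid annulus $(\cB,T) = (\fB,T)_\eta$ with an \'etale structure morphism to a standard formal annulus. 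By formula \ref{eq:raddefsmooth3} and lemma \ref{etalechange2}, $\cR_{\fY,\fZ}(x,(\cE,\na)) = \cR_{\fB}(x,(\cE,\na)_{|\cB})$ for $x \in \cB$, so it suffices to compute the restriction of $x \mapsto \cR_{\fB}(x,\Sigma)$ to $S(\cB)$, where $\Sigma$ is the system (\ref{diffCHDWintro}) obtained from a global $\cO(\cB)$-basis of $\cE_{|\cB}$. Over $S(\cB)$ the \'etale coordinate $T$ identifies $S(\cB)$ with an interval $[r_0,1]$ (or $[r_E,1]$ after rescaling by the height), and $\cR_{\fB}(\eta_\rho,\Sigma)$ equals $R(\Sigma,\rho)/\rho$ in the Christol--Dwork normalisation, because the entries of the matrices $G_{[i]}$ are analytic on a basic affinoid annulus containing the standard annulus on which $\Sigma$ lives (this is the observation after (\ref{radCHDW})).

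Next I would invoke theorem \ref{slopes} (Pons) together with corollary \ref{piecewiselinear}: the restriction of $\rho \mapsto R(\Sigma,\rho)/\rho$ to $[r_E,1]$ is the infimum of the constant $1$ and finitely many functions of the form $|p|^{1/(p-1)p^h}|b|^{1/jp^h}\rho^{s/j}$ with $j \in \{1,\dots,\mu\}$, $s \in \Z$, $h \in \Z \cup \{\infty\}$, $b \in k^\times$. Transporting this through the identification $S(\cB) \cong [r_E,1]$ and through $\cR_{\fY,\fZ} = \cR_{\fB}$ gives exactly formula (\ref{pentes}) for the restriction of $\cR$ to $E$. One must check that the two canonical parametrisations of $\ol E$ differ by the inversion $\rho \mapsto r_E/\rho$, which only changes $s$ to $-s$ (and absorbs a power of $r_E$ into $b$), so the form (\ref{pentes}) is stable under the choice; this is the content of the earlier discussion of $k$-rational open segments. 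For the endpoints: $E$ has finite length iff $A_E$ is a $k$-rational annulus, iff the ends of $E$ are points of type $(2)$, in which case by corollary \ref{contiguity} the value $\cR$ at each end of $\ol E$ is the infimum of $\cR$ over a contiguous open disk, hence equals the limiting value of the piecewise log-affine function, so the formula extends to the closed interval $[r_E,1]$.

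For part $(2)$, $E$ is an open half-line connecting a point $z_i \in \cZ$ to the vertex $\tau_{\fY}(z_i)$, parametrised by $\rho : \ol E \xrightarrow{\ \sim\ } [0,1]$ with $\rho(z_i)=0$. Here I would apply lemma \ref{compact}: for each $\veps \in |k| \cap (0,1)$, the subcurve $X^{(\veps)} = \ol X \setminus \bigcup D_{\fY}(z_i,\veps^-)$ is the generic fiber of the semistable formal scheme $\fX^{(\veps)}$, and on $X^{(\veps)}$ the half-line $E$ restricted to $\rho^{-1}([\veps,1])$ becomes (a compact subinterval of) a genuine open edge of finite length of ${\bf S}(\fX^{(\veps)})$ (after a further admissible blow-up if one wants it to be an actual open edge), with $\cR_{\fX^{(\veps)}}(x,(\cE,\na)_{|X^{(\veps)}}) = \cR_{\fY,\fZ}(x,(\cE,\na))$ for $x \in X^{(\veps)}$. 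Applying part $(1)$ to this edge gives that the restriction of $\cR$ to $\rho^{-1}([\veps,1])$ is an infimum of finitely many functions of the form (\ref{pentes}), as claimed.

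The main obstacle I anticipate is not conceptual but bookkeeping: one must be careful that the matrices $G_{[i]}$ used in the definition of $\cR_{\fB}$ are those attached to the global \'etale coordinate $T$ of the basic formal annulus (not to an embedding into $\A^1$), so that the Dwork--Robba estimates of theorem \ref{cor:DworkRobba1} and the Young/Christol--Dwork/Kedlaya machinery recalled in the proof of theorem \ref{slopes} apply verbatim; this is exactly why section \ref{GeneralDR} was set up over basic annuli rather than over affinoid subdomains of the line. A secondary point to track is that the normalisation constant relating $R(\Sigma,\rho)/\rho$ to $\cR_{\fB}(\eta_\rho,\Sigma)$ (formulas (\ref{eq:raddefsmooth2}) and (\ref{radCHDW})) introduces only a factor $|T(x)|^{-1} = \rho^{-1}$, which is itself log-affine of slope $-1$, so it merely shifts the exponent $s/j$ by $-1$ and does not affect the shape of (\ref{pentes}).
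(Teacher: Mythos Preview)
Your localisation to a basic formal annulus via lemma~\ref{cover}, lemma~\ref{etalechange2} and formula~(\ref{eq:raddefsmooth3}) is correct and is exactly what the paper does. The reduction of part~(2) to part~(1) via lemma~\ref{compact} is also fine.

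The gap is in the sentence ``invoke theorem~\ref{slopes} together with corollary~\ref{piecewiselinear}''. Those results are proved for a system with coefficients in $\sH(r_1,r_2)$, i.e.\ analytic \emph{elements} on a standard annulus in $\A^1_k$ (equivalently, for a closed annulus embedded in $\A^1_k$). A basic affinoid annulus $(\cB,T)$ is only \'etale over the standard one; it need not embed in $\A^1_k$, and $\cO(\cB)$ is not $\sH(r_1,r_2)$. What you do get for free is that the \emph{open} part $T^{-1}(B(r_E,1))$ is isomorphic to the standard open annulus (this is built into definition~\ref{basicannuli}), so on any closed subannulus $B[r_E/\veps,\veps]$ with $\veps<1$ the restricted system does have coefficients in the affinoid algebra of a standard closed annulus, and corollary~\ref{piecewiselinear} applies there. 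This gives the form~(\ref{pentes}) on every compact subinterval of the \emph{open} edge, but a priori the number of log-affine pieces could accumulate at the endpoints of $\ol E$. Your endpoint paragraph only addresses continuity (via corollary~\ref{contiguity}), not finiteness.

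The paper closes this gap as follows. From the compact-subinterval result plus continuity (theorem~\ref{context2}) and log-concavity one knows $S:=\log\circ\cR\circ\exp$ is a concave continuous piecewise-affine function on $[\log r_E,0]$ with slopes rational of denominator $\le\mu$; it remains to bound the slopes near each endpoint. If $\cR=1$ at an endpoint, concavity and $S\le 0$ force the slopes there to be nonnegative and decreasing, hence eventually constant by the denominator bound. If $\cR<1$ at an endpoint, one must run the Frobenius-descent-plus-Young argument \emph{on the basic affinoid annulus itself}, and for that the classical descent of \S\ref{review} is not enough: this is precisely why the paper proves theorem~\ref{FrobDesc}, the Frobenius descent for basic formal annuli not lying in $\A^1$. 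Your last paragraph notices that \S\ref{GeneralDR} is set up over basic annuli, but the assertion that the machinery of theorem~\ref{slopes} applies ``verbatim'' skips over the fact that theorem~\ref{FrobDesc} is a genuine extension, and that even with it in hand one still needs the separate endpoint argument just described.
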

\begin{proof} 
We may restrict to case 1. We know the result in case  $\tau_{\fY}^{-1}(\ol{E})$ is an annulus in $\A^1$. So, the restriction of $x \mapsto \cR_{\fY,z_1,\dots,z_r} (x,(\cE,\na))$ to $\rho^{-1}([r_E/\veps ,\veps])$ has the required properties for any $\veps \in |k| \cap (r_{E}^{1/2},1)$. 
Let $R(r) =  \cR_{\fY,\fZ} (\rho^{-1} (r),(\cE,\na))$, and let $S= \log \circ R \circ \exp $. So, $S$ is concave, piecewise linear and continuous in $[\log r_E, 0]$. We have to show that is a polygon with a finite number of sides. It is enough to show that the slope of the function $S(t)$ is bounded as $t \to 0$, since the case $t \to \log r_E$ may be obtained by inversion. We may then assume, going to an \'etale neighborhood of the singular point of $\fY$ corresponding to $E$,  that $\tau_{\fY}^{-1}(\ol{E})$ is a basic affinoid annulus. 
We may now follow the strategy of proof of theorem \ref{slopes}.  If we have $R(0) = 1$, the result is clear because in this case $S$ is never decreasing in a neighborhood of $0$, and its slopes  are then non-negative. On the other hand, the slopes are rational numbers with denominator bounded by $\mu$. So, there can only be a finite number of sides. If $R(0) < 1$, we apply  we apply our result (\ref{FrobDesc}) of descent by Frobenius instead, and reduce eventually to the case explicitly computed by Young, precisely as in the proof of theorem \ref{slopes}. 
\end{proof}
The last results show that the function 
$\log \rho \mapsto \log \cR_{\fY,\fZ} (\rho^{-1}(-),(\cE,\na))$ on $[\log r_E,0]$ as in (\ref{pentes}) is a finite concave polygon $\leq 0$.   In particular, \cite[Appendix I, Corollary]{DGS}, 
\begin{cor} \label{Robbacond} Let $E$ be an edge of finite length of $S_{\fZ}(\fY)$, parametrized as in corollary \ref{genpiecewiselinear}, 
and let $x, y \in \ol{E}$ be such that $\rho(x) \leq \rho(y)$. Then, if $\cR_{\fY,\fZ} (x,(\cE,\na)) = \cR_{\fY,\fZ} (y,(\cE,\na)) = 1$, one has $\cR_{\fY,\fZ} (z,(\cE,\na))  =1$, $\forall \, z \in 
\sp_{\fY}^{-1}([x,y])$, where $[x,y] = \rho^{-1} ([\rho(x), \rho(y)])$.
\end{cor}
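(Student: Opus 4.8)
The plan is to deduce Corollary \ref{Robbacond} directly from Corollary \ref{genpiecewiselinear} together with the cited consequence of the concavity lemma in \cite[Appendix I, Corollary]{DGS}, and then to transfer the conclusion on the edge to the whole tube $\sp_{\fY}^{-1}([x,y])$ via the contiguity/transfer principle (Corollary \ref{contiguity}). First I would record the setup: $E$ is an edge of finite length of ${\bf S}_{\fZ}(\fY)$, parametrized by $\rho: \ol{E} \xrightarrow{\ \sim\ } [r_E,1]$, and by Corollary \ref{genpiecewiselinear} the function $\Phi(t) := \log \cR_{\fY,\fZ}(\rho^{-1}(e^t),(\cE,\na))$ is a finite concave polygon, with $\Phi \leq 0$ on $[\log r_E, 0]$. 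The hypothesis says $\Phi(\log\rho(x)) = \Phi(\log\rho(y)) = 0$, i.e.\ $\Phi$ attains its maximal value $0$ at the two endpoints of the subinterval $[\log\rho(x),\log\rho(y)]$. A concave function that equals its global upper bound $0$ at both endpoints of an interval must be identically $0$ on that interval; this is precisely the content invoked from \cite[Appendix I, Corollary]{DGS}. Hence $\cR_{\fY,\fZ}(z,(\cE,\na)) = 1$ for every $z \in [x,y] = \rho^{-1}([\rho(x),\rho(y)]) \subset \ol{E}$.

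Next I would extend the equality from the segment $[x,y]$ to the affinoid tube $\sp_{\fY}^{-1}([x,y])$. Every point $w$ of this tube retracts under $\tau_{\fY}$ (or rather $\tau_{\fY,\fZ}$, but on this tube there are no $z_i$'s involved, so $\tau_{\fY}$ suffices) to a point $t(w) \in [x,y]$, and $w$ lies in the maximal open disk $D_{\fY}(t(w)', 1^-)$-type neighborhood whose boundary point in $S(\fY)$ is $t(w)$; more precisely $w$ belongs to a fiber $\tau_{\fY}^{-1}(t(w)) \setminus \{t(w)\}$ which is a disjoint union of open disks with limit point $t(w)$. Apply Corollary \ref{contiguity} to such a disk $D$ with limit point $t(w)$: we get $\cR_{\fY,\fZ}(t(w),(\cE,\na)) = \inf_{v \in D} \cR_{\fY,\fZ}(v,(\cE,\na))$ (after normalizing, since $R$ and $\cR$ differ by the continuous positive factor $|T|$, which is locally constant equal on $D$ to its value at $t(w)$). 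Since the left-hand side equals $1$ and $\cR \leq 1$ everywhere, we conclude $\cR_{\fY,\fZ}(v,(\cE,\na)) = 1$ for all $v \in D$, hence in particular at $w$. As $w$ was an arbitrary point of $\sp_{\fY}^{-1}([x,y])$, this gives the claim.

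The only genuinely delicate point is the passage between $R$ and $\cR$ when invoking the contiguity corollary on the fiber disks and when restricting to a basic affinoid annulus: Corollary \ref{contiguity} is stated for $R(t,\Sigma)$ on a basic affinoid annulus or disk $(X,T)$, whereas we are on the curve $X = \ol{X}$ with its formal model $\fY$. To make this rigorous I would, exactly as in the proof of Theorem \ref{context2}, pass to an \'etale neighborhood which is a basic affinoid annulus covering a neighborhood of the relevant part of $E$ (using Lemma \ref{cover}), invoke the invariance \ref{etaleinv}, so that $\cR_{\fY,\fZ}$ is computed locally by formula (\ref{eq:raddefsmooth2}) as $R(-,\Sigma)/|T(-)|$; on a single fiber disk $D$ of $\tau_{\fY}$ the function $|T|$ is constant (equal to its value at the boundary point), so the normalization factor is harmless and Corollary \ref{contiguity} applies verbatim. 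I expect this bookkeeping to be the main, though entirely routine, obstacle; the conceptual core is just ``concave and $\leq 0$ with two zeros forces identically zero,'' plus ``radius $=1$ at the boundary of a disk propagates into the disk by contiguity.''

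\begin{proof}
Parametrize $E$ by one of its canonical homeomorphisms $\rho: \ol{E} \xrightarrow{\ \sim\ } [r_E,1]$ as in corollary \ref{genpiecewiselinear}, and set
$$\Phi(t) = \log \cR_{\fY,\fZ}(\rho^{-1}(e^t),(\cE,\na)) \;, \qquad t \in [\log r_E, 0]\;.$$
By corollary \ref{genpiecewiselinear}, $\Phi$ is a finite concave polygon, and $\Phi \leq 0$ since $\cR_{\fY,\fZ} \leq 1$. The hypothesis $\cR_{\fY,\fZ}(x,(\cE,\na)) = \cR_{\fY,\fZ}(y,(\cE,\na)) = 1$ means $\Phi(\log\rho(x)) = \Phi(\log\rho(y)) = 0$, so $\Phi$ attains its global maximum $0$ at both ends of the subinterval $J = [\log\rho(x),\log\rho(y)]$. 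A concave function bounded above by $0$ which equals $0$ at the two endpoints of an interval is identically $0$ there (\cf \cite[Appendix I, Corollary]{DGS}). Hence $\Phi \equiv 0$ on $J$, i.e.
$$\cR_{\fY,\fZ}(z,(\cE,\na)) = 1 \qquad \forall\, z \in [x,y] = \rho^{-1}([\rho(x),\rho(y)]) \;.$$

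It remains to extend this to $w \in \sp_{\fY}^{-1}([x,y])$. The retraction $\tau_{\fY}$ sends $w$ to a point $t = t(w) \in [x,y] \subset S(\fY)$, and $w$ lies in a connected component $D$ of $\tau_{\fY}^{-1}(t) \setminus \{t\}$, which is an open disk with limit point $t$ in $\ol{X} \setminus D$. Choosing, via lemma \ref{cover} and the invariance (\ref{etaleinv}), an \'etale neighborhood which is a basic affinoid annulus containing a neighborhood of $t$ and of $D$, the function $\cR_{\fY,\fZ}$ is computed there by (\ref{eq:raddefsmooth2}) as $R(-,\Sigma)/|T(-)|$; the factor $|T|$ is constant on $D$, equal to its value at $t$. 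Corollary \ref{contiguity} then gives
$$R(t,\Sigma) = \inf_{v \in D} R(v,\Sigma)\;,$$
whence $\cR_{\fY,\fZ}(t,(\cE,\na)) = \inf_{v \in D}\cR_{\fY,\fZ}(v,(\cE,\na))$. Since the left-hand side is $1$ and $\cR_{\fY,\fZ} \leq 1$, we get $\cR_{\fY,\fZ}(v,(\cE,\na)) = 1$ for every $v \in D$, in particular at $v = w$. As $w \in \sp_{\fY}^{-1}([x,y])$ was arbitrary, the corollary follows.
\end{proof}
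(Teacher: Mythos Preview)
Your proof is correct and follows the same approach as the paper. The paper's argument is just the one sentence preceding the corollary: the function $\log\rho \mapsto \log\cR_{\fY,\fZ}(\rho^{-1}(-),(\cE,\na))$ is a finite concave polygon $\leq 0$, so by \cite[Appendix I, Corollary]{DGS} it must be identically $0$ on $[\log\rho(x),\log\rho(y)]$. The paper then leaves the extension from $[x,y]$ to the tube $\sp_{\fY}^{-1}([x,y])$ (which should be read as $\tau_{\fY}^{-1}([x,y])$) entirely implicit, whereas you spell it out via Corollary~\ref{contiguity}; this extra care is appropriate and your handling of the normalization factor $|T|$ on the fiber disks is correct (cf.\ the remark $|T(x)| = |T(\tau_{\cB}(x))|$ in \S\ref{basicdefn}).
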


\par 
Our last result makes use of the $p$-adic Turrittin theory of \cite{Advances}. A crucial point needed in that paper in order to analytically reduce the system to its formal Turrittin canonical form, was the non-appearance of exponent differences of $p$-adic Liouville type.
We recall that, for any differential system $\Sigma$ as in (\ref{diffCHDWintro}), with $G$ a matrix with entries in $k((T))$,  the \emph{exponents at 0} of $\Sigma$ are formal invariants of the (fuchsian part of the) corresponding differential module $(\cE,\na)$, and the \emph{exponent-differences at 0} are the exponents at 0 of the system corresponding to the differential module of endomorphisms $({\cE}nd_{k((T))} (\cE), \na)$. So, the difference of two exponents at 0 is an exponent-difference at 0, but the converse is false in general. 
\begin{rmk} We recall that, by the $p$-adic Roth theorem,  algebraic numbers are $p$-adically non-Liouville.
\end{rmk}
\begin{prop}  \label{advances2}
In the previous notation, assume $(\cE,\na)$ is an object of   
${\bf MIC}_{\fY}(\ol{X}(\ast \cZ)/k)$ and let $E$ be an open edge of  infinite length of $S_{\fZ}(\fY)$, with closure $\ol{E}$ ending at the $k$-rational point $z_i$.  Let $\rho_{z_i}(\cE,\na)$ be the  Poincar\'e-Katz rank  \cite{Advances} of  $(\cE,\na)$ at $z_i$, and assume that the exponent-differences of $(\cE,\na)$ at $z_i$ are $p$-adically non-Liouville numbers. Then, the restriction of $x \mapsto \cR_{\fY,\fZ} (x,(\cE,\na))$ to  $\ol{E}$ is the infimum of the constant 1 and a finite set of functions of the form
\beq \label{pentes2} |p|^{1/(p-1)p^h}|b|^{1/jp^h}\rho(x)^{s/j}\; ,
\eeq
 where $j \in \{ 1, 2, \dots,\mu\}$, $s \in \Z$,  $h \in \Z \cup \{\infty\}$, $b \in k^{\times}$ and $s/j \leq \rho_{z_i}(\cE,\na)$. We have 
 \beq  \label{advances3}
\lim_{x \to z_i}\cR_{\ol{\fX}, \fZ}(x, (\cF,\na))/ |T_{z_i}(x)|^{\rho_{z_i}(\cF,\na)}  = C \; ,
\eeq 
for a non-zero constant  $C \in |(k^{\alg})^{\times}|$. 
\end{prop}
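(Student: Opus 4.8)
The plan is to combine the $p$-adic Turrittin theory of \cite{Advances} with the local description of the radius of convergence already obtained in this paper. First I would recall that the half-line $E$ of infinite length connects $z_i$ to the vertex $\tau_{\fY}(z_i)$ and that $\ol{E} = S(D_{\fY}(z_i,1^-)\setminus\{z_i\})$, so that a $\fY$-normalized coordinate $T_{z_i}: D_{\fY}(z_i,1^-)\xrightarrow{\ \sim\ }D(0,1^-)$ identifies $\ol{E}$ with the skeleton $\{t_{0,\rho}\,|\,0\leq\rho\leq 1\}$, and the canonical parametrization $\rho:\ol{E}\xrightarrow{\ \sim\ }[0,1]$ is $x\mapsto |T_{z_i}(x)|$. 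On any closed subinterval $\rho^{-1}([\veps,1])$ with $\veps\in|k|\cap(0,1)$, corollary \ref{genpiecewiselinear}(2), applied after passing to an \'etale neighborhood of $\tau_{\fY}(z_i)$ making $\tau_{\fY}^{-1}$ of a closed subsegment a basic affinoid annulus, gives that $\cR_{\fY,\fZ}(x,(\cE,\na))$ restricted there is an infimum of the constant $1$ and finitely many functions of the form (\ref{pentes}). The issue is only the behavior as $\rho\to 0$, i.e.\ as $x\to z_i$: one must show the number of sides of the concave polygon $\log\rho\mapsto\log\cR$ stays finite, and that the slope near $0$ is $\rho_{z_i}(\cE,\na)$.

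For this I would invoke the asymptotic estimate \cite[Prop.~4]{Advances} recalled in (\ref{advances}): under condition $\bf NL$ at $z_i$ (the exponent-differences of $(\cE,\na)$ at $z_i$, which are precisely the exponents at $0$ of $\mathcal{E}nd(\cE,\na)$, are $p$-adic non-Liouville), one has $\cR_{\ol{\fX},\fZ}(x,(\cE,\na))\sim |T_{z_i}(x)|^{\rho_{z_i}(\cE,\na)}$ as $x\to z_i$. Logarithmically this says that $S(t) := \log\cR_{\fY,\fZ}(\rho^{-1}(e^t)(-),(\cE,\na))$ has $S(t)/t \to \rho_{z_i}(\cE,\na)$ as $t\to-\infty$, hence $S(t) - \rho_{z_i}(\cE,\na)\,t$ is bounded near $-\infty$. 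Since $S$ is a continuous concave piecewise-linear function (by continuity, theorem \ref{context2}, together with log-concavity and corollary \ref{genpiecewiselinear}), its left-hand slopes are non-increasing as $t$ increases, so the slopes near $-\infty$ are $\geq$ all later slopes; the asymptotic forces the slope at $-\infty$ to equal $\rho_{z_i}(\cE,\na)$ exactly, and in particular $S-\rho_{z_i}(\cE,\na)t$ is eventually constant near $-\infty$, i.e.\ the polygon has finitely many sides on all of $[\log r_E, 0]$ (the end $t\to 0$ being handled by inversion exactly as in corollary \ref{genpiecewiselinear}). Each side has rational slope with denominator $\leq\mu$ by Young's theorem / theorem \ref{slopes} (transported through Frobenius descent, theorem \ref{FrobDesc}, when $\cR<p^{-1/(p-1)}$), and the constants appearing are of the form $|p|^{1/(p-1)p^h}|b|^{1/jp^h}$ — this is the content of (\ref{pentes2}). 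The inequality $s/j\leq\rho_{z_i}(\cE,\na)$ for every side is then automatic from concavity: the slope $s/j$ of any side is $\leq$ the slope at $-\infty$, which is $\rho_{z_i}(\cE,\na)$, with equality exactly on the first side; and $\max\{s/j\}=\rho_{z_i}(\cE,\na)$.

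Finally, for (\ref{advances3}), since the polygon $S(t)-\rho_{z_i}(\cE,\na)\,t$ is constant for $t$ sufficiently negative, say equal to $\log C$, we get $\cR_{\fY,\fZ}(x,(\cE,\na))\,|T_{z_i}(x)|^{-\rho_{z_i}(\cE,\na)} = C$ identically for $x$ near $z_i$ on $\ol{E}$, hence in particular the limit in (\ref{advances3}) equals $C$; and $C\in|(k^{\alg})^\times|$ because it is a value of $\cR$ on a $k$-rational point of a $k$-rational segment, which by corollary \ref{genpiecewiselinear} (the constants $|p|^{1/(p-1)p^h}|b|^{1/jp^h}\rho^{s/j}$ evaluated at $\rho\in|k|$) lies in $|(k^{\alg})^\times|$. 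The main obstacle, and the step where all the real work sits, is the passage to the limit $x\to z_i$: one must be sure that \cite[Prop.~4]{Advances} applies in this global semistable setting — i.e.\ that the meromorphic connection $(\cE,\na)$ of ${\bf MIC}_{\fY}(\ol{X}(\ast\cZ)/k)$ restricted to a neighborhood of $z_i$ really is a classical meromorphic connection at a $k$-rational point to which the $p$-adic Turrittin theory (reduction to formal canonical form under non-Liouville exponent differences) applies, and that the Poincar\'e--Katz rank $\rho_{z_i}(\cE,\na)$ there agrees with the algebraic irregularity; once that identification is made, everything else is a bookkeeping of concave polygons using results already proved in the paper.
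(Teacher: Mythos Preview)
Your proposal is correct and follows essentially the same approach as the paper, whose proof is the single sentence ``This is a combination of the previous discussion with \cite[prop.~4]{Advances}.'' You have unpacked that line accurately: use corollary~\ref{genpiecewiselinear}(2) on each compact subinterval $\rho^{-1}([\veps,1])$, then control the end $\rho\to 0$ by combining the asymptotic from \cite[Prop.~4]{Advances} with the discreteness of slopes (rational, denominator $\leq \mu$) and log-concavity to force the polygon to stabilize with slope exactly $\rho_{z_i}(\cE,\na)$ near $-\infty$; the only small expository slip is that the passage from ``slope at $-\infty$ equals $\rho_{z_i}$'' to ``$S-\rho_{z_i}t$ is eventually constant'' should explicitly invoke the bounded-denominator discreteness of slopes \emph{before} drawing that conclusion, rather than after.
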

\begin{proof} This is a combination of the previous discussion with \cite[prop. 4]{Advances}.
\end{proof}
\par
We have a variant of corollary \ref{genpiecewiselinear}:
\begin{cor} \label{Robbacond2} Let $v$ be the second end of $\ol{E}$ as in the proposition. Then, if $(\cE,\na)$ is regular at $z_i$ and 
$\cR_{\fY,\fZ} (v,(\cE,\na)) = 1$, we have  $\cR_{\fY,\fZ} (z,(\cE,\na))  =1$, $\forall \, z \in D_{\fY}(z_i,1^-) \setminus \{z_i\}$.
\end{cor}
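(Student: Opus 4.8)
The statement to prove is Corollary \ref{Robbacond2}: if $(\cE,\na)$ is an object of ${\bf MIC}_{\fY}(\ol{X}(\ast\cZ)/k)$ which is regular at $z_i$, and if $\cR_{\fY,\fZ}(v,(\cE,\na))=1$ at the second end $v$ of $\ol E$, then $\cR_{\fY,\fZ}(z,(\cE,\na))=1$ for all $z$ in the punctured disk $D_{\fY}(z_i,1^-)\setminus\{z_i\}$. The basic idea is to combine the shape of the function $x\mapsto\cR(x)$ along $\ol E$, as given by Proposition \ref{advances2}, with the hypothesis of regularity at $z_i$, which forces the Poincar\'e–Katz rank to vanish, and then to use the rigidity of concave piecewise log-linear functions together with the contiguous-disk transfer (Corollary \ref{contiguity}) to propagate the value $1$ off the skeleton $\ol E$ into the full punctured disk.

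\textbf{Step 1: reduce to the skeleton.} The punctured disk $D_{\fY}(z_i,1^-)\setminus\{z_i\}$ retracts onto the open half-line $E$ via $\tau_{\fY,\fZ}$ (Definition \ref{openskeleton}); and for $z$ in the punctured disk, $D_{\fY,\fZ}(z,1^-)$ is the maximal open disk through $z$ not meeting $E$, with boundary point $\tau_{\fY,\fZ}(z)\in E$. By Corollary \ref{contiguity} applied to such a disk (in the basic-annulus \'etale chart provided by Lemma \ref{cover}), $\cR(\tau_{\fY,\fZ}(z)) \le \cR(z)$ is not quite what we want directly — rather, the transfer theorem and the fact that $\cR\le 1$ identically give that $\cR$ is constant on each disk $\tau_{\fY,\fZ}^{-1}(\eta)\setminus\{\eta\}$ is false in general; instead the correct reduction is: it suffices to show $\cR(\eta_\rho)=1$ for every point $\eta_\rho\in E$, because then on the maximal disk $D_{\fY,\fZ}(z,1^-)$ whose boundary point is $\eta_\rho = \tau_{\fY,\fZ}(z)$, the restriction of $(\cE,\na)$ is trivial (the radius being $\ge 1$ forces triviality on the whole disk by the discussion after Definition \ref{def:altradius}), hence $\cR(z)=1$. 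So the whole problem collapses to: $\cR(\eta_\rho)=1$ for all $\rho\in(0,1)$, i.e. the restriction of $\cR$ to $\ol E$ is identically $1$.

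\textbf{Step 2: analyze $\cR$ on $\ol E$ via Proposition \ref{advances2}.} Since $(\cE,\na)$ is regular at $z_i$, the Poincar\'e–Katz rank $\rho_{z_i}(\cE,\na)$ is $0$; also the regularity hypothesis together with condition $\bf NL$ — wait, Proposition \ref{advances2} requires the exponent-differences at $z_i$ to be $p$-adically non-Liouville; in the regular case with integer/algebraic exponents this is automatic by the $p$-adic Roth theorem (Remark before Proposition \ref{advances2}), but in general for the corollary to make sense we should assume this, as the surrounding hypotheses ($\bf NL$ in Theorem \ref{Robbacond3}) supply it. Granting this, Proposition \ref{advances2} tells us the restriction of $\cR$ to $\ol E$, parametrized by $\rho:\ol E\xrightarrow{\sim}[0,1]$ with $\rho(z_i)=0$, is the infimum of the constant $1$ and finitely many functions $|p|^{1/(p-1)p^h}|b|^{1/jp^h}\rho^{s/j}$ with $s/j\le\rho_{z_i}(\cE,\na)=0$. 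Thus every such function has non-positive exponent $s/j\le 0$, so it is $\ge 1$ at $\rho=0$ and non-increasing — hence on $[\,\rho(v),1]$ (recall $v$ is the end $\rho=1$) each of these functions is $\ge$ its value at $\rho=1$. Equivalently, $\log\cR$ as a function of $\log\rho$ is concave, piecewise linear, $\le 0$, and its one-sided slope at the $v$-end is $\ge 0$ (non-positive exponents $s/j$ become non-negative slopes in the $\log\rho\to -\infty$ direction... I must track the sign carefully here).

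\textbf{Step 3: use $\cR(v)=1$ and concavity to conclude.} Here is the key rigidity argument. The function $\Phi(t):=\log\cR(\rho^{-1}(e^t))$ on $t\in(-\infty,0]$ is concave and $\le 0$, with $\Phi(0)=\log\cR(v)=0$. A concave function on $(-\infty,0]$ that is $\le 0$ everywhere and attains the value $0$ at the right endpoint $t=0$ must satisfy $\Phi'(0^-)\ge 0$; but concavity also forces $\Phi'$ to be non-increasing, so $\Phi'(t)\ge\Phi'(0^-)\ge 0$ for all $t\le 0$, hence $\Phi$ is non-decreasing on $(-\infty,0]$, so $\Phi(t)\le\Phi(0)=0$ and $\Phi(t)\ge\lim_{s\to t^+}$... this only gives $\Phi(t)\le 0$, which we knew. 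The extra input needed is the slope constraint from Step 2: each component function $|p|^{1/(p-1)p^h}|b|^{1/jp^h}\rho^{s/j}$ has $s/j\le 0$; in the variable $t=\log\rho$ this is the affine function with slope $s/j\le 0$ and it is $\ge$ some positive constant times... no: $|p|^{1/(p-1)p^h}<1$, so these functions can be $<1$. The clean way: since $\cR(v)=1=\min(1,\{\text{functions at }\rho=1\})$, every component function satisfies $|p|^{1/(p-1)p^h}|b|^{1/jp^h}\ge 1$ at $\rho=1$, i.e. $\ge 1$; combined with exponent $s/j\le 0$, each component is $\ge 1$ for all $\rho\in(0,1]$; hence the infimum with $1$ is exactly $1$ throughout. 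Therefore $\cR\equiv 1$ on $\ol E$. Combining with Step 1 finishes the proof.

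\textbf{Main obstacle.} The delicate point is Step 2–Step 3, specifically extracting from Proposition \ref{advances2} — applied with $\rho_{z_i}=0$ coming from regularity — that \emph{every} component function in the finite infimum has non-positive $\rho$-exponent \emph{and} (crucially) that the value $\cR(v)=1$ forces each of these component functions to be $\ge 1$ at the $v$-end, so that monotonicity in $\rho$ propagates the bound $\ge 1$ all the way to $z_i$. One must be careful that "regular at $z_i$" really gives $\rho_{z_i}(\cE,\na)=0$ (this is the definition/standard fact relating Poincar\'e–Katz rank to regularity, cf. \cite{Advances}), and that the non-Liouville hypothesis needed to invoke Proposition \ref{advances2} is indeed available (it is, under the ambient $\bf NL$ assumption of Theorem \ref{Robbacond3}, or automatically if exponents are algebraic). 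Everything else is the soft topology of concave piecewise-linear functions and the contiguous-disk transfer, which is routine given Corollary \ref{contiguity}.
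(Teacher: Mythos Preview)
Your proof is correct and follows exactly the approach the paper intends (the corollary is stated without proof, as an immediate consequence of Proposition~\ref{advances2}). The essential mechanism---regularity gives $\rho_{z_i}=0$, hence all exponents $s/j\le 0$ in~(\ref{pentes2}); the hypothesis $\cR(v)=1$ at $\rho=1$ forces each component function to be $\ge 1$ there; monotonicity in $\rho$ then gives $\ge 1$ everywhere on $\ol E$; and transfer (Corollary~\ref{contiguity}) propagates $\cR=1$ from $E$ to the whole punctured disk---is exactly right.

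A few minor cleanups would help the exposition. In Step~1, the reduction you want is simply: for $z$ off the skeleton with $\eta_\rho=\tau_{\fY,\fZ}(z)$, Corollary~\ref{contiguity} gives $R(\eta_\rho)=\inf_{x\in D}R(x)\le R(z)$ on the residue disk $D$, hence $\cR(z)\ge\cR(\eta_\rho)$; so $\cR\equiv 1$ on $E$ suffices. In Step~3, the ``clean way'' you land on is the argument---drop the earlier false starts about concavity signs. Finally, you are right that the non-Liouville hypothesis on exponent-differences is needed to invoke Proposition~\ref{advances2}; it is supplied by the ambient condition~$\mathbf{NL}$ under which the corollary is applied (Theorem~\ref{Robbacond3}).
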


We conclude
\begin{prop} \label{Robbacond5} Let $(\cE,\na)$ be an object of ${\bf MIC}_{\fY}(\ol{X}(\ast \cZ)/k)$, and let 
$(\fY_1,\fZ_{1})$ be another pair like $(\fY,\fZ)$. We assume that $\fY_1$, $z_{r+1}, \dots,z_{r+s}$ are as in equation \ref{eq:change} and that $\cZ_1 = \{z_1,\dots,z_{r+s}\}$, $X_1 = \ol{X} \setminus \cZ_1$. 
Then $z \mapsto \cR_{\fY,\fZ} (z,(\cE,\na))$ is identically 1 on $X$ if and only if $z \mapsto \cR_{\fY_1,\fZ_1} (z,(\cE,\na))$ is identically 1 on $X_1$.

\end{prop}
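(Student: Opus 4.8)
The plan is to compare the two normalized radii via the dilation function $\rho_{(\fY,\fZ)/(\fY_1,\fZ_1)}$ introduced just above the statement, exploiting the change-of-model formula \ref{eq:change}. Recall that
\beq
\cR_{\fY_1,\fZ_1} (x,(\cE,\na)) = \min \l(1, \cR_{\fY,\fZ} (x,(\cE,\na))/ \rho_{(\fY,\fZ)/(\fY_1,\fZ_1)}(x)\r) \; ,
\eeq
for all $x \in \ol{X}\setminus \cZ_1$. Since $\rho_{(\fY,\fZ)/(\fY_1,\fZ_1)}(x) \in (0,1)\cap|k|$ by construction (it is the inverse length of a nondegenerate segment), we have $\cR_{\fY,\fZ}(x,(\cE,\na)) \geq \cR_{\fY_1,\fZ_1}(x,(\cE,\na))$ at every point of $X_1$; hence if $\cR_{\fY,\fZ}(\cdot,(\cE,\na)) \equiv 1$ on $X$, then in particular $\cR_{\fY,\fZ}(x,(\cE,\na))/\rho_{(\fY,\fZ)/(\fY_1,\fZ_1)}(x) \geq 1$ and the formula forces $\cR_{\fY_1,\fZ_1}(x,(\cE,\na)) = 1$ for all $x \in X_1$. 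This is the easy direction and requires essentially nothing beyond \ref{eq:change}.

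For the converse, suppose $\cR_{\fY_1,\fZ_1}(\cdot,(\cE,\na)) \equiv 1$ on $X_1$. I would first treat the effect of adding the extra points $z_{r+1},\dots,z_{r+s}$ and the effect of the blow-up $\fY_1\to\fY$ separately; by factoring through intermediate pairs and iterating the Remark following \ref{eq:change} (the explicit formula for a single apparent singularity) together with the change-of-model lemma, it suffices to handle two model cases: (a) $\fZ_1 = \fZ$ but $\fY_1 \to \fY$ is a one-step blow-up, and (b) $\fY_1 = \fY$ but $\cZ_1 = \cZ \cup \{z\}$ with $z$ a single apparent point. In case (a), for $x$ ranging over $X$, either $D_{\fY_1}(x,1^-) = D_{\fY}(x,1^-)$, in which case $\cR_{\fY,\fZ}(x,(\cE,\na)) = \cR_{\fY_1,\fZ_1}(x,(\cE,\na)) = 1$ directly, or $x$ lies over the blown-up locus, where $D_{\fY_1}(x,1^-) = D_{\fY}(x,\rho^-)$ is strictly smaller; here I use that $\cR_{\fY_1,\fZ_1}(x,(\cE,\na)) = 1$ means $(\cE,\na)$ is trivial on the slightly smaller disk $D_{\fY_1}(x,1^-)$, and then invoke continuity of $x\mapsto \cR_{\fY,\fZ}(x,(\cE,\na))$ (Theorem \ref{context2}) together with corollaries \ref{Robbacond} and \ref{Robbacond2}: triviality of the connection on a cofinal family of subdisks, combined with the fact that the radius is $1$ at the two ends of every edge of the refined skeleton and the log-concave piecewise-linear description of corollary \ref{genpiecewiselinear}, propagates to give $\cR_{\fY,\fZ}(z,(\cE,\na)) = 1$ on the whole preimage $\sp_{\fY}^{-1}([x,y])$. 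Case (b) is handled analogously using the half-line version, corollary \ref{Robbacond2}, provided one checks condition $\bf NL$ is not needed here because the added singularity is apparent, hence regular with integer (in particular non-Liouville) exponents.

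The main obstacle I expect is the gluing/propagation argument in case (a): knowing $\cR_{\fY,\fZ}(x,(\cE,\na)) = 1$ on the dense subset of rational points whose maximal $\fY_1$-disk already exhausts the $\fY$-disk does not by itself give it everywhere on $X$, because over the exceptional locus of the blow-up the $\fY_1$-normalization only controls a proper subdisk. One must genuinely use the structural results: the value $1$ at the vertices of ${\bf S}_\fZ(\fY)$ not created by the blow-up, the log-concavity and piecewise linearity along each edge (corollary \ref{genpiecewiselinear}), and the Robba-type transfer corollaries \ref{Robbacond}, \ref{Robbacond2} which say that if the radius is $1$ at both ends of a segment it is $1$ along the whole segment and on the disks retracting to it. Putting these together edge by edge along the finitely many new edges of ${\bf S}(\fY_1)$, and using that $X$ is the inverse limit of the skeleta (equation \ref{limin}), one recovers $\cR_{\fY,\fZ}(\cdot,(\cE,\na)) \equiv 1$ on all of $X$. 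Finally, reassembling cases (a) and (b) and the field-extension invariance \ref{basechange} completes the proof; relaxing the algebraically closed hypothesis on $k$ is harmless since $\cR$ is preserved under ground field extension.
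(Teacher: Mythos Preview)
Your overall strategy matches the paper's: the easy direction via formula \ref{eq:change}, and the hard direction by factoring $(\fY,\fZ)\leadsto(\fY_1,\fZ_1)$ into elementary moves (the paper distinguishes three: adding one apparent point, a simple blow-up adding one edge and vertex, and a trivial subdivision). One small slip: from \ref{eq:change} with $\rho\le 1$ one gets $\cR_{\fY_1,\fZ_1}\geq\cR_{\fY,\fZ}$, not the reverse; your conclusion for the easy direction is nonetheless correct.

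The substantive gap is in your propagation arguments. In case (a), Corollary \ref{Robbacond} concerns edges of $S_\fZ(\fY)$, not of the \emph{refined} skeleton $S_{\fZ_1}(\fY_1)$; at the new vertex $v\in S(\fY_1)\setminus S(\fY)$ one only has $\rho_{(\fY,\fZ)/(\fY_1,\fZ_1)}(v)=r_E<1$, so \ref{eq:change} yields merely $\cR_{\fY,\fZ}(v)\geq r_E$, not $1$, and your ``radius $1$ at the two ends of every edge of the refined skeleton'' does not hold for $\cR_{\fY,\fZ}$. In case (b), Corollary \ref{Robbacond2} is stated for an infinite edge of $S_\fZ(\fY)$ ending at some $z_i\in\cZ$; the apparent point $z_{r+1}$ lies only in $\cZ_1$, so applying \ref{Robbacond2} to the pair $(\fY,\fZ_1)$ just reproduces the hypothesis $\cR_{\fY,\fZ_1}\equiv 1$, not the conclusion about $\cR_{\fY,\fZ}$. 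The key observation you are missing---and which the paper uses---is that at the \emph{old} boundary vertex ($w=\tau_{\fY}(a)$ in case (a), $v=\tau_{\fY}(z_{r+1})$ in case (b)) one has $\rho=1$ because this vertex lies on $S_\fZ(\fY)\subset S_{\fZ_1}(\fY_1)$, whence $\cR_{\fY,\fZ}=\cR_{\fY_1,\fZ_1}=1$ there; then the transfer theorem \ref{contiguity} immediately gives $\cR_{\fY,\fZ}(z)\geq 1$ for every $z$ in the maximal $\fY$-disk with that boundary point. This is the paper's route, and it is both shorter and avoids the log-concavity corollaries entirely.
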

\begin{proof} By (\ref{eq:change}), $\cR_{\fY_1,\fZ_1} (z,(\cE,\na))  \geq 
\cR_{\fY,\fZ} (z,(\cE,\na))$, $\forall \, z \in X_1$.  So, let us assume that $\cR_{\fY_1,
\fZ_1} (z,(\cE,\na))  =1$, 
$\forall \, z \in X_1$. Suppose first that $\fY = \fY_1$, and $s=1$. So, the disk $D_{\fY}(z_{r+1},1^-)$ contains no singularity of $(\cE,\na)$, and, by the transfer theorem  \ref{contiguity}, for any $z \in D_{\fY}(z_{r+1},1^-)$, $1 \geq \cR_{\fY,\fZ} (z,(\cE,\na)) \geq \cR_{\fY,\fZ} (v,(\cE,\na))=  \cR_{\fY,\fZ_1} (v,(\cE,\na)) =1$, where $v$ is the boundary point of $D_{\fY}(z_{r+1},1^-)$ in ${\bf S}(\fY)$. 
Let us assume instead that  $s =0$ and  ${\bf S}(\fY_1)$ is obtained from ${\bf S}(\fY)$ by addition of one open edge $E$ and one vertex $v$. 
Let $w$ be the second end of $\ol{E}$. Then, the edge $E$ corresponds to a $k$-rational open annulus $B(r_E,1)$ in an open disk $D(0,1^-) \cong D_{\fY}(a,1^-)$, via a $\fY$-normalized coordinate at $a$, where the vertex $v$ (resp. $w$) corresponds to $t_{0,r_E^+}$ (resp. to $\tau_{\fY}(a) = t_{0,1}$). Now, again by (\ref{contiguity}), 
$\cR_{\fY,\fZ}(w, (\cE,\na)) \geq \cR_{\fY,\fZ}(v, (\cE,\na)) = \cR_{\fY_1,\fZ}(v, (\cE,\na)) =1$, 
 We conclude from proposition \ref{Robbacond2} that $\cR_{\fY,\fZ} (z,(\cE,\na))=1$, $\forall \, z \in X$. Suppose now  $s =0$ and  ${\bf S}(\fY_1)$ is obtained from ${\bf S}(\fY)$ by subdividing one  open edge $E$ of ${\bf S}(\fY)$ as $E = E_1 \cup \{v\} \cup E_2$, where $v$ (resp. $E_1$, $E_2$) is the only new vertex (resp. are the only new edges) of ${\bf S}(\fY_1)$. Then, $D_{\fY_1,\fZ} (z,1^{-}) = D_{\fY_1,\fZ} (z,1^{-})$, and therefpre, 
 for any object $(\cF,\na)$ of  ${\bf MIC}_{\fY}(\ol{X}(\ast \fZ)/k)$, $\cR_{\fY_1,\fZ} (z,(\cE,\na)) = \cR_{\fY,\fZ} (z,(\cE,\na))$, for any $z \in X$; so the result follows also in this case. The general case is an iteration of the steps we have described.
\end{proof}

\begin{cor} \label{Robbacond4}
 Theorem \ref{Robbacond3} holds. 
\end{cor}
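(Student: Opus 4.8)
\textbf{Proof of Corollary \ref{Robbacond4}.} The plan is to assemble Theorem \ref{Robbacond3} out of the pieces already at hand: the continuity result (Theorem \ref{context2}), the fine description of $x \mapsto \cR_{\fY,\fZ}(x,(\cE,\na))$ along the edges of the skeleton (Corollaries \ref{genpiecewiselinear} and \ref{Robbacond2}, Proposition \ref{advances2}), the transfer theorem to contiguous disks (Corollary \ref{contiguity}), and the robustness of the identity $\cR \equiv 1$ under changes of formal model and of the divisor $\cZ$ (Proposition \ref{Robbacond5}).

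First I would dispose of the trivial implication: if $\cR_{\fY,\fZ}(x,(\cE,\na)) = 1$ identically on $X$, then evaluating at the vertices of ${\bf S}(\ol{\fX})$ gives $(1)$, and the fact that $\cR \equiv 1$ near each $z_i$ forces the exponent-differences — hence, via \eqref{advances3} with $\rho_{z_i} = 0$ — the regularity at $z_i$, i.e. $(2)$; condition $\bf NL$ then makes $(\cE,\na)$ a Robba connection by Definition \ref{Robbaconn} and the characterization of constancy of $\cE^{\na}$ on maximal open disks. For the substantive direction, assume $(1)$ and $(2)$ together with $\bf NL$ at each $z \in \cZ$. I would pass, using Proposition \ref{Robbacond5}, to a strictly semistable model $\fY_1 \geq \ol{\fX}$ that resolves $\cZ$ as required in Notation \ref{merosing}; since $\cR_{\fY_1,\fZ_1} \geq \cR_{\ol{\fX},\fZ}$ and we want to prove $\cR \equiv 1$, it suffices to prove it for $(\fY_1,\fZ_1)$, and by the same proposition we may freely subdivide edges. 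So reduce to the case where $\ol{\fX}$ itself is strictly semistable and $\cZ$ is split off onto disjoint disks centered at vertices.

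Now the core step: show $\cR_{\ol{\fX},\fZ}(x,(\cE,\na)) = 1$ for every $x$ in the skeleton $S_{\fZ}(\ol{\fX})$, and then propagate into the residue disks. On each open edge $E$ of finite length, Corollary \ref{genpiecewiselinear} says $\rho \mapsto \cR(\eta_\rho)$ is the infimum of $1$ and finitely many functions of the form \eqref{pentes}; by hypothesis $(1)$ its value is $1$ at both endpoints (which are vertices of ${\bf S}(\ol{\fX})$), so Corollary \ref{Robbacond} — log-concavity of the polygon — forces $\cR \equiv 1$ on all of $\ol{E}$, hence on $\sp_{\ol{\fX}}^{-1}([x,y])$. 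On each half-line $\ell_i$ ending at $z_i$, regularity $(2)$ means $\rho_{z_i}(\cE,\na) = 0$, so by Proposition \ref{advances2} (applicable precisely because of $\bf NL$) the restriction of $\cR$ to $\ell_i$ is again an infimum of $1$ and functions \eqref{pentes} with $s/j \leq 0$, and equals $1$ at the vertex end $\tau_{\ol{\fX}}(z_i)$; the limit formula \eqref{advances3} with $\rho_{z_i}=0$ pins $C \in |k^{\times}|$, and — more cleanly — Corollary \ref{Robbacond2} directly yields $\cR_{\ol{\fX},\fZ}(z,(\cE,\na)) = 1$ for all $z \in D_{\ol{\fX}}(z_i,1^-) \setminus \{z_i\}$. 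Thus $\cR \equiv 1$ on $S_{\fZ}(\ol{\fX})$ and on all of $D_{\ol{\fX}}(z_i,1^-)\setminus\{z_i\}$. Finally, for an arbitrary rigid point $y \in X \setminus S_{\fZ}(\ol{\fX})$ not in one of those residue disks, apply the transfer theorem (Corollary \ref{contiguity}): $y$ lies in an open disk $D$ whose limit point $t \in S_{\fZ}(\ol{\fX})$ satisfies $\cR(t) = 1$, and $\cR$ is constant on $D$ equal to $\inf_{x \in D}\cR(x) \geq \cR(t) = 1$, hence $=1$; by continuity (Theorem \ref{context2}) and density of rigid points, $\cR_{\ol{\fX},\fZ} \equiv 1$ on $X$. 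That $(\cE,\na)$ is then a Robba connection follows as in the trivial direction.

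The main obstacle is making sure the reductions via Proposition \ref{Robbacond5} are compatible with the hypotheses — in particular that passing to $\fY_1$ and its $\cZ_1$ does not disturb condition $\bf NL$ or the regularity statement $(2)$, since these are conditions on the formal germ at each $z_i$ and on the $\fX$-normalized disks there. This is harmless because $\bf NL$ and regularity at $z_i$ are formal invariants of $(\cE,\na)$ at $z_i$, unchanged by admissible blow-up away from $z_i$ or by subdivision of edges; but it should be stated explicitly. The only other delicate point is the interplay between the two canonical parametrizations of an edge (the inversion $\rho \mapsto r_E/\rho$) when invoking log-concavity at \emph{both} ends — one simply notes that the log-concavity conclusion of Corollary \ref{Robbacond} is symmetric in the two ends, so no choice of orientation is needed.
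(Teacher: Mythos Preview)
Your argument for the identity $\cR_{\ol{\fX},\fZ} \equiv 1$ on $X$ is essentially the paper's: both invoke Corollaries \ref{Robbacond} and \ref{Robbacond2} to handle finite edges and half-lines, and the transfer principle to propagate into the residue disks. (Your explicit invocation of Corollary \ref{contiguity} is slightly redundant, since Corollary \ref{Robbacond} already gives the conclusion on $\sp_{\fY}^{-1}([x,y])$, not merely on $[x,y]$; but no harm done.)

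The genuine gap is in the last sentence. ``That $(\cE,\na)$ is then a Robba connection follows as in the trivial direction'' is not an argument, and the phrase you point back to --- ``the characterization of constancy of $\cE^{\na}$ on maximal open disks'' --- only yields constancy of $\cE^{\na}$ on each $D_{\ol{\fX},\fZ}(x,1^-)$. Definition \ref{Robbaconn} asks for constancy on \emph{every} open disk $D' \subset X \wt k'$, and such a $D'$ need not lie inside any $\ol{\fX}'$-maximal disk: whenever $D'$ meets the skeleton $S_{\fZ'}(\ol{\fX}')$ (which is perfectly possible, since $S(\ol{\fX})$ is in general strictly larger than the analytic skeleton $S(\ol{X})$), no point of $D'$ has its $\ol{\fX}'$-maximal disk containing $D'$. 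This is precisely the case that occupies most of the paper's proof. There one shrinks $D$ to a closed $k$-rational disk whose maximal point $v$ lies in the interior of an edge, observes that the tree $\Gamma = D \cap S_{\fZ}(\fY)$ furnishes a semistable model $\fD$ of $D$ with $S(\fD) = \Gamma$, so that the $\fD$-maximal disks inside $D$ coincide with the $\fY$-maximal disks and hence $\cR_{\fD} \equiv 1$ on $D$; then, since $\cF_{|D}$ is free and $D \cong D(0,1^+)$ with $R(t_{0,1},\Sigma)=1$, the transfer theorem on a single closed disk gives constancy of $\cF^{\na}$ on the whole open disk. You need to supply this step (or an equivalent one); without it the Robba conclusion does not follow from $\cR_{\ol{\fX},\fZ}\equiv 1$.
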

\begin{proof} Let $(\cF,\na)$ be an object of ${\bf MIC}_{\ol{\fX}}(\ol{X}(\ast \cZ)/k)$ satisfying the assumptions of Theorem \ref{Robbacond3}. 
From the corollaries \ref{Robbacond} and  \ref{Robbacond2}  we deduce that the function 
$x \mapsto \cR_{\ol{\fX},\fZ} (z,(\cF,\na))$ is identically 1 on $X =  \ol{X} \setminus  \cZ$. Let  $D$ be an open disk in $X$. We may assume that $D$ is isomorphic to $D(0,1^-)$, and must show that the restriction of $\cF^{\na}$ to $D$ is locally constant.
If $D$ is contained in one of the disks $D_{\ol{\fX}}(z_i,1^-)$, hence in the punctured disk $D_{\ol{\fX}}(z_i,1^-) \setminus \{z_{i}\}$, 
the restriction of $\cF^{\na}$ to $D$ is in fact constant. Similarly if $D \subset D_{\ol{\fX}}(x,1^-)$, for some $x \in X(k)$, $\ds x \notin \cup_{i=1}^r D_{\ol{\fX}}(z_i,1^-)$. Otherwise, we may replace $D$ by any smaller closed $k$-rational disk, and assume that the maximal point $v$ of $D$ is an interior point of an open edge $E$ of ${\bf S}_{\fZ}(\fY)$. The closed subset  $D \cap S_{\fZ}(\fY)$ of $D$, has a canonical graph structure, whose open edges (resp. vertices) are all the open edges (resp. vertices) of ${\bf S}_{\fZ}(\fY)$ contained in $D$ and the extra open edge $E \cap D$ (resp. $v$). We call $\Gamma$ this graph structure. So, $\Gamma$ corresponds to a formal model $\fD$ of $D$, and clearly $z \mapsto \cR_{\fD} (z,(\cF,\na)_{|D})$ is identically 1 on $D$. But $\cF_{|D}$ is free, so that $(\cF,\na)_{|D}$ corresponds to a system $\Sigma$ as in \ref{diffCHDWintro}, over the closed disk $D(0,1^+)$, with $R(t_{0,1},\Sigma) = 1$. The transfer theorem shows that $\cR(x,\Sigma) = 1$, $\forall x \in D(0,1^+)$.
\end{proof}

\begin{rmk} Let $X$ be a compact rig-smooth strictly $k$-analytic  curve. Given an object $(\cE,\na)$ of ${\bf MIC}(X/k)$, we may look for  semistable models $\fY$ of $X$ such that $(\cE,\na)$ is an object of ${\bf MIC}_{\fY}(X/k)$. The function $x \mapsto  \cR_{\fY} (x,(\cE,\na))$, $X \to [0,1]$ then depends on $\fY$ as explained in subsection \ref{apparent}. For any point $v \in X$ of type $(2)$, let $E$ be a  segment  of $X$ ending at $v$. We may find a formal model $\fY$ of $X$, as before, such that $v$ is a vertex of ${\bf S}(\fY)$, and that an edge $\ell$ of ${\bf S}(\fY)$ ending at $v$ is contained in $E$. We parametrize the closure of $\ell$ by $\rho: \ol{\ell} \to [r,1]$, with $\rho(v) =1$, as before. Then, the right slope of the function $\log \rho(x) \mapsto  \log \cR_{\fY} (x,(\cE,\na))$ at $0$, is independent of the choice of $\fY$, and it is called the \emph{maximal slope of $(\cE,\na)$ at $v$ in the direction of $E$}. 
We regard theorem \ref{Robbacond3} as an indication of the existence of a purely analytic theory of slopes. 
\end{rmk}

We now  answer a question raised by Berthelot. 
\begin{prop} Let $f : Y \to X$ be a non-constant morphism of rig-smooth strictly $k$-analytic curves, where $X$ admits an embedding as an analytic domain in the analytification of a smooth projective $k$-curve. Then, if  $(\cE,\na)$ is an object of ${\bf MIC}^{Robba}(X/k)$, $f^{\ast}(\cE,\na)$ is an object of ${\bf MIC}^{Robba}(Y/k)$.
\end{prop}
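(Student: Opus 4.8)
The plan is to reduce the question to a local statement about preservation of the ``constancy'' of $\cE^\na$ on open polydisks, and then to invoke Berkovich's structure result for morphisms of analytic curves in the neighborhood of a point. Recall that by Definition \ref{Robbaconn}, to check that $f^\ast(\cE,\na)$ is a Robba connection we must show that for any algebraically closed complete extension $k^\p/k$, and any open polydisk (here: open disk) $D^\p \subset Y^\p = Y\what\otimes k^\p$, the sheaf of horizontal sections of the $k^\p$-linear extension of $f^\ast(\cE,\na)$ is constant on $D^\p$. Since formation of $f^\ast(\cE,\na)$ and of the solution sheaf commutes with the ground field extension $k\to k^\p$, and $f$ extends to $f^\p : Y^\p \to X^\p$, we may replace $k$ by $k^\p$ and assume $k$ algebraically closed. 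So the task is: given an open disk $D \subset Y$, show that $(f^\ast\cE)^{f^\ast\na}$ is constant on $D$, knowing that $(\cE,\na)$ is a Robba connection on $X$.

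First I would reduce to the case where $D$ is $k$-rational and small. Since being constant on $D$ can be checked on an exhausting increasing family of $k$-rational closed subdisks $D(0,r^+)$, $r\to 1$, and since the solution sheaf on an open disk is automatically constant once it is locally constant (an open disk being simply connected for the natural topology, as noted repeatedly in the excerpt), it suffices to prove: for every $k$-rational point $y \in D$ there is a $k$-rational open subdisk $D_y \subset D$ containing $y$ on which $(f^\ast\cE)^{f^\ast\na}$ is constant. Now I would use \cite[4.5.3]{Berkovich}, cited just above the proposition: a non-constant morphism of rig-smooth $k$-analytic curves is, locally near any point, a finite flat morphism followed by an open immersion — more precisely, for $y\in Y$ there is an open neighborhood $V$ of $y$ and an open $U\subset X$ with $f(V)\subset U$ such that $f|_V : V \to U$ factors through a finite flat map onto an analytic domain. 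Restricting further, I can arrange $V = D_y$ to be an open disk around $y$ and $f(D_y)$ to be contained in an open disk $D' \subset X$.

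The key step is then: the image $f(D_y)$ of an open disk under such a local morphism is contained in an open disk $D'$ of $X$ (one can take $D'$ to be the maximal open disk of $X$ containing $f(y)$ and not meeting the skeleton, if $X$ is not rational; or, in the embedded case, the image of a small disk under an analytic map is again contained in a disk by the maximum principle / the fact that $T\circ f$ is analytic and nonconstant on $D_y$, so that $|T\circ f|$ is constant along the skeleton direction and $f(D_y)$ retracts to a point). Because $(\cE,\na)$ is a Robba connection on $X$, its solution sheaf $\cE^\na$ is constant on $D'$, hence $(\cE,\na)|_{D'}$ is isomorphic to the trivial connection $(\cO_{D'},d)^\mu$ (using the isomorphism \eqref{eq:RHsheafintro2}). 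Pulling back a trivial connection along $f$ gives a trivial connection: $f^\ast(\cO_{D'},d)^\mu \cong (\cO_{D_y}, d)^\mu$, and $f$ being a morphism over $k$ commutes with the exterior derivative, so $(f^\ast\cE, f^\ast\na)|_{D_y} \cong (\cO_{D_y},d)^\mu$ and its solution sheaf is the constant sheaf $k^\mu$ on $D_y$. This establishes local constancy around every $k$-rational point of $D$, hence constancy on $D$, hence $f^\ast(\cE,\na)$ is a Robba connection on $Y$.

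The main obstacle I expect is the geometric step — precisely controlling the image $f(D_y)$ of a small open disk and arguing that it lies inside a single open disk $D'$ of $X$ on which $\cE^\na$ is known to be constant. In the embedded case (which is all that is stated in the proposition — $X$ is assumed to be an analytic domain in the analytification of a projective curve) this is cleanest: choosing a coordinate $T$ on an open disk of $X$ containing $f(y)$, the function $g = T\circ f$ is analytic and nonconstant on $D_y \cong D(0,1^-)$, and by the elementary theory of analytic functions on a disk (the Newton polygon / maximum modulus argument, \cite[\S IV]{DGS}) $g$ maps $D(0,r^-)$ into an open disk for $r$ small enough; shrinking $D_y$ we get $f(D_y) \subset D'$ with $D'$ an open $k$-rational disk of $X$. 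One then only needs that $\cE^\na$ restricted to $D'$ is constant, which is exactly the Robba condition applied to the open polydisk $D' \subset X \what\otimes k$ (Definition \ref{Robbaconn}, taking $k^\p = k$ since we already reduced to $k$ algebraically closed). The rest — pullback of a trivial connection is trivial, compatibility of $f^\ast$ with $d$, passage between ``locally constant on an open disk'' and ``constant'' — is routine.
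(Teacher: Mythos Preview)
Your core strategy---show that $f$ carries an open disk into an open disk of $X$, then pull back the trivialization guaranteed by the Robba condition---is exactly the paper's. The gap is in how you establish the geometric step, and it traces back to a misreading of \cite[4.5.3]{Berkovich}.

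That reference is not the local structure theorem you describe. As the paper uses it, it is a \emph{global} statement: an analytic map from an open disk to a smooth projective curve $\ol{X}$ of genus $\geq 1$ has image contained in an open disk of $\ol{X}$. The paper applies this directly to the full disk $D \subset Y$: the composite $D \to X \hookrightarrow \ol{X}$ lands in an open disk of $\ol{X}$ by \cite[4.5.3]{Berkovich} (and is in fact an open disk, by the argument of \cite[4.5.2]{Berkovich}), hence $f(D)$ is an open disk of $X$, and the Robba hypothesis plus pullback of the trivial connection finishes the argument at once. When $X \subset \A^1$, the same conclusion follows from the elementary fact you mention---but applied globally to $T \circ f$ for the standard coordinate $T$ on $\A^1$, not after shrinking to small local disks. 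The remaining case $X = \P^1$ needs separate handling: there ${\bf MIC}(\P^1/k)$ consists only of direct sums of $(\cO_{\P^1},d)$, so the conclusion is trivial.

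Your local-to-global patching does not work as written. You show the solution sheaf is constant on a small $D_y$ around each $k$-rational $y$, then invoke simple connectedness of $D$. But ``locally constant $\Rightarrow$ constant'' on a simply connected space requires local constancy at \emph{every} point, and your shrunken disks $D_y$ have no reason to cover the type (2), (3), (4) points of $D$. Positive radius of convergence at every rational point is automatic for any connection on a disk and does not force triviality on the whole disk. The paper avoids this entirely by never localizing: it shows in one stroke that $f(D)$ itself lies in an open disk of $X$.
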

\begin{proof} Let $D$ be any open disk in $Y$. Suppose first that  $X$ is isomorphic to an analytic domain in a projective curve $\ol{X}$ of genus $\geq 1$. Then the map $f$ induces an analytic map $\ol{f}: D \to \ol{X}$ and, by \cite[4.5.3]{Berkovich}, 
$\ol{f}(D)$ is contained in an open  disk in $\ol{X}$. By the arguments given in the proof of \cite[4.5.2]{Berkovich},  it follows that $\ol{f}(D)$ \emph{is} an open disk in $\ol{X}$. Therefore, $\ol{f}(D) = f(D)$ is in fact an open disk in $X$. Suppose now that $X$ is an analytic domain in $\A^{1}$. By the same arguments as before, we have that $f(D)$ is an open disk in $X$. In the cases considered so far, the fact that $f^{\ast}(\cE,\na)$ is a Robba connection is checked  directly on  the definition. 
Finally observe that on $X = \P^{1}$, ${\bf MIC}(X/k) = {\bf MIC}^{Robba}(X/k)$ only consists of direct sums of copies of
 $(\cO_{X},d_{X/k})$. But the inverse image of $(\cO_{X},d_{X/k})$ via $f$  is $(\cO_{Y},d_{Y/k})$, which is a Robba connection.  
\end{proof}

\end{document}